\newtheorem*{thm*}{Theorem}
\newtheorem{thm}{Theorem}[section]{\bf}{\it}
\newtheorem{prop}[thm]{Proposition}
\newtheorem{lemma}[thm]{Lemma}
\newtheorem{cor}[thm]{Corollary}
\theoremstyle{definition}
\newtheorem{dfn}[thm]{Definition}
\theoremstyle{remark}
\newtheorem{rmk}[thm]{Remark}
\theoremstyle{remark}
\newtheorem{exm}[thm]{Example}
\newtheorem{assu}[thm]{Assumption}
\newtheorem{cons}[thm]{Construction}
\newcommand{\A}{\mathbb{A}}
\newcommand{\B}{\mathbb{B}}
\newcommand{\F}{\mathbb{F}}
\newcommand{\HH}{\mathbb{H}}
\newcommand{\LL}{\mathbb{L}}
\newcommand{\N}{\mathbb{N}}
\newcommand{\Q}{\mathbb{Q}}
\newcommand{\RR}{\mathbb{R}}
\newcommand{\R}{\mathbb{R}}
\newcommand{\T}{\mathbb{T}}
\newcommand{\Z}{\mathbb{Z}}
\newcommand{\cat}{\mathbf{C}}
\newcommand{\catD}{\mathbf{D}}
\newcommand{\catT}{\mathbf{T}}
\newcommand{\ra}{\rightarrow}
\newcommand{\mcF}{\mathcal{F}}
\newcommand{\mcI}{\mathcal{I}}
\newcommand{\mcO}{\mathcal{O}}
\newcommand{\mcU}{\mathcal{U}}
\newcommand{\mcV}{\mathcal{V}}
\newcommand{\mfF}{\mathfrak{F}}
\newcommand{\mfG}{\mathfrak{G}}
\newcommand{\mfM}{\mathfrak{M}}
\newcommand{\mfX}{\mathfrak{X}}
\newcommand{\del}{\partial}
\newcommand{\adj}[4]{#1\negmedspace: #2\rightleftarrows #3:\negmedspace #4}
\DeclareMathOperator{\car}{char}
\DeclareMathOperator{\ct}{ct}
\DeclareMathOperator{\eff}{eff}
\DeclareMathOperator{\et}{\acute{e}t}
\DeclareMathOperator{\Ev}{Ev}
\DeclareMathOperator{\FormDA}{\bf{FormDA}}
\DeclareMathOperator{\Frac}{Frac}
\DeclareMathOperator{\Frob}{Frob}
\DeclareMathOperator{\Frobet}{Frob\acute{e}t}
\DeclareMathOperator{\Gal}{Gal}
\DeclareMathOperator{\gc}{gc}
\DeclareMathOperator{\hocolim}{hocolim}
\DeclareMathOperator{\Hom}{Hom}
\DeclareMathOperator{\id}{id}
\DeclareMathOperator{\Nis}{Nis}
\DeclareMathOperator{\Perf}{Perf}
\DeclareMathOperator{\PerfSm}{PerfSm}
\DeclareMathOperator{\quiet}{quiet}
\DeclareMathOperator{\RigCor}{RigCor}
\DeclareMathOperator{\RigSm}{RigSm}
\DeclareMathOperator{\RHom}{Hom_\bullet}
\DeclareMathOperator{\Sing}{Sing}
\DeclareMathOperator{\Sm}{Sm}
\DeclareMathOperator{\Spa}{Spa}
\DeclareMathOperator{\Spec}{Spec}
\DeclareMathOperator{\Spect}{Spt}
\DeclareMathOperator{\Spf}{Spf}
\DeclareMathOperator{\st}{st}
\DeclareMathOperator{\Sus}{Sus}
\DeclareMathOperator{\tr}{tr}
\DeclareMathOperator{\uhom}{\underline{Hom}}
\DeclareMathOperator{\wRig}{\widehat{Rig}}
\DeclareMathOperator{\Ch}{\bf{Ch}}
\DeclareMathOperator{\DA}{\bf{DA}}
\DeclareMathOperator{\DM}{\bf{DM}}
\DeclareMathOperator{\FSH}{\bf{FSH}}
\DeclareMathOperator{\Mod}{\bf{-Mod}}
\DeclareMathOperator{\PerfDA}{\bf{PerfDA}}
\DeclareMathOperator{\RigDA}{\bf{RigDA}}
\DeclareMathOperator{\RigDM}{\bf{RigDM}}
\DeclareMathOperator{\Psh}{\bf{Psh}}
\DeclareMathOperator{\PST}{\bf{PST}}
\DeclareMathOperator{\Sh}{\bf{Sh}}
\DeclareMathOperator{\SSpect}{\bf{Spt}}
\DeclareMathOperator{\sPsh}{\bf{sPsh}}
\DeclareMathOperator{\wRRig}{\widehat{\bf{Rig}}}
\newcommand{\wRigDA}{\wRRig\!\DA}
\newcommand{\wRigSm}{\wRig\!\Sm}
\begin{document}
	\title{A motivic version of the theorem of Fontaine and Wintenberger} 
	\author{Alberto Vezzani}
	\address{LAGA - Universit\'e Paris 13, Sorbonne Paris Cit\'e, 
		99 av. Jean-Baptiste Cl\'ement, 93430 Villetaneuse, France}
	
	\email{vezzani@math.univ-paris13.fr}

	\begin{abstract}
		We establish a tilting equivalence for rational, homotopy-invariant cohomology theories defined over non-archimedean analytic varieties. More precisely, we prove an equivalence between the categories of motives of rigid analytic varieties over a perfectoid field $K$ of mixed characteristic and over the associated (tilted) perfectoid field $K^{\flat}$ of equal characteristic. This can be considered as a motivic generalization of a theorem of Fontaine and Wintenberger, claiming that the Galois groups of $K$ and $K^\flat$ are isomorphic. %
	\end{abstract}

\maketitle

\tableofcontents


\section*{Introduction}
A theorem of Fontaine and Wintenberger \cite{fw}, later expanded by Scholze \cite{scholze}, states that there is an isomorphism between the Galois groups of a perfectoid  field $K$ and the associated (tilted) perfect field $K^\flat$ of positive characteristic. The standard example of such a pair is formed by the completions of the fields  $\Q_p(p^{1/p^{\infty}})$ and $\F_p(\!(t)\!)(t^{1/p^\infty})$. This theorem is a cornerstone of $p$-adic Hodge theory, providing a striking equivalence between  objects in mixed characteristic (finite \'etale algebras over $K$) and objects in equal characteristic $p$ (finite \'etale algebras over $K^\flat$).

In the same spirit, the ``tilting equivalence" of Scholze \cite{scholze} promotes the equivalence above to a certain kind of spaces of higher dimension, namely  the \emph{perfectoid spaces} over the two fields.  These objects are non-noetherian  analytic spaces, which can typically be thought as certain infinite covers of classical rigid analytic varieties obtained by adding all $p$-th roots of local coordinates, and their introduction has had many different applications in arithmetic geometry (see \cite{scholze-icm}).

The aim of this paper is to ``descend" Scholze's tilting equivalence to the level of classical rigid analytic varieties defined over $K$ and $K^\flat$, at least from a (co-)homological point of view. What we actually prove, is an equivalence between  the two associated categories of  (mixed, effective, with rational coefficients) rigid analytic motives $\RigDM$  defined over the two fields, introduced by Ayoub \cite{ayoub-rig} adapting the classic construction of $\DM$ by Voevodsky \cite{voe-h}:
\begin{thm*}[\ref{main}]
	Let $K$ be a perfectoid field with tilt $K^\flat$ and let $\Lambda$ be a $\Q$-algebra. There is a monoidal triangulated equivalence of categories
	\[{\mfF}\colon{\RigDM_{\et}^{\eff}(K^\flat,\Lambda)}\stackrel{\sim}{\ra}{\RigDM_{\et}^{\eff}(K,\Lambda)}.\]
\end{thm*}
We remark that, from a motivic point of view, the theorem of Fontaine and Wintenberger can  be rephrased by saying that the categories of \emph{Artin motives} over the two fields are equivalent. Conversely, our theorem shows that not only the absolute Galois groups of a perfectoid field and its tilt are isomorphic, but also their \emph{absolute local motivic Galois groups} (in the sense of the introduction of \cite{ayoub-rig}).
By \cite{vezz-DADM}, it is also possible to state the result above with respect to motives \emph{without transfers} $\RigDA$ defined over the two fields (the rigid analytic analogue of the category $\DA$ see \cite{ayoub-rig}).

The statement above involves only  rigid analytic varieties and its proof uses Scholze's theory of perfectoid spaces only in an auxiliary way. Nonetheless, we can restate  our main result  highlighting the role of perfectoid spaces as follows:
\begin{thm*}[\ref{mainperf}]
	Let $K$ be a perfectoid field 
	and let $\Lambda$ be a $\Q$-algebra. There is a monoidal triangulated equivalence of categories
	\[{\RigDM_{\et}^{\eff}(K,\Lambda)}\stackrel{\sim}{\ra}{\PerfDA_{\et}^{\eff}(K,\Lambda)}\]
\end{thm*}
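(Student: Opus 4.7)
The construction of $\mfF$ in Theorem~\ref{main} factors naturally through $\PerfDA_{\et}^{\eff}(K,\Lambda)$: one first takes the perfection $\widehat X$ of $X\in\RigSm(K^\flat)$, then tilts to obtain a perfectoid space over $K$, and only at the end ``de-perfectoidifies''. Denoting the composition of the first two steps by
\[
\Phi\colon \RigDM_{\et}^{\eff}(K^\flat,\Lambda) \longrightarrow \PerfDA_{\et}^{\eff}(K,\Lambda),
\]
the plan is to show that $\Phi$ itself is a monoidal triangulated equivalence. Combined with Theorem~\ref{main}, this forces $\mfF\circ\Phi^{-1}\colon \PerfDA_{\et}^{\eff}(K,\Lambda)\to\RigDM_{\et}^{\eff}(K,\Lambda)$ to be an equivalence as well, which is exactly the desired statement.

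I split $\Phi$ as $\Phi_2\circ\Phi_1$, where $\Phi_1$ is induced by perfection $\RigSm(K^\flat)\to\PerfSm(K^\flat)$ and $\Phi_2$ by Scholze's tilt $\PerfSm(K^\flat)\simeq\PerfSm(K)$. The factor $\Phi_2$ is the easier one: Scholze's tilt is an equivalence of sites compatible with the étale topology and with the perfectoid ball, so it upgrades directly to a monoidal triangulated equivalence on the corresponding étale, $\B^1$-localized $\Lambda$-linear motivic categories. For the factor $\Phi_1$, essential surjectivity follows from the fact that every smooth perfectoid space over $K^\flat$ is étale-locally a perfection of a smooth rigid variety, so the image of $\Phi_1$ generates the target as a triangulated category.

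The main obstacle is full faithfulness of $\Phi_1$. A morphism $\widehat X\to\widehat Y$ of perfectoid spaces is only a compatible system of Frobenius-twisted morphisms between the levels of the two towers, and not in general the perfection of a single morphism $X\to Y$. I would resolve this using two ingredients: first, since $\Lambda$ is a $\Q$-algebra, absolute Frobenius acts invertibly on $\Lambda$-linear étale motives in positive characteristic, so any Frobenius twist is already trivialized on the motivic side; second, the non-archimedean implicit function theorem mentioned in the introduction produces $\B^1$-homotopies between a given morphism of adic spaces and one defined at a finite level of the tower. Both arguments are most naturally carried out inside the intermediate category $\wRigSm$ of adic spaces, introduced precisely so that rigid and perfectoid objects can be compared within a single homotopy-theoretic framework.
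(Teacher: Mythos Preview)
Your proposal is correct and follows essentially the same route as the paper. The paper derives the stated equivalence precisely by composing Theorem~\ref{main} with the chain
\[
\RigDM_{\et}^{\eff}(K^\flat)\;\stackrel{\text{Thm~\ref{DA=DMp}}}{\cong}\;\RigDA_{\Frobet}^{\eff}(K^\flat)\;\stackrel{\text{Thm~\ref{eqcharp}}}{\cong}\;\PerfDA_{\et}^{\eff}(K^\flat)\;\stackrel{\text{Prop~\ref{tiltingeq}}}{\cong}\;\PerfDA_{\et}^{\eff}(K),
\]
which is exactly your $\Phi=\Phi_2\circ\Phi_1$; your sketch of full faithfulness for $\Phi_1$ (Frobenius invertibility over $\Q$ plus the approximation results, carried out in $\wRigSm$) is the content of Theorem~\ref{eqcharp}, whose proof rests on Propositions~\ref{colimok} and~\ref{2-6}. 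The only cosmetic difference is that the paper passes through $\RigDA_{\Frobet}^{\eff}$ rather than invoking Frobenius invertibility directly on $\RigDM$, but via Theorem~\ref{DA=DMp} this is the same thing.
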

The category $\PerfDA^{\eff}_{\et}(K,\Lambda)$ is built in the same way as $\RigDA^{\eff}_{\et}(K,\Lambda)$ using as a starting point the big  \'etale site of smooth, small perfectoid spaces, i.e. those which are locally \'etale over some perfectoid ball $\widehat{\B}^n$.

This last theorem provides a way to ``perfectoidify'' (and ``de-perfectoidify'') canonically a rigid analytic variety into a perfectoid motive (and vice-versa). The first thorem gives  a way to ``tilt'' (or ``untilt'') canonically also a rigid analytic varieties over $K$ into a rigid analytic motive defined on the  field $K^\flat$ (and vice-versa). 

\vspace{0.3cm}

We now make a rough sketch on how the motivic ``untilting'' and ``de-perfectoidification'' procedures work. Start from a smooth rigid variety $X$ over $K^\flat$ and  associate to it a perfectoid space $\widehat{X}$ obtained by taking the perfection of $X$. This operation can be performed canonically since $K^\flat$ has positive characteristic. 
 We then use Scholze's theorem to tilt $\widehat{X}$ obtaining a perfectoid space $\widehat{Y}$ in mixed characteristic. Suppose now that $\widehat{Y}$ is the limit of a tower of rigid analytic varieties 
 \[
 \ldots \ra Y_{h+1}\ra Y_{h}\ra\ldots\ra Y_1\ra Y_0
 \]
 such that $Y_0$ is \'etale over the Tate ball $\B^n=\Spa K\langle \upsilon_1,\ldots,\upsilon_n\rangle$ and each $Y_h$ is obtained as the pullback of $Y_0$ by the map $\B^n\ra\B^n$ defined by taking the $p^h$-powers of the coordinates $\upsilon_i\mapsto\upsilon_i^{p^h}$. 
 Under such hypotheses (we will actually need slightly stronger conditions on the tower  above) we 
 then ``de-perfectoidify'' $\widehat{Y}$ by associating to it  $Y_{\bar{h}}$ for a sufficiently big index $\bar{h}$. 
 
 The main technical problem of this construction is to show that it is independent of the choice of the tower, and on the index $\bar{h}$. It is also by definition a local procedure, which is not canonically extendable to arbitrary varieties by gluing. 
 In order to overcome these obstacles, we use in a crucial way some techniques of approximating maps between spaces up to homotopy, which are obtained by a generalization of the implicit function theorem in the non-archimedean setting. 
 
We point out that many deep motivic theorems are widely used throughout the article, and they are crucial to prove that the recipe sketched above gives rise to an equivalence (see the proof of \ref{mainperf}) and not only for the construction of explicit homotopies. Among them, the Cancellation Theorem \cite{voe-canc} and the dualizability of compact motives \cite{riou-dual}. These results admit a rigid analytic version, proved by Ayoub in \cite{ayoub-rig}. We also use the parallel between $\DA$ and $\DM$ and in order to do so, we introduce a finer topology in characteristic $p$  (the $\Frobet$ topology, also called $\quiet$ in \cite{fj}) giving rise to transfers by means of a localization procedure (see \cite{vezz-DADM}).%

The following diagram of categories of motives summarizes the situation. The equivalence in the bottom line follows easily from the ``tilting equivalence'' of Scholze. 
 All notations introduced in the theorems and in the diagram will be described in later sections. 
 $$\xymatrix{
 &\RigDM^{\eff}_{\et}(K,\Lambda)&&\RigDM^{\eff}_{\et}(K^{\flat },\Lambda)\ar[ll]_{\mfF}^\sim\\
 &\RigDA^{\eff}_{\et}(K,\Lambda)\ar[u]_{\sim}\ar@<0.5ex>[dl]^{\LL \iota^*}&&\RigDA^{\eff}_{\Frobet}(K^\flat,\Lambda)\ar[u]^{\sim}\ar[dd]_{\sim}^{\LL\Perf^*}\\
 {\wRigDA^{\eff}_{\et,\widehat{\B}^1}}(K,\Lambda)\ar@<0.5ex>[ur]^{\LL \iota_!}\\
 &\PerfDA^{\eff}_{\et}(K,\Lambda)\ar[uu]_\sim\ar[ul]_{\LL j^*}\ar@{<->}[rr]^{\sim}&&\PerfDA^{\eff}_{\et} (K^\flat,\Lambda).
 }$$

The main result also extends to the categories of non-effective motives, as follows:
\begin{thm*}[\ref{mainstable}]
 Let $K$ be a perfectoid field with tilt $K^\flat$ and let $\Lambda$ be a $\Q$-algebra. There is a monoidal triangulated equivalence of categories
 \[{\mfF}^{\st}\colon{\RigDM_{\et}(K^\flat,\Lambda)}\stackrel{\sim}{\ra}{\RigDM_{\et}(K,\Lambda)}.\]
\end{thm*}
Due to the intricate construction of the functor $\mfF$, the theorem above is not immediately obtained from the effective case and requires a little extra inspection of the categories of spectra.

\vspace{0.3cm}

As for concrete applications, our theorem  allows to ``(un-)tilt'' and ``\mbox{(de-)}perfectoidify''  any motivic  cohomology theory on rigid or perfectoid spaces satisfying \'etale descent and homotopy invariance, having coefficients over $\Q$ (it is expected to extend this over $\Z[1/p]$ as well, see Remark \ref{finalrmk}). Such an example is  the overconvergent de Rham cohomology for rigid analytic varieties over $K$. It  gives rise to new cohomology theories ``\`a la de Rham'' for (small) perfectoid spaces as well as for rigid varieties over local fields of equi-characteristic $p$, satisfying descent, homotopy invariance, finite-dimensionality  and further formal properties (see \cite{vezz-rigidreal}) which are compatible with rigid cohomology \cite{vezz-tilt4rigid}. The relation with $p$-adic periods is an object of future research. Also, we underline that such realization functors might finally give rise (thanks to the derived tannakian formalism developed in \cite{ayoub-h1}) to the {absolute local motivic Galois  group }of $K$ for any valued field $K$, the case of equi-characteristic zero being described in the introduction of \cite{ayoub-rig}. %

It is possible to use our main theorem  to answer positively to a conjecture of Ayoub \cite[Conjecture 2.5.72]{ayoub-rig} giving an explicit description of rigid analytic motives of good reduction over a local field $K$ as well as morphisms between compact rigid analytic motives over $K$ in terms of motives and morphisms defined over the residue field, extending these results from the equi-characteristic case to the mixed characteristic case (see \cite{vezz-tilt4rigid}).

We also point out that the motivic tilting equivalence can be used to ``replace'' the base field $K$ with another one $K'$ having the same tilt. A classic example is formed by the completions $K$ and $K'$ of the fields $\Q_p(p^{1/p^\infty})$ and $\Q_p(\mu_{p^\infty})$, respectively. Similar techniques are exploited by Kedlaya-Liu in \cite{ked-liu2}.

\vspace{0.3cm}

The article is organized as follows. 
In Section \ref{pre} we recall the basic definitions and the language of adic spaces while in Section \ref{wrigsm} we define the environment in which we will perform our construction, namely the category of semi-perfectoid spaces $\wRigSm$ and we define the \'etale topology on it. 
In Section \ref{motives}  we  define the  categories of motives for $\RigSm$, $\wRigSm$ and $\PerfSm$  adapting the constructions of Voevodsky and Ayoub. Thanks to the general model-categorical tools introduced in this section, we give in Section \ref{motapprox} a motivic interpretation of some approximation results of maps valid for non-archimedean Banach algebras. 
In Sections \ref{deperf0} and \ref{deperfp} we prove the existence of the de-perfectoidification functor $\LL\iota_!$ from perfectoid motives to rigid motives in zero and positive characteristics, respectively. Finally, we give in Section \ref{mainthm} the proof of our main result, both in its effective and stable form.

In the appendix, we collect some technical theorems that are used in our proof. Specifically, we first present a generalization of the implicit function theorem in the rigid setting, and conclude a  result about the approximation of maps modulo homotopy as well as its geometric counterpart. 
 We also prove the existence of compatible approximations of a collection of maps $\{f_1,\ldots,f_N\}$ from a variety in $\wRigSm$ of the form $X\times\B^n$ to a rigid variety $Y$ such that the compatibility conditions among the maps $f_i$ on the faces $X\times\B^{n-1}$ are preserved.
  This fact has 
  important consequences for computing maps to $\B^1$-local complexes of presheaves in the motivic setting. 

\section*{Acknowledgments}
This paper is part of my PhD thesis, 
carried out in a  co-tutelle program between the University of Milan and the University of Zurich. I am incredibly indebted to my advisor  Joseph Ayoub for his suggestion of pursuing the present project, his constant and generous guidance, the outstanding amount of insight that he kindly shared with me, and his endless patience. I wish to express my gratitude to my co-advisor  Luca Barbieri Viale, for his invaluable encouragement and the numerous mathematical discussions throughout the development of my thesis. I am grateful to  Peter Scholze for answering 
 many questions and also to 
   Yves Andr\'e, Fabrizio Andreatta and an anonymous referee for their  remarks and suggestions. 
 I had 
   the opportunity to give  a detailed series of talks of this work at the University of Zurich: I kindly thank  Andrew Kresch for this chance and the various discussions on this topic. 
 For the plentiful remarks   on preliminary versions of this work and his precious and friendly support, I also wish to thank Simon Pepin Lehalleur.

\section{Generalities on adic spaces}\label{pre}

We start by recalling the language of adic spaces, as introduced by Huber  \cite{huber1}, \cite{huber2} and generalized by Scholze-Weinstein \cite{sw}.  
We will always work with adic spaces over a non-archimedean valued field $K$ in the following sense. 

\begin{dfn}
A \emph{non-archimedean  field} is a topological field $K$ whose topology is induced by a non-trivial valuation of rank one. The associated norm is a multiplicative map that we denote by $|\cdot|\colon K\ra\R_{\geq0}$ and its valuation ring is denoted by $K^\circ$.  

\end{dfn}

 From now on, we fix a non-archimedean field $K$ and we  pick
  a non-zero element $\pi\in K$ with $|\pi|<1$.

\begin{dfn}A \emph{ Tate $K$-algebra} is a topological $K$-algebra $R$ for which there exists a subring $R_0$ such that the set $\{\pi^k R_0\}$ forms a basis of neighborhoods of $0$. A subring $R_0$ with the above property is called a \emph{ring of definition}. 
\end{dfn}

\begin{dfn}
Let $R$ be a Tate $K$-algebra. 
\begin{itemize}
\item A subset $S$ of $R$ is \emph{bounded} if it is contained in the set $\pi^{-N}R_0$ for some integer $N$. An element $x$ of $R$ is \emph{power-bounded} if the set $\{x^{n}\}_{n\in\N}$ is bounded. The set of power-bounded elements is a subring of $R$ that we denote by $R^\circ$. 
\item An element $x$ of $R$ is \emph{topologically nilpotent }if $\lim_{n\ra+\infty}x^n=0$. The set of topologically nilpotent elements is an ideal of $R^\circ$ that we denote by $R^{\circ\circ}$. 
 \end{itemize}
\end{dfn}

\begin{dfn}
 An \emph{affinoid $K$-algebra} is a pair $(R,R^+)$ where $R$ is a Tate $K$-algebra and $R^+$ is an open and integrally closed $K^\circ$-subalgebra of $R^\circ$. A morphism $(R,R^+)\ra(S,S^+)$ of  affinoid $K$-algebras is a pair of compatible $K^\circ$-linear continuous maps of rings $(f,f^+)$. 
  An affinoid $K$-algebra $(R,R^+)$ is called \emph{complete} if $R$ (and hence also $R^+$) is complete. 
 \end{dfn}

 \begin{rmk}
By \cite[Proposition 5.30]{wedhorn} if $R$ is a Tate $K$-algebra, then $(R,R^\circ)$ is an affinoid $K$-algebra.
 \end{rmk}

  \begin{dfn}For any complete Tate $K$-algebra $R$ we denote by $R\langle \upsilon _1,\ldots, \upsilon _n\rangle$ the Banach algebra of strictly convergent power series in $R[[\upsilon_1,\ldots,\upsilon_n]]$ endowed with the sup-norm (see  \cite[Section 1.4.1]{BGR}). 
   A \emph{topologically of finite type Tate algebra }(or simply \emph{tft Tate algebra}) is a Banach $K$-algebra $R$ isomorphic to a quotient  of the normed $K$-algebra $K\langle\upsilon _1,\ldots, \upsilon _n\rangle$ for some $n$. 
  \end{dfn}
If $R$ is a tft Tate algebra, the pair $(R,R^\circ)$ is an affinoid $K$-algebra, and $R^\circ$ is a ring of definition  whenever $R$ is reduced (see \cite[Theorem 6.2.4/1]{BGR}). 

  We now recall the definition of perfectoid pairs, introduced in \cite{scholze}.
  
    \begin{dfn}
     A  \emph{perfectoid field} $K$ is a complete non-archimedean field whose rank one valuation is non-discrete, whose residue characteristic is $p$ and  such that the Frobenius is surjective on $K^\circ/p$. In case $\car K=p$ this last condition amounts to saying that $K$ is perfect. 
    \end{dfn}
  
  \begin{dfn}Let $K$ be a perfectoid field.
  \begin{itemize}
  \item 
    A \emph{perfectoid algebra} is a Banach $K$-algebra $R$ such that $R^\circ$ is bounded and the Frobenius map is surjective on $R^\circ/p$.
    \item  A \emph{perfectoid affinoid $K$-algebra} is an affinoid $K$-algebra $(R,R^+)$ over a perfectoid field $K$ such that $R$ is perfectoid.
  \end{itemize}
  \end{dfn}
  
 \begin{rmk}
   If $R$ is a perfectoid algebra, then $(R,R^\circ)$ is a perfectoid affinoid $K$-algebra.
  \end{rmk}
  
    \begin{exm}
    Suppose that $K$ is a perfectoid field. 
    A basic example of a perfectoid algebra is the following: let $\underline{\upsilon}=(\upsilon_1,\ldots,\upsilon_N)$ be a $N$-tuple of coordinates  and  $K^\circ[\underline{\upsilon}^{1/p^\infty}]$ be the ring $\varinjlim_h K^\circ[\underline{\upsilon}^{1/p^h}]$ endowed with the sup-norm induced by the norm on $K$. We also denote by $K^\circ\langle \underline{\upsilon}^{1/p^\infty}\rangle$ its $\pi$-adic completion. By \cite[Proposition 5.20]{scholze}, the ring $K^\circ\langle \underline{\upsilon}^{1/p^\infty}\rangle[\pi^{-1}]$ is a perfectoid $K$-algebra which we will denote by $K\langle \underline{\upsilon}^{1/p^\infty}\rangle$. The pair $(K\langle \underline{\upsilon}^{1/p^\infty}\rangle,K^\circ\langle \underline{\upsilon}^{1/p^\infty}\rangle)$ is a perfectoid affinoid $K$-algebra.  
     We also define in the same way the perfectoid affinoid $K$-algebra $(K\langle \underline{\upsilon}^{\pm1/p^\infty}\rangle,K^\circ\langle \underline{\upsilon}^{\pm1/p^\infty}\rangle)$ (see \cite[Example 4.4]{scholze-ph}). 
       \end{exm}
   
      \begin{rmk}\label{bigosum}
       $K\langle \underline{\upsilon}^{1/p^\infty}\rangle$ is isomorphic as a $K\langle \underline{\upsilon}\rangle$-topological module to the completion  $\widehat{\bigoplus} K\langle \underline{\upsilon}\rangle$ of the free module $\bigoplus K\langle \underline{\upsilon}\rangle$ with basis indexed by the set $I=(\Z[1/p]\cap[0,1))^{N}$. By \cite[Proposition 2.1.5/7]{BGR} there is an explicit description of this ring as the  subring of $\prod_{i\in I}  K\langle \underline{\upsilon}\rangle$ whose elements are those $(x_i)_{i\in I }$ such that for any $\varepsilon>0$ the inequality $||x_i||<\varepsilon$ holds for almost all $i$ (that is, for all $i$ except for a finite number of them).
      \end{rmk}
  
   The following theorem summarizes some results of Scholze, including the \emph{tilting equivalence} of perfectoid algebras which will play a crucial role in our construction.
    
    \begin{thm}[\cite{scholze}]\label{tilteq} Let $K$ be a perfectoid field. 
    \begin{enumerate}
    \item (\cite[Lemma 3.4]{scholze}) The multiplicative monoid $\varprojlim_{x\mapsto x^p} K$ can be given a structure $K^\flat$ of perfectoid field with the norm induced by the  multiplicative map $\sharp\colon K^\flat\ra K$.  The field $K^\flat$ has characteristic $p$ and  coincides with $K$ in case $\car K=p$. 
    \item (\cite[Theorem 3.7]{scholze}) The functor $L\mapsto L^\flat$ for $L$ finite \'etale over $K$ induces an isomorphism $\Gal(K)\cong\Gal(K^\flat)$.
    \item (\cite[Lemma 6.2]{scholze}) There is an equivalence of categories, the \emph{tilting equivalence}, from perfectoid affinoid $K$-algebras  to perfectoid affinoid $K^\flat$-algebras   denoted by $(R,R^+)\mapsto(R^\flat,R^{\flat+})$ such that $R^\flat$ is multiplicatively isomorphic to $\varprojlim_{x\mapsto x^p} R$ and  $R^{\flat+}$  is multiplicatively isomorphic to $\varprojlim_{x\mapsto x^p} R^+$.
    \item (\cite[Proposition 5.20 and Corollary 6.8]{scholze}) The tilting equivalence associates the perfectoid $K$-algebra 
      $(K\langle \underline{\upsilon}^{1/p^\infty}\rangle,K^\circ\langle \underline{\upsilon}^{1/p^\infty}\rangle)$ to $(K^\flat\langle \underline{\upsilon}^{1/p^\infty}\rangle,K^{\flat\circ}\langle \underline{\upsilon}^{1/p^\infty}\rangle)$ and the perfectoid $K$-algebra $(K\langle \underline{\upsilon}^{\pm1/p^\infty}\rangle,K^\circ\langle \underline{\upsilon}^{\pm1/p^\infty}\rangle)$ to $(K^\flat\langle \underline{\upsilon}^{\pm1/p^\infty}\rangle,K^{\flat\circ}\langle \underline{\upsilon}^{\pm1/p^\infty}\rangle)$. 
    \end{enumerate}
   \end{thm}

We now recall Huber's construction of the spectrum of a valuation ring (see \cite{huber2}). 

\begin{cons}
 Let $(R,R^+)$ be an affinoid $K$-algebra. The set $\Spa(R,R^+)$ is the set of equivalence classes of continuous multiplicative valuations $|\cdot|$ (see \cite[Section 3]{huber1} and \cite[Definitions 2.2, 2.5]{scholze}). 
 It is endowed with the topology generated by the basis of \emph{rational subsets} $\{U(f_1,\ldots,f_n\mid g)\}$ by letting $f_1,\ldots,f_n,g$ vary among elements in $R$ such that $f_1,\ldots,f_n$ generate $R$ as an ideal and where the set $U(f_1,\ldots,f_n\mid g)$ is the set of those valuations $|\cdot|$ satisfying $|f_i|\leq|g|$ for all $i$.

If we define maps of valuation fields over $K$ (see \cite[Definition 2.26]{scholze}) as maps of affinoid $K$-algebras $(L,L^+)\ra(L',L'^+)$   such that $L'^+\cap L=L^+$ then $\Spa(R,R^+)$ has the following alternative description: it is the set $\varinjlim\Hom((R,R^+),(L,L^+))$ by letting $(L,L^+)$ vary in the category of valuation fields over $K$ (see \cite[Proposition 2.27]{scholze}). Its topology can be defined by declaring the sets $\{\phi\colon  0\neq|\phi(f)|\leq |\phi(g)|\}$ to be open, for all pairs of elements $f,g$ in $R$.

We can associate to a rational subset $U(f_1,\ldots,f_n\mid g)$ the affinoid $K$-algebra the affinoid $K$-algebra \[(\mcO(U),\mcO^+(U))=(R\langle f_1/g,\ldots,f_n/g\rangle,R\langle f_1/g,\ldots,f_n/g\rangle^+)\] defined in \cite[Section 1]{huber2} and \cite[Defintion 2.13]{scholze}. 
 This way, we  define a presheaf of affinoid $K$-algebras $(\mcO_X,\mcO^+_X)$ on a basis of  $X=\Spa(R,R^+)$.  
 
 By \cite[Lemma 1.5, Proposition 1.6]{huber2} for any $x\in X=\Spa(R,R^+)$ the valuation at $x$ extends to a valuation on $\mcO_{X,x}$ and the stalk $\mcO^+_{X,x}$ is local and corresponds to $\{f\in\mcO_{X,x}\colon|f(x)|\leq1\}$. 
\end{cons}

By \cite[Proposition 1.6]{huber2} there  holds $\mcO^+(U)=\{f\in\mcO(U)\colon |f(x)|\leq1\text{ for all }x\in U\}$ for any rational subset $U$ of $\Spa(R,R^+)$ so that $\mcO^+$ is a sheaf if and only if $\mcO$ is a sheaf.

Sadly enough, $\mcO$, $\mcO^+$ are not sheaves in general as shown at the end of \cite[Section 1]{huber2}. %
By Tate's acyclicity theorem \cite[Theorem 8.2.1/1]{BGR} and Scholze's acyclicity theorem \cite[Theorem 6.3]{scholze}, if $(R,R^+)$ is a tft Tate algebra or a perfectoid affinoid $K$-algebra, then $\mcO$, $\mcO^+$ are sheaves (see \cite[Section 2]{huber2}). Also, if $\mcO^+(U)$ is bounded for all rational subspaces $U$ of $X$ (in which case we say that $X$ is \emph{stably uniform}) then $\mcO$, $\mcO^+$ are sheaves (see \cite{buzz-ver}).

\begin{dfn}
Objects of $\mcV$ are triples $(X,\mcO_X ,\{|\cdot|_x\}_{x\in X})$ where $X$ is a topological space, $\mcO_X$ is a sheaf of complete topological $K$-algebras, and $|\cdot|_x$ is a continuous
valuation on $\mcO_{X,x}$. Maps are morphisms of ringed spaces which induce continuous $K$-algebra morphisms of sheaves  and are compatible with the valuations on stalks.
\end{dfn}

\begin{rmk}
By abuse of notation, whenever $R$ is a tft Tate algebra we  sometimes denote by $\Spa R$ the object $\Spa(R,R^\circ)$ of $\mcV$.
\end{rmk}

\begin{dfn}Let $X$ be an object of $\mcV$.
\begin{itemize}
\item We say that $X$ is an \emph{ affinoid adic space} if it is isomorphic to $\Spa(R,R^+)$ for some  affinoid $K$-algebra $(R,R^+)$.  
\item  We say that $X$ is an \emph{ affinoid rigid variety} if it is isomorphic to $\Spa(R,R^\circ)$ for some  tft Tate algebra $R$. 
\item We say that $X$ is a \emph{perfectoid affinoid space} if it is isomorphic to $\Spa(R,R^+)$ for some perfectoid affinoid $K$-algebra $(R,R^+)$.
\item We say that $X$ is an  \emph{adic space} if it is locally isomorphic to an affinoid adic space. 
\item  We say that $X$ is a \emph{ rigid variety} if it is 
 locally isomorphic to an affinoid rigid variety. 
 \item We say that $X$ is a \emph{perfectoid space} if it  is  locally isomorphic to a perfectoid affinoid space. 
\end{itemize}
\end{dfn}

There is an apparent clash of definitions between  rigid varieties as presented above, and as defined by Tate. In fact, the two categories are canonically isomorphic. 
We refer to \cite[Section 4]{huber2} and \cite[Section 2]{scholze} for a more detailed collection of results on the comparison between these theories.

\begin{rmk}\label{Spaadj2}
By \cite[Proposition 2.1(ii)]{huber2} if $X$ is an adic space and $Y=\Spa(R,R^+)$ is an affinoid adic space  then $\Hom(X,Y)\cong\Hom(({R},{R}^+),(\mcO_X(X),\mcO_X^+(X))$. Moreover, as shown in \cite[Section 3]{huber1} and \cite[Section 2]{huber2}, if $(\widehat{R},\widehat{R}^+)$ is the completion of $({R},{R}^+) $ then it is a affinoid $K$-algebra and  $\Spa(\widehat{R},\widehat{R}^+)\cong\Spa(R,R^+)$.
\end{rmk}

\begin{assu}\label{assu}
  From now on,  we will always assume that $K$ is a perfectoid field. We also make the extra assumption that the invertible element $\pi$ of $K$ satisfies $|p|\leq|\pi|<1$ and coincides with $(\pi^\flat)^\sharp$ for a chosen $\pi^\flat$ in $K^\flat$. In particular, $\pi$ is equipped with a compatible system of $p$-power roots $\pi^{1/p^h}$ (see \cite[Remark 3.5]{scholze}).
\end{assu}

We now consider some basic examples and fix some notation. 

\begin{exm}
Let $\underline{\upsilon}=(\upsilon_1,\ldots,\upsilon_N)$ be a $N$-tuple of coordinates. 
The Tate $N$-ball $\Spa(K\langle\underline{\upsilon}\rangle,K^\circ\langle\underline{\upsilon} \rangle) $  
will be denoted by $\B^N$ and the $N$-torus $\Spa(K\langle\underline{\upsilon}^{\pm1}\rangle,K^\circ\langle\underline{\upsilon}^{\pm1} \rangle)$ 
 by $\T^N$. It is the rational subspace $U(1\mid \upsilon_1\ldots\upsilon_N)$ of $\B^N$. The map of spaces induced by the inclusion  
 $(K\langle \underline{\upsilon}\rangle,K^\circ\langle \underline{\upsilon}\rangle)\ra(K\langle \underline{\upsilon}^{1/p^h}\rangle, K^\circ\langle \underline{\upsilon}^{1/p^h}\rangle)$ 
    will be denoted by $\B^N\langle \underline{\upsilon}^{1/p^h}\rangle\ra \B^N$.  We use the analogous notation  $\T^N\langle \underline{\upsilon}^{1/p^h}\rangle\ra\T^N$ for the torus. These maps are clearly isomorphic to the endomorphism of $\B^N$ resp. $\T^N$ induced by $\upsilon_i\mapsto \upsilon_i^{p^h}$.

The space defined by the perfectoid affinoid $K$-algebra $(K\langle \underline{\upsilon}^{1/p^\infty}\rangle,K^\circ\langle \underline{\upsilon}^{1/p^\infty}\rangle)$ will be denoted  by  $\widehat{\B}^N$ and referred to as the \emph{perfectoid $N$-ball}. The space defined by the perfectoid affinoid $K$-algebra  $(K\langle \underline{\upsilon}^{\pm1/p^\infty}\rangle,K^\circ\langle \underline{\upsilon}^{\pm1/p^\infty}\rangle)$ coincides with the rational subspace $U(1\mid \upsilon_1\ldots\upsilon_N)$ of $\widehat{\B}^{N}$ and will be denoted by $\widehat{\T}^{N}$ and will be referred to as the \emph{perfectoid $N$-torus}.
\end{exm}

We now recall the definition of \'etale maps on the category of adic spaces, taken from \cite[Section 7]{scholze}.

\begin{dfn}
A map of   affinoid adic spaces $f\colon\Spa(S,S^+)\ra\Spa(R,R^+)$ is \emph{finite \'etale} if the associated map $R\ra S$ is a finite \'etale map of rings, and if $S^+$ is the integral closure of $R^+$ in $S$. A map of   adic spaces $f\colon X\ra Y$ is \emph{\'etale} if for any point $x\in X$ there exists an open neighborhood $U$ of $x$ and an  affinoid open subset $V$ of $Y$ containing $f(U)$ such that $f|_U\colon U\ra V$ factors as an open embedding $U\ra W$ and a finite \'etale map $W\ra V$ for some  affinoid adic space $W$.
\end{dfn}

The previous definitions, when restricted to the case of tft Tate varieties, coincide with the usual ones, as proved in \cite[Proposition 8.1.2]{fvdp}.

\begin{rmk}
Suppose we are given a diagram of affinoid $K$-algebras
$$\xymatrix{
(R,R^+)\ar[r]\ar[d]&(S,S^+)\\
(T,T^+)
}$$
In general, it is not possible to define a push-out in the category of affinoid $K$-algebras. Nonetheless, this can be performed under some hypothesis. For example, if the affinoid $K$-algebras are tft Tate algebras then the push-out exists and it is the tft Tate algebra associated to the completion $S\widehat{\otimes}_RT$ of $S\otimes_RT$ endowed with the norm of the tensor product (see \cite[Section 3.1.1]{BGR}). In case the affinoid $K$-algebras are perfectoid affinoid, then the push-out exists and is also perfectoid affinoid. It coincides with the completion of $(L,L^+)$ where $L$ is the ring $S{\otimes}_RT$  endowed with the norm of the tensor product and $L^+$ is the integral closure of $S^+\otimes_{R^+}T^+$ in $L$ (see \cite[Proposition 6.18]{scholze}). The same construction holds in case the map $(R,R^+)\ra(S,S^+)$ is finite \'etale and $(T,T^+)$ is a perfectoid affinoid (see \cite[Lemma 7.3]{scholze}). 
By Remark \ref{Spaadj2}, 
 the constructions above give rise to fiber products in the category of adic spaces.
\end{rmk}

\section{Semi-perfectoid spaces}\label{wrigsm}

We can now introduce  a convenient generalization of both smooth rigid varieties and smooth perfectoid spaces. We recall that our base field $K$ is a perfectoid field (see Assumption \ref{assu}).

\begin{prop}\label{fibprod}
Let $\underline{\upsilon}=\upsilon_1,\ldots,\upsilon_N$ and $\underline{\nu}=\nu_1,\ldots,\nu_M$ be two systems of coordinates. Let $(R_0,R_0^\circ)$ be a tft Tate algebra and let $f\colon\Spa (R_0,R_0^\circ)\ra\T^N\times\T^M=\Spa K\langle \underline{\upsilon}^{\pm1},\underline{\nu}^{\pm1}\rangle$ be a map which is a composition of finite \'etale maps and rational embeddings. Let also $\Spa(R_h,R_h^\circ)$ be the affinoid rigid variety $\Spa(R_0,R_0^\circ)\times_{\T^N}\T^N\langle \underline{\upsilon}^{1/p^h}\rangle$. The $\pi$-adic completion $(T,T^+)$ of $(\varinjlim_hR_h,\varinjlim_hR_h^\circ)$ represents the fiber product $\Spa(R_0,R_0^\circ)\times_{\T^N}\widehat{\T}^{N}$ and defines a bounded affinoid adic space. Moreover, $(T,T^+)$ is isomorphic to the completion of $(L,L^+)$ where $L$ is the ring  $R_0{\otimes}_{K\langle \underline{\upsilon}\rangle}K\langle \underline{\upsilon}^{1/p^\infty}\rangle$ endowed with the  norm of the tensor product and $L^+$ is the  integral closure  of $R_0^\circ$ in $L$.
\end{prop}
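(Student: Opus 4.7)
The plan is to first construct $(T,T^+)$ as a bounded affinoid $K$-algebra and verify its universal property, then establish the sheaf property so that the outcome lies in $\catV$, and finally identify it with the completion of $(L,L^+)$.

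Since $K\langle \underline{\upsilon}^{\pm 1/p^h}\rangle$ is free of rank $p^{hN}$ over $K\langle \underline{\upsilon}^{\pm 1}\rangle$, the algebraic tensor product $R_0 \otimes_{K\langle \underline{\upsilon}^{\pm 1}\rangle} K\langle \underline{\upsilon}^{\pm 1/p^h}\rangle$ is already complete and coincides with $R_h$. Applying Lemma \ref{liminbdd} to the tower $\{(R_h,R_h^\circ)\}$ yields that $(\varinjlim R_h,\varinjlim R_h^\circ)$, with the topology making $\varinjlim R_h^\circ$ a ring of definition, is a bounded affinoid $K$-algebra, and its $\pi$-adic completion $(T,T^+)$ is again bounded. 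For the universal property, I would use Proposition \ref{Spaadj} to identify morphisms $X \to \Spa(T,T^+)$ in $\catV_\psh$ with maps $(T,T^+) \to (\mcO(X),\mcO^+(X))$; unwinding the universal properties of the $\pi$-adic completion, of the filtered colimit, and of the fiber product of tft Tate algebras defining each $R_h$, such maps biject with pairs of compatible morphisms $X \to \Spa(R_0,R_0^\circ)$ and $X \to \widehat{\T}^N$ over $\T^N$, as required.

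The main obstacle is sheafiness, since a priori $(T,T^+)$ is neither tft nor perfectoid (the coordinates $\underline{\nu}$ have not been perfected), so neither Tate's nor Scholze's acyclicity theorem applies directly. My strategy is to pass to the perfectoid cover obtained by further base change along $K\langle \underline{\nu}^{\pm 1}\rangle \to K\langle \underline{\nu}^{\pm 1/p^\infty}\rangle$, producing a bounded affinoid $(T',T'^+)$ that is genuinely perfectoid: the decomposition of $f$ into finite \'etale maps and rational embeddings, combined with \cite[Lemma 7.3]{scholze} and the stability of perfectoid affinoids under rational localization, shows that $R_0 \widehat{\otimes}_{K\langle \underline{\upsilon}^{\pm 1},\underline{\nu}^{\pm 1}\rangle} K\langle \underline{\upsilon}^{\pm 1/p^\infty},\underline{\nu}^{\pm 1/p^\infty}\rangle$ is perfectoid. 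Scholze's acyclicity theorem then gives sheafiness of $(T',T'^+)$, and since the morphism $\Spa(T',T'^+) \to \Spa(T,T^+)$ is a limit of pro-finite covers (finite \'etale in characteristic zero, purely inseparable in characteristic $p$), faithfully flat descent in the former case and the universal-homeomorphism property in the latter transfer \v{C}ech exactness of rational coverings from $(T',T'^+)$ to $(T,T^+)$.

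Finally, since $R_0$ inverts $\underline{\upsilon}$, the natural map $L = R_0 \otimes_{K\langle \underline{\upsilon}\rangle} K\langle \underline{\upsilon}^{1/p^\infty}\rangle \to T$ factors through $\varinjlim R_h$, which is dense in $T$; hence the $\pi$-adic completion of $L$ (whose tensor-product norm is equivalent to the $\pi$-adic topology on a ring of definition) is $T$. On integral subrings, each $R_h^\circ$ is integrally closed in $R_h$ and integrality passes to filtered unions, so $\varinjlim R_h^\circ$ is integrally closed in $\varinjlim R_h = L$ and contains $R_0^\circ$; conversely, any element of $L$ integral over $R_0^\circ$ already lies in some $R_h^\circ$. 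Thus $L^+ = \varinjlim R_h^\circ$, whose $\pi$-adic completion is exactly $T^+$, completing the identification.
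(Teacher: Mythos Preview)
Your overall strategy coincides with the paper's: construct $(T,T^+)$ as the completed colimit, verify the universal property, and then reduce sheafiness to that of the genuinely perfectoid pair $(T',T'^+)$ obtained by also perfecting the $\underline{\nu}$-coordinates. The identification with the completion of $(L,L^+)$ and the integral-closure argument for $L^+=\varinjlim R_h^\circ$ are also handled the same way (the paper invokes \cite[Theorem 6.3.5/1]{BGR} for the latter).

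The one place where your write-up is too quick is the descent of \v{C}ech exactness from $(T',T'^+)$ to $(T,T^+)$. Appealing to ``faithfully flat descent'' in characteristic $0$ and ``the universal-homeomorphism property'' in characteristic $p$ is not directly justified here: the map $T\to T'$ is not finite (only pro-finite), so the relevant tensor products are \emph{completed} tensor products, and it is not automatic that $\mcO(U_i)\widehat{\otimes}_T T'=\mcO(U_i')$ or that exactness survives completion; in characteristic $p$, a homeomorphism on spectra says nothing about the structure presheaves. The paper replaces this vague descent by the concrete observation of Remark~\ref{bigosum}: as $K\langle\underline{\nu}\rangle$-topological modules one has $K\langle\underline{\nu}^{1/p^\infty}\rangle\cong\widehat{\bigoplus}K\langle\underline{\nu}\rangle$, hence $\mcO(W')\cong\widehat{\bigoplus}\mcO(W)$ and $\mcO(U_i')\cong\widehat{\bigoplus}\mcO(U_i)$ compatibly. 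From this explicit splitting one reads off $\mcO(W)=\mcO(W')\times_{\mcO(U_i')}\mcO(U_i)$, so the equalizer of the \v{C}ech diagram for $\{U_i\}$ is obtained by pullback from that for $\{U_i'\}$, which equals $\mcO(W')$ by Scholze's acyclicity. This is morally the same descent you have in mind, but the direct-sum decomposition is what makes it rigorous in both characteristics. The paper also uses the strict embedding $T^+\hookrightarrow T'^+$ to deduce boundedness of $T^+$, rather than arguing from the construction as you do.
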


\begin{proof}
Let $(T,T^+)$ be as in the last claim. We let $W'$ be the fiber product  of $\Spa(T,T^+)$ and $\widehat{\T}^{N}\times\widehat{\T}^{M}$ over ${\widehat{\T}^N\times\T^M}$. If $\car K=0$ by \cite[Lemma 4.5(i)]{scholze-ph} it exists and is affinoid perfectoid,   represented by a perfectoid pair $(T',T'^+)$. %
The same is true if $\car K=p$ as in this case it coincides with the completed perfection of $X_0$ (indeed, the pullback of an \'etale map over  Frobenius  is isomorphic to  Frobenius, see for example \cite[Lemma 0EBS]{stacks-project}). 

From the strict inclusion $(T,T^+)\ra(T',T'^{\circ})$ we deduce that $T^+$ is bounded since $T'^+$ is, being $T'$ perfectoid. By considering rational subspaces of $\Spa(T,T^+)$ we deduce that this space is stably uniform, hence sheafy (by \cite{buzz-ver}).

The proof of the alternative description of $(T,T^+)$ follows in the same way as  \cite[Lemma 4.5(i)]{scholze-ph}.
\end{proof}

\begin{rmk}
The statement of the previous proposition is an instance when the  second term $T^+$ of some affinoid $K$-algebra $(T,T^+)$ may\emph{ not }be equal to the ring $T^\circ$.
\end{rmk}

\begin{cor}
Let $X$ be a reduced rigid variety with an \'etale map \[f\colon X\ra\T^N\times\T^M=\Spa K\langle \underline{\upsilon}^{\pm1},\underline{\nu}^{\pm1}\rangle.\] Then the fiber product $X\times_{\T^N}\widehat{\T}^{N}$ exists in the category of adic spaces. 
\end{cor}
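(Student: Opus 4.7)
The approach is to reduce to Proposition \ref{fibprod} by a local-to-global argument. First, since $f$ is \'etale, for each point $x\in X$ I can choose an affinoid open $U\subset X$ containing $x$, an affinoid open $V\subset\T^N\times\T^M$ containing $f(U)$, and a factorization $U\hookrightarrow W\to V$ where the first arrow is an open embedding, the second is finite \'etale, and $W$ is affinoid. By covering $V$ by rational opens of $\T^N\times\T^M$, pulling these back to rational opens of $W$, and further covering the open subset $U\subset W$ by rational opens of $W$ contained in $U$ (using that $U$ is quasi-compact), I obtain a covering of $X$ by affinoid opens $\{U_i\}$ such that each composite $U_i\to \T^N\times\T^M$ is a rational embedding followed by a finite \'etale map followed by a rational embedding. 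Since $X$ is reduced, each $U_i$ is a reduced affinoid rigid variety, so Proposition \ref{fibprod} applies and provides bounded affinoid adic spaces $\widetilde{U_i}:=U_i\times_{\T^N}\widehat{\T}^{N}$.

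To finish, I would glue the $\widetilde{U_i}$ along their overlaps. For each pair $(i,j)$, I refine $U_i\cap U_j$ by affinoid opens $U_{ijk}$ that are simultaneously rational in both $U_i$ and $U_j$, and apply Proposition \ref{fibprod} to each $U_{ijk}$. The explicit description in the Proposition of the fiber product as the $\pi$-adic completion of $R_0\otimes_{K\langle\underline{\upsilon}\rangle}K\langle\underline{\upsilon}^{1/p^\infty}\rangle$ shows that each $\widetilde{U_{ijk}}$ is canonically identified with the corresponding rational open of both $\widetilde{U_i}$ and $\widetilde{U_j}$. These identifications satisfy the cocycle condition, yielding an object $\widetilde{X}\in\catV$ which is locally a bounded affinoid adic space, hence a bounded adic space. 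The universal property of the fiber product is verified locally against any test object in $\catV$ via Proposition \ref{Spaadj} and the universal property of each $\widetilde{U_i}$.

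The main technical point is the compatibility of the local constructions under passage to rational subdomains: one needs that the completion-of-tensor-product construction used in Proposition \ref{fibprod} commutes with rational localization of $R_0$. This follows from the flatness of rational localizations together with the description of $K\langle\underline{\upsilon}^{1/p^\infty}\rangle$ as the topological direct sum $\widehat{\bigoplus} K\langle\underline{\upsilon}\rangle$ recalled in Remark \ref{bigosum}, which reduces the compatibility statement to the analogous fact for the Banach completion of free modules already exploited in the proof of the Proposition.
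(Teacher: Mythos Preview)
Your proposal is correct and takes essentially the same approach as the paper: reduce to Proposition~\ref{fibprod} using that every \'etale map is locally on the source a composition of rational embeddings and finite \'etale maps, then glue. The paper's proof is the one-line version of yours, leaving the gluing implicit; your write-up simply makes explicit the cocycle check and the compatibility with rational localization that the paper takes for granted.
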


\begin{proof}
 This follows from Proposition \ref{fibprod} and the fact that every \'etale map is locally (on the source) a composition of rational embeddings and finite \'etale maps.
\end{proof}

\begin{dfn}\label{gc}
We denote by $\wRigSm^{\gc}/K$ the full subcategory of 
adic spaces whose objects are isomorphic to spaces $X=X_0\times_{\T^{N}}\widehat{\T}^{N}$ with respect to a map of affinoid rigid varieties $f\colon X_0\ra\T^N\times\T^M$ that is a composition of rational embeddings and finite \'etale maps. Such spaces will be called  \emph{smooth semi-perfectoid spaces with good coordinates}. 
Because of Proposition \ref{fibprod}, such fiber products $X=X_0\times_{\T^{N}}\widehat{\T}^{N}$ exist and are affinoid. Whenever $N=0$ these varieties are rigid analytic varieties and the full subcategory they form will be denoted by $\RigSm^{\gc}/K$ and referred to as\emph{ smooth affinoid rigid varieties with  {good coordinates}}. Whenever $M=0$ these varieties are perfectoid affinoid spaces and the full subcategory they form will be denoted by $\PerfSm^{\gc}/K$ and referred to as \emph{smooth affinoid perfectoids with  {good  coordinates}}. A perfectoid space $X$ in $\wRigSm^{\gc}/K$ will be sometimes denoted with $\widehat{X}$. 

When $X=X_0\times_{\T^{N}}\widehat{\T}^{N}$ is in $\wRigSm^{\gc}/K$ we denote by $X_h$ the fiber product $X_0\times_{\T^N}\T^N\langle \underline{\upsilon}^{1/p^h}\rangle$ and we will write $X=\varprojlim_h X_h$.  
We say that a presentation  $X=\varprojlim_hX_h$ of an object $X$ in $\wRigSm^{\gc}/K$ has \emph{ good reduction} if  the map $X_0\ra\T^n\times\T^m$ has an \'etale formal model $\mfX\ra\Spf(K^\circ\langle\underline{\upsilon}^{\pm1},\underline{\nu}^{\pm1}\rangle)$. We say that a presentation  $X=\varprojlim_hX_h$ of an object $X$ in $\wRigSm^{\gc}/K$ has \emph{potentially good reduction} if  there exists a finite separable field extension $L/K$ such that $X_L=\varprojlim_h(X_{h})_L$ has good reduction in $\wRigSm^{\gc}/L$. 
  We warn the reader that the association $X\mapsto X_0$ is not functorial and the varieties $X_h$ are not uniquely determined by $X$ in general. 

We denote by $\wRigSm/K$ the full subcategory of adic spaces  which are locally isomorphic to objects in $\wRigSm^{\gc}/K$ and its objects will be called \emph{smooth semi-perfectoid spaces}. We denote by $\RigSm/K$ the full subcategory of  adic spaces which are locally isomorphic to objects in $\RigSm^{\gc}/K$ and  by $\PerfSm/K$ the one of   adic spaces which are locally isomorphic to objects in $\PerfSm^{\gc}/K$. Its objects will be called \emph{smooth perfectoid spaces}. Whenever the context allows it, we omit $K$ from the notation.
\end{dfn}

\begin{rmk}
Any smooth rigid  variety (see for example  \cite[Definition 1.1.41]{ayoub-rig}) has locally good coordinates over $\T^N$ by \cite[Corollary 1.1.51]{ayoub-rig}. Hence  $\RigSm$ coincides with  the category of smooth rigid  varieties.
\end{rmk}

 We remark that the presentations of good reduction defined above are a special case of the objects  considered in \cite{andreatta-gen}.

The notation $X=\varprojlim_hX_h$ is justified by the following corollary, which is inspired by \cite[Proposition 2.4.5]{sw}.

\begin{cor}
 Let $Y=\Spa(S,S^+)$ be an affinoid such that $S^+$ is a ring of definition and let $X=\varprojlim_hX_h$ be in $\wRigSm^{\gc}$.  Then $\Hom(Y,X)\cong\varprojlim_h\Hom(Y,X_h)$.
\end{cor}

\begin{proof}Suppose that $X_h=\Spa(R_h,R_h^\circ)$ and $X=\Spa(R,R^+)$. By  Proposition \ref{fibprod} $R^+$ is a ring of definition and is the completion of $\varinjlim R_h^+$. 
   A map from $(R,R^+)$ to $(S,S^+)$ is uniquely determined by a $K^\circ$-linear map from $\varinjlim R_h^\circ$ to $S^+$. Similarly, a map from $(R_h,R_h^\circ)$ to $(S,S^+)$ is uniquely determined by a $K^\circ$-linear map from $R_h^\circ$ to $S^+$. From the isomorphism $\Hom_{K^+}(\varinjlim R_h^\circ,S^+)\cong\varprojlim_h\Hom_{K^+}(R_h^\circ,S^+)$ we then deduce the claim. 
\end{proof}

Let $\{X_h,f_h\}_{h\in I}$ be a cofiltered diagram of rigid varieties and let $\{X\ra X_h\}_{h\in I}$ be a collection of compatible maps of adic spaces. We recall that, according to \cite[Remark 2.4.5]{huber},  one writes $X\sim\varprojlim_hX_h$ when the following two conditions are satisfied:
\begin{enumerate}
 \item The induced map on topological spaces $|X|\ra\varprojlim_h|X_h|$ is a homeomorphism.
  \item For any $x\in X$ with images $x_h\in X_h$ the map of residue fields $\varinjlim_hk(x_h )\ra k(x)$ has dense image.
\end{enumerate}
The apparent clash of notations is solved by the following fact.
\begin{prop}
 Let $X=\varprojlim_hX_h$ be in $\wRigSm^{\gc}$. Then $X\sim\varprojlim_hX_h$.
\end{prop}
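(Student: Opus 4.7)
The plan is to verify Huber's two conditions directly. Write $X = \varprojlim_h X_h$ with $X = \Spa(T, T^+)$ and $X_h = \Spa(R_h, R_h^\circ)$; by Proposition \ref{fibprod}, $(T, T^+)$ is the $\pi$-adic completion of $(\varinjlim_h R_h, \varinjlim_h R_h^\circ)$, so the image of $\varinjlim_h R_h$ is dense in $T$. The set-theoretic bijection $|X| \to \varprojlim_h |X_h|$ is almost immediate from the corollary just proven: a point of $X$ is a morphism from $\Spa(L, L^+)$ for some valuation field $(L, L^+)$ over $K$, and by that corollary such morphisms correspond bijectively to compatible systems of maps to the $X_h$, i.e., to compatible systems of points $(x_h)$ with $x_h \in X_h$.

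Condition (2) then follows quickly from the density of $\varinjlim_h R_h$ in $T$. Indeed, for $x \in X$ with images $x_h \in X_h$, the residue field $k(x)$ is the completion of the fraction field of $T/\mathrm{supp}(x)$; the image of $\varinjlim_h R_h$ in $T/\mathrm{supp}(x)$ is dense, and passing to fraction fields one obtains that $\varinjlim_h k(x_h) \to k(x)$ has dense image.

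For the topological part of condition (1), continuity of the map $|X| \to \varprojlim_h |X_h|$ is clear from functoriality of $\Spa$. To see it is a homeomorphism it suffices to show that every rational subset $U(f_1, \ldots, f_n \mid g) \subset X$ is the preimage of a rational subset of some $X_h$. After multiplying by a suitable power of $\pi$ one may assume $f_1, \ldots, f_n, g \in T^+$; by density of $\varinjlim_h R_h^\circ$ in $T^+$ we can choose, for $h$ sufficiently large, elements $f_i', g' \in R_h^\circ$ approximating $f_i$ and $g$ modulo a high enough power of $\pi$. A standard continuity argument for rational subsets (as in \cite[Lemma 1.5]{huber2}) shows that sufficiently close approximations define the same rational subset in $X$, namely $U(f_1, \ldots, f_n \mid g) = U(f_1', \ldots, f_n' \mid g')$, which is manifestly the preimage of the rational subset $U(f_1', \ldots, f_n' \mid g')$ of $X_h$.

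The main subtlety is the approximation step: one needs to choose $h$ large enough so that all $f_i$ and $g$ are close to elements of $R_h^\circ$ modulo a power of $\pi$ that dominates the generating data, and so that the approximations still generate the unit ideal. Both points are standard consequences of the boundedness of valuations on rational opens and the openness of the conditions $|f_i(x)| \leq |g(x)| \neq 0$ established in Lemma \ref{lrs}.
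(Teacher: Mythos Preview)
Your proof is correct and takes a genuinely different route from the paper. The paper's argument is a one-liner: it observes that $\widehat{\T}^N \sim \varprojlim_h \T^N\langle \underline{\upsilon}^{\pm1/p^h}\rangle$ and then invokes \cite[Proposition 7.16]{scholze}, a general result stating that the relation $\sim$ is preserved under suitable base change. Since $X = X_0 \times_{\T^N} \widehat{\T}^N$ and $X_h = X_0 \times_{\T^N} \T^N\langle\underline{\upsilon}^{1/p^h}\rangle$, the result follows formally from the base case of the perfectoid torus.

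You instead verify Huber's two defining conditions directly, using only the structural input from Proposition \ref{fibprod} (that $(T,T^+)$ is the $\pi$-adic completion of the colimit $(\varinjlim_h R_h,\varinjlim_h R_h^\circ)$) together with elementary approximation. This is more self-contained and avoids importing Scholze's machinery, at the cost of being longer. Your argument for the homeomorphism --- descending a rational subset of $X$ by approximating its defining data in some $R_h$ --- is precisely what the paper later isolates as Proposition \ref{liftmap}(2); note that the correct reference for the perturbation step is \cite[Lemma 3.10]{huber1} rather than \cite[Lemma 1.5]{huber2} (the latter concerns the universal property of rational localizations, not their stability under small perturbations). Your phrasing of the set-theoretic bijection via the preceding corollary is slightly informal --- a point is an equivalence class of maps from valuation fields rather than a single such map --- but the underlying content (restrict a continuous valuation on $T$ to each $R_h$; conversely, extend a compatible system from $\varinjlim_h R_h$ to its completion by continuity) is sound.
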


\begin{proof}
 This follows from $\widehat{\T}^{N}\sim\varprojlim_h\Spa K\langle \underline{\upsilon}^{\pm1/p^h}\rangle$ and from \cite[Proposition 7.16]{scholze}.
\end{proof}

\'Etale maps define a topology on $\wRigSm$ in the following way.

\begin{dfn}
A collection of \'etale maps of 
adic spaces $\{U_i\ra X\}_{i\in I}$ is an \emph{\'etale cover} if the induced map $\bigsqcup_{i\in I} U_i\ra X$ is surjective. These covers define a Grothendieck topology on $\wRigSm$ called the \emph{\'etale topology}.
 \end{dfn}

We pin down the following  facts on the topology of the objects $X=\varprojlim_h X_h$.%

\begin{prop}\label{liftmap}
Let $X=\varprojlim_h X_h$ be an object of $\wRigSm^{\gc}$. 
\begin{enumerate}
\item Any finite \'etale map $U\ra X$ is isomorphic to $U_{\bar{h}}\times_{X_{\bar{h}}}X$ for some integer ${\bar{h}}$ and some finite \'etale map $U_{\bar{h}}\ra X_{\bar{h}}$.
\item Any rational subspace $U\subset X$ is isomorphic to $U_{\bar{h}}\times_{X_{\bar{h}}}X$ for some integer $H$ and some rational subspace $U_{\bar{h}}\subset X_{\bar{h}}$.
\end{enumerate}
\end{prop}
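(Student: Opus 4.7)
The plan is to exploit the fact, just established, that $X\sim\varprojlim_h X_h$; in particular, when $X$ is affinoid the global sections $T=\mcO_X(X)$ form the $\pi$-adic completion of $\varinjlim_h R_h$, so the image of this colimit is dense in $T$. In both claims, I will approximate the data defining $U$ up to a large power of $\pi$ by elements coming from a single stage $R_{\bar h}$, and then use stability of the relevant structures to conclude.

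For part~(2) I would first write $U=U(f_1,\ldots,f_n\mid g)$ with $f_i,g\in T$ and $(f_1,\ldots,f_n)$ the unit ideal. By density, for every prescribed $N$ I can choose $\bar h$ and elements $\tilde f_i,\tilde g\in R_{\bar h}$ with $f_i-\tilde f_i,\ g-\tilde g\in\pi^NT^+$. For $N$ sufficiently large the $\tilde f_i$ still generate the unit ideal in $R_{\bar h}$, and at every point $x$ both the non-vanishing of $g$ and the inequalities $|f_i(x)|\leq|g(x)|$ are unaffected by replacing $f_i,g$ with $\tilde f_i,\tilde g$. Thus $U_{\bar h}\colonequals U(\tilde f_1,\ldots,\tilde f_n\mid\tilde g)\subset X_{\bar h}$ is a rational subdomain whose pullback to $X$ equals $U$.

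For part~(1), finite \'etale maps of affinoid adic spaces correspond to finite \'etale algebras, so the question reduces to descending a finite \'etale $T$-algebra $S$ to some $R_{\bar h}$. I would present $S$ as $T\langle y_1,\ldots,y_m\rangle/(p_1,\ldots,p_s)$, approximate the $p_j$ by polynomials $\tilde p_j$ with coefficients in $R_{\bar h}$ to high $\pi$-adic order, and invoke a Hensel/Elkik lifting: since $R_{\bar h}^+$ is $\pi$-adically complete, hence henselian along $(\pi)$, a sufficiently fine approximation can be upgraded to a genuinely finite \'etale $R_{\bar h}$-algebra $S_{\bar h}$ whose associated cover $U_{\bar h}\ra X_{\bar h}$ satisfies $U_{\bar h}\times_{X_{\bar h}}X\cong U$. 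Equivalently, I could quote the general statement, for an adic space $X\sim\varprojlim_h X_h$, that the category of finite \'etale covers of $X$ is the $2$-colimit of the analogous categories over the $X_h$, which is the adic-space counterpart of the classical result on limits of schemes (cf.\ \cite[Proposition 2.4.4]{huber} and the proof of \cite[Theorem 7.17]{scholze}).

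The substantive obstacle is~(1): mere approximation of a polynomial presentation need not preserve \'etaleness, and one genuinely needs a Hensel/Elkik step to propagate the \'etale structure back to a finite stage. The key input is the identification $T^+/\pi^N\cong\varinjlim_h R_h^+/\pi^N$ for every $N$, which follows from the construction of $T^+$ as the $\pi$-adic completion of $\varinjlim_h R_h^+$; this makes the $\pi$-adic reduction of $S$ descend to some $R_{\bar h}^+/\pi^N$, after which the unique lifting of finite \'etale algebras along nilpotent thickenings provides the desired descent up to enlarging $\bar h$. Part~(2), by contrast, is routine once the denseness of $\varinjlim_h R_h$ in $T$ is combined with the openness of the conditions defining a rational subdomain (Lemma~\ref{lrs}).
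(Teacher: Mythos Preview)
Your proposal is correct and matches the paper's approach: the paper simply cites \cite[Lemma 7.5]{scholze} for part~(1) and \cite[Lemma 3.10]{huber1} together with the density of $\varinjlim_h\mcO(X_h)$ in $\mcO(X)$ for part~(2), which are precisely the stability-under-perturbation and Hensel-lifting inputs you sketch. Your treatment is more explicit about the mechanism behind these citations, but the substance is identical.
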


\begin{proof}
The first statement follows from \cite[Lemma 7.5]{scholze}. 
For the second, we remark that according to \cite[Lemma 3.10]{huber1} any rational subspace $U=(f_1,\ldots,f_n|g)$ of $X$ can be defined by means of elements $f_i,g$ lying in $\varinjlim_h\mcO(X_h)$ since it is dense in $\mcO(X)$, hence the claim.
\end{proof}

The proposition above can also be used to prove that a finite \'etale extension of an object $\Spa(T,T^+)$ in $\wRigSm^{\gc}/K$ also lies  in $\wRigSm^{\gc}/K$ and hence arbitrary fiber products of \'etale maps in $\wRigSm/K$ exist, and are \'etale.

\begin{cor}\label{liftcov}
 Let $X=\varprojlim_h X_h$ be an object of $\wRigSm^{\gc}$ and let  $\mcU\colonequals\{f_i\colon U_i\ra X\}$ be an \'etale covering of adic spaces. There exists an integer ${\bar{h}}$ and a finite affinoid refinement $\{V_j\ra X\}$ of $\mcU$ which is obtained by pullback of an \'etale covering $\{V_{{\bar{h}}j}\ra X_{{\bar{h}}}\}$ of $X_{\bar{h}}$ and such that $V=\varprojlim_h V_{hj}$ lies in $\wRigSm^{\gc}$ by letting $V_{hj}$ be $V_{{\bar{h}}j}\times_{X_{\bar{h}}}X_h$ for all $h\geq {\bar{h}}$. 
\end{cor}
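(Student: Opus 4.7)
The plan is to reduce, via quasi-compactness and the local structure of \'etale maps, to a finite refinement in which each map factors as a rational embedding followed by a finite \'etale map into a rational open subset of $X$, then to descend each such building block via Proposition~\ref{liftmap} at a common sufficiently large index $\bar{h}$, and finally to verify that the descended family covers $X_{\bar{h}}$ and that each $V_{\bar{h},j}$ inherits good coordinates from $X_{\bar{h}}$.

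First I would unwind the definition of \'etale map: for each $i$ and each $x\in U_i$ there exist an open neighborhood $V_{i,x}\subseteq U_i$ of $x$ and an affinoid open $V'_{i,x}\subseteq X$ containing its image, such that $f_i|_{V_{i,x}}$ factors as an open immersion $V_{i,x}\hookrightarrow W_{i,x}$ into an affinoid followed by a finite \'etale map $W_{i,x}\to V'_{i,x}$. Since rational subdomains form a basis of the topology of an affinoid, I would shrink further so that $V'_{i,x}$ is a rational open of $X$ and $V_{i,x}$ is a rational subdomain of $W_{i,x}$. Because $|X|$ is quasi-compact ($X$ being affinoid), the resulting open cover of $X$ admits a finite subfamily, yielding an affinoid refinement $\{V_j\hookrightarrow W_j\to V'_j\subseteq X\}_{j=1}^n$ of $\mcU$. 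I would then apply Proposition~\ref{liftmap} three times per index $j$ and take the maximum: some $\bar{h}$ suffices to descend simultaneously all rational opens $V'_j\subseteq X$ to rational opens $V'_{\bar{h},j}\subseteq X_{\bar{h}}$, all finite \'etale maps $W_j\to V'_j$ to finite \'etale maps $W_{\bar{h},j}\to V'_{\bar{h},j}$, and all rational subdomains $V_j\subset W_j$ to $V_{\bar{h},j}\subset W_{\bar{h},j}$, in such a way that $V_j\cong V_{\bar{h},j}\times_{X_{\bar{h}}}X$.

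It remains to check the covering property and the tower structure. For surjectivity, the open image $W_{\bar{h}}\subseteq X_{\bar{h}}$ of $\bigsqcup_j V_{\bar{h},j}\to X_{\bar{h}}$ pulls back along $X\to X_{\bar{h}}$ to an open subset containing the image of $\bigsqcup_j V_j\to X$, which is all of $X$; since the transition maps $X_{h+1}\to X_h$ arise by pullback from the faithfully flat extensions $\T^N\langle\underline{\upsilon}^{1/p^{h+1}}\rangle\to\T^N\langle\underline{\upsilon}^{1/p^h}\rangle$, the canonical map $|X|\to|X_{\bar{h}}|$ induced by $X\sim\varprojlim_hX_h$ is surjective, and hence $W_{\bar{h}}=X_{\bar{h}}$. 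For the good-coordinate property, the composite $V_{\bar{h},j}\to X_{\bar{h}}\to\T^N\langle\underline{\upsilon}^{1/p^{\bar{h}}}\rangle\times\T^M$ is a composition of rational embeddings and finite \'etale maps; after the rescaling $\upsilon_i\mapsto\upsilon_i^{p^{\bar{h}}}$ identifying $\T^N\langle\underline{\upsilon}^{1/p^{\bar{h}}}\rangle$ with $\T^N$, this exhibits $V_{\bar{h},j}$ as an admissible source for an object of $\wRigSm^{\gc}$, with tower $V_{h,j}\colonequals V_{\bar{h},j}\times_{X_{\bar{h}}}X_h$ for $h\geq\bar{h}$, and $V_j=\varprojlim_h V_{h,j}$. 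The main obstacle will be synchronizing the three descent steps uniformly in $j$ at a single level $\bar{h}$; once this bookkeeping is in place, the topological check of surjectivity and the inheritance of good coordinates are essentially formal.
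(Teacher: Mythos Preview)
Your proof is correct and follows essentially the same approach as the paper, which gives only a one-line sketch: ``Any \'etale map of adic spaces is locally a composition of rational embeddings and finite \'etale maps and they descend because of Proposition~\ref{liftmap}.'' You have simply unpacked this sketch in full detail, including the quasi-compactness argument for finiteness, the iterated application of Proposition~\ref{liftmap} at a common level $\bar{h}$, the surjectivity check via $|X|\twoheadrightarrow|X_{\bar{h}}|$, and the verification that good coordinates are inherited.
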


\begin{proof}
 Any \'etale map of adic spaces is locally a composition of rational embeddings and finite \'etale maps and they descend because of Proposition \ref{liftmap}. We therefore obtain an affinoid refinement $\{V_j\ra X\}$ of $\mcU$ such that each $V_i$ descends to some $X_h$. Since $X$ is quasi-compact, we can also refine this covering by a finite one, and  choose   a common index $\bar{h}$ where each $V_j$ descends to.
\end{proof}

\begin{cor} \label{tilt}
 A perfectoid space $X$ lies in $\PerfSm$ if and only if it is locally \'etale over $\widehat{\T}^{N}$.
\end{cor}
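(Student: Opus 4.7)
The plan is to prove the two directions separately. The forward direction is essentially a formality: if $X \in \PerfSm$, then locally $X \cong X_0 \times_{\T^N} \widehat{\T}^N$ with $X_0 \to \T^N$ a composition of rational embeddings and finite \'etale maps, hence itself \'etale. Stability of \'etaleness under base change then implies that the projection $X \to \widehat{\T}^N$ is \'etale.

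For the converse, I would argue locally: assume $f\colon U \to \widehat{\T}^N$ is \'etale with $U$ an affinoid perfectoid open of $X$. Unfolding the definition of an \'etale map between adic spaces and shrinking $U$ if necessary, I may assume a factorization $U \hookrightarrow W \to V \subset \widehat{\T}^N$, where $U \to W$ is an open embedding into an affinoid adic space, $W \to V$ is finite \'etale, and $V$ is a rational subdomain of $\widehat{\T}^N$ (any affinoid open is a union of rational subdomains). Since rational subdomains form a basis of the topology of $W$, I can further cover $U$ by affinoid opens $U'' \subset W$ rational in $W$.

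The heart of the argument will be to descend the resulting chain of morphisms through the tower $\widehat{\T}^N = \varprojlim_h \T^N\langle\underline{\upsilon}^{1/p^h}\rangle$, which lies in $\PerfSm^{\gc}$. I apply Proposition \ref{liftmap} three times in sequence: first the rational subdomain $V \subset \widehat{\T}^N$ descends to some $V_{h_1} \subset \T^N\langle\underline{\upsilon}^{1/p^{h_1}}\rangle$; after enlarging the level, the finite \'etale cover $W \to V$ descends to $W_{h_2} \to V_{h_2}$; finally the rational subdomain $U'' \subset W$ descends to $U''_{h_3} \subset W_{h_3}$ for some $h_3 \geq h_2$. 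Reindexing the tower, I identify $\T^N\langle\underline{\upsilon}^{1/p^{h_3}}\rangle$ with $\T^N$ via $\upsilon_i \mapsto \upsilon_i^{p^{h_3}}$, which is precisely the transition map in the tower defining $\widehat{\T}^N$. Under this identification, the composite $U''_{h_3} \to \T^N$ is a composition of rational embeddings and a finite \'etale map, and $U'' \cong U''_{h_3} \times_{\T^N} \widehat{\T}^N$. This exhibits $U'' \in \PerfSm^{\gc}$, hence $X$ is locally in $\PerfSm^{\gc}$, i.e., $X \in \PerfSm$.

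The only mild subtlety I expect is the careful bookkeeping of tower levels and the coordinate relabeling that identifies $\T^N\langle\underline{\upsilon}^{1/p^{h_3}}\rangle$ with $\T^N$; no new machinery beyond Proposition \ref{liftmap} and the definition of $\PerfSm^{\gc}$ is needed.
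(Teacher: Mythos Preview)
Your proposal is correct and follows essentially the same approach as the paper: unfold the definition of \'etale to get a local factorization through an open embedding and a finite \'etale map over a rational subdomain of $\widehat{\T}^N$, then descend each piece through the tower using Proposition~\ref{liftmap}. The paper's proof is simply a terser rendition of the same argument, omitting the forward direction and the explicit bookkeeping of levels that you spell out.
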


\begin{proof}
Let $X$ be locally \'etale over $\widehat{\T}^{N}$. Then it is locally open in a finite \'etale space over a rational subspace of $\widehat{\T}^{N}=\varprojlim_h\T^N\langle \underline{\upsilon}^{\pm1/p^h}\rangle$. By Proposition \ref{liftmap}, we conclude it is locally of the form $X_0\times_{\T^N}\widehat{\T}^{N}$ for some \'etale map $X_0\ra\T^N=\Spa(K\langle \underline{\upsilon}^{\pm1}\rangle,K^\circ\langle \underline{\upsilon}^{\pm1}\rangle)$ which is the composition of rational embeddings and finite \'etale maps.
\end{proof}

\begin{rmk}\label{fincpt}
 If $X$ is a smooth affinoid perfectoid space, then it has a finite number of connected components. Indeed, it is quasi-compact and locally isomorphic to a rational subspace of a perfectoid space which is finite \'etale over a rational subspace of $\widehat{\T}^N$.
\end{rmk}

For later use, we record the following simple example 
 of a space $X=\varprojlim_hX_h$ for which the varieties $X_h$ are easy to understand.

\begin{prop}\label{B1perf}
 Consider the smooth affinoid variety with good coordinates $$X_0= U\left({\upsilon-1}\mid{\pi}\right)\hookrightarrow\T^1=\Spa(K\langle\upsilon^{\pm1}\rangle).$$  One has  $X_h\cong\B^1$ for all $h$ and $\widehat{X}=\varprojlim_hX_h\cong\widehat{\B}^1$. 
\end{prop}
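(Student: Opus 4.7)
\textit{Plan.} I will verify the two isomorphisms in sequence: $X_h\cong\B^1$ by a uniform change of coordinate governed by the standing assumption $|p|\le|\pi|$, and $\widehat X\cong\widehat\B^1$ either directly in equal characteristic or through Scholze's tilting equivalence in mixed characteristic. The base case $X_0\cong\B^1$ is immediate from the substitution $w=(\upsilon-1)/\pi$: since $|\pi w|<1$, the element $\upsilon=1+\pi w$ is a unit in $K\langle w\rangle$, so the affinoid algebra $K\langle\upsilon^{\pm1}\rangle\langle(\upsilon-1)/\pi\rangle$ defining $X_0$ collapses to $K\langle w\rangle$.

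For $X_h\cong\B^1$ with $h\ge1$, I view $X_h$ inside $\T^1_\tau$ via $\upsilon=\tau^{p^h}$ and claim that as a rational subdomain it coincides with the disk $\{|\tau-1|\le|\pi^{1/p^h}|\}$; the $X_0$ argument applied with $\pi^{1/p^h}$ in place of $\pi$ and $\tau$ in place of $\upsilon$ then gives the isomorphism via $s=(\tau-1)/\pi^{1/p^h}$. The inclusion $\{|\tau-1|\le|\pi^{1/p^h}|\}\subseteq X_h$ is a direct binomial check. The reverse inclusion uses the expansion $\tau^{p^h}-1=\sum_{k=1}^{p^h}\binom{p^h}{k}(\tau-1)^k$ combined with Legendre's formula $v_p\binom{p^h}{k}=h-v_p(k)$ and $|p|\le|\pi|$ to show $|\binom{p^h}{k}|<|\pi|^{(p^h-k)/p^h}$ for every $1\le k<p^h$; an ultrametric estimate applied to the defining relation $\sum_{k=1}^{p^h}\binom{p^h}{k}u^k=\pi w$ in $R_h$ (with $u=\tau-1$ and $|w|\le1$) then forces $|\tau-1|\le|\pi|^{1/p^h}$.

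For $\widehat X\cong\widehat\B^1$, in equal characteristic $p$ the Frobenius identity $(1+\pi^{1/p^{h+1}}s_{h+1})^p=1+\pi^{1/p^h}s_{h+1}^p$ makes the transition maps $X_{h+1}\to X_h$ become literally $s_h=s_{h+1}^p$ under the identifications of the previous step, so Proposition~\ref{fibprod} identifies $\widehat X$ with the standard presentation of $\widehat\B^1$. In mixed characteristic the transitions acquire Frobenius corrections, so instead I will invoke Scholze's tilting equivalence: the fiber-product construction $\widehat X=X_0\times_{\T^1}\widehat\T^1$ commutes with tilting (since after pullback $X_0$ becomes a rational subdomain of the perfectoid $\widehat\T^1$), hence $\widehat X^\flat$ is the analogous construction over $K^\flat$, which equals $\widehat\B^1_{K^\flat}$ by the equal-characteristic case; combined with the fact recorded in Section~\ref{pre} that $\widehat\B^1_K$ tilts to $\widehat\B^1_{K^\flat}$, this yields $\widehat X\cong\widehat\B^1_K$.

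The main obstacle is the binomial estimate in the second step: precisely the hypothesis $|p|\le|\pi|$ keeps every intermediate coefficient subdominant in the Newton polygon, without which the disk $\{|\tau-1|\le|\pi^{1/p^h}|\}$ would be a proper subset of $X_h$ and the change of variable would fail. Handling the mixed-characteristic perfectoid limit without an explicit computation of the corrected transition maps is what makes the detour through tilting worthwhile.
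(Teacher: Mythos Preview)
Your proof is correct and follows essentially the same approach as the paper: the same change of variable $(\tau-1)/\pi^{1/p^h}$ driven by the same binomial estimates under $|p|\le|\pi|$, and the same reduction of the perfectoid claim to equal characteristic via tilting. The paper is slightly more explicit at the tilting step, verifying that $(\upsilon-1)^\sharp\equiv\upsilon-1\pmod\pi$ in $K^\circ\langle\upsilon^{1/p^\infty}\rangle$ so that the rational subdomain $U(\upsilon-1\mid\pi)\subset\widehat\T^1$ can be rewritten as $U((\upsilon-1)^\sharp\mid\pi^{\flat\sharp})$ and hence tilts to $U(\upsilon-1\mid\pi^\flat)\subset\widehat\T^{\flat1}$; this is precisely the content your phrase ``commutes with tilting'' is packaging, and is the only point where your sketch would benefit from one more line.
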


\begin{proof}
 By direct computation, the variety $X_h$ is isomorphic to $\Spa(K\langle\upsilon,\omega\rangle/(\omega^{p^h}-(\pi\upsilon+1)))$. Since $|p|\leq|\pi|$ we deduce that $|\binom{p^h}{i}|\leq|\pi|$ for all $0<i<p^h$. In particular, in the ring  $K\langle\upsilon,\omega\rangle/(\omega^{p^h}-(\pi\upsilon+1)) $  one has \[|(\omega-1)^{p^h}|=\left|\pi\upsilon+\sum_{i=1}^{p^h-1}\binom{p^h}{i}\omega^i\right|=|\pi|.\]
 Analogously, in the ring $K\langle\chi\rangle$  one has 
 \[
 |(\chi+\pi^{-1/p^h})^{p^h}-\pi^{-1}|=\left|\chi^{p^h}+\sum_{i=1}^{p^h-1}\binom{p^h}{i}\chi^{p^h-1}\pi^{-i/p^h}\right|=1.
 \]
The following maps are therefore well defined and clearly mutually inverse:
\[
 \begin{aligned}
X_h=\Spa(K\langle\upsilon,\omega\rangle/(\omega^{p^h}-(\pi\upsilon+1)) &\leftrightarrows \Spa(K\langle\chi\rangle)=\B^1\\
(\upsilon,\omega)&\mapsto((\chi+\pi^{-1/p^h})^{p^h}-\pi^{-1},\pi^{1/p^h}\chi+1)\\
\pi^{-1/p^h}(\omega-1)&\mapsfrom \chi.\\
 \end{aligned}
\]

Consider the multiplicative map $\sharp\colon K^\flat\langle\upsilon^{1/p^\infty}\rangle=(K\langle\upsilon^{1/p^\infty}\rangle)^\flat\ra K\langle\upsilon^{1/p^\infty}\rangle$ defined in \cite[Proposition 5.17]{scholze}. By  our assumptions on $\pi$ the element $(\upsilon-1)^\sharp-(\upsilon-1)$ is divisible by $\pi$ in $K^\circ\langle \upsilon^{1/p^\infty}\rangle$ and therefore the rational subspace $\widehat{X}\cong U\left({\upsilon-1}\mid{\pi}\right)$ of $\widehat{\T}^1$ coincides with $U\left({(\upsilon-1)^\sharp}\mid{\pi^{\flat\sharp}}\right)$. From \cite[Theorem 6.3]{scholze} we conclude $\widehat{X}^\flat\cong U\left({\upsilon-1}\mid{\pi^\flat}\right)\hookrightarrow\widehat{\T}^{\flat1}$ which is isomorphic to $\widehat{\B}^{\flat1}$ hence the claim.
\end{proof}

From the previous proposition we conclude in particular that the perfectoid space $\widehat{\B}^1$ lies in $\PerfSm^{\gc}$.

\section{Categories of adic motives}\label{motives}

From now on, we fix a commutative ring $\Lambda$ and work  with $\Lambda$-enriched categories. In particular, the term ``presheaf'' should be understood as ``presheaf of $\Lambda$-modules'' and similarly for the tem ``sheaf''. The presheaf $\Lambda(X)$ represented by an object $X$ of a category $\cat$ sends an object $Y$ of $\cat$ to the free $\Lambda$-module $\Lambda\Hom(Y,X)$. 

\begin{assu}
Unless otherwise stated, we assume from now on that $\Lambda$ is a $\Q$-algebra and we omit it from the notations.
\end{assu}

We make extensive use of the theory of model categories and localization, following the approach of Ayoub in  \cite{ayoub-rig} and \cite{ayoub-th2}. Fix a  site $(\cat,\tau)$. In our situation,  this will be the \'etale site of $\RigSm$ or $\wRigSm$. The category of complexes of presheaves $\Ch(\Psh(\cat))$ can be endowed with the \emph{projective model structure} for which weak equivalences are quasi-isomorphisms and fibrations are maps $\mcF\ra\mcF'$ such that $\mcF(X)\ra\mcF'(X)$ is a surjection for all $X$ in $\cat$ (cfr \cite[Section 2.3]{hovey} and \cite[Proposition 4.4.16]{ayoub-th2}). 

Also the category of complexes of sheaves $\Ch(\Sh_\tau(\cat))$ can be endowed with the \emph{projective model structure} defined in  \cite[Definition 4.4.40]{ayoub-th2}. In this structure, weak equivalences are quasi-isomorphisms of complexes of sheaves.

\begin{rmk}\label{projcof}
Let $\cat$ be a category. As shown in \cite{fausk} any projectively cofibrant complex $\mcF$ in $\Ch\Psh(\cat)$ is a retract of a complex that is the filtered colimit of bounded above complexes, each constituted by presheaves that are direct sums of representable ones.
\end{rmk}

Just like in \cite{jardine-s},  \cite{mvw}, \cite{mv-99} or \cite{riou}, we  consider the  left Bousfield localization of of the model category $\Ch(\Psh(\cat))$ with respect to the topology we select, and  a chosen ``contractible object''. We recall that left Bousfield localizations with respect to a class of maps $S$ (see \cite[Chapter 3]{hirschhorn}) is the universal model categories in which the maps in $S$ become weak equivalences. The existence of such structures is granted only under some technical hypotheses, as shown in \cite[Theorem 4.1.1]{hirschhorn} and \cite[Theorem 4.2.71]{ayoub-th2}.

\begin{prop}\label{locsets}
Let $(\cat,\tau)$ be a site with finite direct products  and let $\cat'$ be a full subcategory of $\cat$  such that every object of $\cat$ has a covering by objects of $\cat'$. Let also $I$ be an object of $\cat'$.
\begin{enumerate}
\item The projective model category $\Ch\Psh (\cat)$  admits a  left Bousfield localization $\Ch_{I}\Psh (\cat)$  with respect to the set $S_{I}$ of all maps $\Lambda(I\times X)[i]\ra\Lambda( X)[i]$ as $X$ varies in $\cat$  and $i$ varies in $\Z$. 
\item The projective model categories $\Ch\Psh (\cat)$ and $\Ch\Psh(\cat')$  admit  left Bousfield localizations  $\Ch_{\tau}\Psh (\cat)$ and $\Ch_{\tau}\Psh(\cat')$ with respect to the class $S_{\tau}$   of   maps $\mcF\ra\mcF'$ inducing isomorphisms on the $\et$-sheaves associated to $H_i(\mcF)$ and $H_i(\mcF')$ for all $i\in\Z$. Moreover, the two localized model categories are Quillen equivalent and the sheafification functor induces a Quillen equivalence to the projective model category  $\Ch\Sh_{\tau}(\cat)$.
\item The  model categories $\Ch_{\tau}\Psh (\cat)$ and $\Ch_{\tau}\Psh(\cat')$  admit  left Bousfield localizations $\Ch_{\tau,I}\Psh (\cat)$ and $\Ch_{\tau,I}\Psh(\cat')$ with respect to the set $S_{I}$ defined above.  Moreover, the two localized model categories are Quillen equivalent.
\end{enumerate}
\end{prop}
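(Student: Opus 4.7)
The plan is to apply the standard machinery of left Bousfield localizations for combinatorial left proper model categories, as developed in \cite[Chapter 4]{hirschhorn} and specialized to complexes of presheaves in \cite[Section 4.4]{ayoub-th2}. The projective model structures on $\Ch\Psh(\cat)$ and $\Ch\Psh(\cat')$ are combinatorial and left proper, so they admit left Bousfield localizations with respect to any \emph{set} of maps by \cite[Theorem 4.2.71]{ayoub-th2}. Part (1) is then essentially immediate: once one replaces $\cat$ by a small skeleton, $S_{I}$ is genuinely a set (fix $I$, let $X$ range over a skeleton of $\cat$ and $i$ over $\Z$), so the localization $\Ch_{I}\Psh(\cat)$ exists, and likewise for $\cat'$.

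For part (2), the obstruction is that $S_{\tau}$ is a proper class, so I would first exhibit an equivalent \emph{set} of generators: for each $X \in \cat$ and each bounded $\tau$-hypercover $\mcU_{\bullet}\to X$, take the augmentation $\Tot\Lambda(\mcU_{\bullet})\to\Lambda(X)$; a Verdier-style descent argument shows that inverting these maps already inverts $S_{\tau}$. Given the resulting $\Ch_{\tau}\Psh(\cat)$, the Quillen equivalence with the projective model category $\Ch\Sh_{\tau}(\cat)$ is induced by the sheafification/inclusion adjunction, whose unit lies in $S_{\tau}$ by the very construction of the associated sheaf. The Quillen equivalence between $\Ch_{\tau}\Psh(\cat')$ and $\Ch_{\tau}\Psh(\cat)$ is induced by the restriction/extension adjunction along $\cat'\hookrightarrow\cat$: the hypothesis that every object of $\cat$ admits a $\tau$-cover by objects of $\cat'$ makes $\cat'$ a dense subsite, so restriction induces an equivalence of the associated sheaf topoi, and this upgrades to a Quillen equivalence by comparing with $\Ch\Sh_{\tau}(\cat)$.

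Part (3) is then obtained by iterating part (1) on top of part (2): starting from the $\tau$-local projective structure just constructed, further left Bousfield localize with respect to the same set $S_{I}$. The resulting Quillen equivalence $\Ch_{\tau,I}\Psh(\cat')\rightleftarrows\Ch_{\tau,I}\Psh(\cat)$ is inherited from the $\tau$-local equivalence of part (2), once one checks that the two $S_{I}$-classes are matched up under restriction and extension; this is immediate since $I$ lies in $\cat'$ by hypothesis and finite products with $I$ exist in both categories, so the image of $S_{I}$ under either functor lands in (the saturation of) $S_{I}$.

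The main obstacle lies in part (2): namely, producing a genuine set of generators for $S_{\tau}$ (which forces the use of bounded hypercovers and descent criteria) and verifying that the sheafification adjunction is a left Quillen equivalence between the projective model structure on $\Ch\Sh_{\tau}(\cat)$ and the $\tau$-localized projective model structure on presheaves. Once these set-theoretic and descent-theoretic points are secured along the lines of \cite[Section 4.4]{ayoub-th2}, the remaining content of parts (1) and (3) reduces to formal bookkeeping about iterated Bousfield localizations, and the Quillen equivalences between the variants on $\cat$ and $\cat'$ follow automatically from density of the subsite $\cat'$.
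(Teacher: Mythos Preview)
Your proposal is correct and follows essentially the same route as the paper: reduce $S_\tau$ to a set of generating maps, invoke the general existence theorem for left Bousfield localizations of left proper model categories, and obtain the Quillen equivalences from density of $\cat'$ and the sheafification adjunction. The only cosmetic differences are that the paper works in the \emph{cellular} framework (citing \cite[Theorem 4.1.1]{hirschhorn}) rather than the combinatorial one, and that instead of sketching the hypercover argument and the subsite comparison it simply cites the ready-made packages \cite[Proposition 4.4.32, Lemma 4.4.35, Corollary 4.4.43, Proposition 4.4.56]{ayoub-th2}, which encapsulate exactly the steps you outline.
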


\begin{proof}According to  \cite[Theorem 4.1.1]{hirschhorn}, any model category which is  {left proper} and {cellular} (some technical properties which are defined in \cite[Definitions 12.1.1 and 13.1.1]{hirschhorn}) admits a left Boudsfield localization with respect to a set of maps. 
The model structure on complexes is left proper and cellular (see \cite[Page 7]{ss-1998}). It follows that the projective model structures  in the statement are also left proper and cellular (see  \cite[Propositions 12.1.5 and 13.1.14]{hirschhorn})  hence the first claim.

For the first part of second claim, it suffices to apply \cite[Proposition 4.4.31, Lemma 4.4.34]{ayoub-th2} showing that the localization over $S_\tau$ is equivalent to a localization over a set of maps. The second part is a restatement of  \cite[Corollary 4.4.42, Proposition 4.4.55]{ayoub-th2}.

Since by  \cite[Proposition 4.4.31]{ayoub-th2} the $\tau$-localization coincides with the Bousfield localization with respect to a set, we conclude by \cite[Theorem 4.2.71]{ayoub-th2} that the  model category $\Ch_{\tau}\Psh (\cat)$ is still left proper and cellular. The last statement then follows from  \cite[Theorem 4.1.1]{hirschhorn} and the second claim.
\end{proof}

In the situation above, we will denote by $S_{(\tau,I)}$ the union of the class $S_{\tau}$ and the set $S_I$.

\begin{rmk}
A geometrically relevant situation is induced when $I$ is endowed with a multiplication map $\mu\colon I\times I\ra I$ and maps $i_0$ and  $i_1$ from the terminal object to $I$ satisfying the relations of a monoidal object with $0$ as in the definition of an interval object (see \cite[Section 2.3]{mv-99}). Under these hypotheses, we say that the triple $(\cat,\tau,I)$ is a \emph{site with an interval}. 
\end{rmk}

\begin{exm}
The affinoid rigid variety with good coordinates $\B^1=\Spa K\langle \chi\rangle$ is an interval object with respect to the natural multiplication $\mu$ and maps $i_0$ and  $i_1$ induced by the substitution  $\chi\mapsto0 $ and  $\chi\mapsto1$ respectively. 
\end{exm}

We now apply the constructions above to the sites introduced in the previous sections. We recall that we consider adic spaces defined over a perfectoid field $K$.

\begin{cor}
The following pairs of model categories are Quillen equivalent.
\begin{itemize}
\item $\Ch_{\et}\Psh(\RigSm)$ and $\Ch_{\et}\Psh(\RigSm^{\gc})$.
\item $\Ch_{\et,\B^1}\Psh(\RigSm)$ and $\Ch_{\et,\B^1}\Psh(\RigSm^{\gc})$.
\item $\Ch_{\et}\Psh(\wRigSm)$ and $\Ch_{\et}\Psh(\wRigSm^{\gc})$.
\item $\Ch_{\et,\B^1}\Psh(\wRigSm)$ and $\Ch_{\et,\B^1}\Psh(\wRigSm^{\gc})$.
\end{itemize}
\end{cor}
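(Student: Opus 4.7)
The plan is to deduce all four Quillen equivalences as a direct application of Proposition \ref{locsets}, parts (2) and (3), to the pairs $(\RigSm^{\gc}\hookrightarrow\RigSm)$ and $(\wRigSm^{\gc}\hookrightarrow\wRigSm)$, taking $\tau=\et$ and $I=\B^1$. So the real content to verify is the hypothesis of the proposition: that $\cat'$ is a full subcategory of $\cat$, and that every object of $\cat$ admits an \'etale cover by objects of $\cat'$. Once this is checked, the first and third bullets come from part (2) of the proposition, and the second and fourth bullets come from part (3) (using that $\B^1$ is an object of both $\RigSm^{\gc}$ and $\wRigSm^{\gc}$, so the set $S_{\B^1}$ is defined uniformly in the two sides of each Quillen pair).

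First I would dispose of the easy verifications. Fullness of $\RigSm^{\gc}\subset\RigSm$ and of $\wRigSm^{\gc}\subset\wRigSm$ is part of Definition \ref{gc}. The covering condition for $\wRigSm^{\gc}\subset\wRigSm$ is likewise built into the definition: $\wRigSm$ is by definition the full subcategory of bounded adic spaces which are \emph{locally} isomorphic to objects of $\wRigSm^{\gc}$, hence any such object admits an open (in particular \'etale) cover by objects in $\wRigSm^{\gc}$.

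Next I would treat the only slightly non-trivial case: the covering condition for $\RigSm^{\gc}\subset\RigSm$. Any smooth rigid variety is locally affinoid, and every smooth affinoid rigid variety $X$ admits, locally on $X$, an \'etale map to some torus $\T^{N}\times\T^{M}$ (or equivalently, after refinement, to some ball $\B^{n}$): this is classical. By the standard local factorization of \'etale maps of affinoid rigid varieties as a composition of rational embeddings and finite \'etale maps (the fact already used in the proof of Corollaries \ref{liftcov} and \ref{tilt} above), one can further refine such a cover so that each member is an object of $\RigSm^{\gc}$. Thus every object of $\RigSm$ has an \'etale cover by objects of $\RigSm^{\gc}$, as required.

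With the hypotheses of Proposition \ref{locsets} in place, the four Quillen equivalences follow immediately: part (2) gives the first and third equivalences (for the $\et$-localizations), and part (3) gives the second and fourth (for the further $\B^1$-localizations). The main obstacle is essentially cosmetic, namely confirming that the local factorization of \'etale maps really allows us to replace a general \'etale-local cover by one whose members lie in $\RigSm^{\gc}$; but this reduction is exactly what was used earlier in the section, so no new input is needed.
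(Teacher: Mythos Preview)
Your proposal is correct and follows the same approach as the paper: both reduce the corollary to a direct application of Proposition \ref{locsets}(2),(3) with $\cat'$ the subcategory of varieties with good coordinates, $\tau=\et$, and $I=\B^1$. The paper's proof is a one-line invocation of the proposition, while yours spells out the verification of the covering hypothesis (which the paper leaves implicit); in particular your reduction via local factorization of \'etale maps for the case $\RigSm^{\gc}\subset\RigSm$ is the intended argument.
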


\begin{proof}
 It suffices to apply Proposition \ref{locsets} to the sites with interval $(\RigSm,\et,\B^1)$ and $(\wRigSm,\et,\B^1)$ where $\cat'$ is in both cases the subcategory of varieties with good coordinates.
\end{proof}

\begin{dfn}
For $\eta\in\{\et, \B^1,(\et,\B^1)\}$ we say that a map   in  $\Ch\Psh (\RigSm)$ [resp. $\Ch\Psh (\wRigSm)$]  is a \emph{$\eta$-weak equivalence} if it is a weak equivalence in the model structure   $\Ch_{\eta}\Psh (\RigSm)$ [resp. $\Ch_{\eta}\Psh (\wRigSm)$]. 
The triangulated homotopy category associated to the localization $\Ch_{\et,\B^1}\Psh (\RigSm)$ [resp. to the localization  $\Ch_{\et,\B^1}\Psh (\wRigSm)$]   is denoted by $\RigDA_{\et}^{\eff}(K,\Lambda)$ [resp.  $\wRigDA_{\et,\B^1}^{\eff}(K,\Lambda)$].  
We omit $\Lambda$ from the notation whenever the context allows it. The image of a variety $X$ in one of these categories is denoted by $\Lambda(X)$. 
We say that an object $\mcF$ of the derived category $\catD=\catD(\Psh (\RigSm))$ [resp. $\catD=\catD(\Psh (\wRigSm))$] is \emph{$\eta$-local} if the functor $\Hom_{\catD}(\cdot,\mcF)$ sends maps in $S_\eta$ (see Proposition \ref{locsets}) to isomorphisms. This amounts to say that $\mcF$ is  quasi-isomorphic to a $\eta$-fibrant object. 
\end{dfn}

We need to keep track of  $\B^1$ in the notation of $\wRigDA_{\et,\B^1}^{\eff}(K,\Lambda)$ since later we will perform a localization on $\Ch\Psh(\wRigSm)$ with respect to a different interval object.

\begin{rmk}
Using the language of \cite{bv-dg}, the localizations defined above induce endofunctors $C^\eta$ of the derived categories 
$\catD(\Psh(\RigSm))$, $\catD(\Psh(\RigSm^{\gc}))$,  $\catD(\Psh(\wRigSm))$ and $\catD(\Psh(\wRigSm^{\gc}))$ such that $C^\eta\mcF$ is $\eta$-local for all $\mcF$ and there is a natural transformation $C^\eta\ra\id$ which is a pointwise $\eta$-weak equivalence. 
 The  functor $C^\eta$ restricts to a triangulated equivalence on the objects $\mcF $ that are $\eta$-local  
and one can compute the Hom set $\Hom(\mcF,\mcF')$ in the the homotopy category of the $\eta$-localization  as ${\catD}(\mcF,C^\eta\mcF')$.
\end{rmk}

\begin{rmk}\label{Cet}According to Proposition \ref{locsets},   for any $X$ in $\wRigSm$ and any integer $i$, one has $\Hom_{\mathbf{D}}(\Lambda(X)[-i],C^{\et}\mcF)\cong\Hom_{\mathbf{D}(\Sh_{\et}(\wRigSm))}(\Lambda(X)[-i],\mcF)$. The latter group is  the \'etale hypercohomology group $\HH_{\et}^i(X,\mcF)$ which can be computed with respect to the \emph{small} \'etale site  over $X$. The property $\Hom_{\mathbf{D}}(\Lambda(X)[-i],C^{\et}\mcF)\cong \HH^i(X_{\et},\mcF)$ characterizes $C^{\et}\mcF$ up to quasi-isomorphisms (and holds true for more general topologies, see \cite[Proposition 4.4.58]{ayoub-th2}).
\end{rmk}

We now show that the \'etale localization 
 can alternatively be described in terms of \'etale hypercoverings $\mcU_\bullet\ra X$ (see for example \cite{dhi}). Any such datum defines a simplicial presheaf $n\mapsto\bigoplus_i\Lambda(U_{ni})$ whenever $\mcU_n=\bigsqcup_i h_{U_{ni}}$ is the sum of the presheaves of sets $h_{U_{ni}}$ represented by $U_{ni}$. This simplicial presheaf can be associated to a normalized chain complex, that we   denote by $\Lambda(\mcU_\bullet)$. It is is endowed with a map to $\Lambda(X)$.

\begin{prop}\label{bddhypercov}
 The localization over  $S_{\et}$ on $\Ch\Psh(\RigSm^{\gc})$ [resp. $\Ch\Psh(\wRigSm^{\gc})$] coincides with the localization over the set $\Lambda(\mcU_\bullet)[i]\ra\Lambda(X)[i]$ as $\mcU_\bullet\ra X$ varies among bounded \'etale hypercoverings of the objects $X$ of $\RigSm^{\gc}$ [resp. $\wRigSm^{\gc}$] and $i$ varies in $\Z$.
\end{prop}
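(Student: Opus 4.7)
The plan is to prove the two left Bousfield localizations coincide by showing they have the same class of local objects. Let $S_H$ denote the set of maps $\Lambda(\mcU_\bullet)[i]\to\Lambda(X)[i]$ arising from bounded \'etale hypercoverings of objects of $\RigSm^{\gc}$ (resp.\ $\wRigSm^{\gc}$). I shall establish (a) that every such map is an $S_\et$-weak equivalence, so that every $S_\et$-local object is automatically $S_H$-local; and (b) the converse, that every $S_H$-local object is $S_\et$-local. The argument is identical in the two cases.

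For (a), I would invoke cohomological descent for hypercoverings. After \'etale sheafification, the augmented simplicial sheaf associated to a hypercovering $\mcU_\bullet\to X$ is, by the very definition of a hypercovering, a trivial simplicial resolution of the representable sheaf $\Lambda(X)$; equivalently, via Dold--Kan, the sheafified normalized chain complex of $\Lambda(\mcU_\bullet)$ is quasi-isomorphic to $\Lambda(X)$ placed in degree zero. Consequently the cone of $\Lambda(\mcU_\bullet)\to\Lambda(X)$ has vanishing \'etale sheaves of homology in every degree, which is precisely the definition of membership in $S_\et$.

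For (b), let $\mcF$ be $S_H$-local. By Remark \ref{Cet} it suffices to show $\Hom_\catD(\Lambda(X)[-i],\mcF)\cong\HH_\et^i(X,\mcF)$ for all $X$ and all $i\in\Z$. For any bounded \'etale hypercovering $\mcU_\bullet\to X$, $S_H$-locality yields an isomorphism $\Hom_\catD(\Lambda(X)[-i],\mcF)\cong\Hom_\catD(\Lambda(\mcU_\bullet)[-i],\mcF)$, and the right-hand side is identified with the cohomology $H^i(\mcF(\mcU_\bullet))$ of the \v{C}ech-type bicomplex obtained by evaluating $\mcF$ on $\mcU_\bullet$ (using that $\Lambda(\mcU_\bullet)$ is built from representables and is projectively cofibrant). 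By Verdier's hypercovering theorem, $\HH_\et^i(X,\mcF)$ is the filtered colimit of the $H^i(\mcF(\mcU_\bullet))$ as $\mcU_\bullet$ ranges over bounded \'etale hypercoverings of $X$. Since every term of this colimit is already $\Hom_\catD(\Lambda(X)[-i],\mcF)$, so is the colimit, giving the desired identification.

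The step I expect to be the main obstacle is verifying that \emph{bounded} hypercoverings suffice in Verdier's theorem: the classical formulation is in terms of all hypercoverings, which form a proper class. The cleanest way to restrict to the set-sized subsystem of bounded hypercoverings is via the coskeleton construction: for each integer $n$, the $n$-th coskeleton of any hypercovering is bounded and its associated \v{C}ech cohomology agrees with that of the original in degrees $\leq n$, so taking the colimit first over $n$ and then over bounded hypercoverings recovers the full \'etale cohomology. Alternatively, one can appeal to classical cofinality results (e.g.\ SGA 4 V) exhibiting bounded hypercoverings as a cofinal subsystem for computing cohomology, a property that applies to the sites under consideration.
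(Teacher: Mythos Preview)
Your approach is essentially the same as the paper's: show that the hypercovering maps lie in $S_{\et}$ (the easy direction), then use Verdier's hypercovering formula to prove that an $S_H$-local complex computes \'etale hypercohomology on representables and is therefore $\et$-local.

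There is, however, a genuine gap in part~(b). You invoke the formula
\[
\HH^i_{\et}(X,\mcF)\;\cong\;\varinjlim_{\mcU_\bullet} H^i(\mcF(\mcU_\bullet))
\]
for an arbitrary, possibly \emph{unbounded}, complex of presheaves $\mcF$. Verdier's theorem in SGA~4 V.7.4.1 is stated for a single abelian sheaf; extending it to hypercohomology of unbounded complexes requires the further input that every object has finite $\et$-cohomological dimension (so that the relevant spectral sequence converges; the paper invokes \cite[Theorem~0.3]{sv-bk} precisely for this passage). The paper secures finite cohomological dimension from the standing hypothesis $\Q\subset\Lambda$: with rational coefficients, \'etale and Nisnevich cohomology of sheaves agree (the argument of \cite[Proposition~14.23]{mvw} applies), and affinoids have finite Nisnevich cohomological dimension by \cite[1.2.19]{ayoub-rig}. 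Your write-up omits this step, and without it the argument does not go through --- for instance over a base of infinite $\ell$-cohomological dimension with torsion coefficients the conclusion can fail, which is exactly why the paper records the remark following Proposition~\ref{cech}.

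By contrast, the obstacle you single out --- restricting to \emph{bounded} hypercoverings --- is not the real difficulty: the statement in SGA~4 is already formulated over the filtered category $HR_\infty(X)$ of bounded (coskeletal) hypercoverings, so no separate cofinality argument is needed there.
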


\begin{proof}
Any $\et$-local object $\mcF$ is also local with respect to the maps of the statement. We are left to prove that a complex $\mcF$ which is local with respect to the maps of the statement is also $\et$-local.

 Since $\Lambda$ contains $\Q$ the \'etale cohomology of an \'etale sheaf $\mcF$ coincides with the Nisnevich cohomology (the same proof of \cite[Proposition 14.23]{mvw} holds also here). By means of \cite[Corollary 1.2.21]{ayoub-rig} we conclude that any rigid variety $X$ has a finite cohomological dimension. By \cite[Theorem V.7.4.1]{SGAIV2} and \cite[Theorem 0.3]{sv-bk}, we obtain for any rigid variety $X$ and any complex of presheaves $\mcF$ an isomorphism
\[
 \HH^n_{\et}(X,\mcF)\cong \varinjlim_{\mcU_{\bullet}\in HR_\infty(X)}H_{-n}\RHom(\Lambda(\mcU_{\bullet}),\mcF)
\]
where $HR_\infty(X)$ is the category of bounded \'etale hypercoverings of $X$ (see \cite[V.7.3]{SGAIV2}) and $\RHom$ is the $\Hom$-complex computed in the unbounded derived category of presheaves.  
Suppose now $\mcF$ is local with respect to the maps of the statement. 
 Then $\RHom(\Lambda(\mcU_{\bullet}),\mcF)$ is quasi-isomorphic to $\RHom(X,\mcF)$ for every bounded hypercovering $\mcU_{\bullet}$ hence $H_{-n}\mcF(X)\cong\HH^n_{\et}(X,\mcF)$ by the formula above. We then conclude that the map $\mcF\ra C^{\et}\mcF$ is a quasi-isomorphism,  proving the proposition.
\end{proof}

As the following proposition shows, there are also alternative presentations of the  homotopy categories introduced so far, which we will later use.

\begin{prop}\label{cech}
 Let $\Lambda$ be a $\Q$-algebra. The natural inclusion induces a Quillen equivalence \[L_{S}\Ch(\Psh({\wRigSm^{\gc}}))\leftrightarrows\Ch_{\et}\Psh({\wRigSm}^{\gc})\] where  $L_S$ denotes the Bousfield localization with respect to  the set $S$ of shifts of the maps of complexes induced by \'etale Cech hypercoverings $\mcU_\bullet\ra X$ of objects $X$ in ${\wRigSm^{\gc}}$ such that for some presentation $X=\varprojlim_h X_h$ the covering $\mcU_0\ra X$  descends to a covering of $X_0$. 
\end{prop}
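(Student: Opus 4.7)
Each map in $S$ is an \'etale weak equivalence: for an \'etale Cech hypercovering $\mcU_\bullet \to X$, the sheafification of $\Lambda(\mcU_\bullet) \to \Lambda(X)$ is a quasi-isomorphism of \'etale sheaves. Thus the identity functor induces a left Quillen functor from $L_S\Ch\Psh(\wRigSm^{\gc})$ to $\Ch_{\et}\Psh(\wRigSm^{\gc})$, and it is a Quillen equivalence precisely when every $\et$-weak equivalence is also an $S$-weak equivalence. By Proposition \ref{bddhypercov}, it suffices to show that for every bounded \'etale hypercovering $\mcU_\bullet \to X$ with $X\in\wRigSm^{\gc}$, the map $\Lambda(\mcU_\bullet)\to\Lambda(X)$ is an $S$-weak equivalence.

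A standard Verdier-type argument using iterated Cech constructions reduces the problem to the case of a Cech nerve $\check{C}(\mcV)$ of a single \'etale covering $\mcV = \{V_i\to X\}$. Given such a covering, Corollary \ref{liftcov} produces a refinement $\mcV' = \{V'_j\to X\}$ that is the pullback of an \'etale covering of $X_{\bar h}$ for some index $\bar h$. Since $X_{\bar h}$ is itself a composition of rational embeddings and finite \'etale maps over $\T^N\times\T^M$ (namely $X_0\to\T^N\times\T^M$ composed with the finite \'etale map $\T^N\langle\underline{\upsilon}^{1/p^{\bar h}}\rangle\to\T^N$), it serves as ``$X_0$'' in the alternative presentation $X=\varprojlim_h X_{h+\bar h}$. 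With respect to this new presentation the Cech nerve $\check{C}(\mcV')$ lies in $S$, so $\Lambda(\check{C}(\mcV'))\to\Lambda(X)$ is already an $S$-weak equivalence.

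To conclude, one shows that the refinement $\Lambda(\check{C}(\mcV'))\to\Lambda(\check{C}(\mcV))$ is itself an $S$-weak equivalence, so that the two-out-of-three property yields the claim for $\Lambda(\check{C}(\mcV))\to\Lambda(X)$. Each level $\check{C}(\mcV)_n = \bigsqcup_{i_0,\ldots,i_n} V_{i_0}\times_X\cdots\times_X V_{i_n}$ splits into summands in $\wRigSm^{\gc}$ and carries a natural presentation, and by passing to the bisimplicial Cech cover associated to the common refinement $\{V_i\times_X V'_j\}$, all fiber products descend simultaneously to $X_{\bar h}$. The main obstacle is precisely this bookkeeping: one must propagate the property of ``descending to $X_0$'' through the bisimplicial levels while coherently adjusting the presentations. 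The finite \'etale cohomological dimension used in the proof of Proposition \ref{bddhypercov} ensures that the resulting Reedy-style spectral sequence converges, delivering the desired $S$-weak equivalence and hence the Quillen equivalence.
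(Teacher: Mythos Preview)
Your reduction via Proposition~\ref{bddhypercov} to showing that every bounded \'etale hypercovering gives an $S$-weak equivalence is the same starting point as the paper's, and the use of Corollary~\ref{liftcov} to produce a refinement $\mcV'$ that descends is also right. The gap is in the bisimplicial step. You assert that in the bisimplicial object built from $\{V_i\times_X V'_j\}$ ``all fiber products descend simultaneously to $X_{\bar h}$''; they do not. Every level of that object contains iterated fiber products of the \emph{original} $V_i$'s over $X$, and these were arbitrary \'etale maps with no reason to descend to any $X_h$. Hence the \v Cech nerves you obtain by restricting $\mcV'$ to the pieces $V_{i_0}\times_X\cdots\times_X V_{i_n}$ are not in $S$, and you have given no argument for why they are $S$-weak equivalences. (The other projection, towards $\check{C}(\mcV')$, does collapse: since $\mcV'$ refines $\mcV$, the pullback of $\mcV$ to each $V'_{j_0}\times_X\cdots\times_X V'_{j_m}$ is split and its \v Cech nerve is simplicially contractible. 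But that only identifies the diagonal with $\check{C}(\mcV')$, which you already knew maps to $X$ by an $S$-equivalence; it says nothing about $\check{C}(\mcV)$.) Your ``standard Verdier-type argument'' reducing bounded hypercoverings to \v Cech nerves has the same issue: it presupposes that \emph{all} \v Cech nerves are $S$-equivalences, which is exactly what you are still trying to prove.

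The paper avoids this bookkeeping entirely. After the same reduction it passes via the Dold--Kan correspondence to simplicial presheaves and invokes \cite[Theorem~A.6 and Corollary~A.8]{dhi}, which state precisely that localizing at \v Cech nerves of a class of covers that merely \emph{generates} the topology (and is closed under self-fiber-products $\mcU\times_X\mcU$) already inverts all hypercovers. The paper explicitly notes that the class of coverings descending to finite level is \emph{not} stable under pullback along arbitrary maps, but that the proof of \cite[Corollary~A.8]{dhi} (resting on \cite[Proposition~A.2]{dhi}) does not require this. That abstract result is exactly the machinery that absorbs the inductive bisimplicial argument you are attempting by hand.

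A minor aside: your parenthetical reason for the reindexing---that $\T^N\langle\underline{\upsilon}^{1/p^{\bar h}}\rangle\to\T^N$ is finite \'etale---holds only when $\car K=0$; in characteristic $p$ that map is purely inseparable. The reindexing is nonetheless valid in both cases, but for a different reason: $\T^N\langle\underline{\upsilon}^{1/p^{\bar h}}\rangle$ is isomorphic to $\T^N$ as a rigid space, so the base change $X_{\bar h}\to\T^N\langle\underline{\upsilon}^{1/p^{\bar h}}\rangle\times\T^M$ is itself a composition of rational embeddings and finite \'etale maps, giving $X_{\bar h}$ its own good-coordinates presentation.
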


\begin{proof}
Using  Proposition \ref{bddhypercov}, it suffices to prove that the map $\Lambda(\mcU_\bullet)\ra\Lambda( X)$ is an isomorphism in the homotopy category $L_S\Ch(\Psh({\wRigSm^{\gc}}))$ for a fixed bounded \'etale hypercovering $\mcU_\bullet$ of an object $X$ in ${\wRigSm^{\gc}}$.

Since the inclusion functor $\Ch_{\geq0}\ra\Ch$ is a Quillen functor, it suffices to prove that $\Lambda(\mcU_\bullet)\ra\Lambda( X)$ is a weak equivalence in $L_T\Ch_{\geq0}(\Psh({\wRigSm^{\gc}}))$ where $T$ is the set of shifts of the maps of  complexes induced by \'etale Cech hypercoverings descending at finite level. Let $L_{\tilde{T}}\sPsh({\wRigSm^{\gc}})$ be the Bousfield localization of the projective model structure on simplicial presheaves of sets with respect to the set $\tilde{T}$ formed by maps  induced by \'etale Cech hypercoverings $\mcU_\bullet\ra X$ descending at finite level. We remark that the Dold-Kan correspondence (see \cite[Section 4.1]{ss-2003}) and the $\Lambda$-enrichment also define a left Quillen functor from $L_{\tilde{T}}\sPsh({\wRigSm^{\gc}})$ to the category $L_T\Ch_{\geq0}(\Psh({\wRigSm^{\gc}}))$. It therefore suffices to prove that $\mcU_\bullet\ra  X$ is a weak equivalence in $L_{\tilde{T}}\sPsh({\wRigSm^{\gc}})$  and this follows from the fact that bounded hypercovering define the same localization as Cech hypercoverings (see 
\cite[Theorem A.6]{dhi}) together with the fact that coverings descending to finite level define the same topology (Corollary \ref{liftcov}) and hence the same localization (\cite[Corollary A.8]{dhi}). We remark that  \cite[Corollary A.8]{dhi} applies in our case even if the coverings $\mcU\ra X$ descending to the finite level do not form a basis of the topology, as their pullback via an arbitrary map $Y\ra X$ may not have the same property. However, the proof of the statement  relies on \cite[Proposition A.2]{dhi}, where it is only used that the chosen family of coverings $\mcU\ra X$ generates the topology and that the fiber product $\mcU\times_X\mcU$ is defined. 
\end{proof}

\begin{rmk}
It is shown in the proof that the statements of Propositions \ref{bddhypercov} and \ref{cech}  hold true without any assumptions on $\Lambda$  under the condition that all varieties $X$ have  finite cohomological dimension with respect to the \'etale topology. 
\end{rmk}

As we pointed out in Remark \ref{Cet},  there is a  characterization of   $C^{\et}\mcF$ for any complex $\mcF$. 
This is also true for the $\B^1$-localization, described in the following part.

\begin{dfn}\label{cocu}
We denote by $\square$ the $\Sigma$-enriched cocubical object (see \cite[Appendix A]{ayoub-h1}) defined by putting $\square^n=\B^n=\Spa K\langle \tau_1,\ldots,\tau_n\rangle$ and considering the morphisms $d_{r,\epsilon}$ induced by the maps $\B^n\ra\B^{n+1}$ corresponding to the substitution $\tau_r=\epsilon$ for $\epsilon\in\{0,1\}$ and the morphisms $p_r$ induced by the projections $\B^n\ra\B^{n-1}$.
 For any  variety $X$ and any presheaf $\mcF$ with values in an abelian category, we can therefore consider the $\Sigma$-enriched cubical object  $\mcF(X\times\square)$ (see \cite[Appendix A]{ayoub-h1}). Associated to any $\Sigma$-enriched cubical object $\mcF$ there are the following complexes: the complex $C^\sharp_\bullet\mcF$ defined as $C^\sharp_n\mcF=\mcF_n$ and with differential $\sum (-1)^r (d_{r,1}^*-d_{r,0}^*)$; the \emph{simple complex} $C_\bullet\mcF$ defined as $C_n\mcF=\bigcap_{r=1}^n\ker d_{r,0}^*$ and with differential $\sum (-1)^r d_{r,1}^*$; the \emph{normalized complex} $N_\bullet\mcF$ defined as $N_n\mcF=C_n\cap\mcF\bigcap_{r=2}^n\ker d_{r,1}^*$ and with differential $-d_{1,1}^*$. By \cite[Lemma A.3, Proposition A.8, Proposition A.11]{ayoub-h2}, the inclusion $N_\bullet\mcF\hookrightarrow C_\bullet\mcF$ is a quasi-isomorphism and both inclusions $C_\bullet\mcF\hookrightarrow C^\sharp_\bullet\mcF$ and $N_\bullet\mcF\hookrightarrow C_\bullet\mcF$ split.

  For any complex of presheaves $\mcF$ we define the \emph{singular complex of $\mcF$}, denoted by $\Sing^{\B^1}\mcF$, to be the total complex of the simple complex associated to the  $\uhom(\Lambda(\square),\mcF)$. It sends the object $X$ to the total complex of the simple complex associated to $\mcF(X\times\square)$.
\end{dfn}

The following lemma is the cocubical version of \cite[Lemma 2.18]{mvw}.

\begin{lemma}\label{chainhomo}
For any presheaf $\mcF$ the two maps of cubical sets $i_0^*,i_1^*\colon\mcF(\square\times\B^1)\ra\mcF(\square)$ induce chain homotopic maps on the associated simple and normalized complexes. 
\end{lemma}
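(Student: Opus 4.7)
The lemma is asserted as the cocubical counterpart of \cite[Lemma 2.18]{mvw}, so the plan is to write down the classical explicit chain homotopy, using the interval structure on $\B^1$. Under the canonical identification $\square^{n+1}=\square^n\times\B^1$ obtained by placing the auxiliary $\B^1$ factor as the $(n+1)$-st coordinate, let $\pi\colon\B^{n+1}\to\B^n$ denote the projection dropping this last coordinate. I would set
\[
 h_n\colon\mcF(\square^n\times\B^1)\longrightarrow\mcF(\square^{n+1}),\qquad h_n(f)\colonequals(-1)^n\bigl(f-\pi^*i_0^*f\bigr).
\]
The correction term $\pi^*i_0^*f$ is tailored so that $d_{n+1,0}^*h_n(f)=0$; together with the hypothesis $d_{r,0}^*f=0$ for $r\le n$ which defines $C_n\mcF(\square\times\B^1)$ and the commutations $d_{r,0}^*\pi^*=\pi^*d_{r,0}^*$ valid for $r\le n$, this guarantees that $h_n$ takes values in $C_{n+1}\mcF(\square)=\bigcap_{r=1}^{n+1}\ker d_{r,0}^*$.

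Next I would verify the chain-homotopy relation by direct computation. The cubical identities $d_{r,1}\circ\pi=\pi\circ d_{r,1}$ (for $r\le n$) and $i_1^*\pi^*=\mathrm{id}$ give $d_{r,1}^*h_n(f)=-h_{n-1}(d_{r,1}^*f)$ for $r\le n$ and $d_{n+1,1}^*h_n(f)=(-1)^n(i_1^*f-i_0^*f)$. Substituting into the differential $D=\sum_{r=1}^{n+1}(-1)^r d_{r,1}^*$ and collecting signs yields
\[
 Dh_n+h_{n-1}D'=i_0^*-i_1^*
\]
on $C_n\mcF(\square\times\B^1)$, where $D'$ denotes the differential on the simple complex $C_\bullet\mcF(\square\times\B^1)$. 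This is the desired chain homotopy for the simple complex.

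For the normalized complex $N_\bullet$ I would invoke the splitting of the inclusion $\iota\colon N_\bullet\hookrightarrow C_\bullet$ with its chain retraction $p\colon C_\bullet\to N_\bullet$ already recalled in Definition \ref{cocu}. Since $i_0^*$ and $i_1^*$ are morphisms of cubical objects they commute with both $\iota$ and $p$, and $p$ is a chain map; hence the composite $p\circ h_n\circ\iota$ inherits the identity above and produces a chain homotopy between the restrictions of $i_0^*$ and $i_1^*$ to $N_\bullet$. The only real hurdle throughout is clean bookkeeping of cubical face identities and signs: it is precisely the term $\pi^*i_0^*$ that ensures $h_n$ lands in the subcomplex $C_\bullet$ rather than merely in $C_\bullet^\sharp$, and the prefactor $(-1)^n$ that absorbs the $n$-dependent sign coming from the last face $d_{n+1,1}^*$.
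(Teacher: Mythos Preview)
Your argument is correct. The computation that $h_n$ lands in $C_{n+1}$ and that $Dh_n+h_{n-1}D'=i_0^*-i_1^*$ checks out once one keeps track of the cubical identities $p_{n+1}\circ d_{r,\epsilon}=d_{r,\epsilon}\circ p_n$ for $r\le n$ and $p_{n+1}\circ d_{n+1,\epsilon}=\id$, and the transfer to $N_\bullet$ via the functorial retraction is legitimate because $i_0^*,i_1^*$ are maps of $\Sigma$-enriched cubical objects.

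The paper proceeds slightly differently: it writes down the bare identification $s_n^*\colon\mcF(\square^n\times\B^1)\to\mcF(\square^{n+1})$ with no correction term and shows that $\{(-1)^n s_n^*\}$ is already a chain homotopy between $i_0^*$ and $i_1^*$ on the full complex $C^\sharp_\bullet$, whose differential is $\sum(-1)^r(d_{r,1}^*-d_{r,0}^*)$. It then invokes the functorial retractions $C^\sharp_\bullet\to C_\bullet$ and $C^\sharp_\bullet\to N_\bullet$ simultaneously to descend the homotopy. Your route instead adds the degeneracy term $\pi^*i_0^*f$ so that the homotopy already lives in the subcomplex $C_\bullet$ from the start; you then only need the single retraction $C_\bullet\to N_\bullet$. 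The trade-off is that the paper's homotopy is simpler to write down but needs two transfer steps, while yours is tailored to $C_\bullet$ at the cost of a correction term and a slightly longer sign verification. Both arguments are equally valid and of comparable length.
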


\begin{proof}
Consider     the isomorphism $s_n\colon\B^{n+1}\ra\B^n\times\B^1$ defined on points by separating the last coordinate and let $s_n^*$ be the induced map $\mcF(\square^n\times\B^1)\ra\mcF(\square^{n+1})$. We have $s_{n-1}^*\circ d_{r,\epsilon}^*=d_{r,\epsilon}^*\circ s_{n}^*$ for all $1\leq r\leq n$ and $\epsilon\in\{0,1\}$. We conclude that 
\[                                     
\begin{aligned}
s_{n-1}^*\circ\sum_{r=1}^{n}(-1)^r(d_{r,1}^*-d_{r,0}^*)+\sum_{r=1}^{n+1}(-1)^r(d_{r,1}^*-d_{r,0}^*)\circ (-s_{n}^*)\\=(-1)^n(d_{n+1,1}^*\circ s_n^*-d_{n+1,0}^*\circ s_n^*)=(-1)^n(i_{1}^*-i_0^*).  
\end{aligned}                                        
\]
Therefore, the maps $\{(-1)^ns_n^*\}$ define a chain homotopy from $i_0^*$ to $i_1^*$ as maps of complexes $C^\sharp_\bullet\mcF(\square\times\B^1)\ra C^\sharp_\bullet\mcF(\square)$. 

We automatically deduce that if an inclusion $C'_\bullet\mcF\ra C^\sharp_\bullet\mcF$ has a functorial retraction, then the maps $i_0^*,i_1^*\colon C'_\bullet\mcF(\square\times\B^1)\ra C'_\bullet\mcF(\square)$ are also chain homotopic.
\end{proof}

The following proposition is the rigid analytic analogue of \cite[Theorem 2.23]{ayoub-h1}, or the cocubical analogue of \cite[Lemma 2.5.31]{ayoub-rig}.

\begin{prop}\label{sing}
Let $\mcF$ be a complex in $\Ch\Psh(\wRigSm)$. 
Then $\Sing^{\B^1}\mcF$ is $\B^1$-local and $\B^1$-weak equivalent to $\mcF$ in $\Ch\Psh(\wRigSm)$.
\end{prop}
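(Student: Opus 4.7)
The statement has two distinct parts: the $\B^1$-locality of $\Sing^{\B^1}\mcF$ and the fact that the natural augmentation $\mcF \to \Sing^{\B^1}\mcF$ (coming from the inclusion in degree zero of the simple complex, equivalently from the projection $\square^\bullet \to \square^0 = \Spa K$) is a $\B^1$-weak equivalence. I would separate these two issues and treat them in order, modeling the argument on \cite[Theorem 2.23]{ayoub-h1} with the cocubical machinery of Definition~\ref{cocu}.

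For the $\B^1$-locality, fix $X \in \wRigSm$ and consider the projection $p\colon X\times\B^1 \to X$ with its section $i_0$. The equality $p\circ i_0 = \id_X$ gives $i_0^*\circ p^* = \id$ on $\Sing^{\B^1}\mcF(X)$, so it suffices to produce a chain homotopy between $p^*\circ i_0^*$ and the identity on $\Sing^{\B^1}\mcF(X\times\B^1)$. The homotopy will come from the multiplication map $\mu\colon \B^1\times\B^1 \to \B^1$, which satisfies $\mu\circ i_0 = 0$ and $\mu\circ i_1 = \id_{\B^1}$. Setting $H \colonequals \id_X\times\mu\colon X\times\B^1\times\B^1 \to X\times\B^1$, one has $H\circ(\id_{X\times\B^1}\times i_0) = i_0\circ p$ and $H\circ(\id_{X\times\B^1}\times i_1) = \id_{X\times\B^1}$. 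Applying Lemma~\ref{chainhomo} to the presheaf $Y \mapsto \mcF(X\times\B^1\times Y)$ shows that the maps $i_0^*, i_1^*\colon \Sing^{\B^1}\mcF(X\times\B^1\times\B^1) \to \Sing^{\B^1}\mcF(X\times\B^1)$ are chain homotopic. Precomposing both with $H^*$ yields a chain homotopy between $p^*\circ i_0^*$ and $\id$, as required. Since $p^*$ is thus a quasi-isomorphism for all $X$, the complex $\Sing^{\B^1}\mcF$ is $\B^1$-local.

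For the $\B^1$-weak equivalence of $\mcF \to \Sing^{\B^1}\mcF$, I would test against arbitrary $\B^1$-local complexes $\mcG$ (taken fibrant in $\Ch_{\B^1}\Psh(\wRigSm)$ after a cofibrant replacement of $\mcF$). Explicitly, I would check that the induced map
\[
\RHom\bigl(\Sing^{\B^1}\mcF,\mcG\bigr) \longrightarrow \RHom(\mcF,\mcG)
\]
is a quasi-isomorphism. By adjunction the left-hand side is the totalization of the simple complex of $\RHom\bigl(\mcF,\uhom(\Lambda(\square),\mcG)\bigr)$. Because $\mcG$ is $\B^1$-local, the natural map $\mcG \to \uhom(\Lambda(\B^n),\mcG)$ is a quasi-isomorphism at every $X$ (it is a finite composition of projections of the form $p^*$), so the cocubical object $n \mapsto \RHom(\mcF,\uhom(\Lambda(\square^n),\mcG))$ is levelwise quasi-isomorphic to the constant object $\RHom(\mcF,\mcG)$, and its normalized complex degenerates onto $\RHom(\mcF,\mcG)$ in degree zero. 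Since $\Sing^{\B^1}\mcF$ is already $\B^1$-local, this universal property identifies $\mcF \to \Sing^{\B^1}\mcF$ with a $\B^1$-fibrant replacement up to weak equivalence, giving the claim.

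The main technical nuisance I expect is the bookkeeping in the second step: one must handle the internal-Hom and totalization carefully enough that derived functors agree with the strict constructions, which in turn requires either a cofibrant replacement of $\mcF$ (using Remark~\ref{projcof}) or a cellular-cofibration argument. Once this is in place, the cocubical Eilenberg--Zilber / Dold--Kan shuffle identifying the normalized complex of a constant cocubical object with its degree-zero part gives the collapse, and the proof concludes exactly as in the simplicial case of \cite[Lemma 2.5.31]{ayoub-rig}.
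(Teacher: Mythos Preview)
Your argument for $\B^1$-locality is correct and is essentially the paper's approach: the paper invokes Lemma~\ref{chainhomo} together with \cite[Proposition 2.2.37]{ayoub-rig}, and what you wrote is an explicit unpacking of that reference via the multiplication map on $\B^1$.

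The second part, however, contains a genuine gap. Your key step is the claim that ``by adjunction'' one has
\[
\RHom(\Sing^{\B^1}\mcF,\mcG)\;\simeq\;\Tot\,\RHom\bigl(\mcF,\uhom(\Lambda(\square),\mcG)\bigr).
\]
No such adjunction exists. Recall that $\Sing^{\B^1}\mcF=\Tot\,\uhom(\Lambda(\square),\mcF)$; pulling the $\Tot$ (a hocolim-type construction) through $\RHom(-,\mcG)$ yields a holim of terms $\RHom(\uhom(\Lambda(\B^n),\mcF),\mcG)$, and there is no adjunction identifying $\RHom(\uhom(A,\mcF),\mcG)$ with $\RHom(\mcF,\uhom(A,\mcG))$ in general. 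The object $\Lambda(\B^n)$ is not dualizable in $\Ch\Psh(\wRigSm)$, so you cannot trade $\uhom(\Lambda(\B^n),-)$ for a tensor. The computation you carry out afterwards is correct but computes the wrong thing: you have shown $\Tot\,\RHom(\mcF,\uhom(\Lambda(\square),\mcG))\simeq\RHom(\mcF,\mcG)$, which is not the left-hand side you need.

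The paper avoids testing against $\B^1$-local objects entirely and argues directly. For each fixed $n$, the map $a\colon\mcF\to\uhom(\Lambda(\square^n),\mcF)$ has a one-sided inverse $b$ induced by the zero section of $\square^n$, with $b\circ a=\id$; the composite $a\circ b$ is shown to be $\B^1$-homotopic to the identity using the homothety $\B^1\times\square^n\to\square^n$ (this is the same multiplication idea you used in the first part, but acting on $\square^n$ rather than on an external $\B^1$). Hence each level map $\mcF\to\uhom(\Lambda(\square^n),\mcF)$ is a $\B^1$-weak equivalence, and one concludes by closure of $\B^1$-weak equivalences under filtered colimits and cones that the map from $\mcF$ to the total complex is a $\B^1$-weak equivalence. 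This is both simpler and avoids the derived-functor bookkeeping you flagged as a ``technical nuisance''.
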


\begin{proof}
 In order to prove that $\Sing^{\B^1}\mcF$ is $\B^1$-local in $\Ch\Psh(\wRigSm)$ we need to check that each homology presheaf $H_n(\Sing^{\B^1}\mcF ) $ is homotopy-invariant. By means of \cite[Proposition 2.2.46]{ayoub-rig} it suffices to show that  the maps $i_0^*,i_1^*\colon N_\bullet\mcF(\square\times\B^1)\ra N_\bullet\mcF(\square)$ are chain homotopic, and this follows from Lemma \ref{chainhomo}.

We now prove that $\Sing^{\B^1}\mcF$ is $\B^1$-weak equivalent to $\mcF$. 
We first prove that the canonical map $a\colon\mcF\ra\uhom(\Lambda(\square^n),\mcF)$ has an inverse up to homotopy for a fixed $n$. 
Consider the map $b\colon\uhom(\Lambda(\square^n),\mcF)\ra \mcF$ induced by the zero section of $\square^n$. It holds that $b\circ a=\id$ and $a\circ b$ is homotopic to $\id$ via the map
\[
H\colon\Lambda(\B^1)\otimes\uhom(\Lambda(\square^n),\mcF)\ra\uhom(\Lambda(\square^n),\mcF)                                                                        
\]
which is deduced from the adjunction $(\Lambda(\B^1)\otimes\cdot,\uhom(\Lambda(\B^1),\cdot))$ and the map
\[
 \uhom(\Lambda(\square^n),\mcF)\ra\uhom(\Lambda(\B^1\times\square^n),\mcF)
\]
defined via the homothety of $\B^1$ on $\square^n$. As $\B^1$-weak equivalences are stable under filtered colimits and cones, we also conclude that the total complex associated to the simple complex of  $\uhom(\Lambda(\square),\mcF)$ is $\B^1$-equivalent to the one associated to the constant cubical object $\mcF$ (see for example the argument of \cite[Corollary 2.5.36]{ayoub-rig}) which is in turn quasi-isomorphic to $\mcF$.
\end{proof}

\begin{cor}\label{replacement}
Let $\Lambda$ be a $\Q$-algebra. For any $\mcF$ in $\Ch\Psh(\wRigSm)$  the localization $C^{\B^1}\mcF$ is quasi-isomorphic to $\Sing^{\B^1}\mcF$ and
 the localization $C^{\et,\B^1}\mcF$ is quasi-isomorphic to $\Sing^{\B^1}(C^{\et}\mcF^\bullet)$.
\end{cor}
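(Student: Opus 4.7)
The plan is to exploit the uniqueness (up to quasi-isomorphism) of local replacements in each of the Bousfield-localized model structures.

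For the first statement, by Proposition \ref{sing} the complex $\Sing^{\B^1}\mcF$ is $\B^1$-local and the natural map $\mcF\to\Sing^{\B^1}\mcF$ is a $\B^1$-weak equivalence. On the other hand, by construction $C^{\B^1}\mcF$ is $\B^1$-local and quasi-isomorphic to a $\B^1$-fibrant replacement of $\mcF$, so the canonical natural transformation provides a $\B^1$-weak equivalence $\mcF\to C^{\B^1}\mcF$. Composing (formally) with an inverse we obtain a $\B^1$-weak equivalence $C^{\B^1}\mcF\to\Sing^{\B^1}\mcF$ between two $\B^1$-local objects; any such weak equivalence is a quasi-isomorphism, so the two complexes are canonically isomorphic in the derived category.

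For the second statement we would like to apply the same argument, this time in the $(\et,\B^1)$-localization. So we must show two things about $\Sing^{\B^1}(C^{\et}\mcF)$: it is $(\et,\B^1)$-local, and it is $(\et,\B^1)$-weak equivalent to $\mcF$. The weak-equivalence part is a formal composition: $\mcF\to C^{\et}\mcF$ is an $\et$-weak equivalence by Remark \ref{Cet}, and applying Proposition \ref{sing} to $C^{\et}\mcF$ yields a further $\B^1$-weak equivalence $C^{\et}\mcF\to\Sing^{\B^1}(C^{\et}\mcF)$; both are $(\et,\B^1)$-weak equivalences. The $\B^1$-locality of $\Sing^{\B^1}(C^{\et}\mcF)$ is again given by Proposition \ref{sing}.

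The only nontrivial step, and the main obstacle, is to verify that $\Sing^{\B^1}$ preserves $\et$-locality, so that $\Sing^{\B^1}(C^{\et}\mcF)$ is simultaneously $\et$-local and $\B^1$-local, hence $(\et,\B^1)$-local. Using the description of $\Sing^{\B^1}\mcF$ as the total complex of $\uhom(\Lambda(\square),\mcF)$, it is enough to check that $\uhom(\Lambda(\B^n),\mcG)(X)=\mcG(X\times\B^n)$ is $\et$-local whenever $\mcG$ is; total complexes of $\et$-local complexes are $\et$-local since the $\et$-local objects form a triangulated subcategory stable under the relevant homotopy (co)limits. The preservation property reduces, via the characterization in Remark \ref{Cet}, to the identity $\HH^i_{\et}(X,\uhom(\Lambda(\B^n),\mcG))=\HH^i_{\et}(X\times\B^n,\mcG)$, which follows from the fact that any \'etale hypercovering $\mcU_\bullet\to X$ pulls back to an \'etale hypercovering $\mcU_\bullet\times\B^n\to X\times\B^n$, together with the description of $\et$-cohomology in terms of bounded hypercoverings proved in Proposition \ref{bddhypercov}. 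With this in hand, the uniqueness of $(\et,\B^1)$-local replacements concludes the proof.
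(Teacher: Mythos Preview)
Your argument is correct and follows the same route as the paper: both reduce the second claim to showing that $\Sing^{\B^1}(C^{\et}\mcF)$ is $\et$-local, test this against bounded hypercoverings via Proposition~\ref{bddhypercov}, and use that $\mcU_\bullet\times\B^n\to X\times\B^n$ is again a bounded hypercovering so that $C^{\et}\mcF$ already sees no difference at each cubical level. The only cosmetic difference is in the passage to the total complex: you invoke closure of $\et$-local objects under the relevant homotopy (co)limits, whereas the paper runs the spectral sequence of the double complex directly; your phrasing is fine but would be tightened by pointing to the finite-cohomological-dimension argument (as in the proof of Proposition~\ref{genRigDA}) that guarantees $\et$-local objects are closed under the filtered colimit implicit in the totalization.
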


\begin{proof}
  The first claim follows from Proposition \ref{sing}. We are left to prove that the complex $\Sing^{\B^1}(C^{\et}\mcF^\bullet)$ is $\et$-local. To this aim, we use the description given in Proposition \ref{bddhypercov} and we show that $\Sing^{\B^1}(C^{\et}\mcF^\bullet)$ is local with respect to shifts of maps $\Lambda(\mcU_\bullet)\ra\Lambda(X)$ induced by bounded hypercoverings $\mcU_\bullet\ra X$. 
  
Fix such a hypercovering $\mcU_\bullet\ra X$. From the isomorphisms \[H_p\RHom(\Lambda(\mcU_\bullet\times\square^q),C^{\et}\mcF)\cong H_p\RHom(\Lambda(X\times\square^q),C^{\et}\mcF)\] valid for all $p,q$ and a spectral sequence argument (see \cite[Theorem 0.3]{sv-bk}) we deduce that \[\catD(\Lambda(X)[n],\Sing^{\B^1}{C^{\et}\mcF})\cong\catD(\Lambda(\mcU_\bullet)[n],\Sing^{\B^1}C^{\et}\mcF)\] for all $n$ as wanted.
\end{proof}

We now investigate some of the natural Quillen functors which arise between the model categories introduced so far. We start by considering the natural inclusion of categories $\RigSm\ra\wRigSm$

\begin{prop}\label{Dff}
The inclusion $\RigSm\hookrightarrow{\wRigSm}$ induces a Quillen adjunction
\[
\adj{\iota^*}{\Ch_{{\et},\B^1}\Psh(\RigSm)}{\Ch_{{\et},\B^1}\Psh({\wRigSm})}{\iota_*}. 
\]
Moreover, the functor $\LL\iota^*\colon\RigDA_{\et}^{\eff}(K)\ra{\wRigDA_{\et,\B^1}^{\eff}}(K)$ is fully faithful.
\end{prop}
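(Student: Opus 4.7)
The proof of this proposition naturally splits into two independent tasks: first establishing the Quillen adjunction, then showing full faithfulness of the derived left adjoint.

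For the Quillen adjunction, note that $\iota\colon\RigSm\hookrightarrow\wRigSm$ is a fully faithful embedding, so at the level of presheaves we have the standard adjoint pair with $\iota^*$ the left Kan extension (so $\iota^*\Lambda(X)=\Lambda(X)$ for $X\in\RigSm$) and $\iota_*$ the restriction along $\iota$. Since fibrations and trivial fibrations in the projective model structure on complexes of presheaves are defined sectionwise, $\iota_*$ preserves both; hence $(\iota^*,\iota_*)$ is already Quillen on the projective structures. To descend to the $(\et,\B^1)$-localizations, by general left Bousfield localization theory it suffices to verify that $\iota^*$ sends each generator of $S_{(\et,\B^1)}$ on $\RigSm$ to a weak equivalence in $\Ch_{\et,\B^1}\Psh(\wRigSm)$. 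These generators are (shifts of) maps $\Lambda(X\times\B^1)\to\Lambda(X)$ for $X\in\RigSm$ and of $\Lambda(\mcU_\bullet)\to\Lambda(X)$ for bounded \'etale hypercoverings (Proposition \ref{bddhypercov}); under $\iota^*$ they map to analogous maps in $\wRigSm$ which, since $\RigSm\subset\wRigSm$ with the induced \'etale topology, lie again in the generating class on the target.

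For the full faithfulness of $\LL\iota^*$, by adjunction this is equivalent to showing that the unit $\mcF\to\RR\iota_*\LL\iota^*\mcF$ is an isomorphism in $\RigDA_{\et}^{\eff}(K)$ for every $\mcF$. Since $\LL\iota^*$ and $\RR\iota_*$ both preserve sums and triangles, and $\RigDA_{\et}^{\eff}(K)$ is generated by $\{\Lambda(X)\}_{X\in\RigSm}$, it suffices to check the unit on these generators. Representables are projectively cofibrant, so $\LL\iota^*\Lambda(X)=\Lambda(X)$ viewed as a presheaf on $\wRigSm$, and by Corollary \ref{replacement} a functorial fibrant replacement in $\Ch_{\et,\B^1}\Psh(\wRigSm)$ is given by $\Sing^{\B^1}C^{\et}_{\wRigSm}\Lambda(X)$. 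The derived right adjoint $\RR\iota_*$ is obtained by restricting this replacement along $\iota$, so the problem reduces to showing that this restriction is $(\et,\B^1)$-equivalent in $\Ch\Psh(\RigSm)$ to $\Sing^{\B^1}C^{\et}_{\RigSm}\Lambda(X)$.

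Since $Y\times\B^n\in\RigSm$ whenever $Y\in\RigSm$, the $\Sing^{\B^1}$ construction is compatible with the inclusion of sites, so by the characterization of $C^{\et}$ in Remark \ref{Cet} the problem reduces to the comparison $\HH^i_{\et,\wRigSm}(Y,\Lambda(X))=\HH^i_{\et,\RigSm}(Y,\Lambda(X))$ for all $Y\in\RigSm$ and all $i$. The key observation, which I anticipate to be the main step, is that every \'etale map $U\to Y$ in $\wRigSm$ with $Y$ a (reduced) rigid variety has its source $U$ in $\RigSm$: locally an \'etale map is a composition of rational embeddings followed by a finite \'etale map, and both operations preserve the tft Tate condition when the base is tft. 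Iterating over a bounded hypercovering (and using that fibre products of rigid varieties over a rigid base remain rigid), the category of bounded \'etale hypercoverings of $Y$ in $\wRigSm$ coincides with the one in $\RigSm$. Applying the hypercovering formula for \'etale cohomology established in the proof of Proposition \ref{bddhypercov}, the two cohomology groups are computed by the same colimit and therefore agree, yielding the desired weak equivalence and completing the argument.
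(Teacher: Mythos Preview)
Your argument is correct and rests on the same key observation as the paper's proof: \'etale maps in $\wRigSm$ with target a rigid variety have rigid source, so the small \'etale sites over objects of $\RigSm$ agree in the two categories, which is exactly the statement that $\iota_*$ commutes with \'etale sheafification. The paper, however, organizes the argument more directly. Rather than reducing to representable generators, it takes an arbitrary cofibrant $\mcF$, notes that $\iota_*\iota^*\mcF=\mcF$ (by full faithfulness of $\iota$), and then observes that $\iota_*$ commutes with both $\Sing^{\B^1}$ and with $C^{\et}$ (the latter because it commutes with \'etale sheafification); hence $\mcF\to\iota_*\Sing^{\B^1}C^{\et}(\iota^*\mcF)$ is an $(\et,\B^1)$-weak equivalence directly.

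Your detour through generators forces you to assert that $\RR\iota_*$ preserves small direct sums, which you state without justification. This is true (since $\iota_*$ is restriction, and the fibrant replacement $\Sing^{\B^1}C^{\et}$ commutes with sums by the finite cohomological dimension argument in the proof of Proposition~\ref{genRigDA}), but it is an extra step that the paper's formulation avoids entirely. For the first claim, the paper simply invokes \cite[Proposition 4.4.46]{ayoub-th2}; your hands-on verification via generating weak equivalences is a fine substitute.
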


\begin{proof}
The first claim is a special instance of \cite[Proposition 4.4.45]{ayoub-th2}.

We prove the second claim by showing that $\RR\iota_*\LL\iota^*$ is isomorphic to the identity. Let $\mcF $ be a cofibrant object in $\Ch_{{\et},\B^1}\Psh(\RigSm)$. We need to prove that the map $\mcF\ra\iota_*(\Sing^{\B^1}C^{\et}(\iota^*\mcF))$ is an $(\et,\B^1)$-weak equivalence. Since $\iota_*$ commutes with $\Sing^{\B^1}$ we are left to prove that the map $\iota_*\iota^*\mcF=\mcF\ra\iota_*C^{\et}(\iota^*\mcF)$ is an $\et$-weak equivalence. This follows since $\iota_*$ preserves $\et$-weak equivalences, as it commutes with $\et$-sheafification.
\end{proof}

We are now interested in finding 
 a convenient set of compact objects which generate the  categories above, as triangulated categories with small sums. This will simplify many definitions and proofs in what follows.

\begin{prop}\label{genRigDA}
The category $\RigDA_{\et}^{\eff}(K)$ [resp.  $\wRigDA_{\et,\B^1}^{\eff}(K)$] 
is compactly generated (as a triangulated category with small sums) by motives $\Lambda(X)$ associated to rigid varieties $X$ which are in $\RigSm^{\gc}$ [resp. $\wRigSm^{\gc}$].
\end{prop}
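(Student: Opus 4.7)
The plan is to reduce the claim to compact generation in the homotopy category of the good-coordinate subcategory, and then transport the result via the Quillen equivalence from the corollary after Proposition \ref{locsets}. First, in the unlocalized projective model category $\Ch\Psh(\cat')$ for $\cat'$ equal to $\RigSm^{\gc}$ or $\wRigSm^{\gc}$, the shifts of the representables $\Lambda(X)$, $X\in\cat'$, form a set of compact generators of $\catD(\Psh(\cat'))$: compactness is immediate from the identification $\Hom(\Lambda(X)[n],\mcF)=H_n\mcF(X)$ and the fact that homology of presheaves commutes with small sums, while generation is clear because a complex whose homology presheaves all vanish is quasi-isomorphic to zero.

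The harder step is to propagate compact generation through the Bousfield localization at $S_{(\et,\B^1)}$. The key point is that this localization is realized at a \emph{set} of maps (the $\et$-part by \cite[Proposition 4.4.32]{ayoub-th2}, as used in the proof of Proposition \ref{locsets}), and that the sources and targets of these generating maps are themselves compact in the underlying derived category: they are either shifts of $\Lambda(\B^1\times X)\ra\Lambda(X)$ or bounded Cech complexes of representables as in Proposition \ref{bddhypercov}. Under these hypotheses, the fibrant replacement in the localized model structure can be built by the small object argument and commutes, up to weak equivalence, with small sums; this implies that the representables $\Lambda(X)$ for $X\in\cat'$ remain compact in the $(\et,\B^1)$-localized homotopy category. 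Generation is inherited, since an $(\et,\B^1)$-local complex $\mcF$ on which every $\Lambda(X)[n]$ with $X\in\cat'$ acts trivially satisfies $H_n\mcF(X)=0$ for all such $X$ and $n$, and hence is quasi-isomorphic to zero.

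Finally, the Quillen equivalences from the corollary after Proposition \ref{locsets}, induced by the inclusions $\RigSm^{\gc}\hookrightarrow\RigSm$ and $\wRigSm^{\gc}\hookrightarrow\wRigSm$, transport compact generators to compact generators, sending $\Lambda(X)$ for $X$ in the good-coordinate subcategory to the motive of the same variety viewed in $\RigDA_{\et}^{\eff}(K)$ (resp.\ in $\wRigDA_{\et,\B^1}^{\eff}(K)$). This yields the desired statement. The main obstacle throughout is the compactness step under Bousfield localization; however, this is a standard consequence of the fact that the localized model structure is cellular and left proper, which was already invoked in Proposition \ref{locsets}.
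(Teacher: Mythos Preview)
Your overall strategy---reduce to the good-coordinate subcategory and argue that representables survive Bousfield localization at a set of maps with compact sources and targets---is sound, but the final sentence contains a genuine misjustification. Cellularity and left properness only guarantee the \emph{existence} of the Bousfield localization; this is exactly what Proposition~\ref{locsets} uses them for, and it says nothing about compactness being preserved. The correct reason, which you in fact state two sentences earlier, is that when the localizing set consists of maps between compact objects, the local replacement built by the small object argument is a transfinite composition of pushouts along such maps and hence commutes with filtered homotopy colimits. You should drop the last sentence or replace it with this. There is also a small slip on the \'etale side: Proposition~\ref{bddhypercov} speaks of bounded \emph{hypercoverings}, not \v{C}ech nerves, and for $\Lambda(\mcU_\bullet)$ to be compact you need each level $\mcU_n$ to be a \emph{finite} disjoint union; either argue cofinality of such hypercoverings over a quasi-compact $X$, or invoke Proposition~\ref{cech} instead.

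The paper's own proof takes a more concrete route. Rather than the abstract compactness-preservation principle for Bousfield localizations, it uses the explicit description $C^{\et,\B^1}\cong\Sing^{\B^1}\circ C^{\et}$ of Corollary~\ref{replacement}. Since $\Sing^{\B^1}$ visibly commutes with small sums, compactness of $\Lambda(X)$ reduces to showing that a small direct sum of $\et$-local complexes is again $\et$-local; this is checked first for finite sums and then extended to arbitrary sums via finite \'etale cohomological dimension (through \cite[Proposition 4.5.62]{ayoub-th2}). Your abstract argument is cleaner and more portable, while the paper's argument is hands-on and makes the dependence on finite cohomological dimension explicit---note that both routes ultimately rely on it, yours implicitly through the proof of Proposition~\ref{bddhypercov}.
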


\begin{proof}
The statements are analogous, and we only consider the case of $\wRigDA_{\et,\B^1}^{\eff}(K)$. It is clear that the set of functors $H_i\RHom(\Lambda(X),\cdot)$ detect quasi-isomorphisms between \'etale local objects, by letting $X$ vary in $\wRigSm^{\gc}$ and $i$ vary in $\Z$.  We are left to prove that the motives $\Lambda(X)$ with $X$ in $\wRigSm^{\gc}$ are compact. Since $\Lambda(X)$ is compact in $\catD(\Psh(\wRigSm^{\gc}))$ and $\Sing^{\B^1}$ commutes with direct sums, it suffices to prove that if $\{\mcF_i\}_{i\in I}$ is a family of $\et$-local complexes, then also $\bigoplus_i\mcF_i$ is $\et$-local. 
If $I$ is finite, the claim follows from the isomorphisms $H_{-n}\RHom(X,\bigoplus_i\mcF_i)\cong\bigoplus_i \HH^{n}(X,\mcF_i)\cong\HH^n(X,\bigoplus_i\mcF_i)$. A coproduct over an arbitrary family is a filtered colimit of finite coproducts, hence the claim follows from \cite[Proposition 4.5.62]{ayoub-th2}.
\end{proof}

\begin{rmk}
The above proof shows that the statement of Proposition \ref{genRigDA} holds true without any assumptions on $\Lambda$  under the condition that all varieties $X$ have  finite cohomological dimension with respect to the \'etale topology.
\end{rmk}

We now introduce the category of motives associated to smooth perfectoid spaces, using the same formalism as before. In this category, the canonical choice of the ``interval object'' for defining homotopies is the perfectoid ball $\widehat{\B}^1$.

\begin{exm}
 The perfectoid ball $\widehat{\B}^1=\Spa(K\langle \chi^{1/p^\infty}\rangle,K^\circ\langle \chi^{1/p^\infty}\rangle)$ is an interval object with respect to the natural multiplication $\mu$ and maps $i_0$ and  $i_1$ induced by the substitution  $\chi^{1/p^h}\mapsto0 $ and  $\chi^{1/p^h}\mapsto1$ respectively.
\end{exm}

The perfectoid variety $\widehat{\B}^1$ naturally lives in $\wRigSm$ and has good coordinates by Proposition \ref{B1perf}. It can therefore be used to define another homotopy category out of $\Ch\Psh(\wRigSm)$ and  $\Ch\Psh(\wRigSm^{\gc})$.

\begin{cor}
The following pairs of model categories are Quillen equivalent.
\begin{itemize}
\item $\Ch_{\et}\Psh(\PerfSm)$ and $\Ch_{\et}\Psh(\PerfSm^{\gc})$.
\item $\Ch_{\et,\widehat{\B}^1}\Psh(\PerfSm)$ and $\Ch_{\et,\widehat{\B}^1}\Psh(\PerfSm^{\gc})$.
\item $\Ch_{\et}\Psh(\wRigSm)$ and $\Ch_{\et}\Psh(\wRigSm^{\gc})$.
\item $\Ch_{\et,\widehat{\B}^1}\Psh(\wRigSm)$ and $\Ch_{\et,\widehat{\B}^1}\Psh(\wRigSm^{\gc})$.
\end{itemize}
\end{cor}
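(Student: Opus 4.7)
The statement is a direct analogue of the earlier corollary comparing $\Ch_{\eta}\Psh(\RigSm)$ with $\Ch_{\eta}\Psh(\RigSm^{\gc})$ and $\Ch_{\eta}\Psh(\wRigSm)$ with $\Ch_{\eta}\Psh(\wRigSm^{\gc})$, and I plan to prove it the same way: by invoking Proposition \ref{locsets} four times, with appropriate choices of site, subcategory, and interval object. So my plan is essentially a bookkeeping task of checking that the hypotheses of that proposition are met.

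\smallskip

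The plan is as follows. For the first two equivalences, I take $(\cat,\tau,I) = (\PerfSm,\et,\widehat{\B}^1)$ with $\cat' = \PerfSm^{\gc}$. I need to verify: (a) $\PerfSm$ has finite direct products; (b) every object of $\PerfSm$ admits an \'etale covering by objects of $\PerfSm^{\gc}$; (c) $\widehat{\B}^1$ lies in $\PerfSm^{\gc}$. For (a), finite products in $\PerfSm$ exist using the completed tensor product of perfectoid affinoid $K$-algebras and gluing, as recalled in the remark following Proposition \ref{Spaadj}. For (b), by Corollary \ref{tilt} every object of $\PerfSm$ is locally \'etale over $\widehat{\T}^N$, and via Proposition \ref{liftmap} any such local model is of the form $X_0\times_{\T^N}\widehat{\T}^N$ for $X_0\to\T^N$ a composition of rational embeddings and finite \'etale maps, i.e.\ lies in $\PerfSm^{\gc}$. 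For (c), this is exactly the content of Proposition \ref{B1perf}: $\widehat{\B}^1 = \varprojlim_h X_h$ with $X_0 = U((\upsilon-1)\mid\pi)\hookrightarrow\T^1$, which is a good-coordinate presentation.

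\smallskip

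For the last two equivalences, I take $(\cat,\tau,I) = (\wRigSm,\et,\widehat{\B}^1)$ with $\cat' = \wRigSm^{\gc}$. Condition (b) is immediate since $\wRigSm$ is defined as the full subcategory of adic spaces \emph{locally isomorphic} to objects in $\wRigSm^{\gc}$. Condition (c) again follows from Proposition \ref{B1perf} together with the observation right after its proof. For (a), finite products of good-coordinate objects $X = X_0\times_{\T^N}\widehat{\T}^N$ and $Y = Y_0\times_{\T^M}\widehat{\T}^M$ sit naturally inside $\wRigSm^{\gc}$ via the fiber-product construction of Proposition \ref{fibprod}, and general products are obtained by gluing; more concretely, the product of the rigid models $X_0\times Y_0$ still factors through a product of tori, so the good-coordinate presentation is preserved.

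\smallskip

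With the three hypotheses verified in each case, Proposition \ref{locsets}(2) delivers the Quillen equivalences $\Ch_{\et}\Psh(\cat)\simeq\Ch_{\et}\Psh(\cat')$ in the first and third equivalences of the corollary, and Proposition \ref{locsets}(3) delivers the $(\et,\widehat{\B}^1)$-localized Quillen equivalences in the second and fourth. No step here looks genuinely difficult; the only mildly subtle points are checking that $\widehat{\B}^1$ has good coordinates (already handled) and that the good-coordinate subcategory is stable enough under products for the proposition to apply, which is what Proposition \ref{fibprod} and Corollary \ref{tilt} are there to guarantee.
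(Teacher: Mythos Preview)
Your proposal is correct and follows exactly the paper's approach: the paper's proof is a single sentence invoking Proposition~\ref{locsets} with $(\cat,\tau,I)=(\PerfSm,\et,\widehat{\B}^1)$ and $(\wRigSm,\et,\widehat{\B}^1)$, taking $\cat'$ to be the good-coordinates subcategory in each case. Your write-up simply makes explicit the hypothesis checks (finite products, coverings by $\cat'$, and $\widehat{\B}^1\in\cat'$ via Proposition~\ref{B1perf}) that the paper leaves implicit.
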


\begin{proof}
 It suffices to apply Proposition \ref{locsets} to the sites with interval $(\PerfSm,\et,\widehat{\B}^1)$ and $(\wRigSm,\et,\widehat{\B}^1)$ where $\cat'$ is in both cases the subcategory of affinoid rigid varieties with good coordinates.
\end{proof}

\begin{dfn}
For $\eta\in\{\et,\widehat{\B}^1,(\et,\widehat{\B}^1)\}$ we say that a map   in  $\Ch\Psh (\PerfSm)$ [resp. $\Ch\Psh (\wRigSm)$]  is a \emph{$\eta$-weak equivalence} if it is a weak equivalence in the model structure   $\Ch_{\eta}\Psh (\PerfSm)$ [resp. $\Ch_{\eta}\Psh (\wRigSm)$]. 
We say that an object $\mcF$ of the derived category $\catD=\catD(\Psh (\PerfSm))$ [resp. $\catD=\catD(\Psh (\wRigSm))$] is \emph{$\eta$-local} if the functor $\Hom_{\catD}(\cdot,\mcF)$ sends maps in $S_\eta$ (see Proposition \ref{locsets}) to isomorphisms. This amounts to say that $\mcF$ is  quasi-isomorphic to a $\eta$-fibrant object.  
The triangulated homotopy category associated to the localization $\Ch_{\et,\widehat{\B}^1}\Psh (\PerfSm)$ [resp. $\Ch_{\et,\widehat{\B}^1}\Psh (\wRigSm)$]   will be denoted by $\PerfDA_{\et}^{\eff}(K,\Lambda)$ [resp.  $\wRigDA^{\eff}_{\et,\widehat{\B}^1}(K,\Lambda)$].   
We will omit $\Lambda$ whenever the context allows it. The image of a variety $X$ in one of these categories will be denoted by $\Lambda(X)$. 
\end{dfn}

We recall one of the main results of Scholze \cite{scholze}, reshaped in our derived homotopical setting. It will constitute the bridge to pass from characteristic $p$ to characteristic $0$. As summarized in Theorem \ref{tilteq} there is an equivalence of categories between perfectoid affinoid $K$-algebras and perfectoid affinoid $K^\flat$-algebras, extending to an equivalence between the categories of perfectoid spaces over $K$ and over $K^\flat$ (see \cite[Proposition 6.17]{scholze}). We refer to this equivalence as the \emph{tilting equivalence}.

\begin{prop}\label{tiltingeq}
 There exists an equivalence of triangulated categories
\[
 \adj{(-)^\sharp}{\PerfDA_{\et}^{\eff}(K^\flat)}{\PerfDA_{\et}^{\eff}(K)}{(-)^\flat}
\]
induced by the tilting equivalence. 
\end{prop}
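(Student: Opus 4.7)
The plan is to upgrade Scholze's tilting equivalence from the level of affinoid perfectoid $K$-algebras to the level of the associated model categories of motivic presheaves, and then to take homotopy categories.

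First, I would recall that by \cite[Theorem 5.2, Lemma 6.2]{scholze} the tilting operation $(R,R^+)\mapsto(R^\flat,R^{\flat+})$ is an equivalence between the categories of perfectoid affinoid $K$-algebras and perfectoid affinoid $K^\flat$-algebras. Combined with Proposition \ref{Spaadj} this gives an equivalence of categories of affinoid perfectoid spaces $\Spa(R,R^+)\mapsto\Spa(R^\flat,R^{\flat+})$. By \cite[Corollary 6.8]{scholze} (already recorded in the excerpt) this equivalence sends $\widehat{\B}^N_K$ to $\widehat{\B}^N_{K^\flat}$ and $\widehat{\T}^N_K$ to $\widehat{\T}^N_{K^\flat}$. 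By Corollary \ref{tilt}, an object of $\PerfSm/K$ is one that is locally \'etale over $\widehat{\T}^N_K$; using that \'etale maps tilt to \'etale maps, the tilting thus preserves $\PerfSm^{\gc}$ and more generally induces an equivalence $\flat\colon\PerfSm/K\xrightarrow{\sim}\PerfSm/K^\flat$.

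Next I would verify the compatibility with the \'etale topology. By \cite[Theorem 7.12]{scholze} the tilting operation induces an equivalence of \'etale sites: a morphism of perfectoid spaces is \'etale (resp.\ a covering) if and only if its tilt is, and finite \'etale covers and rational subdomains correspond under $\flat$. Hence the equivalence of underlying categories above promotes to an equivalence of sites with interval $(\PerfSm/K,\et,\widehat{\B}^1)\simeq(\PerfSm/K^\flat,\et,\widehat{\B}^1)$.

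Then I would transport this equivalence of sites with interval to an equivalence at the level of model structures. An equivalence of categories $\flat\colon\cat\xrightarrow{\sim}\cat^\flat$ induces an equivalence of presheaf categories $\Psh(\cat)\simeq\Psh(\cat^\flat)$, hence of chain complexes $\Ch\Psh(\cat)\simeq\Ch\Psh(\cat^\flat)$, which is automatically a Quillen equivalence for the projective model structures. Because the generating sets $S_\et$ and $S_{\widehat{\B}^1}$ (as in Proposition \ref{locsets}) correspond to each other under $\flat$ by the two previous paragraphs, the left Bousfield localization is preserved, producing a Quillen equivalence
\[
\adj{(-)^\sharp}{\Ch_{\et,\widehat{\B}^1}\Psh(\PerfSm/K^\flat)}{\Ch_{\et,\widehat{\B}^1}\Psh(\PerfSm/K)}{(-)^\flat}.
\]
Passing to homotopy categories yields the desired equivalence $\PerfDA^{\eff}_\et(K^\flat)\simeq\PerfDA^{\eff}_\et(K)$.

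The only nontrivial input is the compatibility of tilting with the \'etale topology and with the chosen interval object, but both are exactly what Scholze's foundational results provide, so no genuine obstacle arises here; the work is purely the formal transport of an equivalence of sites with interval through the Bousfield-localization machinery of Proposition \ref{locsets}.
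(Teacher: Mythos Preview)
Your proof is correct and follows essentially the same approach as the paper: establish that tilting induces an equivalence of sites with interval $(\PerfSm/K,\et,\widehat{\B}^1)\cong(\PerfSm/K^\flat,\et,\widehat{\B}^1)$, then deduce the equivalence of homotopy categories. The paper compresses this into three sentences, while you spell out the intermediate steps (equivalence of affinoid perfectoid spaces, preservation of $\PerfSm$ via Corollary~\ref{tilt}, compatibility with the \'etale topology, transport through the Bousfield localization of Proposition~\ref{locsets}); the content is the same.
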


\begin{proof}
 The tilting equivalence (see Theorem \ref{tilteq}) induces an equivalence of the \'etale sites on perfectoid spaces over $K$ and over $K^\flat$ (see \cite[Theorem 7.12]{scholze}). 
  Moreover $(\widehat{\T}^n)^\flat=\widehat{\T}^n$ and $(\widehat{\B}^n)^\flat=\widehat{\B}^n$. It therefore induces an equivalence of sites with interval $(\PerfSm/K,\et,\widehat{\B}^1)\cong(\PerfSm/K^\flat,\et,\widehat{\B}^{1})$ hence the claim.
\end{proof}

We now investigate the triangulated functor between the categories of motives induced by the natural embedding $\PerfSm\ra\wRigSm$ in the same spirit of what we did previously in Proposition \ref{Dff}.

\begin{prop}\label{Dff2}
The inclusion $\PerfSm\hookrightarrow{\wRigSm}$ induces a Quillen adjunction
\[
\adj{j^*}{\Ch_{{\et},\widehat{\B}^1}\Psh(\PerfSm)}{\Ch_{{\et},\widehat{\B}^1}\Psh({\wRigSm})}{j_*}. 
\]
Moreover, the functor $\LL j^*\colon\PerfDA_{\et}^{\eff}(K)\ra{\wRigDA^{\eff}_{\et,\widehat{\B}^1}}(K)$ is fully faithful.
\end{prop}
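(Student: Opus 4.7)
The plan is to adapt the two-step strategy of Proposition \ref{Dff} verbatim, with the inclusion $j\colon \PerfSm \hookrightarrow \wRigSm$ replacing $\iota\colon \RigSm \hookrightarrow \wRigSm$ and the perfectoid ball $\widehat{\B}^1$ replacing the Tate ball $\B^1$. For the Quillen adjunction I would invoke \cite[Proposition 4.4.46]{ayoub-th2}: the inclusion $j$ is continuous for the \'etale topology (any \'etale cover of a perfectoid space in $\wRigSm$ remains in $\PerfSm$, since \'etale morphisms with perfectoid target are perfectoid) and it tautologically sends the chosen interval $\widehat{\B}^1$ to $\widehat{\B}^1$. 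Ayoub's machinery then produces the Quillen adjunction on the $(\et,\widehat{\B}^1)$-localized model structures.

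For fully faithfulness of $\LL j^*$ I would show that the unit $\mcF \to \RR j_*\LL j^*\mcF$ is an isomorphism for every cofibrant $\mcF$ in $\Ch_{\et,\widehat{\B}^1}\Psh(\PerfSm)$. To this end, I first set up the perfectoid analogues of Definition \ref{cocu}, Proposition \ref{sing} and Corollary \ref{replacement}: the cocubical object $\widehat{\square}^n = \widehat{\B}^n$ lives entirely inside $\PerfSm^{\gc}$ (this is Proposition \ref{B1perf} and its straightforward generalization to higher $n$), so the functor $\Sing^{\widehat{\B}^1}$ is well defined on $\Ch\Psh(\PerfSm)$ and on $\Ch\Psh(\wRigSm)$, and the same cocubical arguments establish that $\Sing^{\widehat{\B}^1}\circ C^{\et}$ is an $(\et,\widehat{\B}^1)$-fibrant replacement in both cases.

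Having set this up, the proof concludes in two lines: since $j$ is a fully faithful embedding, one has $j^* j_* = \id$ and $j_* j^* \mcF = \mcF$; moreover $j_*$ is just restriction of presheaves along a full subcategory, so it commutes object-wise with $\Sing^{\widehat{\B}^1}$ (as $\widehat{\B}^n \in \PerfSm$ for all $n$). The problem therefore reduces to checking that the natural map $\mcF \to j_* C^{\et}(j^*\mcF)$ is an $\et$-weak equivalence.

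The one technical point meriting care — and the main obstacle I expect — is the justification that $j_*$ preserves $\et$-weak equivalences, which I would reduce to showing that $j_*$ commutes with \'etale sheafification. This rests precisely on the stability of \'etale maps: any \'etale cover of a perfectoid space in $\wRigSm$ is already a cover in $\PerfSm$, so the \'etale site on $\PerfSm$ is the restriction of the one on $\wRigSm$ in the strong sense that sheafification of a presheaf $\mcG$ on $\PerfSm$ can be computed either intrinsically or as $(j_*\mcG)^{\sh}|_{\PerfSm}$. Granted this, $j_*$ commutes with $H_i$ of \'etale sheafification hence preserves $\et$-local equivalences, and the statement follows exactly as in Proposition \ref{Dff}.
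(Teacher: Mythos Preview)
Your proposal is correct and follows essentially the same approach as the paper, which simply states that ``the result follows in the same way as Proposition \ref{Dff}''; you have spelled out precisely that argument with $\widehat{\B}^1$ in place of $\B^1$ and $j$ in place of $\iota$. One harmless slip: the identity $j^* j_* = \id$ you mention is not generally true (and not needed); only $j_* j^* \mcF = \mcF$, which follows from full faithfulness of $j$, is used.
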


\begin{proof}
The result follows in the same way as Proposition \ref{Dff}.
\end{proof}

Also in this framework, the $\widehat{\B}^1$-localization has a very explicit construction. Most proofs are  straightforward analogues of those relative to the $\B^1$-localizations, and will therefore be omitted.

\begin{dfn}
We denote by $\widehat{\square}$ the $\Sigma$-enriched cocubical object (see \cite[Appendix A]{ayoub-h2}) defined by putting $\widehat{\square}^n=\widehat{\B}^n=\Spa K\langle \tau^{1/p^\infty}_1,\ldots,\tau_n^{1/p^{1/\infty}}\rangle$ and considering the morphisms $d_{r,\epsilon}$ induced by the maps $\widehat{\B}^n\ra\widehat{\B}^{n+1}$ corresponding to the substitution $\tau_r^{1/p^h}=\epsilon$ for $\epsilon\in\{0,1\}$ and the morphisms $p_r$ induced by the projections $\widehat{\B}^n\ra\widehat{\B}^{n-1}$. 
For any complex of presheaves $\mcF$ we let $\Sing^{\widehat{\B}^1}\mcF$ be the total complex of the simple complex associated to $\uhom(\widehat{\square},\mcF)$. It sends the object $X$ to the total complex of the simple complex associated to $\mcF(X\times\widehat{\square})$.
\end{dfn}

\begin{prop}\label{singp}
Let $\mcF$ be a complex in $\Ch\Psh(\PerfSm)$ [resp. in $\Ch\Psh(\wRigSm)$]. 
Then $\Sing^{\widehat{\B}^1} \mcF$ is $\widehat{\B}^1$-local and $\widehat{\B}^1$-weak equivalent to $\mcF$. 
\end{prop}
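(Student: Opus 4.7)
The plan is to transcribe the proof of Proposition \ref{sing} verbatim, replacing the Tate ball $\B^1$ and the cocubical object $\square$ by their perfectoid analogues $\widehat{\B}^1$ and $\widehat{\square}$. First I would establish the perfectoid counterpart of Lemma \ref{chainhomo}: for any presheaf $\mcF$, the two evaluations $i_0^*, i_1^*\colon \mcF(\widehat{\square}\times\widehat{\B}^1) \to \mcF(\widehat{\square})$ induce chain-homotopic maps on both the simple and the normalized complexes. The geometric input is the canonical isomorphism $s_n\colon \widehat{\B}^{n+1} \xrightarrow{\sim} \widehat{\B}^n\times\widehat{\B}^1$ obtained by separating off the last perfectoid coordinate $\tau_{n+1}^{1/p^\infty}$; it satisfies $s_{n-1}^*\circ d_{r,\epsilon}^* = d_{r,\epsilon}^*\circ s_n^*$ for $1\leq r\leq n$ and $\epsilon\in\{0,1\}$, so the family $\{(-1)^n s_n^*\}$ provides the desired chain homotopy exactly as in the proof of Lemma \ref{chainhomo}.

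Granted this, the $\widehat{\B}^1$-locality of $\Sing^{\widehat{\B}^1}\mcF$ follows from \cite[Proposition 2.2.37]{ayoub-rig}, by the same mechanism used in Proposition \ref{sing}: a cubical object whose two face-extractions are chain-homotopic descends to an object local with respect to the interval $\widehat{\B}^1$. Nothing in this step depends on whether the interval is the Tate ball or the perfectoid ball.

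For the second half of the statement --- that $\Sing^{\widehat{\B}^1}\mcF$ is $\widehat{\B}^1$-weak equivalent to $\mcF$ --- I would fix $n$ and exhibit a homotopy inverse to the canonical map $a\colon\mcF\to\uhom(\Lambda(\widehat{\square}^n),\mcF)$. Letting $b$ be evaluation at the zero section one gets $b\circ a = \id$, while $a\circ b$ is homotopic to the identity via a map
\[
H\colon \Lambda(\widehat{\B}^1)\otimes\uhom(\Lambda(\widehat{\square}^n),\mcF)\to\uhom(\Lambda(\widehat{\square}^n),\mcF)
\]
built, through the adjunction $(\Lambda(\widehat{\B}^1)\otimes\cdot,\uhom(\Lambda(\widehat{\B}^1),\cdot))$, from the homothety $\widehat{\B}^1\times\widehat{\square}^n\to\widehat{\square}^n$. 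This homothety is a genuine map of perfectoid spaces because multiplication is well-defined on $K^\circ\langle\tau^{1/p^\infty}\rangle$; this is precisely the interval-object structure on $\widehat{\B}^1$ already recalled in the excerpt. The same statement holds in $\wRigSm$ since $\widehat{\B}^1$ lies in $\wRigSm^{\gc}$ by Proposition \ref{B1perf}.

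Finally, using that $\widehat{\B}^1$-weak equivalences are closed under filtered colimits and cones, the total complex associated to the simple complex of $\uhom(\Lambda(\widehat{\square}),\mcF)$ is $\widehat{\B}^1$-equivalent to the one associated to the constant cubical object $\mcF$, which is itself quasi-isomorphic to $\mcF$ (as in \cite[Corollary 2.5.36]{ayoub-rig}). The only point where one might worry is whether the cubical-object machinery of \cite[Appendix A]{ayoub-h2} applies equally well to $\widehat{\square}$, but this is purely formal: the $\Sigma$-enriched structure depends only on the product decompositions $\widehat{\B}^{n+1}\cong\widehat{\B}^n\times\widehat{\B}^1$ and the $0$- and $1$-sections, all of which exist in $\PerfSm$. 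Hence the proof of Proposition \ref{sing} applies mutatis mutandis.
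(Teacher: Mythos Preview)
Your proposal is correct and follows essentially the same approach as the paper. The paper's proof is even terser: it isolates the perfectoid analogue of Lemma~\ref{chainhomo} as a separate Lemma~\ref{chainhomop} and records the $i_0^*=i_1^*$ criterion as Lemma~\ref{i0=i1}, then simply says the weak-equivalence half ``follows in the same way as in the proof of Proposition~\ref{sing}''; your write-up just unpacks these references explicitly.
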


\begin{proof}
 The fact that $\Sing^{\widehat{\B}^1}\mcF$ is $\widehat{\B}^1$-local in $\Ch\Psh(\wRigSm)$ can be deduced by Lemma \ref{i0=i1} and Lemma \ref{chainhomop}. 
We are left to prove that $\Sing^{\widehat{\B}^1}\mcF$ is $\widehat{\B}^1$-weak equivalent to $\mcF$ and this follows in the same way as in the proof of Proposition \ref{sing}.
\end{proof}

The following lemmas are used in the previous proof.

\begin{lemma}\label{i0=i1}
 A presheaf $\mcF$ in  $\Psh(\Sm\Perf)$ [resp. in $\Psh (\wRigSm)$] is $\widehat{\B}^1$-invariant if and only if $i_0^*=i_1^*\colon\mcF(X\times\widehat{\B}^1)\ra\mcF(X)$ for all $X$ in $\Sm\Perf$ [resp. in $\wRigSm$].
\end{lemma}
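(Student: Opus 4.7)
The plan is to prove this standard interval-object lemma by unpacking the axioms that make $\widehat{\B}^1$ into an interval object: the multiplication $\mu\colon\widehat{\B}^1\times\widehat{\B}^1\to\widehat{\B}^1$ together with the sections $i_0,i_1$ satisfying $\mu\circ(\id,i_0)=i_0\circ p$ (where $p$ denotes the projection to the point) and $\mu\circ(\id,i_1)=\id$. I would argue in the category $\Psh(\wRigSm)$; the argument for $\Psh(\PerfSm)$ is identical since $\widehat{\B}^1$ lies in $\PerfSm^{\gc}$ and all objects used in the proof remain in the respective category.

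The ``only if'' direction is formal: if $p^*\colon\mcF(X)\to\mcF(X\times\widehat{\B}^1)$ is a bijection, then since $p\circ i_0=p\circ i_1=\id_X$ both $i_0^*$ and $i_1^*$ are the two-sided inverse of $p^*$, hence coincide.

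For the ``if'' direction, assume $i_0^*=i_1^*$ on $\mcF(X\times\widehat{\B}^1)$ for every $X$. The identity $p\circ i_0=\id_X$ implies $i_0^*\circ p^*=\id$, so $p^*$ is injective and $i_0^*$ is a left inverse. It remains to show $p^*$ is surjective. Given $s\in\mcF(X\times\widehat{\B}^1)$, consider the section $(\id_X\times\mu)^{*}s\in\mcF(X\times\widehat{\B}^1\times\widehat{\B}^1)$, which I view as an element of $\mcF(Y\times\widehat{\B}^1)$ for $Y=X\times\widehat{\B}^1$, with the last factor serving as the interval. Pulling back along the section $i_1$ on this last factor gives, by $\mu\circ(\id,i_1)=\id$, the element $s$ itself; pulling back along $i_0$ gives, by $\mu\circ(\id,i_0)=i_0\circ p$, the element $p_Y^{*}(i_0^{*}s)$, where $p_Y$ is the projection $Y=X\times\widehat{\B}^1\to X$ followed by the identity on the first coordinate. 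Applying the hypothesis $i_0^*=i_1^*$ to the element $(\id_X\times\mu)^{*}s$ yields $s=p_Y^{*}(i_0^{*}s)$, exhibiting $s$ in the image of $p^*$.

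I do not anticipate any real obstacle; the only point to be careful about is keeping the two copies of $\widehat{\B}^1$ straight and verifying the two identities $\mu\circ(\id,i_\epsilon)$ from the interval-object axioms recalled in the remark before Example on $\widehat{\B}^1$. Since the argument uses only the existence of products $X\times\widehat{\B}^1\times\widehat{\B}^1$ and the interval-object structure on $\widehat{\B}^1$, it works uniformly on $\Psh(\PerfSm)$ and $\Psh(\wRigSm)$, thereby proving both versions of the lemma.
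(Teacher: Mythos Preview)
Your argument is correct and is exactly the standard interval-object proof: the paper's own proof consists of the single sentence ``This follows in the same way as \cite[Lemma 2.16]{mvw}'', and what you have written is precisely that argument spelled out for $\widehat{\B}^1$. The only cosmetic blemish is the phrase ``$p_Y$ is the projection $Y=X\times\widehat{\B}^1\to X$ followed by the identity on the first coordinate'', which is garbled; you simply mean that $p_Y\colon X\times\widehat{\B}^1\to X$ is the projection, so that $p_Y^*(i_0^*s)=p^*(i_0^*s)$.
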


\begin{proof}
 This follows in the same way as \cite[Lemma 2.16]{mvw}.
\end{proof}

\begin{lemma}\label{chainhomop}
For any presheaf $\mcF$ the two maps of cubical sets $i_0^*,i_1^*\colon\mcF(\widehat{\square}\times\widehat{\B}^1)\ra\mcF(\widehat{\square})$ induce chain homotopic maps on the associated simple and normalized complexes. 
\end{lemma}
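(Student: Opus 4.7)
The plan is to transcribe the proof of Lemma \ref{chainhomo} into the perfectoid setting, since the argument there is purely formal once one has the product decomposition of the cocubical pieces. The only new geometric input needed is an isomorphism $\hat{s}_n\colon \widehat{\B}^{n+1}\xrightarrow{\sim}\widehat{\B}^n\times\widehat{\B}^1$ separating the last coordinate. This exists because the perfectoid affinoid algebra $K\langle\tau_1^{1/p^\infty},\ldots,\tau_{n+1}^{1/p^\infty}\rangle$ agrees with the completed tensor product $K\langle\tau_1^{1/p^\infty},\ldots,\tau_n^{1/p^\infty}\rangle\widehat{\otimes}_K K\langle\tau_{n+1}^{1/p^\infty}\rangle$, so the fiber product description of $\widehat{\B}^n\times\widehat{\B}^1$ in $\catV$ (see the remark on push-outs of perfectoid affinoid algebras following the definition of $\wRigSm$) yields the desired decomposition.

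With $\hat{s}_n^*\colon \mcF(\widehat{\square}^n\times\widehat{\B}^1)\to\mcF(\widehat{\square}^{n+1})$ in hand, one verifies from the explicit description in terms of the coordinates $\tau_r^{1/p^h}$ that $\hat{s}_{n-1}^*\circ d_{r,\epsilon}^*=d_{r,\epsilon}^*\circ\hat{s}_n^*$ for all $1\leq r\leq n$ and $\epsilon\in\{0,1\}$. The same manipulation performed in the proof of Lemma \ref{chainhomo} then gives
\[
\hat{s}_{n-1}^*\circ\sum_{r=1}^{n}(-1)^r(d_{r,1}^*-d_{r,0}^*)+\sum_{r=1}^{n+1}(-1)^r(d_{r,1}^*-d_{r,0}^*)\circ(-\hat{s}_n^*)=(-1)^n(i_1^*-i_0^*),
\]
so the collection $\{(-1)^n\hat{s}_n^*\}$ is a chain homotopy from $i_0^*$ to $i_1^*$ on $C^\sharp_\bullet\mcF(\widehat{\square}\times\widehat{\B}^1)\to C^\sharp_\bullet\mcF(\widehat{\square})$.

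Finally, for the passage to the simple and normalized subcomplexes, one invokes the same observation used at the end of Lemma \ref{chainhomo}: by \cite[Appendix A]{ayoub-h2} the inclusions $N_\bullet\mcF\hookrightarrow C_\bullet\mcF\hookrightarrow C^\sharp_\bullet\mcF$ admit functorial retractions, so the chain homotopy just constructed restricts to a chain homotopy on each of them. The main (and only) obstacle is really the geometric product decomposition of $\widehat{\B}^{n+1}$, but this is immediate from the perfectoid fiber product formalism recalled earlier, so no serious new work is required.
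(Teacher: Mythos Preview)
Your proposal is correct and follows exactly the approach the paper intends: the paper's own proof of this lemma is simply ``This follows in the same way as Lemma \ref{chainhomo},'' and you have faithfully transcribed that argument into the perfectoid setting, supplying the one geometric ingredient (the product decomposition $\widehat{\B}^{n+1}\cong\widehat{\B}^n\times\widehat{\B}^1$) that makes the formal computation go through.
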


\begin{proof}
This follows in the same way as Lemma \ref{chainhomo}.
\end{proof}

\begin{cor}\label{wreplacement}
Let $\mcF$ be in $\Ch\Psh(\PerfSm)$ [resp. in $\Ch\Psh(\wRigSm)$] the $(\et,\widehat{\B}^1)$-localization $C^{\et,\widehat{\B}^1}\mcF$ is quasi-isomorphic to $\Sing^{\widehat{\B}^1}(C^{\et}\mcF)$.
\end{cor}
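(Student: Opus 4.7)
The plan is to transcribe the argument of Corollary \ref{replacement} to the perfectoid setting, substituting $\widehat{\B}^1$ and $\widehat{\square}$ for $\B^1$ and $\square$. By Proposition \ref{singp}, the complex $\Sing^{\widehat{\B}^1}(C^{\et}\mcF)$ is $\widehat{\B}^1$-local and is $\widehat{\B}^1$-weakly equivalent to $C^{\et}\mcF$, which in turn is $\et$-weakly equivalent to $\mcF$. It therefore suffices to verify that $\Sing^{\widehat{\B}^1}(C^{\et}\mcF)$ remains $\et$-local; once this is established, it is an $(\et,\widehat{\B}^1)$-local replacement of $\mcF$ and hence quasi-isomorphic to $C^{\et,\widehat{\B}^1}\mcF$ by the usual Bousfield-localization argument.

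Next, using the description of $\et$-local objects supplied by Proposition \ref{bddhypercov} (and the Quillen equivalence with the $\mathrm{gc}$-version), it is enough to check locality against shifts of maps $\Lambda(\mcU_\bullet)\to\Lambda(X)$ coming from bounded \'etale hypercoverings $\mcU_\bullet\to X$ with $X$ in $\PerfSm^{\gc}$ (resp.\ $\wRigSm^{\gc}$). Fix such a hypercovering. Since $\widehat{\B}^1\in\PerfSm^{\gc}$ by Proposition \ref{B1perf}, each product $X\times\widehat{\square}^q$ and $\mcU_\bullet\times\widehat{\square}^q$ still lies in the relevant category, and the pullback $\mcU_\bullet\times\widehat{\square}^q\to X\times\widehat{\square}^q$ is again a bounded \'etale hypercovering. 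The $\et$-locality of $C^{\et}\mcF$ then yields isomorphisms
\[
H_p\RHom(\Lambda(X\times\widehat{\square}^q),C^{\et}\mcF)\cong H_p\RHom(\Lambda(\mcU_\bullet\times\widehat{\square}^q),C^{\et}\mcF)
\]
for every $p$ and $q$.

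Finally, applying $\Sing^{\widehat{\B}^1}$ amounts to taking the total complex of the simple complex built from $\uhom(\Lambda(\widehat{\square}),\cdot)$, and one passes from the level-wise isomorphism to
\[
\catD(\Lambda(X)[n],\Sing^{\widehat{\B}^1}C^{\et}\mcF)\cong\catD(\Lambda(\mcU_\bullet)[n],\Sing^{\widehat{\B}^1}C^{\et}\mcF)
\]
via the convergent spectral sequence of a double complex (as in \cite[Theorem 0.3]{sv-bk}), convergence being guaranteed by the fact that bounded hypercoverings have finite cohomological length. This is exactly the $\et$-locality required by Proposition \ref{bddhypercov}. The main point to take care of, and the only place where the argument genuinely diverges from the rigid case, is the verification that the ``$\widehat{\square}$-thickened'' hypercoverings $\mcU_\bullet\times\widehat{\square}^q\to X\times\widehat{\square}^q$ remain bounded \'etale hypercoverings in $\PerfSm^{\gc}$ and $\wRigSm^{\gc}$; this however follows because $\widehat{\B}^n$ is a fixed object of both sites and \'etale hypercoverings are stable under base change.
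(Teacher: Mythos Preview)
Your proof is correct and follows the same approach as the paper, which simply says ``This follows in the same way as Corollary \ref{replacement}.'' You have unwound that reference and written out the details: use Proposition \ref{singp} for $\widehat{\B}^1$-locality, then verify $\et$-locality via the bounded-hypercovering criterion of Proposition \ref{bddhypercov} together with the spectral-sequence argument, exactly mirroring the proof of Corollary \ref{replacement}. The only remark is that your final paragraph overstates the novelty: the stability of $\mcU_\bullet\times\widehat{\square}^q\to X\times\widehat{\square}^q$ under base change is no different from the corresponding step with $\square^q$ in the rigid case, so there is no genuine divergence from the argument of Corollary \ref{replacement}.
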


\begin{proof}
This follows in the same way as Corollary \ref{replacement}.
\end{proof}

\begin{prop}\label{genPerfDA} 
The category $\PerfDA_{\et}^{\eff}(K)$ [resp. $\wRigDA^{\eff}_{\et,\widehat{\B}^1}(K)$] is compactly generated (as a triangulated category with small sums) by motives $\Lambda(X)$ associated to rigid varieties $X$ which are in $\PerfSm^{\gc}$ [resp. $\wRigSm^{\gc}$].
\end{prop}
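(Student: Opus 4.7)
The plan is to mirror the proof of Proposition \ref{genRigDA} almost verbatim, once two basic ingredients are in place for the perfectoid/semi-perfectoid setting: a generation statement for local objects, and a finite étale cohomological dimension statement (which justifies stability of $\et$-local complexes under infinite direct sums). I would treat the two cases $\PerfDA_{\et}^{\eff}(K)$ and $\wRigDA_{\et,\widehat{\B}^1}^{\eff}(K)$ in parallel, since only the ambient site differs.

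First, I would establish generation. Arguing as in Proposition \ref{genRigDA}, the functors $H_i\RHom(\Lambda(X),\cdot)$ for $X$ varying in $\PerfSm^{\gc}$ (respectively $\wRigSm^{\gc}$) and $i\in\Z$ jointly detect quasi-isomorphisms between $(\et,\widehat{\B}^1)$-local complexes: by Corollary \ref{tilt} any perfectoid space is locally étale over $\widehat{\T}^N$, and by construction every object of $\wRigSm$ is locally isomorphic to an object of $\wRigSm^{\gc}$, so by étale descent (combined with Proposition \ref{bddhypercov} and Proposition \ref{cech}) a map between $\et$-local complexes which induces isomorphisms on sections over objects with good coordinates is a quasi-isomorphism after $\et$-sheafification; the further $\widehat{\B}^1$-localization preserves this property.

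Next, I would show compactness. For $X$ in $\PerfSm^{\gc}$ (resp. $\wRigSm^{\gc}$), $\Lambda(X)$ is already compact in $\catD(\Psh(\PerfSm))$ (resp. $\catD(\Psh(\wRigSm))$). By Corollary \ref{wreplacement}, the localization is computed as $\Sing^{\widehat{\B}^1}C^{\et}$, and $\Sing^{\widehat{\B}^1}$ manifestly commutes with direct sums. So it is enough to check that arbitrary direct sums of $\et$-local complexes remain $\et$-local. For a finite family this is immediate from the isomorphisms $H_{-n}\RHom(\Lambda(X),\bigoplus_i\mcF_i)\cong\bigoplus_i \HH^n_{\et}(X,\mcF_i)$. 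An arbitrary coproduct is a filtered colimit of its finite subcoproducts, and \cite[Proposition 4.5.62]{ayoub-th2} then upgrades the finite case to the infinite one provided each $X$ has finite $\et$-cohomological dimension with $\Lambda$-coefficients.

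The main technical obstacle is therefore the finite cohomological dimension input, which in the rigid case was \cite[1.2.19]{ayoub-rig}. For perfectoid and semi-perfectoid affinoid spaces $X=\varprojlim_h X_h$ this requires a separate argument: since $\Lambda\supset\Q$, I would reduce étale to Nisnevich cohomology exactly as in the proof of Proposition \ref{bddhypercov} (by the argument of \cite[Proposition 14.23]{mvw}), then invoke the equivalence of small étale sites between $X$ and the tower $\{X_h\}$ (via \cite[Proposition 7.16]{scholze} and $X\sim\varprojlim_h X_h$, together with Proposition \ref{liftmap}) to rewrite $\HH^n_{\et}(X,\mcF)\cong\varinjlim_h \HH^n_{\et}(X_h,\mcF)$, where each $X_h$ is a reduced affinoid rigid variety and hence has cohomological dimension bounded by \cite[1.2.19]{ayoub-rig} uniformly in $h$. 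This yields the vanishing of $\HH^n_{\et}(X,-)$ for $n$ large and completes the verification that finite cohomological dimension holds in $\PerfSm^{\gc}$ and $\wRigSm^{\gc}$, concluding the proof.
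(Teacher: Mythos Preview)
Your proposal is correct and follows essentially the same approach as the paper, whose proof simply reads ``This follows in the same way as Proposition \ref{genRigDA}.'' You have unpacked that reference faithfully: detection of quasi-isomorphisms by representables with good coordinates, compactness of $\Lambda(X)$ in the presheaf derived category, commutation of $\Sing^{\widehat{\B}^1}$ with sums (via Corollary \ref{wreplacement}), and stability of $\et$-local complexes under direct sums via \cite[Proposition 4.5.62]{ayoub-th2}.

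The one place where you go beyond the paper is in explicitly justifying finite \'etale cohomological dimension for objects of $\PerfSm^{\gc}$ and $\wRigSm^{\gc}$. The paper treats this as already absorbed into Proposition \ref{bddhypercov}, which is stated for $\wRigSm^{\gc}$ but whose proof only cites \cite[1.2.19]{ayoub-rig} for rigid varieties. Your colimit argument $\HH^n_{\et}(X,\mcF)\cong\varinjlim_h\HH^n_{\et}(X_h,\mcF)$ is a reasonable way to fill this in; just be aware that making it precise requires identifying the small \'etale topos of $X$ with the $2$-colimit of those of the $X_h$ (which does follow from Proposition \ref{liftmap} and \cite[Theorem 7.17]{scholze}), and checking that the restrictions of a big-site sheaf to the small sites are compatible under pullback along the maps $X\to X_h$. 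Alternatively, one can argue more directly that the Nisnevich cohomological dimension of $|X|$ is bounded by that of the spectral space $|X_0|$, in the spirit of \cite[1.2.19]{ayoub-rig}. Either route works, and the paper is silent on which one it intends.
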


\begin{proof}
This follows in the same way as Proposition \ref{genRigDA}.
\end{proof}

\begin{rmk}
The above proof shows that the statement of Proposition \ref{genPerfDA} holds true without any assumptions on $\Lambda$  under the condition that all varieties $X$ have  finite cohomological dimension with respect to the \'etale topology. 
\end{rmk}

So far, we have defined two different Bousfield localizations on complexes of presheaves on $\wRigSm$ according to two different choices of intervals: $\B^1$ and  $\widehat{\B}^1$. We remark that the second constitutes a further localization of the first, in the following sense.

\begin{prop}\label{b1>b1}
 $\B^1$-weak equivalences in $\Ch\Psh(\wRigSm)$ are $\widehat{\B}^1$-weak equivalences.
\end{prop}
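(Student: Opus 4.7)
The plan is to reduce the statement to showing that each generator of the $\B^{1}$-localization is already a $\widehat{\B}^{1}$-weak equivalence. Both $\Ch_{\B^{1}}\Psh(\wRigSm)$ and $\Ch_{\widehat{\B}^{1}}\Psh(\wRigSm)$ are left Bousfield localizations of the same projective model structure, so for the containment of weak-equivalence classes it suffices to check that, for every $Y\in\wRigSm$ and every $i\in\Z$, the map $\Lambda(\B^{1}\times Y)[i]\to\Lambda(Y)[i]$ is a $\widehat{\B}^{1}$-weak equivalence; equivalently, that for every $\widehat{\B}^{1}$-local complex of presheaves $\mcF$ the projection $p\colon\B^{1}\times Y\to Y$ induces a quasi-isomorphism $p^{*}\colon\mcF(Y)\to\mcF(\B^{1}\times Y)$.

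The key construction is a ``scaling'' morphism
\[
m\colon\B^{1}\times\widehat{\B}^{1}\to\B^{1}
\]
induced at the level of affinoid algebras by $K\langle\chi\rangle\to K\langle\chi,\tau^{1/p^{\infty}}\rangle$, $\chi\mapsto\chi\tau$. This is well defined because $\chi\tau$ is power-bounded in the target. It is worth noting that no analogous morphism $\B^{1}\times\widehat{\B}^{1}\to\widehat{\B}^{1}$ exists, since $\chi\tau$ admits no compatible system of $p$-power roots; nevertheless, the ``lopsided'' map $m$ is exactly what is needed. From $m$ I obtain
\[
H\colon\B^{1}\times Y\times\widehat{\B}^{1}\to\B^{1}\times Y,\qquad(\chi,y,\tau)\mapsto(\chi\tau,y),
\]
and record that $H\circ i_{0}=s\circ p$ and $H\circ i_{1}=\id$, where $s\colon Y\to\B^{1}\times Y$ is the zero section and $i_{\varepsilon}\colon \B^{1}\times Y\hookrightarrow \B^{1}\times Y\times\widehat{\B}^{1}$ denotes the inclusion at $\tau=\varepsilon$.

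Applying $\mcF$ and using Lemma \ref{i0=i1} together with the chain-homotopy statement of Lemma \ref{chainhomop}, I see that $i_{0}^{*}=i_{1}^{*}$ in the derived category for any $\widehat{\B}^{1}$-local $\mcF$. Precomposing with $\mcF(H)$ then yields $(s\circ p)^{*}=\id^{*}$ on $\mcF(\B^{1}\times Y)$; combined with the tautology $(p\circ s)^{*}=\id$ on $\mcF(Y)$, this exhibits $p^{*}$ and $s^{*}$ as mutually inverse in the derived category, so $p^{*}$ is a quasi-isomorphism. Hence $\Lambda(\B^{1}\times Y)\to\Lambda(Y)$ is a $\widehat{\B}^{1}$-weak equivalence, and the proposition follows by the general Bousfield-localization argument outlined at the start.

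The genuinely content-bearing step is spotting the scaling morphism $m$; everything else is routine application of the cubical homotopy calculus developed earlier in Section \ref{motives}. The only side-check is that the various fiber products written above actually live in $\wRigSm$ (and not merely in the ambient category of bounded adic spaces), which is guaranteed by Proposition \ref{fibprod} combined with the good-coordinate presentation of $\widehat{\B}^{1}$ recorded in Proposition \ref{B1perf}.
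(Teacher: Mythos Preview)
Your proof is correct and follows essentially the same approach as the paper: the paper's one-line argument invokes the ``multiplicative homothety $\widehat{\B}^{1}\times\B^{1}\to\B^{1}$'' to produce a homotopy between the zero map and the identity on $\B^{1}$, which is precisely your scaling morphism $m$ and the resulting $H$. You have simply spelled out the standard reduction to generating maps and the section/retraction bookkeeping that the paper leaves implicit.
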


\begin{proof}
It suffices to prove that $X\times\B^1\ra X$ induces a $\widehat{\B}^1$-weak equivalence, for any variety $X$ in $\wRigSm$. This follows as the multiplicative homothety $\widehat{\B}^{1}\times\B^1\ra\B^1$ induces a homotopy between the zero map and the identity on $\B^1$.
\end{proof}

\begin{cor}
 The triangulated category  $\wRigDA^{\eff}_{\et,\widehat{\B}^1}(K)$ is equivalent to the full triangulated subcategory of  $\wRigDA_{\et,\B^1}^{\eff}(K)$ formed by $\widehat{\B}^{1}$-local objects.
\end{cor}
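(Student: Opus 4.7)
The claim is a standard consequence of the theory of iterated Bousfield localizations, and the plan is to reduce it to that formal fact. Specifically, given a left proper cellular model category $\mathbf{M}$ and a class of maps $S \subseteq T$ (or more generally, such that every $S$-weak equivalence is a $T$-weak equivalence), the localization $L_T \mathbf{M}$ can be computed by first localizing at $S$ and then further localizing at $T$. On homotopy categories, this identifies the $T$-localized homotopy category with the full subcategory of $\mathrm{Ho}(L_S \mathbf{M})$ consisting of $T$-local objects, the equivalence being realized by the $T$-fibrant replacement functor.

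Concretely, I would take $\mathbf{M} = \Ch\Psh(\wRigSm)$ with its projective model structure, $S = S_{(\et,\B^1)}$ and $T = S_{(\et,\widehat{\B}^1)}$. By Proposition \ref{b1>b1}, every $\B^1$-weak equivalence is a $\widehat{\B}^1$-weak equivalence, and trivially every $\et$-weak equivalence is an $\et$-weak equivalence, so the inclusion $S \subseteq T$ holds (up to enlarging $T$ by the $\B^1$-class, which is already contained in $T$-equivalences and hence does not change the localization). By Proposition \ref{locsets} both localizations exist as left proper cellular model structures, so the iterated Bousfield localization of $L_S \mathbf{M}$ at $T$ is well defined and coincides with $L_T \mathbf{M}$.

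To conclude, I would invoke the universal property of left Bousfield localization: the identity functor $L_S\mathbf{M} \to L_T\mathbf{M}$ is a left Quillen functor whose derived adjunction realizes the homotopy category $\mathrm{Ho}(L_T\mathbf{M}) = \wRigDA^{\eff}_{\et,\widehat{\B}^1}(K)$ as a reflective triangulated subcategory of $\mathrm{Ho}(L_S\mathbf{M}) = \wRigDA^{\eff}_{\et,\B^1}(K)$, and this subcategory is exactly the full subcategory of $T$-local (i.e.\ $\widehat{\B}^1$-local, since $\et$-locality is already imposed in $L_S\mathbf{M}$) objects. Fullness and faithfulness of the inclusion on the subcategory of local objects is the standard content of \cite[Theorem 3.2.13 and Proposition 3.4.20]{hirschhorn} applied to this situation, while essential surjectivity onto the $\widehat{\B}^1$-local objects follows from the fact that $\widehat{\B}^1$-fibrant replacement in $L_S\mathbf{M}$ produces an object that is simultaneously $\et$-local, $\B^1$-local, and $\widehat{\B}^1$-local, hence a fibrant replacement in $L_T\mathbf{M}$.

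There is no real obstacle here; the only mild care needed is to check that the hypotheses for iterating Bousfield localizations are met, namely that $L_S\mathbf{M}$ is again left proper and cellular so that the further localization at $\widehat{\B}^1$ is available. This has however already been established in the course of Proposition \ref{locsets}, whose proof explicitly records that the $\et$- and interval-localizations preserve these properties. Once this is in place the equivalence is automatic from the formal theory.
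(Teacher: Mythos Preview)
Your proposal is correct and takes essentially the same approach as the paper: both use Proposition~\ref{b1>b1} to see that the $(\et,\widehat{\B}^1)$-localization is a further localization of the $(\et,\B^1)$-localization, and then invoke the standard identification of an iterated Bousfield localization with the full subcategory of local objects. The paper's proof is a one-sentence version of your argument, while you spell out the model-categorical details and the Hirschhorn references explicitly.
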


\begin{proof}
 Because of Proposition \ref{b1>b1}, the triangulated category $\wRigDA^{\eff}_{\et,\widehat{\B}^1}(K)$ coincides with the localization of $\wRigDA^{\eff}_{\et,{\B}^1}(K)$ with respect to the set generated by the maps $\Lambda(\widehat{\B}^1_X)[n]\ra\Lambda( X)[n]$ as $X$ varies in $\wRigSm$ and $n$ in $\Z$. 
\end{proof}

We end this section by recalling the definition of rigid motives with transfers. 
The notion of {finite correspondence}  plays an important role in Voevodsky's theory of motives. In the case of rigid varieties over a field $K$ correspondences give rise to the category $\RigCor(K)$ as defined in \cite[Definition 2.2.29]{ayoub-rig}. 

\begin{dfn}
Additive presheaves over $\RigCor(K)$ are called \emph{presheaves with transfers}, and the category they form is denoted by $\PST(\RigSm/K,\Lambda)$ or simply by $\PST(\RigSm)$ when the context allows it.
\end{dfn}

   By \cite[Definition 2.5.15]{ayoub-rig}, the projective model category $\Ch\PST(\RigSm)$ admits a Bousfield localization  $\Ch_{\et,\B^1}\PST (\RigSm)$ with respect to the union of the class  of   maps $\mcF\ra\mcF'$ inducing isomorphisms on the $\et$-sheaves associated to $H_i(\mcF)$ and $H_i(\mcF')$ for all $i\in\Z$ and the set of all maps $\Lambda(\B^{1}_X)[i]\ra \Lambda(X)[i]$  as $X$ varies in $\RigSm$  and $i$ varies in $\Z$.

\begin{dfn}
The triangulated homotopy category associated to   $\Ch_{\et,\B^1}\PST (\RigSm)$  will be denoted by $\RigDM_{\et}^{\eff}(K,\Lambda)$.  
We will omit $\Lambda$ from the notation whenever the context allows it. The image of a variety $X$ in   will be denoted by $\Lambda_{\tr}(X)$. 
\end{dfn}

\begin{rmk}
Since $\Lambda$ is a $\Q$-algebra, one can equivalently consider the Nisnevich topology in the definition above and obtain a homotopy category $\RigDM_{\Nis}^{\eff}(K,\Lambda)$ which is equivalent to $\RigDM_{\et}^{\eff}(K,\Lambda)$. 
\end{rmk}

\begin{rmk}\label{atrotr}
 The faithful embedding of categories $\RigSm\ra\RigCor$ induces a Quillen adjunction (see \cite[Proposition 2.5.19]{ayoub-rig}):
\[
\adj{a_{tr}}{\Ch_{\et,\B^1}\Psh(\RigSm)}{\Ch_{\et,\B^1}\PST(\RigSm)}{o_{tr}}
\]
such that $a_{tr}\Lambda(X)=\Lambda_{tr}(X)$ for any $X\in\RigSm$ and $o_{tr}$ is the functor of forgetting transfers. These functors induce an adjoint pair:
\[
\adj{\LL a_{tr}}{\RigDA^{\eff}_{\et}(K)}{\RigDM^{\eff}_{\et}(K)}{\RR o_{tr}}
\]
which is investigated in \cite{vezz-DADM}. 
\end{rmk}

\section{Motivic interpretation of approximation results}\label{motapprox}

In all this section, $K$ is a perfectoid field of arbitrary characteristic. 
We begin by presenting an approximation result whose proof is differed to  Appendix \ref{approx}.

\begin{prop}
\label{solleva2text}
Let $X=\varprojlim_hX_h$ be in $\wRigSm^{\gc}$. 
Let also $Y$ be an affinoid rigid variety endowed with an \'etale map $Y\ra\B^m$. For a given finite set of maps $\{f_1,\ldots,f_N\}$ in $\Hom(X\times\B^n,Y)$ we can find corresponding maps $\{H_1,\ldots,H_N\}$ in $\Hom(X\times\B^n\times\B^1, Y)$ and an integer $\bar{h}$ such that:
\begin{enumerate}
\item For all $1\leq k\leq N$ it holds $i_0^*H_k=f_k$ and $i_1^*H_k$ factors over the canonical map $X\ra X_{\bar{h}}$.
\item If ${f}_k\circ d_{r,\epsilon}={f}_{k'}\circ d_{r,\epsilon}$ for some $1\leq k,k'\leq N$ and some $(r,\epsilon)\in\{1,\ldots,n\}\times\{0,1\}$ then $H_k\circ d_{r,\epsilon}=H_{k'}\circ d_{r,\epsilon}$.
\item If for some $1\leq k\leq N$ and some $h\in\N$ the map $f_k\circ d_{1,1}\in \Hom(X\times\B^{n-1},Y)$  lies in $\Hom(X_h\times\B^{n-1},Y)$ then the element $H_k\circ d_{1,1}$ of $\Hom(X\times\B^{n-1}\times\B^1,Y)$ is constant on $\B^1$  equal to $f_k\circ d_{1,1}$.
\end{enumerate}
\end{prop}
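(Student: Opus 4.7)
The plan is to reduce the problem first to the case $Y=\B^m$ by post-composing with the étale map $\pi\colon Y\to\B^m$, use a density argument to obtain approximations at finite level, interpolate linearly in $\B^m$, and then lift the linear homotopies back to $Y$ via the rigid analytic implicit function theorem (stated in the appendix). Throughout, the compatibilities in (2) and (3) will be enforced by making \emph{consistent} choices of approximations on the relevant faces and using the unique lifting property of étale maps.

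More precisely, set $g_k\colonequals\pi\circ f_k\in\mcO(X\times\B^n)^m$. Since $X=\varprojlim_hX_h$, by Proposition \ref{fibprod} and the identification $\mcO(X)\cong\widehat\bigoplus\mcO(X_0)$ (Remark \ref{bigosum}), the subring $\bigcup_h\mcO(X_h\times\B^n)$ is dense in $\mcO(X\times\B^n)$. Hence for any $\varepsilon>0$ and any $h_0$ we can choose a single index $\bar h\geq h_0$ and, for every $k$, an element $\tilde g_k\in\mcO(X_{\bar h}\times\B^n)^m$ with $|\tilde g_k-g_k|\leq\varepsilon$. By arranging the approximation face by face we may further assume that $\tilde g_k\circ d_{r,\epsilon}=\tilde g_{k'}\circ d_{r,\epsilon}$ whenever $g_k\circ d_{r,\epsilon}=g_{k'}\circ d_{r,\epsilon}$, and that $\tilde g_k\circ d_{1,1}=g_k\circ d_{1,1}$ whenever $g_k\circ d_{1,1}$ already descends to $X_h\times\B^{n-1}$ for some $h$ (in that case we may simply take it as its own approximation, after raising $\bar h$). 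Define the linear homotopy into $\B^m$ by
\[
G_k(x,\tau,t)\colonequals (1-t)\, g_k(x,\tau)+t\,\tilde g_k(x,\tau)\in\mcO(X\times\B^n\times\B^1)^m.
\]

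By construction the $G_k$ inherit the compatibilities (2) and (3) on faces. For $\varepsilon$ sufficiently small (controlled by the étale map $Y\to\B^m$), the appendix provides a rigid analytic implicit function theorem that yields a \emph{unique} lift $H_k\colon X\times\B^n\times\B^1\to Y$ of $G_k$ which extends $f_k$ at $t=0$ along the étale map $Y\to\B^m$. The uniqueness of the lift is the crucial point: whenever two maps into $\B^m$ coincide and their restrictions at $t=0$ coincide as maps into $Y$, their lifts automatically coincide. Hence the face relations satisfied by the $G_k$ (and, at $t=0$, by the $f_k$) are automatically inherited by the $H_k$, giving (2); the restriction $H_k\circ d_{1,1}$ coming from a $G_k\circ d_{1,1}$ that is constant in $t$ will itself be constant in $t$, giving (3); and at $t=1$ we have $\pi\circ H_k=\tilde g_k$ factoring through $X_{\bar h}\times\B^n$, whence by uniqueness of the étale lift $H_k|_{t=1}$ factors through $X_{\bar h}$, giving (1).

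The main obstacle is the simultaneous approximation step: one must produce a single index $\bar h$ together with approximations $\tilde g_k$ that are small enough for the implicit function theorem to apply \emph{and} exactly preserve every face coincidence relation in (2), as well as the ``already descends'' hypothesis in (3). This is handled by an induction on the dimension $n$ of the cube, approximating first on the faces $\B^{n-1}\subset\B^n$ (where compatibilities are imposed) and then extending to the interior; the needed bookkeeping is exactly the content of the final theorem of the appendix, whose geometric role is to guarantee the compatibility of approximations on the faces of $\square^n$. Once this combinatorial input is granted, the lifting along the étale map $Y\to\B^m$ is formal and completes the proof.
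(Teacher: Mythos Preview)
Your overall architecture coincides with the paper's: write $Y$ via coordinates $(\sigma,\tau)$ with $P(\sigma,\tau)=0$, approximate the $\sigma$-coordinates $s_k$ of $f_k$ by elements $\tilde s_k$ at finite level preserving face coincidences (this is exactly Proposition~\ref{2dH=02}), interpolate linearly in $\sigma$, and use the implicit function theorem (Corollary~\ref{implicitC}) to produce the $\tau$-coordinates along the homotopy. Your use of uniqueness of the power-series lift to propagate the face relations (2) and (3) is also the paper's argument.

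There is, however, a genuine gap in your justification of (1). You write that since $\pi\circ H_k|_{t=1}=\tilde g_k$ factors through $X_{\bar h}\times\B^n$, ``by uniqueness of the \'etale lift $H_k|_{t=1}$ factors through $X_{\bar h}$''. Uniqueness only tells you that \emph{if} a lift of $\tilde g_k$ exists at level $\bar h$ then it must agree with $H_k|_{t=1}$ after pullback; it does not produce such a lift. Concretely, $H_k|_{t=1}$ is given by $(\tilde s_k,F_k(\tilde s_k))$, where $\tilde s_k\in\mcO(X_{\bar h}\times\B^n)$ but the power series $F_k$ has coefficients in $\mcO(X\times\B^n)$, so a priori $F_k(\tilde s_k)\in\mcO^+(X\times\B^n)$ only. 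The element $F_k(\tilde s_k)$ is algebraic and separable over $\mcO(X_{\bar h}\times\B^n)$ (it satisfies $P(\tilde s_k,\tau)=0$ with invertible Jacobian), but showing that such an element descends to some finite level $\mcO(X_h\times\B^n)$ is a nontrivial fact: it is precisely Proposition~\ref{roots3}, whose proof occupies the bulk of the appendix and requires a careful Krasner-type argument. Without this input, the map $H_k|_{t=1}$ could in principle have $\tau$-coordinates that genuinely live only in the completed limit, and (1) would fail. You should invoke Proposition~\ref{roots3} explicitly at this step.
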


The statement above has the following  interpretation in terms of complexes.

\begin{prop}\label{sollevaDA}
 Let $X=\varprojlim_hX_h$ be in $\wRigSm^{\gc}$ and let $Y$ be in $\RigSm^{\gc}$. The natural map
\[
\phi\colon\varinjlim_h (\Sing^{\B^1}\Lambda{(Y)})(X_h)\ra( \Sing^{\B^1}\Lambda(Y))(X)
\]
is a quasi-isomorphism.
\end{prop}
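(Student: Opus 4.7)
The plan is to exhibit $\phi$ as a quasi-isomorphism by establishing both surjectivity and injectivity of $H_n(\phi)$ at each degree $n$, using Proposition \ref{solleva2text} together with the chain homotopy of Lemma \ref{chainhomo}. Both sides of $\phi$ are computed by the (filtered colimit of the) simple, or equivalently normalized, complex associated to the cubical object $m\mapsto\Lambda\Hom(W\times\B^m,Y)$, evaluated at $W=X_h$ and $W=X$ respectively.

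For surjectivity of $H_n(\phi)$, one starts with a cycle $c=\sum_k a_k[f_k]$ where $f_k\colon X\times\B^n\to Y$, and applies Proposition \ref{solleva2text} to the family $\{f_k\}$, obtaining homotopies $H_k\colon X\times\B^n\times\B^1\to Y$. Condition (2) ensures that every face identification among the $f_k$ is inherited by the $H_k$, so the cycle relation on $c$ lifts to the assertion that $\tau:=\sum_k a_k[H_k]$ is itself a cycle. The chain-homotopy formula $d\circ s_n^* + s_{n-1}^*\circ d=\pm(i_1^*-i_0^*)$ from Lemma \ref{chainhomo} then exhibits $c=i_0^*\tau$ as cohomologous to $i_1^*\tau=\sum_k a_k[i_1^*H_k]$, which by condition (1) factors through $X_{\bar{h}}$ and hence lies in the image of $\phi$.

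For injectivity, given a cycle $c$ coming from some level $X_{h_0}$ with $\phi(c)=d\sigma$ and $\sigma=\sum_\ell b_\ell[g_\ell]$ in the normalized complex, one applies Proposition \ref{solleva2text} to $\{g_\ell\}$. The key observation is the boundary identity $-\sum_\ell b_\ell[g_\ell\circ d_{1,1}]=\sum_k a_k[f_k]$, where each $f_k$ factors through $X_{h_0}$: grouping the $g_\ell$'s by the common value $\phi_j:=g_\ell\circ d_{1,1}$, each group either satisfies $\phi_j=f_k$ for a unique $k$ (so $\phi_j$ factors through $X_{h_0}$, whence condition (3) forces $H^g_\ell\circ d_{1,1}$ to be constant on $\B^1$) or has total coefficient zero (hence contributes trivially). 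Combining this case analysis with the chain homotopy applied to $\tau:=\sum_\ell b_\ell[H^g_\ell]$ yields a chain $\sigma':=i_1^*\tau$ which factors through $X_{\bar{h}}$ and satisfies $d\sigma'=\phi(c)$. Since the pre-composition map $\Lambda\Hom(X_{\bar{h}}\times\B^n,Y)\to\Lambda\Hom(X\times\B^n,Y)$ is injective (as $\mcO(X_{\bar{h}})\hookrightarrow\mcO(X)$ on the coordinate rings), the relation $d\sigma'=c$ already lifts to level $X_{\bar{h}}$, realizing $c$ as a boundary in $\varinjlim_h(\Sing^{\B^1}\Lambda(Y))(X_h)$.

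The main obstacle is the injectivity step: condition (3) of Proposition \ref{solleva2text} is phrased for individual maps, whereas only the $\Lambda$-linear combination $\sum_\ell b_\ell[g_\ell\circ d_{1,1}]$ is \emph{a priori} known to factor through a finite level. The grouping argument above is the device that extracts individual factorization statements from the collective one, exploiting the normalization constraints on $\sigma$ to ensure that residual terms cancel with zero coefficient sum.
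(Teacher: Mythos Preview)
Your proof is correct and follows essentially the same approach as the paper: surjectivity via Proposition~\ref{solleva2text} combined with the chain homotopy of Lemma~\ref{chainhomo}, and injectivity by applying Proposition~\ref{solleva2text} to the bounding chain and checking that the $d_{1,1}$-face is preserved. Your explicit grouping argument in the injectivity step (distinguishing groups where $\phi_j$ factors through a finite level from those with zero coefficient sum) spells out precisely what the paper leaves implicit when it asserts that ``$H\circ d_{1,1}$ is constant on $\B^1$ and coincides with $\phi(\alpha)$''; both arguments rely on the same interplay between conditions (2) and (3) of Proposition~\ref{solleva2text}.
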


\begin{proof}
We need to prove that the natural map
\[
\phi\colon\varinjlim_h\, C_\bullet\Lambda\Hom(X_h\times\square,Y)\ra C_\bullet\Lambda\Hom(X\times\square,Y)
\]
defines bijections on homology groups.

We start by proving surjectivity. As $\square$ is a $\Sigma$-enriched cocubical object, the complexes above are quasi-isomorphic to the associated normalized complexes $N_\bullet$ which  we consider instead. Suppose that $\beta\in\Lambda\Hom(X\times\square^n,Y)$ defines a cycle in $N_n$ i.e. $\beta\circ d_{r,\epsilon}=0$ for $1\leq r\leq n$ and $\epsilon\in\{0,1\}$. This means that $\beta=\sum \lambda_k f_k$ with $\lambda_k\in\Lambda$, $f_k\in\Hom(X\times\square^n,Y)$ and $\sum \lambda_k f_k\circ d_{r,\epsilon}=0$. This amounts to say that for every $k,r,\epsilon$ the sum $\sum\lambda_{k'}$ over the indices $k'$ such that $f_{k'}\circ d_{r,\epsilon}=f_{k}\circ d_{r,\epsilon}$ is zero. By Proposition \ref{solleva2text}, we can find an integer $h$ and  maps $H_k\in\Hom(X\times\square^n\times\B^1,Y)$ such that $i_0^*H=f_k$, $i_1^*H=\phi(\tilde{f}_{k})$ with $\tilde{f}_k\in\Hom(X_h\times\square^n,Y)$ and $H_k\circ d_{r,\epsilon}=H_{k'}\circ d_{r,\epsilon}$ whenever ${f}_k\circ d_{r,\epsilon}={f}_{k'}\circ d_{r,\epsilon}$. If we denote by $H$ the cycle $\sum\lambda_k H_k\in\Lambda\Hom(X\times\square^n\times\B^1,Y)$ we therefore have $d_{r,\epsilon}^*H=0$ for all $r,\epsilon$. 

By Lemma \ref{chainhomo}, we conclude that $i_1^*H$ and $i_0^*H$ define the same homology class, and therefore  $\beta$ defines the same class as $i_1^*H$ which is the image of a class in $\Lambda\Hom(X_h\times\square^n,Y)$ as wanted.

We now turn to injectivity. Consider an element $\alpha\in\Lambda\Hom(X_0\times\square^n,Y)$ such that $\alpha\circ d_{r,\epsilon}=0$ for all $r,\epsilon$ and suppose there exists an element $\beta=\sum\lambda_if_i\in\Lambda\Hom(X\times\square^{n+1},Y)$ such that $\beta\circ d_{r,0}=0$ for $1\leq r\leq n+1$, $\beta\circ d_{r,1}=0$ for $2\leq r\leq n+1$ and $\beta\circ d_{1,1}=\phi(\alpha)$. Again, by Proposition \ref{solleva2text}, we can find an integer ${\bar{h}}$ and maps $H_k\in\Hom(X\times\square^{n+1}\times\B^1,Y)$ such that $H\colonequals\sum\lambda_k H_k$ satisfies $i_1^*H=\phi(\gamma)$ for some $\gamma\in\Lambda\Hom(X_{\bar{h}}\times\square^{n+1},Y)$, $H\circ d_{r,0}=0$ for $1\leq r\leq n+1$, $H\circ d_{r,1}=0$ for $2\leq r\leq n+1$ and $H\circ d_{1,1}$ is constant on $\B^1$ and coincides with $\phi(\alpha)$. We conclude that $\gamma\in N_n$ and $d\gamma=\alpha$. In particular, $\alpha=0$ in the homology group, as wanted.
\end{proof}

\begin{cor}\label{singqi}
 Let $\mcF$ be a projectively cofibrant complex in $\Ch\Psh(\RigSm^{\gc})$. 
For any $X=\varprojlim_hX_h$ in $\wRigSm^{\gc}$ the natural map
\[
\phi\colon\varinjlim_h (\Sing^{\B^1}\mcF)(X_h)\ra (\Sing^{\B^1}\iota^*\mcF)(X)
\]
is a quasi-isomorphism.
\end{cor}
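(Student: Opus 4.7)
The plan is to bootstrap from Proposition \ref{sollevaDA}, which handles the representable case, up to arbitrary projectively cofibrant complexes via the structural description recalled in Remark \ref{projcof}.

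Let $\mathcal{E}$ denote the class of complexes $\mcF$ in $\Ch\Psh(\RigSm^{\gc})$ for which the natural map $\phi$ is a quasi-isomorphism. The first step is to check that $\mathcal{E}$ is closed under the operations we need. Both the source $\varinjlim_h (\Sing^{\B^1}(-))(X_h)$ and the target $(\Sing^{\B^1}\iota^*(-))(X)$ of $\phi$ are obtained by: taking powers of $\square$, evaluating sections on $X_h \times \square^\bullet$ or on $X \times \square^\bullet$, forming the simple complex $C_\bullet$, totalizing, and (on the source) taking a filtered colimit in $h$. By Definition \ref{cocu} the simple complex $C_\bullet$ is a split subcomplex of $C^\sharp_\bullet$, which is just the degreewise evaluation functor $\mcF \mapsto \mcF(X_? \times \square^\bullet)$; consequently each of these operations is exact and commutes with arbitrary direct sums of complexes of presheaves and with filtered colimits in the presheaf variable. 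Because filtered colimits and direct sums of $\Lambda$-modules preserve quasi-isomorphisms, and because cones in $\Ch$ are preserved by additive functors, $\mathcal{E}$ is closed under shifts, arbitrary direct sums, cones (hence two-out-of-three along distinguished triangles), filtered colimits, and retracts.

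Proposition \ref{sollevaDA} puts $\Lambda(Y) \in \mathcal{E}$ for every $Y \in \RigSm^{\gc}$. Closure under shifts and direct sums then shows that every presheaf of the form $\bigoplus_i \Lambda(Y_i)$, placed in any fixed cohomological degree, lies in $\mathcal{E}$. Any bounded complex whose terms are of this form is an iterated cone of such shifted objects, so $\mathcal{E}$ contains all bounded complexes of direct sums of representables. An arbitrary bounded-above complex $\mcG$ of direct sums of representables is the filtered colimit $\mcG = \varinjlim_n \sigma^{\geq -n}\mcG$ of its brutal truncations from below, each of which is bounded of the preceding type, so $\mcG \in \mathcal{E}$. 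A further filtered colimit of such $\mcG$'s also lies in $\mathcal{E}$, and one final application of closure under retracts finishes the argument by Remark \ref{projcof}.

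I do not anticipate any essential obstacle beyond the base case, which is precisely Proposition \ref{sollevaDA} and ultimately the approximation result Proposition \ref{solleva2text}. The only point that deserves a moment of care is the compatibility of $\Sing^{\B^1}$ with filtered colimits and direct sums, which a priori could be an issue because the totalization of the simple complex involves infinitely many cocubical degrees at once; this is however immediate from the fact that $C_\bullet$ is a split subcomplex of the degreewise evaluation $C^\sharp_\bullet$, so the compatibility holds termwise and therefore on the totalization as well.
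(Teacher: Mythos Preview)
Your proof is correct and follows the same strategy as the paper: reduce via Remark \ref{projcof} and bootstrap from Proposition \ref{sollevaDA}. The paper compresses your iterated-cone-and-brutal-truncation argument into a single spectral-sequence step (once $\mcF$ is bounded above, the double complex $C_\bullet\mcF_\bullet(X\times\square)$ is concentrated in one quadrant, so its spectral sequence converges and the $E_1$-isomorphism coming from Proposition \ref{sollevaDA} propagates to the total complex), but the two routes are interchangeable here.
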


\begin{proof}
As homology commutes with filtered colimits, by means of Remark \ref{projcof} we can assume that $\mcF$  is a bounded above complex formed by sums of representable presheaves. For any $X$ in $\wRigSm$ the homology of $\Sing^{\B^1}\mcF(X)$ coincides with the homology of the total complex associated to $C_\bullet(\mcF(X\times\square))$. The result then follows from Proposition \ref{sollevaDA} and the convergence of the spectral sequence associated to the double complex above, which is concentrated in one quadrant.
\end{proof}

The following technical proposition is actually a crucial point of our proof, as it allows  some explicit computations of morphisms in the category $\wRigDA^{\eff}_{\et}(K)$.

 \begin{prop}\label{islocal}
Let $\mcF$ be a cofibrant and $(\B^1,{\et})$-fibrant complex in $\Ch\Psh(\RigSm^{\gc})$. 
  Then $\Sing^{\B^1}(\iota^*\mcF)$ is $(\B^1,{{\et}})$-local in $\Ch\Psh(\wRigSm^{\gc})$.
\end{prop}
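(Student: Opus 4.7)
\bigskip
\noindent\textbf{Proof plan.} The complex $\Sing^{\B^1}(\iota^*\mcF)$ is automatically $\B^1$-local on $\Ch\Psh(\wRigSm^{\gc})$ by Proposition \ref{sing}, so the only thing to verify is $\et$-locality. The plan is to exploit the very efficient description of $\et$-localization given by Proposition \ref{cech}: one only needs to test locality against Cech hypercoverings $\mcU_\bullet\to X$ of objects $X=\varprojlim_h X_h$ in $\wRigSm^{\gc}$ with the property that $\mcU_0\to X$ descends to a covering $\mcU_0^{(0)}\to X_0$.

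Fix such a hypercovering. By Corollary \ref{liftcov} (and the descent of \'etale covers along $X\sim\varprojlim_h X_h$), the covering $\mcU_0^{(0)}\to X_0$ can be pulled back along $X_0\leftarrow X_h$ to produce, for every $h$, a finite \'etale Cech hypercovering $\mcU_\bullet^{(h)}\to X_h$ in $\RigSm^{\gc}$ whose $n$-th piece is a finite disjoint union $\bigsqcup_i V^{(n)}_{h,i}$, with $V^{(n)}_i=\varprojlim_h V^{(n)}_{h,i}$ in $\wRigSm^{\gc}$ for each $i$. By Corollary \ref{singqi} one then has quasi-isomorphisms
\[
(\Sing^{\B^1}\iota^*\mcF)(X)\simeq\varinjlim_h(\Sing^{\B^1}\mcF)(X_h),\qquad (\Sing^{\B^1}\iota^*\mcF)(V^{(n)}_i)\simeq\varinjlim_h(\Sing^{\B^1}\mcF)(V^{(n)}_{h,i}).
\]
Since each $\mcU_n$ is a \emph{finite} disjoint union, the associated Cech totalization commutes with the filtered colimit over $h$, and so the Cech complex of $\mcU_\bullet$ with coefficients in $\Sing^{\B^1}\iota^*\mcF$ is quasi-isomorphic to $\varinjlim_h$ of the Cech complexes of $\mcU_\bullet^{(h)}$ with coefficients in $\Sing^{\B^1}\mcF$.

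It remains to show that each finite-level Cech complex is a resolution. Since $\mcF$ is $(\B^1,\et)$-fibrant, it is $\B^1$-local, so by Proposition \ref{sing} the canonical map $\mcF\to\Sing^{\B^1}\mcF$ is a quasi-isomorphism; hence $\Sing^{\B^1}\mcF$ is again $\et$-local on $\RigSm^{\gc}$. Applying Proposition \ref{cech} (or Proposition \ref{bddhypercov}) at each finite level $h$ gives that the map $\Lambda(\mcU_\bullet^{(h)})\to\Lambda(X_h)$ induces a quasi-isomorphism when paired with $\Sing^{\B^1}\mcF$, and filtered colimits preserve quasi-isomorphisms, so the same holds in the limit. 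This proves $\et$-locality of $\Sing^{\B^1}(\iota^*\mcF)$.

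\noindent\textbf{Main obstacle.} The delicate point is not the conceptual reduction but the bookkeeping that lets Corollary \ref{singqi} and the finite Cech totalization slide past each other in the filtered colimit; this is precisely why one needs the \emph{bounded} Cech hypercoverings of Proposition \ref{cech} (so that each degree is a finite disjoint union) together with the fact, provided by Corollary \ref{liftcov}, that such a hypercovering genuinely descends level by level rather than only at level $0$.
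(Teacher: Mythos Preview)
Your proposal is correct and follows essentially the same route as the paper's proof: reduce $\et$-locality via Proposition~\ref{cech} to Cech hypercoverings $\mcU_\bullet\to X$ descending to finite level, apply Corollary~\ref{singqi} term by term to identify the Cech complex with a filtered colimit over $h$ of the finite-level Cech complexes, and then use that $\Sing^{\B^1}\mcF$ is $\et$-local on $\RigSm^{\gc}$ (since $\mcF\to\Sing^{\B^1}\mcF$ is a quasi-isomorphism when $\mcF$ is already $\B^1$-local). The paper phrases the commutation of the filtered colimit with the Cech totalization more tersely (``passing to the homotopy limit on $n$ on both sides''), whereas you spell out that finiteness of each $\mcU_n$ is what makes this work; both arguments are the same in substance.
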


\begin{proof}
The difficulty lies in showing that the object  $\Sing^{\B^1}(\iota^*\mcF)$  is $\et$-local. By Propositions \ref{cech} and \ref{sing}, it suffices to prove that $\Sing^{\B^1}(\iota^*\mcF)$ is local with respect to the \'etale-Cech hypercoverings $\mcU_\bullet\ra X$ in $\wRigSm^{\gc}$ of $X=\varprojlim_hX_h$ descending at finite level. Let $\mcU_\bullet\ra X$ be one of them. Without loss of generality, we assume that it descends to an \'etale covering of $X_0$. 
In particular we conclude that $\mcU_n=\varprojlim_h\mcU_{nh}$ is a disjoint union of objects in $\wRigSm^{\gc}$.

We need to show that $\RHom(\Lambda(\mcU_\bullet),\Sing^{\B^1}(\iota^*\mcF))$ is quasi-isomorphic to $\Sing^{\B^1}(\iota^*\mcF)(X)$.
Using Corollary \ref{singqi}, we conclude that for each $n\in\N$ the complex $(\Sing^{\B^1}\iota^*\mcF)(\mcU_n)$ is quasi-isomorphic to $\varinjlim_h(\Sing^{\B^1}\iota^*\mcF)(\mcU_{nh})$. Passing to the homotopy limit on $n$ on both sides,  we deduce that  $\RHom(\Lambda(\mcU_\bullet),\Sing^{\B^1}\iota^*\mcF)$ is quasi-isomorphic to $\varinjlim_h \RHom(\Lambda(\mcU_{\bullet h}),\Sing^{\B^1}\iota^*\mcF)$. Using again Corollary \ref{singqi}, we also obtain that $(\Sing^{\B^1}\iota^*\mcF)(X)$ is quasi-isomorphic to $\varinjlim_h(\Sing^{\B^1}\iota^*\mcF)(X_h)$. 

From the exactness of $\varinjlim$ it suffices then to prove that the maps
\[
\RHom(\Lambda(\mcU_{\bullet h}),\Sing^{\B^1}\mcF)\ra \RHom(\Lambda(X_h),\Sing^{\B^1}\mcF)
\]
are quasi-isomorphisms. This follows once we show that the complex $\Sing^{\B^1}\mcF$   is $\et$-local.

 We point out that since $\mcF$ is $\B^1$-local, then the canonical map $\mcF\ra\Sing^{\B^1}\mcF$ is a quasi-isomorphism. As $\mcF$ is  $\et$-local we conclude that $\Sing^{\B^1}\mcF$  also is, hence the claim.
\end{proof}

We are finally ready to state the main result of this section.

\begin{prop}\label{colimok}
 Let $X=\varprojlim_hX_h$ be in $\wRigSm^{\gc}$. For any complex of presheaves $\mcF$ on $\RigSm^{\gc}$ the natural map
\[
\varinjlim_h\RigDA_{\et}^{\eff}(K)(\Lambda(X_h),\mcF)\ra\wRigDA_{\et,\B^1}^{\eff}(K)(\Lambda(X),\LL\iota^*\mcF) 
\]
is an isomorphism.
\end{prop}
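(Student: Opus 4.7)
The plan is to identify both sides of the asserted isomorphism with $\varinjlim_h H_0\Sing^{\B^1}(\mcF)(X_h)$. Since each side depends only on the class of $\mcF$ in the corresponding homotopy category, I first replace $\mcF$ by a cofibrant and $(\et,\B^1)$-fibrant object of $\Ch\Psh(\RigSm^{\gc})$. As representables are cofibrant and $\mcF$ is projectively fibrant, the left-hand side is then simply $\varinjlim_h H_0\mcF(X_h)$.

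For the right-hand side, the cofibrancy of $\mcF$ gives $\LL\iota^*\mcF = \iota^*\mcF$. By Proposition \ref{islocal} the complex $\Sing^{\B^1}(\iota^*\mcF)$ is $(\et,\B^1)$-local in $\Ch\Psh(\wRigSm^{\gc})$, while Proposition \ref{sing} provides a canonical $\B^1$-equivalence $\iota^*\mcF \ra \Sing^{\B^1}(\iota^*\mcF)$; combining this with the Quillen equivalence between $\Ch_{\et,\B^1}\Psh(\wRigSm^{\gc})$ and $\Ch_{\et,\B^1}\Psh(\wRigSm)$ furnished by Proposition \ref{locsets}, I conclude that $\Sing^{\B^1}(\iota^*\mcF)$ correctly computes the derived Hom out of $\Lambda(X)$ for $X \in \wRigSm^{\gc}$. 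Therefore the right-hand side equals $H_0\Sing^{\B^1}(\iota^*\mcF)(X)$, and by Corollary \ref{singqi} together with the exactness of filtered colimits this equals $\varinjlim_h H_0\Sing^{\B^1}(\mcF)(X_h)$.

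To finish, I need to compare $H_0\mcF(X_h)$ with $H_0\Sing^{\B^1}(\mcF)(X_h)$ for each $h$. The canonical map $\mcF \ra \Sing^{\B^1}\mcF$ is a $\B^1$-weak equivalence by Proposition \ref{sing}, and it lies between two $\B^1$-local objects ($\mcF$ by the chosen fibrancy, $\Sing^{\B^1}\mcF$ by Proposition \ref{sing} again). A $\B^1$-weak equivalence between $\B^1$-local objects is a weak equivalence in the unlocalized projective model structure, hence a quasi-isomorphism of presheaves; evaluating at $X_h$ and passing to the filtered colimit yields the desired identification of the two sides.

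The main obstacle is already absorbed into the inputs: Proposition \ref{islocal} is the key ingredient and rests crucially on the approximation statement Proposition \ref{solleva2text}; once it is in hand, the argument above is essentially formal. The remaining subtlety is purely bookkeeping, namely keeping track of the ambient category in which each cofibrant or fibrant replacement is performed, so that Proposition \ref{islocal} can be applied to $\iota^*\mcF$ and Corollary \ref{singqi} can be applied to propagate from $X$ to the tower $\{X_h\}$.
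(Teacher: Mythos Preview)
Your proof is correct and follows essentially the same route as the paper: replace $\mcF$ by a cofibrant $(\et,\B^1)$-fibrant object, use Proposition \ref{islocal} to identify $\Sing^{\B^1}(\iota^*\mcF)$ as an $(\et,\B^1)$-local replacement of $\LL\iota^*\mcF$, invoke Corollary \ref{singqi} to pass from $X$ to the colimit over $X_h$, and finally use that $\mcF\ra\Sing^{\B^1}\mcF$ is a quasi-isomorphism because $\mcF$ is already $\B^1$-local. The paper's write-up is slightly terser but the ingredients and their assembly are identical.
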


\begin{proof}
Since any complex $\mcF$ has a fibrant-cofibrant  replacement
 in $\Ch_{\et,\B^1}\Psh(\RigSm^{\gc})$ we can  assume that $\mcF$ is cofibrant and $(\et,\B^1)$-fibrant.  Since it is  $\B^1$-local, it is quasi-isomorphic to $\Sing^{\B^1}\mcF$. By Corollary \ref{singqi}, for any integer $i$  one has  
\[
\varinjlim_h\Hom(\Lambda(X_h)[i],\Sing^{\B^1}\mcF)\cong\Hom(\Lambda(X)[i],\Sing^{\B^1}\iota^*\mcF).
\]
As $\Lambda(X)$ is a cofibrant object in $\Ch\Psh({\wRigSm}^{\gc})$ and $\Sing^{\B^1}\iota^*\mcF$ is a $(\B^1,{\et})$-local replacement of $\mcF$  
in $\Ch_{\et,\B^1}\Psh(\wRigSm^{\gc})$ by Proposition \ref{islocal}, we conclude that 
the previous isomorphism can be rephrased in the following way:
\[
\varinjlim_h{\RigDA_{\et}^{\eff}}(K)(\Lambda(X_h)[i],\mcF)\cong{\wRigDA_{\et,\B^1}^{\eff}}(K)(\Lambda(X)[i],\LL\iota^*\mcF)
\]
proving the claim.
\end{proof}

\section{The de-perfectoidification functor in characteristic \texorpdfstring{$0$}{0}}\label{deperf0}

The   results proved in Section \ref{motapprox}  are valid both for $\car K=0$ and $\car K=p$. On the contrary, the results of this section  require that $\car K=0$. We will present later their variant for the case $\car K=p$.

We start by considering the adjunction between motives with and without transfers (see Remark \ref{atrotr}). 
 Thanks to the following  theorem, we are allowed to add or ignore  transfers according to the situation.  

\begin{thm} [{\cite{vezz-DADM}}]
\label{DA=DM}
Suppose that $\car K=0$. The functors $(a_{tr}, o_{tr})$ induce an equivalence:
\[
\adj{\LL a_{tr}}{\RigDA_{\et}^{\eff}(K)}{\RigDM_{\et}^{\eff}(K)}{\R o_{tr}}.
\]
\end{thm}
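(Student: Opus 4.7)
The plan is to show the unit and counit of the adjunction $(\LL a_{tr}, \R o_{tr})$ are invertible in the respective homotopy categories, by reducing to an explicit computation on generators and invoking an analog of Morel's/Ayoub's transfer theorem with rational coefficients. Since both categories are compactly generated (by Proposition \ref{genRigDA} on the left, and by its straightforward transferred analogue on the right), and since $\LL a_{tr}$ sends the generator $\Lambda(X)$ to $\Lambda_{tr}(X)$, it is enough to establish two things: (i) the functor $\R o_{tr}$ is conservative, and (ii) for every $X$ in $\RigSm^{\gc}$ the unit map $\Lambda(X) \to \R o_{tr} \LL a_{tr} \Lambda(X) = \R o_{tr} \Lambda_{tr}(X)$ is an isomorphism in $\RigDA_{\et}^{\eff}(K)$.

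For conservativity, the strategy is the rigid analytic transposition of the classical argument (Morel, and Ayoub--Cisinski--D\'eglise) showing that with $\Q$-coefficients every \'etale sheaf admits a unique and canonical structure of presheaf with transfers. Concretely, for a finite \'etale cover $f\colon Y\to X$ of constant degree $n$ (invertible in $\Lambda$ because $\Lambda$ is a $\Q$-algebra), the composite of the trace correspondence $\tr_f\in\Cor(X,Y)$ with $f$ is $n\cdot\id_X$; thus the trace provides the obstruction to lifting an \'etale sheaf structure to transfers, and after inverting $n$ this obstruction vanishes. One then checks this construction is functorial and compatible with the \'etale topology and $\B^1$-homotopy, obtaining that the forgetful functor from \'etale sheaves with transfers of $\Lambda$-modules to \'etale sheaves of $\Lambda$-modules is an equivalence of abelian categories. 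Passing to complexes and to the $\B^1$-localization upgrades this to an equivalence of the homotopy categories, which in particular entails conservativity of $\R o_{tr}$.

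For the unit, by adjunction and Proposition \ref{genRigDA} it suffices to compute \[\Hom_{\RigDA_{\et}^{\eff}}(\Lambda(X)[i],\R o_{tr}\Lambda_{tr}(Y))\cong\Hom_{\RigDM_{\et}^{\eff}}(\Lambda_{tr}(X)[i],\Lambda_{tr}(Y))\] for $X,Y$ in $\RigSm^{\gc}$ and $i\in\Z$, and compare it with $\Hom_{\RigDA_{\et}^{\eff}}(\Lambda(X)[i],\Lambda(Y))$. Using the explicit fibrant replacements $\Sing^{\B^1}C^{\et}(-)$ from Corollary \ref{replacement} and its transfer analogue, this reduces to the quasi-isomorphism, after \'etale sheafification and $\Sing^{\B^1}$, between the presheaf $U\mapsto\Lambda\Hom(U,Y)$ and the presheaf of finite correspondences $U\mapsto\Cor(U,Y)$. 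This is precisely the statement proved via the transfer construction above, where for a finite \'etale cover $U'\to U$ of degree $n$ the symmetrization $\tfrac1n\sum$ identifies the \'etale sheafification of $\Lambda\Hom(-,Y)$ with its transfers counterpart.

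The main obstacle will be verifying that Morel's rigidity-style argument works in the rigid analytic framework: one must control the trace formalism for finite \'etale maps of rigid varieties and its compatibility with \'etale hypercovers, $\B^1$-homotopy, and the local structure theorems of Section \ref{wrigsm}. The key technical input is that the category $\RigCor(K)$ of finite correspondences admits well-behaved trace maps along finite \'etale morphisms, and that \'etale descent together with the $\Q$-linear structure then forces the natural transformation $\Lambda\Hom(-,Y)\to\Cor(-,Y)$ to be an $(\et,\B^1)$-local weak equivalence. Granting this, the Quillen adjunction of Remark \ref{atrotr} becomes a Quillen equivalence and the theorem follows.
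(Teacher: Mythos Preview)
The paper does not prove this theorem: it is quoted from the companion paper \cite{vezz-DADM} and is used here as a black box. There is therefore no ``paper's own proof'' to compare against; your proposal is an attempt to reconstruct an argument that the present paper deliberately outsources.

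As for the content of your sketch: the overall shape is the standard one (Morel, Ayoub, Cisinski--D\'eglise) for showing that over an \'etale site with $\Q$-coefficients the adjunction between motives without and with transfers is an equivalence. The heart of that argument is indeed that every \'etale sheaf of $\Q$-vector spaces carries a unique transfer structure, manufactured from traces along finite \'etale maps, so that $o_{tr}$ is an equivalence already at the level of \'etale sheaves; everything else follows formally. Your outline identifies this correctly.

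That said, what you have written is a plan, not a proof. Two points:
\begin{itemize}
\item Your step (i)/(ii) split is redundant: once you establish that $o_{tr}$ is an equivalence of abelian categories of \'etale sheaves (which you assert in your ``conservativity'' paragraph), the derived equivalence is immediate and there is no separate unit computation to perform. The Hom-set comparison you sketch afterwards is circular, since it invokes the very equivalence you are trying to prove.
\item Your final paragraph concedes the actual work: ``The main obstacle will be verifying that Morel's rigidity-style argument works in the rigid analytic framework\ldots\ Granting this\ldots''. That \emph{is} the theorem. In the rigid setting one must check that the category $\RigCor(K)$ has the expected properties (existence of pushforwards along finite \'etale maps, compatibility with the \'etale topology, that the comparison of \'etale sheafifications with and without transfers goes through). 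This is what \cite{vezz-DADM} carries out, and it is not automatic from the algebraic case.
\end{itemize}
So your proposal points in the right direction but does not close the gap; the substance lies precisely in the verifications you defer.
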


\begin{rmk}\label{Qiscruc1}
 The proof of the statement above  uses in a crucial way the fact that the  ring of coefficients $\Lambda$ is a $\Q$-algebra. %
\end{rmk}

\begin{prop}\label{tower}
Suppose $\car K=0$. Let $X=\varprojlim_hX_h$ be in $\wRigSm^{\gc}$. If $h$ is big enough, then the map $\Lambda(X_{h+1})\ra\Lambda(X_h)$ is an isomorphism in $\RigDA_{\et}^{\eff}(K)$.
\end{prop}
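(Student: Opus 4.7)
The map $f_h\colon X_{h+1}\to X_h$, under the identification $\T^N\langle\underline{\upsilon}^{1/p^h}\rangle\cong\T^N$, is the base change of the $p$-th power morphism $[p]\colon\T^N\to\T^N$ along $X_h\to\T^N\langle\underline{\upsilon}^{1/p^h}\rangle$. Since $\car K=0$, the map $[p]$ is finite étale of degree $p^N$, and hence so is $f_h$.

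Using Theorem \ref{DA=DM}, I would pass to $\RigDM^{\eff}_{\et}(K,\Lambda)$, where transfers are available. For the finite étale map $f_h$, the transfer $f_h^*$ satisfies $f_{h,*}\circ f_h^* = p^N\cdot\mathrm{id}_{\Lambda_{tr}(X_h)}$. Since $p^N$ is invertible in the $\Q$-algebra $\Lambda$, this exhibits $\Lambda_{tr}(X_h)$ as a direct summand of $\Lambda_{tr}(X_{h+1})$, giving a decomposition $\Lambda_{tr}(X_{h+1})\cong\Lambda_{tr}(X_h)\oplus M_h$ in which $f_{h,*}$ is the projection. Moreover, the $p$-th power map $[p]\colon\T^N\to\T^N$ is itself an isomorphism on motives: via the Tate decomposition $\Lambda_{tr}(\T^N)\cong\bigoplus_k\binom{N}{k}\Lambda(k)[k]$, the morphism $[p]_*$ acts by multiplication by $p^k$ on the $k$-th summand, invertible in $\Lambda$.

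To show $M_h=0$ for $h$ large, my plan is to use étale descent. The étale map $X_0\to\T^N\times\T^M$ is, after pulling back to a suitable étale cover $\mcU\to X_0$, a disjoint union of open immersions into $\T^N\times\T^M$. For $h$ exceeding a threshold depending on this cover, the preimages of these opens under $[p^h]\times\mathrm{id}$ acquire a product structure making the transition $(X_{h+1})_{\mcU}\to(X_h)_{\mcU}$ étale-locally of the form $\mathrm{id}_U\times[p]$, which is an iso on motives by the Tate calculation above. Étale descent in $\RigDM^{\eff}_{\et}(K)$ then propagates this local iso to a global iso.

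The \textbf{main obstacle} is pinning down this local-product stabilization: identifying precisely the threshold on $h$ (which should depend on the combinatorics of the finite étale and rational-embedding steps composing $X_0\to\T^N\times\T^M$) and controlling the descent. An alternative, more abstract route exploits Proposition \ref{colimok}: the natural isomorphism $\varinjlim_h\RigDA^{\eff}_{\et}(\Lambda(X_h),\mcF)\cong\wRigDA^{\eff}_{\et,\B^1}(\Lambda(X),\LL\iota^*\mcF)$, together with the compact generation of $\RigDA^{\eff}_{\et}(K)$ by the $\Lambda(Y)$ for $Y\in\RigSm^{\gc}$ (Proposition \ref{genRigDA}) and the retract structure above, should force the complementary summands $M_h$ to vanish for $h$ sufficiently large by a Yoneda-type stabilization argument.
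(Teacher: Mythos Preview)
Your setup is right and mirrors the paper: pass to $\RigDM^{\eff}_{\et}(K)$ via Theorem~\ref{DA=DM}, use the transpose correspondence $f^T$ for the finite \'etale map $f\colon X_{h+1}\to X_h$, and note $f\circ f^T=p^N\cdot\id$. The issue is the \emph{other} composite $f^T\circ f$, i.e.\ showing the complementary summand $M_h$ vanishes, and here both of your proposed routes have gaps.

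Your \'etale-descent route rests on the claim that after an \'etale cover $\mcU\to X_0$ the composite $\mcU\to\T^N\times\T^M$ becomes a disjoint union of open immersions. This is false in general: if $X_0\to\T^N\times\T^M$ has a finite \'etale factor of degree $>1$, no \'etale cover of the source will turn it into an open immersion. Even locally, the presentation of $X_0$ mixes the $\T^N$-coordinates (where we extract roots) with the $\T^M$-coordinates and with the extra $\tau$-variables cut out by equations, so there is no reason for the transition $X_{h+1}\to X_h$ to become a product $\id\times[p]$ after any cover. Your alternative via Proposition~\ref{colimok} is not circular, but it does not force stabilization: knowing that $\varinjlim_h\Hom(\Lambda(X_h),\mcF)$ is a fixed group for every $\mcF$ is compatible with infinitely many $M_h$ being nonzero (the colimit of a split-injective system need not stabilize).

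The paper's argument is analytic and different in kind. After a finite extension (to adjoin $p$-th roots of unity) and a standard presentation $R_0=S\langle\sigma,\tau\rangle/(P)$ with $\det(\partial P/\partial\tau)$ invertible, one computes $f^T\circ f$ as the correspondence $X_1\leftarrow X_1\times_{X_0}X_1\to X_1$, which decomposes as $\sum_\zeta f_\zeta$ over $N$-tuples $\zeta$ of $p$-th roots of unity, with $f_\zeta$ the automorphism $\sigma_i\mapsto\zeta_i\sigma_i$. The heart of the proof is an explicit $\B^1$-homotopy $f_\zeta\sim\id$ built from the implicit function theorem (Corollary~\ref{implicitC}): one shows that the formal solution $F_h(\sigma)$ to $P(\sigma^{p^h},\tau)=0$ has radius of convergence around $\bar\sigma$ that \emph{grows} with $h$ (because of the recursion $F_{h+1}(\sigma)=\phi_h(F_h(\sigma^p))$ and the estimate $|Q(\sigma-\bar\sigma)|\leq\max\{\rho^p,|p|\}$), eventually exceeding $|\zeta_i-1|\leq|p|^{1/p}$. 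Then $(\sigma,\tau)\mapsto(\bar\sigma+(\zeta-1)\bar\sigma\chi,\,F_h(\bar\sigma+(\zeta-1)\bar\sigma\chi))$ is the desired homotopy. This growth of the convergence radius is exactly the ``threshold on $h$'' you were looking for, and it is what your proposal is missing.
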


\begin{proof}
 By means of Theorem \ref{DA=DM}, we can equally prove the statement in the category $\RigDM_{\et}^{\eff}(K)$. 
We claim that we can also make an arbitrary finite  field extension $L/K$. Indeed the transpose of the natural map $Y_L\ra Y$ is a correspondence from $Y$ to $Y_L$. Since $\Lambda$ is a $\Q$-algebra, we conclude that $\Lambda_{\tr}(Y)$ is a direct factor of $\Lambda_{\tr}(Y_L)=\LL e_\sharp\Lambda_{\tr}(Y_L)$ for any variety $Y$ where $\LL e_\sharp$ is the functor $ \RigDM_{\et}^{\eff}(L)\ra\RigDM_{\et}^{\eff}(K)$ induced by restriction of scalars. In particular, if $\Lambda_{\tr}((X_{h+1})_L)\ra\Lambda_{\tr}((X_h)_L)$ is an isomorphism in $\RigDM_{\et}^{\eff}(L)$ then  $
\Lambda_{\tr}((X_{h+1})_L)\ra\Lambda_{\tr}((X_h)_L)$ is an isomorphism in $\RigDM_{\et}^{\eff}(K)$ and therefore also  $\Lambda_{\tr}(X_{h+1})\ra\Lambda_{\tr}(X_h)$ is.

By Lemma \cite[1.1.52]{ayoub-rig}, we can suppose that $X_0=\Spa(R_0,R_0^\circ)$ with $R_0=S\langle \sigma,\tau\rangle/(P(\sigma,\tau))$ where $S=\mcO(\T^M)$, $\sigma=(\sigma_1,\ldots,\sigma_N)$ is a $N$-tuple of coordinates, $\tau=(\tau_1,\ldots,\tau_m)$ is a $m$-tuple of coordinates  and $P$ is a set of $m$ polynomials  in $ S[\sigma,\tau]$ with  $\det(\frac{\del P}{\del \tau})\in R_0^\times$. In particular $X_1=\Spa(R_1,R_1^\circ)$ with $R_1=S\langle \sigma,\tau\rangle/(P(\sigma^p,\tau))$ and the map $f\colon X_1\ra X_0$ is induced by $\sigma\mapsto\sigma^p$, $\tau\mapsto\tau$. Since the map $f$ is finite and surjective, we can also consider the transpose correspondence $f^T\in\RigCor(X_0,X_1)$. The composition $f\circ f^T$ is associated to the correspondence $X_0\stackrel{f}{\leftarrow} X_1\stackrel{f}{\rightarrow} X_0$ which is the cycle $\deg(f)X_0=p^N\cdot\id_{X_0}$. The composition $f^T\circ f$ is associated to the correspondence $X_1\stackrel{p_1}{\leftarrow} X_1\times_{X_0}X_1\stackrel{p_2}{\ra} X_1$. Since $\T^N\langle\sigma^{1/p}\rangle\times_{\T^N}\T^N\langle\sigma^{1/p}\rangle\cong \T^N\langle\sigma^{1/p}\rangle\times\mu_p^N$ we conclude that the above correspondence is $X_1\stackrel{p_1}{\leftarrow} X_1\times(\mu_p)^N\stackrel{\eta}{\ra} X_1$ where $\eta$ is induced by  the multiplication map $\T^N\times\mu_p^N\ra\T^N$. 
Up to a finite field extension, we can  assume that $K$ has the $p$-th roots of unity. The above correspondence is then equal to $\sum f_\zeta$ where each $f_\zeta$ is a map $X_1\ra X_1$ defined by $\sigma_i\mapsto \zeta_i\sigma_i$, $\tau\mapsto\tau$ for each $N$-tuple $\zeta=(\zeta_i)$ of $p$-th roots of unity. If we prove that each $f_\zeta$ is homotopically equivalent to $\id_{X_1}$ then we get $\frac{1}{p^N}f^{T}\circ f=\id$, $f\circ\frac{1}{p^N}f^T=\id$ in $\RigDM_{\et}^{\eff}$ as wanted.

 We are left to find a homotopy between $\id$ and  $ f_\zeta$ for a fixed $\zeta=(\zeta_1,\ldots,\zeta_n)$ up to considering higher indices $h$. For the sake of clarity, we consider them as maps $\Spa \bar{R}_1\ra\Spa {R}_1$ 
  where we put $\bar{R}_h=S\langle\bar{\sigma},\bar{\tau}\rangle/(P(\bar{\sigma}^{p^h},\bar{\tau}))$ for any integer $h$. The first map is induced by $\sigma\mapsto\bar{\sigma}$, $\tau\mapsto\bar{\tau}$ and the second induced by $\sigma\mapsto\zeta\bar{\sigma}$, $\tau\mapsto\bar{\tau}$. Let $F_h=\sum_n a_n (\sigma-\bar{\sigma})^n$ be the unique array of formal power series in $\bar{R}_h[[\sigma-\bar{\sigma}]]$ centered in $\bar{\sigma}$ associated to the polynomials 
$P(\sigma^{p^h},\tau)$  
in $\bar{R}_h[\sigma,\tau]$ via Corollary \ref{implicitC}. Let also $\phi_h$ be the map $\bar{R}_h\ra\bar{R}_{h+1}$. From the formal equalities $P(\sigma^{p^{(h+1)}},F_{h+1}(\sigma))=0$, $P(\sigma^{p^h},\phi(F_h(\sigma)))=\phi_h(P(\sigma^{p^h},F_h(\sigma)))=0$ and the uniqueness of $F_{h+1}$ we deduce $F_{h+1}(\sigma)=\phi_h(F_h(\sigma^p))$.

 We therefore have
 \[\begin{aligned}
F_{h+1}(\sigma)&=\sum_n \phi_h(a_n) (\sigma^p-\bar{\sigma}^p)^n\\
&=\sum_n \phi_h(a_n)\left((\sigma-\bar{\sigma})^{p-1}+\sum_{j=1}^{p-1}\binom{p}{j}(\sigma-\bar{\sigma})^{j-1}\bar{\sigma}^{p-j}\right)^n\!(\sigma-\bar{\sigma})^n.
\end{aligned}
\]
 The expression
 \[
Q(x)=x^{p}+\sum_{j=1}^{p-1}\binom{p}{j}x^{j}\bar{\sigma}^{p-j}
   	\]
   	is a polynomial in $x$ and it easy to show that the mapping
   	 $   	x\mapsto Q(x)$ 
   	extends to a map $\bar{R}_{h+1}\langle x\rangle\ra\bar{R}_{h+1}\langle x\rangle$. 
   	We deduce that we can read off the convergence in the circle of radius $1$ around $\bar{\sigma}$ and the values of $F_{h+1}$ on its expression given above.
   	
We remark that the norm of $Q(\sigma-\bar{\sigma})$ in the circle of radius $\rho\leq1$ around $\bar{\sigma}$ is bounded by $\max\{\rho^{p},|p|\}\leq\max\{\rho,|p|\}$.
 Suppose that $F_h$ converges in a circle of radius $\rho$ with $0<\rho\leq1$ around $\bar{\sigma}$ and in there it takes values in power-bounded elements. By the expression above, the same holds true for $F_{h+1}$ in the circle of radius $\min\{\rho|p|^{-1},1\}$ around $\bar{\sigma}$.  By induction we conclude that 
  for a sufficiently big $h$ the power series $F_h$ converges in a circle of radius $\delta>|p|^{1/(p-1)}$ around $\bar{\sigma}$ and its values in it are power bounded. 
Up to rescaling indices, we suppose that this holds for $h=1$. 

From the relation $F_{h+1}(\sigma)=\phi_h(F_h(\sigma^p))$ we also conclude $F_1(\zeta\bar{\sigma})=F_1(\bar{\sigma})=\bar{\tau}$. Therefore, since $|\zeta_i-1|=|p|^{1/(p-1)}$ for all $i$, the map
\[
\begin{aligned}
X_1=\Spa(S\langle\sigma,\tau\rangle/P(\sigma^p,\tau))&\leftarrow X_1\times\B^1=\Spa(S\langle\bar{\sigma},\bar{\tau},\chi\rangle/(P(\bar{\sigma}^p,\bar{\tau}))\\
(\sigma_i,\tau_j)&\mapsto(\bar{\sigma}_i+(\zeta_i-1)\bar{\sigma}_i\chi,  F_1(\bar{\sigma}+(\zeta-1)\bar{\sigma}\chi))
\end{aligned}
\]
is a well defined map, inducing a homotopy between $\id_{X_1}$ and $f_\zeta$ as claimed.
\end{proof}

It cannot be expected that all maps $X_{h+1}\ra X_h$ are isomorphisms in $\RigDA^{\eff}_{\et}(K)$: consider for example $X_0=\T^1\langle\nu^{1/p}\rangle\ra\T^1$. Then $X_0$  is a connected variety, while $X_1$ is not. That said, there is a particular class of objects $X=\varprojlim_hX_h$  in $\wRigSm^{\gc}$ for which this happens: this is the content of the following proposition which nevertheless will  not be used in the following. 
 
We recall that a presentation 
$X=\varprojlim_hX_h$ of an object in $\wRigSm^{\gc}$ is of  good reduction if the map $X_0\ra\T^N\times\T^M$ has a formal model which is an \'etale map over $\Spf K^\circ\langle\underline{\upsilon}^{\pm1},\underline{\nu}^{\pm1}\rangle$ and is of potentially good reduction if this happens after base change by a separable finite field extension $L/K$.

\begin{prop}\label{goodred}
 Let $\car K=0$ and let 
  $X=\varprojlim_hX_h$ be a presentation of a variety in $\wRigSm^{\gc}$ of potentially good reduction. The maps $\Lambda(X_{h+1})\ra\Lambda(X_h)$ are isomorphisms in $\RigDA_{\et}^{\eff}(K)$ for all $h$.
\end{prop}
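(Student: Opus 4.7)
The plan is to adapt the argument of Proposition \ref{tower} essentially verbatim, using good reduction precisely to make the convergence estimates there uniform in $h$, so that the homotopy construction only available at large $h$ in Proposition \ref{tower} becomes available for every $h$.

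First I would run the preliminary reductions of Proposition \ref{tower}: the transpose-correspondence argument together with Theorem \ref{DA=DM} reduces the statement to one about motives with transfers, and it shows in particular that the conclusion is stable under finite separable base change. This allows a reduction to the case in which $K$ itself witnesses the good reduction of the chosen presentation and, after a further finite extension if needed, contains the $p$-th roots of unity. Working \'etale-locally on $X_0$, good reduction together with the local structure theorem for \'etale morphisms of formal schemes then permits a presentation
\[
R_0 = K\langle \sigma^{\pm 1}, \nu^{\pm 1}, \tau\rangle/(P)
\]
with Jacobian $\det(\partial P/\partial \tau) \in (R_0^\circ)^\times$, strengthening the merely $R_0$-unit condition used in Proposition \ref{tower}.

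As in Proposition \ref{tower}, the task then reduces to producing, for each tuple $\zeta$ of $p$-th roots of unity, a $\B^1$-homotopy on $X_{h+1}$ between $\id$ and the twist $f_\zeta \colon \sigma \mapsto \zeta\sigma$, of the explicit shape
\[
(\sigma_i, \tau_j) \longmapsto \bigl(\bar{\sigma}_i + (\zeta_i - 1)\bar{\sigma}_i \chi,\; F_{h+1}(\bar{\sigma} + (\zeta - 1)\bar{\sigma}\chi)\bigr),
\]
where $F_h \in \bar{R}_h[[\sigma - \bar{\sigma}]]$ is the implicit power series of Corollary \ref{implicitC} attached to $P(\sigma^{p^h}, \tau) = 0$. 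This is the step where good reduction plays the decisive role: because $\det(\partial P/\partial \tau)$ is a unit in $R_0^\circ$, Corollary \ref{implicitC} yields an $F_0$ converging on the entire unit polydisc $\{|\sigma - \bar{\sigma}| \leq 1\}$ with power-bounded values. Combined with the functorial identity $F_{h+1}(\sigma) = \phi_h(F_h(\sigma^p))$ already established in Proposition \ref{tower} and the elementary estimate $|\sigma^p - \bar{\sigma}^p| \leq 1$ for $|\sigma - \bar{\sigma}| \leq 1$, this radius-$1$ convergence propagates inductively to every $F_h$.

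Given this uniform-in-$h$ convergence, the bound $|(\zeta_i - 1)\bar{\sigma}_i \chi| \leq |\zeta_i - 1| < 1$ makes the displayed homotopy well-defined on $X_{h+1} \times \B^1$ for every $h \geq 0$, and the trace computation $f \circ f^T = p^N \id$, $f^T \circ f = p^N \id$ from the end of Proposition \ref{tower} then identifies $\Lambda_{\tr}(X_{h+1}) \to \Lambda_{\tr}(X_h)$ as an isomorphism in $\RigDM_{\et}^{\eff}$, hence in $\RigDA_{\et}^{\eff}$ by Theorem \ref{DA=DM}. The only genuinely new point is the radius-$1$ convergence of the $F_h$ furnished by good reduction; the hard part is convincing oneself that this one quantitative improvement really is all that separates the asymptotic conclusion of Proposition \ref{tower} from the uniform conclusion needed here, while the rest of the argument is a direct reprise of that proof.
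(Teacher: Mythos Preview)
Your approach is correct but takes a genuinely different route from the paper's proof. You refine the analytic estimates of Proposition~\ref{tower}: good reduction lets you choose (after Zariski localization on the formal model, which suffices for \'etale descent on the generic fibre) a standard-\'etale presentation with $P\in K^\circ[\sigma^{\pm1},\nu^{\pm1},\tau]$ and $\det(\partial P/\partial\tau)\in(R_0^\circ)^\times$. The normalization step in the proof of Proposition~\ref{implicit} then stays integral, so the coefficients satisfy $|d_{iI}|\leq 1$ and each $F_h$ converges on every disc of radius $<1$ around $\bar\sigma$ with power-bounded values --- more than enough to accommodate $|\zeta_i-1|=|p|^{1/(p-1)}<1$ and make the homotopy well-defined for every $h$. (A small quibble: $|d_{iI}|\leq 1$ only gives the open unit polydisc, not the closed one you claim; this is harmless for the application.)

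The paper instead passes to the stable category $\RigDA_{\et}(K)$ via the Cancellation theorem, invokes Ayoub's formal-motive machinery $\FormDA_{\et}(K^\circ)$, and reduces to the special fibre over the residue field $k$: there the map $\bar X_1\to\bar X_0$ is the base change of the relative Frobenius on $\A^N_k$, hence an isomorphism of correspondences in $\DM_{\et}(k)$ since $p\in\Lambda^\times$, and this is transported back via the special-fibre/generic-fibre equivalence of \cite[Remark 1.4.30]{ayoub-rig}. Your argument is more elementary and self-contained --- it stays in the effective category and avoids formal motives entirely --- while the paper's route is more conceptual, making explicit that the statement is literally the invertibility of Frobenius on the special fibre, in keeping with the tilting theme of the article.
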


\begin{proof}
 If the map $X_0\ra \T^N\times\T^M$ has an \'etale formal model, then also the map $X_h\ra \T^N\langle\underline{\upsilon}^{1/p^h}\rangle\times\T^M $ does. It is then sufficient to consider only the case $h=0$. 
  Since $L/K$ is finite   and $\Lambda$ is a $\Q$-algebra, by the same argument of the proof of Proposition \ref{tower} we can assume that $\varprojlim_hX_h$ has good reduction.  Also, by means of Theorem \ref{DA=DM} and the Cancellation theorem \cite[Corollary 2.5.49]{ayoub-rig}, we can equally prove the statement in the stable category $\RigDA_{\et}(K)$ defined in \cite[Definition 1.3.19]{ayoub-rig}.

Let $\mfX_0\ra\Spf K^\circ\langle\underline{\upsilon}^{\pm1},\underline{\nu}^{\pm1}\rangle$ be a formal model of the map $X_0\ra\T^n\times\T^m$. We let $\bar{X}_0$ be the special fiber over the residue field $k$ of $K$. The variety $X_1$ has also a smooth formal model $\mfX_1$ whose special fiber is $\bar{X}_1$. By definition, the natural map $\bar{X}_1\ra\bar{X}_0$ is the push-out of the (relative) Frobenius map $\A^{\dim X}_{k}\ra \A^{\dim X}_{k}$ which is isomorphic to the relative Frobenius map and hence an isomorphism of correspondences as $p$ is invertible in $\Lambda$. We conclude that $\Lambda_{\tr}(\bar{X}_1)\ra\Lambda_{\tr}(\bar{X}_0)$ is an isomorphism in $\DM_{\et}(k)$. 

Let $\FormDA_{\et}(K^\circ)$ be the stable category of motives of formal varieties $\FSH_{\mfM}(K^\circ)$ defined in \cite[Definition 1.4.15]{ayoub-rig} associated to the model category $\mfM=\Ch(\Lambda\Mod)$. 
Using \cite[Theorem B.1]{ayoub-etale} we deduce that the map $\Lambda(\bar{X}_1)\ra\Lambda(\bar{X}_0)$ is an isomorphism in $\DA_{\et}(k)$ as is its  image  via the following functor (see \cite[Remark 1.4.30]{ayoub-rig}) induced by  the special fiber functor  and the generic fiber functor:
$$\xymatrix{
\DA_{\et}(k)&\FormDA_{\et}(K^\circ)\ar[r]^-{(-)_\eta}\ar[l]^-{\sim}_-{(-)_\sigma}&\RigDA_{\et}(K).
}$$
This morphism is precisely the map $\Lambda(X_1)\ra\Lambda(X_0)$ proving the claim. 
\end{proof}

We are now ready to present the main result of this section.

\begin{thm}\label{premain}
Let $\car K=0$. The functor $\LL\iota^*\colon\RigDA^{\eff}_{\et}(K)\ra{\wRigDA^{\eff}_{\et}}(K)$ has a left adjoint  $\LL\iota_!$ 
and the counit map $\id\ra\LL\iota_!\LL\iota^*$ is invertible. 
 Whenever $X=\varprojlim_hX_h$ is an object of $\wRigSm^{\gc}$
then $\LL\iota_!\Lambda(X)\cong\Lambda(X_h)$ for a sufficiently large index $h$.  If moreover $X=\varprojlim_hX_h$ is  of potentially good reduction,
then $\LL\iota_!\Lambda(X)\cong\Lambda(X_0)$.
\end{thm}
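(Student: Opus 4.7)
My plan is to construct $\LL\iota_!$ as a left adjoint by defining it explicitly on the compact generators, and then invoking compact generation to extend to the whole category. Given $X = \varprojlim_h X_h$ in $\wRigSm^{\gc}$, Proposition \ref{tower} produces an integer $\bar h$ such that each transition $\Lambda(X_{h+1}) \to \Lambda(X_h)$ is an isomorphism in $\RigDA^{\eff}_{\et}(K)$ for $h \geq \bar h$; I would set $\LL\iota_! \Lambda(X) := \Lambda(X_{\bar h})$. For any object $G$ of $\RigDA^{\eff}_{\et}(K)$, the expected adjunction identity on these generators then follows from the chain
\[ \wRigDA^{\eff}_{\et,\B^1}(K)(\Lambda(X), \LL\iota^* G) \;\cong\; \varinjlim_h \RigDA^{\eff}_{\et}(K)(\Lambda(X_h), G) \;\cong\; \RigDA^{\eff}_{\et}(K)(\Lambda(X_{\bar h}), G), \]
in which the first isomorphism is Proposition \ref{colimok} and the second is the stabilization from Proposition \ref{tower}. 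In particular this shows the independence of the value $\LL\iota_! \Lambda(X)$ from the choice of $\bar h$ up to canonical isomorphism.

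To extend to a global left adjoint, I would use that $\wRigDA^{\eff}_{\et,\B^1}(K)$ is compactly generated by the motives $\Lambda(X)$ with $X$ in $\wRigSm^{\gc}$ (Proposition \ref{genRigDA}). Proposition \ref{colimok} also makes the assignment $\Lambda(X) \mapsto \Lambda(X_{\bar h})$ functorial on morphisms between compact generators, since any morphism $\Lambda(X) \to \Lambda(X')$ in $\wRigDA^{\eff}_{\et,\B^1}(K)$ comes, after suitably enlarging $\bar h$, from a morphism $\Lambda(X_{\bar h}) \to \Lambda(X'_{\bar h})$ in $\RigDA^{\eff}_{\et}(K)$. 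Extending by small sums and homotopy colimits (which any left adjoint must preserve) then yields a triangulated functor $\LL\iota_!$ on all of $\wRigDA^{\eff}_{\et,\B^1}(K)$, and the adjunction formula on generators upgrades to the whole category by compact generation. That the counit $\LL\iota_! \LL\iota^* \to \id$ is invertible is equivalent to the fully faithfulness of $\LL\iota^*$ already established in Proposition \ref{Dff}; alternatively one verifies it directly on generators $\Lambda(Y)$ with $Y \in \RigSm^{\gc}$, where the defining tower is constant ($Y_h = Y$ for all $h$). Finally, for a presentation $X = \varprojlim_h X_h$ of potentially good reduction, Proposition \ref{goodred} asserts that \emph{every} transition $\Lambda(X_{h+1}) \to \Lambda(X_h)$ is already an isomorphism in $\RigDA^{\eff}_{\et}(K)$, so one may take $\bar h = 0$ and conclude $\LL\iota_! \Lambda(X) \cong \Lambda(X_0)$.

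The substantive difficulty has been entirely absorbed by the preceding propositions: Proposition \ref{tower}, proved via the implicit function theorem and a homotopy built from multiplication by $p$-th roots of unity (and crucially depending on $\car K = 0$), and Proposition \ref{colimok}, which in turn rests on the approximation results of Section \ref{motapprox}. Granted these inputs, the present theorem is a formal adjoint-on-compact-generators construction in a compactly generated triangulated category, with no further geometric content required.
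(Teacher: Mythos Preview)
Your proposal is correct and follows essentially the same route as the paper: establish the adjunction identity on the compact generators $\Lambda(X)$ via Propositions \ref{colimok} and \ref{tower} (resp.\ \ref{goodred} in the good-reduction case), then extend using compact generation, and deduce invertibility of the counit from the full faithfulness of $\LL\iota^*$ (Proposition \ref{Dff}). The only difference is in the packaging of the extension step: where you speak of making the assignment functorial on generators and then extending by sums and homotopy colimits, the paper isolates this into a clean abstract statement (Lemma \ref{extriangadj}) showing that the class of objects $M$ for which $N\mapsto\Hom(M,\LL\iota^*N)$ is corepresentable is closed under cones and small sums, hence contains everything once it contains the generators---this sidesteps the need to check functoriality by hand and avoids any ambiguity about the dependence of $\bar h$ on $X$.
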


\begin{proof}
We start by proving 
 that the canonical map
\[
\RigDA^{\eff}_{\et}(K)(\Lambda(X_{\bar{h}}),\mcF)\ra{\wRigDA^{\eff}_{\et}(K)}(\Lambda(X),\LL\iota^*\mcF)
\]
is an isomorphism, for every $X=\varprojlim_hX_h$ and for $\bar{h}$ big enough. 
 By Proposition \ref{colimok}, it suffices to prove that the natural map
\[
\RigDA^{\eff}(K)(\Lambda(X_{\bar{h}}),\LL a_{tr}\mcF)\ra\varinjlim_h\RigDA^{\eff}(K)(\Lambda(X_h),\LL a_{tr}\mcF)
\]
is an isomorphism for some $\bar{h}$. This follows from Proposition \ref{tower} since the maps $\Lambda(X_{h+1})\ra\Lambda( X_h)$  are isomorphisms if $h\geq \bar{h}$ for some big enough $\bar{h}$. In case $\varprojlim_hX_h$ is of potentially good reduction, then Proposition \ref{goodred} ensures that we can choose $\bar{h}=0$.

We conclude that the subcategory $\catT$ of $\wRigDA^{\eff}_{\et,\widehat{\B}^1}(K)$ formed by the objects $M$ such that the functor $N\mapsto{\wRigDA^{\eff}_{\et,\widehat{\B}^1}}(K)(M,\LL\iota^*N)$ is corepresentable contains all motives $\Lambda(X)$ with $X$ any object  of ${\wRigSm^{\gc}}$. Since these objects form a set of compact generators of $\wRigDA^{\eff}_{\et,\widehat{\B}^1}(K)$ by Proposition \ref{genRigDA}, we deduce  the existence of the functor $\LL\iota_!$  by Lemma \ref{extriangadj}.

The formula $\LL\iota_!\LL\iota^*\cong\id$ is a formal consequence of the fact that $\LL\iota_!$ is the left adjoint of a fully faithful functor $\LL\iota^*$ (see Proposition \ref{Dff}).
\end{proof}

\begin{lemma}\label{extriangadj}
Let $\mfG\colon\catT\ra\catT'$ be a triangulated functor of triangulated categories. The full subcategory $\cat$ of $\catT'$ of objects $M$ such that the functor $a_M\colon N\mapsto\Hom(M,\mfG N)$ is corepresentable is closed under cones and small direct sums.
\end{lemma}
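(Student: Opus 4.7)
The plan is to verify each of the two closure properties separately, using the universal nature of corepresentability together with the triangulated structure of $\catT$. Throughout, assume that for each object $M$ of the subcategory we have fixed a representing object $L_M\in\catT$ together with a natural isomorphism $\Hom_{\catT'}(M,\mfG N)\cong\Hom_{\catT}(L_M,N)$, which by Yoneda corresponds to a canonical unit morphism $\eta_M\colon M\to\mfG L_M$ in $\catT'$.

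For closure under direct sums, suppose $\{M_i\}_{i\in I}$ is a small family of objects in the subcategory with representing objects $L_i\in\catT$. Taking $L\colonequals\bigoplus_i L_i$ in $\catT$, one has the natural isomorphisms
\[
\Hom_{\catT'}\!\Bigl(\bigoplus_i M_i,\mfG N\Bigr)\cong\prod_i\Hom_{\catT'}(M_i,\mfG N)\cong\prod_i\Hom_{\catT}(L_i,N)\cong\Hom_{\catT}(L,N),
\]
so $\bigoplus_i M_i$ lies in the subcategory.

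For closure under cones, take a distinguished triangle $M_1\xrightarrow{u}M_2\to M_3\to M_1[1]$ with $M_1,M_2$ in the subcategory, and representing objects $L_1,L_2$. The morphism $u$ induces by contravariance a natural transformation $\Hom_{\catT}(L_2,-)\to\Hom_{\catT}(L_1,-)$, which by Yoneda corresponds to a unique morphism $v\colon L_1\to L_2$ in $\catT$. Choose a distinguished triangle $L_1\xrightarrow{v}L_2\to L_3\to L_1[1]$ in $\catT$, and apply the triangulated functor $\mfG$ to obtain a distinguished triangle in $\catT'$. By construction the square formed by $u$, $\mfG v$, $\eta_{M_1}$ and $\eta_{M_2}$ commutes (this is precisely how $v$ was defined), so axiom TR3 yields a morphism $\eta\colon M_3\to\mfG L_3$ making the two triangles compatible.

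The morphism $\eta$ induces, for every $N\in\catT$, a natural transformation $\Hom_{\catT}(L_3,N)\to\Hom_{\catT'}(M_3,\mfG N)$. Applying $\Hom_{\catT}(-,N)$ to the first triangle and $\Hom_{\catT'}(-,\mfG N)$ to the second gives two long exact sequences linked by this transformation; by hypothesis the linking maps at $L_1,L_1[1],L_2,L_2[1]$ are isomorphisms, so the five-lemma forces $\Hom_{\catT}(L_3,N)\to\Hom_{\catT'}(M_3,\mfG N)$ to be an isomorphism as well, showing that $L_3$ corepresents $a_{M_3}$. The only mild subtlety is that TR3 does not supply a unique $\eta$, but this is immaterial for the statement of the lemma: existence of one such $\eta$ is all the five-lemma needs.
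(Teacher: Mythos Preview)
Your proof is correct and follows essentially the same route as the paper: the direct-sum case is identical, and for cones you build the comparison map $v\colon L_1\to L_2$ via Yoneda (which is exactly the paper's construction of $\mfF f$ through the unit maps, phrased slightly differently), complete to a map of triangles via TR3, and conclude with the five lemma. Your closing remark on the non-uniqueness of $\eta$ being harmless is a welcome clarification that the paper leaves implicit.
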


\begin{proof}
For any object $M$ in $\cat$ we denote by $\mfF M$ the object corepresenting the functor $a_M$. Let now $\{M_i\}_{i\in I}$ be a set of objects in $\cat$. It is immediate to check that $\bigoplus_{i}\mfF M_i$ corepresents the functor $a_{\bigoplus_i M_i}$.

Let now  $M_1$, $M_2$ be two objects of $\cat$  and $f\colon M_1\ra M_2$ be a map between them. There are canonical maps $\eta_i\colon M_i\ra\mfG\mfF M_i$ induced by the identity $\mfF M_i\ra\mfF M_i$ and the universal property of $\mfF M_i$. By composing with $\eta_2$ we obtain a morphism $\Hom(M_1,M_2)\ra\Hom(M_1,\mfG\mfF M_2)\cong\Hom(\mfF M_1,\mfF M_2)$ sending $f$ to a map $\mfF f$.  Let $C$ be the cone of $f$ and $D$ be the cone of $\mfF f$. We claim that $D$ represents $a_C$. From the triangulated structure we obtain a  map of distinguished  triangles
$$\xymatrix{
M_1\ar[r]^f\ar[d]^{\eta_1}	&	M_2\ar[r]\ar[d]^{\eta_2}	&	C\ar[r]\ar[d]	&\\
\mfG\mfF M_1\ar[r]^{\mfG\mfF f}	&	\mfG\mfF M_2\ar[r]	&	\mfG D\ar[r]&
}$$
inducing for any object $N$ of $\catT$ the following maps of long exact sequences
$$\xymatrix{
&\Hom(M_1,\mfG N)\ar[l]	&	\Hom(M_2,\mfG N)\ar[l]	&	\Hom(C,\mfG N)\ar[l]	&\ar[l]\\
&\Hom(\mfG\mfF M_1,\mfG N)\ar[l]\ar[u]&	\Hom(\mfG\mfF M_2,\mfG N)\ar[l]	\ar[u]&	\Hom(\mfG D,\mfG N)\ar[l]\ar[u]&\ar[l]\\
&\Hom(\mfF M_1,N)\ar[l]\ar[u]&	\Hom(\mfF M_2,N)\ar[l]	\ar[u]&	\Hom(D,N)\ar[l]\ar[u]&\ar[l]
}$$
Since the vertical compositions are isomorphisms for $M_1$ and $M_2$ we deduce that they all are, proving that $D$ corepresents $a_C$ as wanted. 
\end{proof}

We remark that we used the fact that $\Lambda$ is a $\Q$-algebra at least twice in the proof of Theorem \ref{premain}: to allow for field extensions and  correspondences using Theorem \ref{DA=DM} as well as to invert the map defined by multiplication by $p$. %

 The following fact is  a straightforward corollary of Theorem \ref{premain}.
 
 \begin{prop}\label{imageofB1}
 Let $\car K=0$. The motive $\LL\iota_!\Lambda(\widehat{\B}^1)$ is isomorphic to $\Lambda$.
 \end{prop}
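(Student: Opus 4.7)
The plan is to invoke Theorem \ref{premain} on a suitable presentation of $\widehat{\B}^1$ as an object of $\wRigSm^{\gc}$. The key input is Proposition \ref{B1perf}, which exhibits $\widehat{\B}^1$ as $\varprojlim_h X_h$ where $X_0 = U((\upsilon-1)\mid\pi) \hookrightarrow \T^1$ is the rational subdomain, and each $X_h = X_0 \times_{\T^1} \T^1\langle \upsilon^{1/p^h}\rangle$ is shown there to be isomorphic to $\B^1$. Since $X_0 \to \T^1$ is a rational embedding (hence a composition of rational embeddings and finite \'etale maps), this presentation indeed makes $\widehat{\B}^1$ into an object of $\wRigSm^{\gc}$.

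Given this setup, Theorem \ref{premain} applies directly: for $h$ sufficiently large one has
\[
\LL\iota_!\,\Lambda(\widehat{\B}^1) \;\cong\; \Lambda(X_h) \;\cong\; \Lambda(\B^1)
\]
in $\RigDA^{\eff}_{\et}(K)$, where the second isomorphism is by Proposition \ref{B1perf}. Since the category $\RigDA^{\eff}_{\et}(K)$ is by construction the $\B^1$-localization of $\Ch_{\et}\Psh(\RigSm)$, the projection $\B^1 \to \Spa K$ is a $\B^1$-weak equivalence, so $\Lambda(\B^1) \cong \Lambda$ in $\RigDA^{\eff}_{\et}(K)$. Chaining these isomorphisms yields $\LL\iota_!\,\Lambda(\widehat{\B}^1) \cong \Lambda$, as claimed. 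There is no real obstacle here: the proposition is essentially a direct corollary of the explicit computation of $\widehat{\B}^1 = \varprojlim \B^1$ in Proposition \ref{B1perf} combined with the de-perfectoidification formula of Theorem \ref{premain}.
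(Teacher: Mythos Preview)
Your proof is correct and follows essentially the same approach as the paper's own proof: both invoke Proposition \ref{B1perf} to obtain the presentation $\widehat{\B}^1=\varprojlim_h X_h$ with $X_h\cong\B^1$, then apply Theorem \ref{premain} to conclude $\LL\iota_!\Lambda(\widehat{\B}^1)\cong\Lambda(\B^1)\cong\Lambda$. The paper states this in a single sentence, but your elaboration of why the presentation lies in $\wRigSm^{\gc}$ and why $\Lambda(\B^1)\cong\Lambda$ is accurate and adds nothing beyond spelling out what the paper leaves implicit.
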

 
 \begin{proof} 
In order to prove the claim, it suffices to prove that $\LL\iota_!\Lambda(\widehat{\B}^1)\cong\Lambda(\B^1)$. This follows from Proposition \ref{B1perf} and the description of $\LL\iota_!$ given in Theorem \ref{premain}. 
 \end{proof}

We recall that all the homotopy categories we consider are monoidal (see \cite[Propositions 4.2.76 and 4.4.62]{ayoub-th2}), and the tensor product $\Lambda(X)\otimes\Lambda(X')$ of two motives associated to varieties $X$ and $Y$ coincides with $\Lambda(X\times X')$. The unit object is obviously the motive $\Lambda$. Due to the explicit description of the functor $\LL\iota_!$ we constructed above, it is easy to prove that it respects the monoidal structures.

\begin{prop}\label{istensorial}
Let $\car K=0$. The functor $\LL\iota_!\colon\wRigDA^{\eff}_{\et,\B^1}(K)\ra\RigDA^{\eff}_{\et}(K)$ is a monoidal functor.
\end{prop}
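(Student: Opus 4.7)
The plan is to deduce strong monoidality of $\LL\iota_!$ from its explicit description in Theorem \ref{premain}. The inclusion $\iota\colon\RigSm\hookrightarrow\wRigSm$ preserves finite products, so the derived pullback $\LL\iota^*$ is strong symmetric monoidal; by a standard adjunction argument its left adjoint $\LL\iota_!$ then inherits a canonical oplax monoidal structure, with unit comparison map $\LL\iota_!\Lambda\to\Lambda$ and, for each pair $M,N$, a morphism $\mu_{M,N}\colon\LL\iota_!(M\otimes N)\to\LL\iota_!(M)\otimes\LL\iota_!(N)$ adjoint to $\eta_M\otimes\eta_N$, where $\eta$ denotes the unit of the adjunction. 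The coherence axioms are then automatic, so monoidality reduces to showing both comparison maps are invertible. The unit case is immediate: $\Lambda=\Lambda(\Spa K)$ and $\Spa K\in\RigSm^{\gc}$, so Theorem \ref{premain} yields $\LL\iota_!\Lambda\cong\Lambda$.

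For the binary comparison map, I would reduce to generators. Since $\LL\iota_!$ is a triangulated left adjoint and $-\otimes-$ preserves small sums and triangles in each variable, Proposition \ref{genRigDA} reduces the problem to verifying that $\mu_{\Lambda(X),\Lambda(Y)}$ is invertible for $X,Y\in\wRigSm^{\gc}$. Given presentations $X=\varprojlim_h X_h$ and $Y=\varprojlim_h Y_h$ arising from maps $X_0\to\T^N\times\T^M$ and $Y_0\to\T^{N'}\times\T^{M'}$ that are compositions of rational embeddings and finite \'etale maps, their product $X_0\times Y_0\to\T^{N+N'}\times\T^{M+M'}$ is a composition of the same type, so $X\times Y$ lies in $\wRigSm^{\gc}$ with the natural presentation $(X\times Y)_h=X_h\times Y_h$. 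Applying Theorem \ref{premain} three times, one finds a sufficiently large $\bar h$ with $\LL\iota_!\Lambda(X)\cong\Lambda(X_{\bar h})$, $\LL\iota_!\Lambda(Y)\cong\Lambda(Y_{\bar h})$ and $\LL\iota_!\Lambda(X\times Y)\cong\Lambda(X_{\bar h}\times Y_{\bar h})$, and both sides of $\mu_{\Lambda(X),\Lambda(Y)}$ are thereby identified with the single object $\Lambda(X_{\bar h}\times Y_{\bar h})$.

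The main subtlety I expect is checking that, under these identifications, the adjunction-theoretic map $\mu_{\Lambda(X),\Lambda(Y)}$ really corresponds to the identity on $\Lambda(X_{\bar h}\times Y_{\bar h})$ and not some non-invertible endomorphism. For this I would trace the unit: the proof of Theorem \ref{premain} constructs the isomorphism $\LL\iota_!\Lambda(X)\cong\Lambda(X_{\bar h})$ from the representability of $N\mapsto\wRigDA^{\eff}_{\et,\B^1}(K)(\Lambda(X),\LL\iota^*N)$ by $\Lambda(X_{\bar h})$ via the canonical morphism $X\to X_{\bar h}$ in $\wRigSm^{\gc}$. Consequently the unit $\eta_{\Lambda(X)}\colon\Lambda(X)\to\LL\iota^*\LL\iota_!\Lambda(X)=\Lambda(X_{\bar h})$ is induced by this very map, and similarly for $Y$; the tensor $\eta_{\Lambda(X)}\otimes\eta_{\Lambda(Y)}$ is therefore induced by the product map $X\times Y\to X_{\bar h}\times Y_{\bar h}$, and taking adjoints gives the identity on $\Lambda(X_{\bar h}\times Y_{\bar h})=\Lambda(X_{\bar h})\otimes\Lambda(Y_{\bar h})$, so $\mu_{\Lambda(X),\Lambda(Y)}$ is indeed invertible.
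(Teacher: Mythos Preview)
Your proof is correct and follows essentially the same approach as the paper: both obtain the oplax comparison map from the adjunction with the strong monoidal $\LL\iota^*$, reduce to generators $\Lambda(X)$ with $X\in\wRigSm^{\gc}$, and use the explicit description of $\LL\iota_!$ from Theorem~\ref{premain} together with the product presentation $(X\times Y)_h=X_h\times Y_h$ to identify both sides. Your treatment is in fact more careful than the paper's, which simply chains isomorphisms $\LL\iota_!\Lambda(X\times X')\cong\Lambda(X_0\times X_0')\cong\LL\iota_!\Lambda(X)\otimes\LL\iota_!\Lambda(X')$ without explicitly verifying that this composite agrees with the canonical comparison map; your final paragraph, tracing the unit $\eta_{\Lambda(X)}$ back to the structural morphism $X\to X_{\bar h}$, supplies exactly the compatibility check the paper leaves implicit.
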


\begin{proof}
 Since $\LL\iota_!$ is the left adjoint of a monoidal functor $\LL \iota^*$ there is a canonical natural transformation of bifunctors $\LL\iota_!(M\otimes M')\ra \LL\iota_! M\otimes \LL\iota_! M'$. In order to prove it is an isomorphism, it suffices to check it on a set of generators of $\wRigDA^{\eff}_{\et,\B^1}$ such as motives of semi-perfectoid varieties $X=\varprojlim_hX_h$, $X'=\varprojlim_hX'_h$. Up to rescaling, we can suppose that $\LL\iota_!\Lambda(X)=\Lambda(X_0)$ and $\LL\iota_!\Lambda(X')=\Lambda(X'_0)$ by  Theorem \ref{premain}. 
 In this case, by  definition of the tensor product, we obtain the following isomorphisms
 \[\LL\iota_!(\Lambda(X)\otimes\Lambda(X'))\cong \LL\iota_!\Lambda(X\times X')\cong \Lambda(X_0\times X_0')\cong\Lambda(X_0)\otimes\Lambda(X'_0)\cong \LL\iota_!\Lambda(X)\otimes \LL\iota_!\Lambda(X')\] 
proving our claim.
\end{proof}

The following proposition can be considered to be a refinement of Theorem \ref{premain}.

\begin{prop}\label{islocal2}
Let  $\car K=0$.  The functor $\LL\iota_!$ factors through $\wRigDA^{\eff}_{\et,\B^1}\ra\wRigDA^{\eff}_{\et,\widehat{\B}^1}$ and  the image of the functor $\LL \iota^*\colon\RigDA^{\eff}_{\et}(K)\ra\wRigDA^{\eff}_{\et,\B^1}(K)$ lies in the subcategory of $\widehat{\B}^1$-local objects. In particular, the triangulated adjunction 
\[\adj{\LL\iota_!}{\wRigDA^{\eff}_{\et,{\B}^1}(K)}{\RigDA_{\et}^{\eff}(K)}{\LL\iota^*}\] 
restricts to a  triangulated adjunction 
\[\adj{\LL\iota_!}{\wRigDA^{\eff}_{\et,\widehat{\B}^1}(K)}{\RigDA^{\eff}_{\et}(K)}{\LL\iota^*}.\] 
\end{prop}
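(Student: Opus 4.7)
The plan is to reduce both claims to a single computation: we show that for every $M$ in $\RigDA^{\eff}_{\et}(K)$, the object $\LL\iota^* M$ is already $\widehat{\B}^1$-local inside $\wRigDA^{\eff}_{\et,\B^1}(K)$. Once this is established, the factorization of $\LL\iota_!$ through the further localization follows formally by adjunction: if $f\colon A\to B$ is a $\widehat{\B}^1$-weak equivalence in $\wRigDA^{\eff}_{\et,\B^1}(K)$, then for every $M$ the map
\[
\Hom(\LL\iota_! B,M)\cong\Hom(B,\LL\iota^*M)\stackrel{f^*}{\longrightarrow}\Hom(A,\LL\iota^*M)\cong\Hom(\LL\iota_! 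A,M)
\]
is an isomorphism because $\LL\iota^*M$ is $\widehat{\B}^1$-local, so $\LL\iota_!(f)$ is invertible by Yoneda. The restricted adjunction is then immediate from Proposition \ref{b1>b1}, which identifies $\wRigDA^{\eff}_{\et,\widehat{\B}^1}(K)$ with the full subcategory of $\widehat{\B}^1$-local objects in $\wRigDA^{\eff}_{\et,\B^1}(K)$.

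To check $\widehat{\B}^1$-locality of $\LL\iota^*M$, by Proposition \ref{genRigDA} and the closure of $\widehat{\B}^1$-local objects under shifts and limits, it suffices to show that for every $X\in\wRigSm^{\gc}$ and every $i\in\Z$ the projection $X\times\widehat{\B}^1\to X$ induces an isomorphism
\[
\Hom\bigl(\Lambda(X)[i],\LL\iota^*M\bigr)\stackrel{\sim}{\longrightarrow}\Hom\bigl(\Lambda(X\times\widehat{\B}^1)[i],\LL\iota^*M\bigr).
\]
Applying the adjunction of Theorem \ref{premain}, the right-hand side becomes $\Hom(\LL\iota_!\Lambda(X\times\widehat{\B}^1)[i],M)$. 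Now the monoidality of $\LL\iota_!$ (Proposition \ref{istensorial}) gives $\LL\iota_!\Lambda(X\times\widehat{\B}^1)\cong\LL\iota_!\Lambda(X)\otimes\LL\iota_!\Lambda(\widehat{\B}^1)$, and Proposition \ref{imageofB1} identifies $\LL\iota_!\Lambda(\widehat{\B}^1)$ with the unit $\Lambda$. Hence the displayed map is induced by the identity on $\LL\iota_!\Lambda(X)$ and is an isomorphism.

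The only point requiring care is that the isomorphism $\LL\iota_!\Lambda(X\times\widehat{\B}^1)\cong\LL\iota_!\Lambda(X)$ obtained above is really induced by the projection $X\times\widehat{\B}^1\to X$; this follows from the naturality of the monoidal transformation together with the fact that the map $\Lambda\to\LL\iota_!\Lambda(\widehat{\B}^1)$ produced in Proposition \ref{imageofB1} is a section of the augmentation $\LL\iota_!\Lambda(\widehat{\B}^1)\to\LL\iota_!\Lambda(\Spa K)=\Lambda$ and is itself an isomorphism. I expect this naturality check to be the only mildly delicate step, as everything else is a formal manipulation combining Theorem \ref{premain}, Proposition \ref{istensorial}, and Proposition \ref{imageofB1}. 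With the first assertion proved, the adjunction $(\LL\iota_!,\LL\iota^*)$ descends to an adjunction between $\wRigDA^{\eff}_{\et,\widehat{\B}^1}(K)$ and $\RigDA^{\eff}_{\et}(K)$, completing the proof.
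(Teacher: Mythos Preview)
Your argument is correct and uses exactly the same ingredients as the paper: the adjunction of Theorem \ref{premain}, the monoidality of $\LL\iota_!$ from Proposition \ref{istensorial}, and the identification $\LL\iota_!\Lambda(\widehat{\B}^1)\cong\Lambda$ from Proposition \ref{imageofB1}. The only difference is organizational: the paper first proves the factorization of $\LL\iota_!$ directly (a monoidal functor sending $\Lambda(\widehat{\B}^1)$ to $\Lambda$ automatically sends the generating maps $\Lambda(X\times\widehat{\B}^1)\to\Lambda(X)$ to isomorphisms), and then deduces the $\widehat{\B}^1$-locality of $\LL\iota^*M$ via the adjunction, whereas you prove locality first and deduce factorization from it by Yoneda. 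Both orderings work and the underlying computation is identical; your naturality remark is a fair caveat but applies equally to the paper's chain of isomorphisms.
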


\begin{proof}
By Propositions \ref{imageofB1} and  \ref{istensorial}, $\LL\iota_!$ is a monoidal functor sending $\Lambda(\widehat{\B}^1)$ to $\Lambda$. This proves the first claim. 

From the adjunction $(\LL\iota_!,\LL\iota^*)$ we then obtain the following isomorphisms, for any $X$ in $\wRigSm^{\gc}$ and any $M$ in $\RigDA^{\eff}_{\et}(K)$:
\[
\begin{aligned}
\wRigDA^{\eff}_{\et,{\B}^1}(K)(\Lambda(X\times\widehat{\B}^1),\LL\iota^*M)\cong\RigDA^{\eff}_{\et}(K)(\LL\iota_!\Lambda(X)\otimes\Lambda,M)\\
\cong \RigDA^{\eff}_{\et}(K)(\LL\iota_!\Lambda(X),M)\cong\wRigDA^{\eff}_{\et,{\B}^1}(K)(\Lambda(X),\LL\iota^*M)
\end{aligned}
\]
proving the second claim.
\end{proof}

\begin{rmk}
In the statement of the proposition above, we make a slight abuse of notation when denoting with $(\LL\iota_!,\LL\iota^*)$ both adjoint pairs. It  will be clear from the context which one we consider at each instance.
\end{rmk}

\section{The de-perfectoidification functor in characteristic \texorpdfstring{$p$}{p}}\label{deperfp}

We now consider the case of a perfectoid field $K^\flat$ of characteristic $p$ and try to generalize the results of Section \ref{deperf0}. We will need to perform an extra localization on the  model structure, and in return we will prove a stronger result. In  this section, we always assume that the base perfectoid field has characteristic $p$. In order to emphasize this hypothesis, we   denote it with $K^\flat$. 

In positive characteristic, we are not able to prove Theorem \ref{DA=DM} as it is stated, and it is therefore not clear that the maps $X_{h+1}\ra X_h$ associated to an object $X=\varprojlim_hX_h$ of $\wRigSm$ are isomorphisms in $\RigDA^{\eff}_{\et}(K^\flat)$ for a sufficiently big $h$. In order to overcome this obstacle, we  localize our model category further.

For any variety $X$ over $K^\flat$ we denote by $X^{(1)}$ the pullback of $X$ over the Frobenius map $\Phi\colon K^\flat\ra K^\flat$, $x\mapsto x^p$. The absolute Frobenius morphism induces a $K^\flat$-linear map $X\ra X^{(1)}$. Since $K^\flat$ is perfect, we can also denote by $X^{(-1)}$ the  pullback of $X$ over the inverse of the Frobenius map $\Phi^{-1}\colon K^\flat\ra K^\flat$ and $X\cong(X^{(-1)})^{(1)}$. There is in particular a canonical map $X^{(-1)}\ra X$ which is isomorphic to the map $X'\ra X$ induced by the absolute Frobenius, where we denote by $X'$ the same variety $X$ endowed with the structure map $X\ra \Spa K\stackrel{\Phi}{\ra}{\Spa K}$.

\begin{prop}
The  model category $\Ch_{\et,\B^1}\Psh (\RigSm/K^\flat)$ admits a left Bousfield localization denoted by $\Ch_{\Frobet,\B^1}\Psh(\RigSm/K^\flat)$ with respect to  the set   $S_{\Frob}$ of relative Frobenius maps $\Phi\colon \Lambda(X^{(-1)})[i]\ra \Lambda(X)[i]$ as $X$ varies in $\RigSm$ and $i$ varies in $\Z$. 
\end{prop}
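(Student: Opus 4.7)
The plan is to invoke the standard existence theorem for left Bousfield localizations at a set of morphisms, exactly as in the proof of Proposition \ref{locsets}. The projective model structure on $\Ch\Psh(\RigSm/K^\flat)$ is left proper and cellular. The category $\Ch_{\et,\B^1}\Psh(\RigSm/K^\flat)$ is obtained from it by two successive left Bousfield localizations at sets of maps: first at $S_{\B^1}$, and then at a set of generators for the $\et$-localization. For the second step one uses the fact, recorded in the proof of Proposition \ref{locsets}, that by \cite[Proposition 4.4.32]{ayoub-th2} the $\et$-localization of $\Ch\Psh(\RigSm/K^\flat)$ actually coincides with the left Bousfield localization at a small set of maps. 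Hence $\Ch_{\et,\B^1}\Psh(\RigSm/K^\flat)$ is a left Bousfield localization at a set.

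Now I would apply \cite[Theorem 4.2.71]{ayoub-th2}, which guarantees that the left Bousfield localization of a left proper cellular model category at a set of morphisms is again left proper and cellular. It follows that $\Ch_{\et,\B^1}\Psh(\RigSm/K^\flat)$ itself is left proper and cellular. Since $S_{\Frob}$ is again a set of morphisms in this category, I can then apply \cite[Theorem 4.1.1]{hirschhorn} one more time to produce the desired left Bousfield localization $\Ch_{\Frobet,\B^1}\Psh(\RigSm/K^\flat)$.

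There is essentially no obstacle here: the only conceptual point is to notice that each prior localization has been carried out with respect to a set (rather than a proper class), so that cellularity and left properness are preserved and the hypotheses of Hirschhorn's localization theorem remain available. The set-theoretic size of $S_{\Frob}$ poses no issue, as it is indexed by the set of (isomorphism classes of) objects of $\RigSm/K^\flat$ and the integers $i\in\Z$, just like the set $S_{\B^1}$ already considered in Proposition \ref{locsets}.
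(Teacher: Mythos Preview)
Your proposal is correct and follows essentially the same argument as the paper's proof: use \cite[Proposition 4.4.32]{ayoub-th2} to see that the $\et$-localization is a localization at a set, invoke \cite[Theorem 4.2.71]{ayoub-th2} to conclude that $\Ch_{\et,\B^1}\Psh(\RigSm/K^\flat)$ remains left proper and cellular, and then apply \cite[Theorem 4.1.1]{hirschhorn} to localize at the set $S_{\Frob}$. You spell out slightly more detail than the paper does, but the route is identical.
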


\begin{proof}
Since by  \cite[Proposition 4.4.31]{ayoub-th2} the $\tau$-localization coincides with the Bousfield localization with respect to a set, we conclude by \cite[Theorem 4.2.71]{ayoub-th2} that the  model category $\Ch_{\et,\B^1}\Psh (\RigSm/K^\flat)$ is still left proper and cellular.   
 We can then apply \cite[Theorem 4.1.1]{hirschhorn}.
\end{proof}

\begin{dfn}\label{DAfrobetK}
 We  denote by  $\RigDA^{\eff}_{\Frobet}(K^\flat,\Lambda)$ the  homotopy category associated to  $\Ch_{\Frobet,\B^1}\Psh(\RigSm/K^\flat)$. We  omit $\Lambda$ whenever the context allows it.
  The image of a rigid variety $X$ in this category is denoted by $\Lambda(X)$. 
\end{dfn}

The triangulated category $\RigDA^{\eff}_{\Frobet}(K)$ is canonically isomorphic to the full triangulated subcategory of $\RigDA^{\eff}_{\et}(K)$ formed by $\Frob$-local objects, i.e. objects that are local with respect to the maps in  $S_{\Frob}$. Modulo this identification,  there is an obvious functor ${\RigDA^{\eff}_{\et}}(K^\flat)\ra\RigDA^{\eff}_{\Frobet}(K^\flat)$ associating to $\mcF$ a $\Frob$-local object $C^{\Frob}\mcF$.

Inverting Frobenius morphisms is enough to obtain an analogue of Theorem \ref{DA=DM} in characteristic $p$. 

 \begin{thm}[{\cite{vezz-DADM}}]\label{DA=DMp}
 Let $\car K^\flat=p$. 
 The functors $(a_{tr}, o_{tr})$ induce an equivalence of triangulated categories:
\[
{\LL a_{tr}}\colon{\RigDA_{\Frobet}^{\eff}}(K^\flat)\cong{\RigDM^{\eff}_{\et}}(K^\flat).
\]
\end{thm}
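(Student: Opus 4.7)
The plan is to adapt the proof of Theorem \ref{DA=DM} by using the relative Frobenius to absorb the wild ramification present in characteristic $p$. The essential preliminary observation is that for a smooth rigid variety $X$ of pure relative dimension $d$, the relative Frobenius $\Phi_X\colon X^{(-1)}\ra X$ is finite and flat of degree $p^d$, so its transpose defines a finite correspondence $\Phi_X^T\in\RigCor(X,X^{(-1)})$ satisfying $\Phi_X\circ\Phi_X^T=p^d\cdot\id_X$ and $\Phi_X^T\circ\Phi_X=p^d\cdot\id_{X^{(-1)}}$. Since $\Lambda$ is a $\Q$-algebra, the integer $p^d$ is invertible; consequently $\LL a_{tr}$ sends every element of $S_{\Frob}$ to an isomorphism and the Quillen adjunction of Remark \ref{atrotr} descends to the desired adjunction
\[
\adj{\LL a_{tr}}{\RigDA^{\eff}_{\Frobet}(K^\flat)}{\RigDM^{\eff}_{\et}(K^\flat)}{\R o_{tr}}.
\]

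To prove that this adjunction is an equivalence, I would proceed in two steps. First, by the triangulated adjunction formalism both functors are compatible with small sums, so the claim reduces, via the compact generators provided by Proposition \ref{genRigDA} (and the corresponding statement with transfers), to checking the unit and counit on the generators $\Lambda(X)$ and $\Lambda_{tr}(X)$ for $X\in\RigSm^{\gc}$. Unfolding definitions, both statements reduce to the single claim that the canonical map $\Lambda(X)\ra \Lambda_{tr}(X)$ in $\Ch\Psh(\RigSm/K^\flat)$ becomes invertible after $(\Frobet,\B^1)$-localization, i.e.\ that the forgetful functor $o_{tr}$ and its left adjoint $a_{tr}$ become mutually inverse on generators after this further localization.

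This reduction isolates the main obstacle. The proof of this key statement, carried out in the companion paper \cite{vezz-DADM}, rests on the geometry of finite correspondences in characteristic $p$: any $Z\in\RigCor(Y,X)$ decomposes generically as a $\Z$-linear combination $\sum n_i\Gamma_i$ of graphs of partially defined morphisms, the multiplicities $n_i$ recording precisely the inseparable degrees along the fibres. Inverting the relative Frobenius allows one to absorb any factor of $p^d$ appearing in the $n_i$ by passing from $X$ to a suitable Frobenius twist $X^{(-h)}$, so that after such a twist the correspondence $Z$ becomes, up to an \'etale refinement of $Y$ and a $\B^1$-homotopy provided by the approximation techniques of Section \ref{motapprox}, a $\Lambda$-linear combination of graphs of genuine morphisms. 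Patching these local-to-global constructions together yields the desired homotopy inverse of $\LL a_{tr}$ and completes the equivalence.
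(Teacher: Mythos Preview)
The paper does not prove this theorem: it is stated with a citation to the companion paper \cite{vezz-DADM}, and the only commentary is the remark that the proof uses crucially that $\Lambda$ is a $\Q$-algebra. There is therefore no proof in the present text to compare your attempt against.

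On the substance of your sketch: the first paragraph is correct and is the standard opening move. The relative Frobenius $\Phi_X\colon X^{(-1)}\to X$ is finite locally free of degree $p^d$, so in $\RigCor$ one has $\Phi_X\circ\Phi_X^T=p^d\cdot\id_X$ and $\Phi_X^T\circ\Phi_X=p^d\cdot\id_{X^{(-1)}}$; with $p\in\Lambda^\times$ this makes $\LL a_{tr}$ send $S_{\Frob}$ to isomorphisms, and the adjunction descends as you say.

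Two points in the remainder deserve care. First, you assert that ``both functors are compatible with small sums''; for the right adjoint $\R o_{tr}$ this is not automatic and needs an argument (e.g.\ that $o_{tr}$ already preserves $(\et,\B^1)$-weak equivalences, so $\R o_{tr}=o_{tr}$, or a compactness argument). Second, your appeal to ``the approximation techniques of Section~\ref{motapprox}'' is misplaced: that section, and the appendix it rests on, concern lifting maps out of semi-perfectoid towers $\varprojlim_h X_h$ to a finite level $X_h$, and have nothing to do with decomposing finite correspondences between rigid varieties. The heart of the $\DA\!\leftrightarrow\!\DM$ comparison in \cite{vezz-DADM} is an argument internal to $\RigSm/K^\flat$ and its correspondences; it does not pass through $\wRigSm$ or the perfectoid machinery at all. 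Your intuition that inverting Frobenius absorbs inseparable multiplicities so that correspondences become, \'etale-locally, combinations of graphs is the right one, but the supporting homotopies come from the standard toolkit for presheaves with transfers, not from the non-noetherian approximation results of this paper.
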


\begin{rmk}\label{Qiscruc2}
 The proof of the statement above  uses in a crucial way the fact that the  ring of coefficients $\Lambda$ is a $\Q$-algebra. %
\end{rmk}

We now investigate the relations between the category $\RigDA_{\Frobet}^{\eff}(K^\flat)$ we have just defined, and the other categories of motives introduced so far.

\begin{prop}\label{2-6}
 Let $X_0$ be in $\RigSm/K^\flat$ endowed with an \'etale map $X_0\ra\T^N\times\T^M=\Spa(K^\flat\langle\underline{\upsilon}^{\pm1},\underline{\nu}^{\pm1}\rangle)$. The map $X_1=X_0\times_{\T^N}\T^N\langle\underline{\upsilon}^{\pm1/p}\rangle\ra X_0$ 
 is invertible in $\RigDA^{\eff}_{\Frobet}(K^\flat)$.
\end{prop}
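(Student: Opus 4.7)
The plan is to pass to the category of motives with transfers via Theorem~\ref{DA=DMp} and then exhibit an explicit inverse for $f\colon X_1 \to X_0$ in the correspondence category $\RigCor(K^\flat,\Lambda)$. This parallels the strategy of Proposition~\ref{tower}, but is substantially simpler in characteristic $p$ because the group scheme $\mu_p$ collapses to a single fat point of length $p$, which obviates the need for the implicit function theorem and for field extensions.

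Since $\LL a_{\tr}\colon \RigDA^{\eff}_{\Frobet}(K^\flat) \to \RigDM^{\eff}_{\et}(K^\flat)$ is an equivalence by Theorem~\ref{DA=DMp} and satisfies $\LL a_{\tr}\Lambda(X)=\Lambda_{\tr}(X)$, it is enough to show that $\Lambda_{\tr}(X_1)\to \Lambda_{\tr}(X_0)$ is an isomorphism in $\RigDM^{\eff}_{\et}(K^\flat)$; in fact, I claim $[f]$ is already invertible in $\RigCor(K^\flat,\Lambda)$. The map $\T^N\langle\underline{\upsilon}^{\pm 1/p}\rangle\to \T^N$ is given on coordinates by $\upsilon\mapsto\upsilon^p$, which in characteristic $p$ is finite flat and purely inseparable of degree $p^N$; its base change $f$ along the étale map $X_0\to\T^N\times\T^M$ inherits these properties. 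Let $f^T\in\RigCor(X_0, X_1)$ be the transpose correspondence. Since $f$ is radicial, the image of $(f,f)\colon X_1\to X_0\times X_0$ is set-theoretically the diagonal $\Delta_{X_0}$, with multiplicity $\deg(f)=p^N$; hence $f\circ f^T = p^N\cdot\id_{X_0}$. Similarly $f^T\circ f = [X_1\times_{X_0}X_1]$ is supported on $\Delta_{X_1}$, and its multiplicity there is the length of $X_1\times_{X_0}X_1$ viewed as a module over $X_1$ via either projection.

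To compute this length, I apply Lemma~\cite[1.1.50]{ayoub-rig} as in the proof of Proposition~\ref{tower} to reduce to the local form $R_0=S\langle\sigma,\tau\rangle/(P(\sigma,\tau))$ with $\det(\partial P/\partial\tau)\in R_0^\times$ and $R_1=S\langle\sigma,\tau\rangle/(P(\sigma^p,\tau))$, the structure map $R_0\to R_1$ being $\sigma\mapsto\sigma^p$, $\tau\mapsto\tau$. In characteristic $p$, the relation $\sigma_L^p=\sigma_R^p$ in $R_1\otimes_{R_0}R_1$ is equivalent to $(\sigma_L-\sigma_R)^p=0$ componentwise, and (using the $\tau$-component identity forced by both copies of $R_0\to R_1$) one obtains
\[
R_1\otimes_{R_0}R_1\cong R_1[\epsilon_1,\ldots,\epsilon_N]/(\epsilon_1^p,\ldots,\epsilon_N^p), \qquad \epsilon_i=\sigma_{L,i}-\sigma_{R,i},
\]
a free $R_1$-module of rank $p^N$ whose reduced structure is $\Delta_{X_1}$. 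Consequently $f^T\circ f = p^N\cdot\id_{X_1}$.

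Since $\Lambda$ is a $\Q$-algebra, $p^N$ is invertible in $\Lambda$, so $p^{-N}f^T$ is a two-sided inverse of $[f]$ in $\RigCor(K^\flat,\Lambda)$, completing the proof. The main technical point is the multiplicity computation in the second composition; this is the characteristic~$p$ analogue of the step in the proof of Proposition~\ref{tower} where each map $f_\zeta$ is shown to be $\B^1$-homotopic to the identity, and it is replaced here by the much cleaner scheme-theoretic collapse of $\mu_p^N$ to a single fat point of length $p^N$.
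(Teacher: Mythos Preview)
Your proof is correct, but it takes a heavier route than the paper's. The paper does \emph{not} invoke Theorem~\ref{DA=DMp} or transfers at all: it simply observes that the composite $X_0^{(-1)}\to X_1\to X_0$ is (isomorphic to) the relative Frobenius of $X_0$, because pulling back the $p$-th power map on all $N+M$ coordinates along the \'etale map $X_0\to\T^N\times\T^M$ recovers the relative Frobenius (cf.\ \cite[Theorem 3.5.13]{gabberramero}). One then considers the diagram
\[
X_1^{(-1)}\xrightarrow{a} X_0^{(-1)}\xrightarrow{b} X_1\xrightarrow{c} X_0,
\]
in which both $cb$ and $ba$ are relative Frobenii, hence invertible in $\RigDA^{\eff}_{\Frobet}(K^\flat)$ by the very definition of the Frob-localization; the standard two-out-of-three trick then forces $c$ to be invertible. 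This argument is intrinsic to $\RigDA^{\eff}_{\Frobet}$ and uses nothing beyond the localizing set $S_{\Frob}$.

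Your approach instead transports the problem to $\RigDM^{\eff}_{\et}(K^\flat)$ via Theorem~\ref{DA=DMp} and produces an explicit two-sided inverse $p^{-N}f^T$ already in $\RigCor(K^\flat,\Lambda)$, exploiting that $f$ is finite flat radicial of degree $p^N$ so that both $f\circ f^T$ and $f^T\circ f$ equal $p^N$ times the identity. This is a legitimate and conceptually satisfying parallel to Proposition~\ref{tower} (and indeed shows why that argument becomes trivial in characteristic $p$: the group $\mu_p$ degenerates to a fat point). The cost is the dependence on Theorem~\ref{DA=DMp} and on $\Lambda\supset\Q$ to invert $p^N$; the paper's argument avoids both, using only that relative Frobenii are inverted by construction. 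Your explicit local-form computation of $R_1\otimes_{R_0}R_1$ is correct but unnecessary: for any finite flat radicial $f$ of degree $d$, flatness gives length $d$ for $X_1\times_{X_0}X_1$ over $X_1$ at the generic point, hence $f^T\circ f=d\cdot\id$ directly.
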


\begin{proof}
The map of the claim 
is a factor of $X_0\times_{(\B^N\times\B^M)}(\B^N\langle\underline{\upsilon}^{1/p}\rangle\times\B^M\langle\underline{\nu}^{1/p}\rangle)\ra X_0$ which is isomorphic to the relative Frobenius map $X_0^{(-1)}\ra X_0$ (see for example \cite[Theorem 3.5.13]{gabberramero}). If we consider the diagram 
\[
X_{1}^{(-1)}\stackrel{a}{\ra} X_0^{(-1)}\stackrel{b}{\ra} X_{1}\stackrel{c}{\ra}X_0
\]
we conclude that the two compositions $ba$ and $cb$ are isomorphisms hence also $c$ is an isomorphism, as claimed. 
\end{proof}

 \begin{prop}\label{areb1local}
  The image via $\LL\iota^*$ of a $\Frob$-local object of $\RigDA^{\eff}_{\et}(K^\flat)$ is $\widehat{\B}^1$-local. In particular, the functor $\LL\iota^*$ restricts to a functor $\LL\iota^*\colon \RigDA^{\eff}_{\Frobet}(K^\flat)\ra\wRigDA^{\eff}_{\et,\widehat{\B}^1}(K^\flat)$. 
 \end{prop}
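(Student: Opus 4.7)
The plan is to verify the $\widehat{\B}^1$-locality of $\LL\iota^*\mcF$ by testing it against the compact generators of $\wRigDA^{\eff}_{\et,\B^1}(K^\flat)$ provided by Proposition \ref{genRigDA}. Concretely, I will show that for every $X=\varprojlim_h X_h$ in $\wRigSm^{\gc}$, the projection $X\times\widehat{\B}^1\to X$ induces an isomorphism
\[
\wRigDA^{\eff}_{\et,\B^1}(K^\flat)(\Lambda(X),\LL\iota^*\mcF)\longrightarrow \wRigDA^{\eff}_{\et,\B^1}(K^\flat)(\Lambda(X\times\widehat{\B}^1),\LL\iota^*\mcF).
\]

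To describe both sides as filtered colimits, I first exhibit $X\times\widehat{\B}^1$ as an explicit object of $\wRigSm^{\gc}$. Using the presentation $\widehat{\B}^1=\varprojlim_h Y_h$ provided by Proposition \ref{B1perf}, where $Y_0$ is the rational subset $U(\upsilon-1\mid\pi)\subset\T^1$ and each $Y_h$ is isomorphic to $\B^1$, the product $X\times\widehat{\B}^1$ acquires the presentation $\varprojlim_h(X_h\times Y_h)$, with $X_0\times Y_0$ \'etale over $\T^{N+1}\times\T^M$. Applying Proposition \ref{colimok} to both $X$ and $X\times\widehat{\B}^1$ reduces the claim to showing that the natural map
\[
\varinjlim_h\RigDA^{\eff}_{\et}(K^\flat)(\Lambda(X_h),\mcF)\longrightarrow\varinjlim_h\RigDA^{\eff}_{\et}(K^\flat)(\Lambda(X_h\times Y_h),\mcF)
\]
induced by the projections is an isomorphism.

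This is precisely where $\Frob$-locality enters. Each transition $X_{h+1}\to X_h$, after the obvious renaming of coordinates, has the form $X_h\times_{\T^N}\T^N\langle\upsilon^{\pm1/p}\rangle\to X_h$ and is therefore invertible in $\RigDA^{\eff}_{\Frobet}(K^\flat)$ by Proposition \ref{2-6}; the same applies to the transitions $X_{h+1}\times Y_{h+1}\to X_h\times Y_h$. Since $\mcF$ is $\Frob$-local, pullback along any such transition is an isomorphism, so both filtered systems consist of isomorphisms and the two colimits collapse to their $h=0$ terms $\RigDA^{\eff}_{\et}(K^\flat)(\Lambda(X_0),\mcF)$ and $\RigDA^{\eff}_{\et}(K^\flat)(\Lambda(X_0\times Y_0),\mcF)$ respectively.

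It then only remains to compare these two terms. Since $Y_0\cong\B^1$, the $\B^1$-locality of $\mcF$ (automatic in $\RigDA^{\eff}_{\et}(K^\flat)$) makes the projection $X_0\times Y_0\to X_0$ induce an isomorphism on $\RigDA^{\eff}_{\et}(-,\mcF)$, completing the verification that $\LL\iota^*\mcF$ is $\widehat{\B}^1$-local. The ``in particular'' statement then follows formally once one identifies $\RigDA^{\eff}_{\Frobet}(K^\flat)$ and $\wRigDA^{\eff}_{\et,\widehat{\B}^1}(K^\flat)$ with the respective full subcategories of $\Frob$-local and $\widehat{\B}^1$-local objects of the larger homotopy categories. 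I do not anticipate a real obstacle; the main bookkeeping is simply matching the concrete towers of $\wRigSm^{\gc}$ against the precise shape required by Proposition \ref{2-6}.
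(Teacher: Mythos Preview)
Your proposal is correct and follows essentially the same route as the paper: present $\widehat{\B}^1$ via Proposition~\ref{B1perf}, apply Proposition~\ref{colimok} to both $X$ and $X\times\widehat{\B}^1$, use Proposition~\ref{2-6} together with $\Frob$-locality to collapse the filtered colimits to their $h=0$ terms, and finish with the $\B^1$-invariance built into $\RigDA^{\eff}_{\et}$. The paper's proof is the same chain of isomorphisms written more tersely.
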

 
 \begin{proof}
  Let $X'=\varprojlim_hX'_h$ be in $\wRigSm^{\gc}$. We consider the object $X'\times\widehat{\B}^1=\varprojlim_h(X'_h\times X_h)$ where we use the description $\widehat{\B}^1=\varprojlim_hX_h$ of Proposition \ref{B1perf}. Let $M$ be a $\Frob$-local object of $\RigDA^{\eff}_{\et}(K^\flat)$. 
 From  Propositions \ref{colimok} and \ref{2-6} we then deduce the following isomorphisms
 \[
 \begin{aligned}
  &\wRigDA^{\eff}_{\et,\B^1}(K^\flat)(X'\times\widehat{\B}^1,\LL\iota^*M)\cong\varinjlim_h\RigDA^{\eff}_{\et}(K^\flat)(X'_h\times X_h,M)\\ &\cong\RigDA^{\eff}_{\et}(K^\flat)(X'_0\times\B^1,M)\cong\RigDA^{\eff}_{\et}(K^\flat)(X'_0,M)\\ &\cong\varinjlim_h\RigDA^{\eff}_{\et}(K^\flat)(X'_h,M)\cong  \wRigDA^{\eff}_{\et,\B^1}(K^\flat)(X',\LL\iota^*M)
 \end{aligned}
 \]
 proving the claim.
 \end{proof}

We remark that in positive characteristic 
  the perfection  $\Perf\colon X\mapsto \varprojlim X^{(-i)}$ is functorial. This makes the description of various functors a lot easier. 
  We recall that we denote by 
  \[
  \adj{\LL j^*}{\PerfDA^{\eff}_{\et}(K^\flat)}{\wRigDA^{\eff}_{\et,\widehat{\B}^1}(K)}{\RR j_*}
  \]
  the adjoint pair induced by the inclusion of categories $j\colon\PerfSm\ra\wRigSm$. 

\begin{prop}\label{perfisji}
 The perfection functor $\Perf\colon\wRigSm\ra\PerfSm$ induces an adjunction
\[
 \adj{\LL\Perf^*}{\wRigDA^{\eff}_{\et,\B^1}(K^\flat)}{\PerfDA^{\eff}_{\et}(K^\flat)}{\RR\Perf_*}
\]
and $\LL\Perf^*$ factors through $\wRigDA^{\eff}_{\et,{\B}^1}(K^\flat)\ra\wRigDA^{\eff}_{\et,\widehat{\B}^1}(K^\flat) $. 
Moreover, the functor $\LL\Perf^*$  coincides with $\RR j_*$ on $\wRigDA^{\eff}_{\et,\widehat{\B}^1}(K^\flat)$.

\end{prop}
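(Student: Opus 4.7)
The plan has three steps. First, I will construct the Quillen adjunction by left Kan extension along $\Perf$, giving $\Perf^*\Lambda(X)=\Lambda(\Perf(X))$ and $\Perf_*$ equal to precomposition with $\Perf$. As in the proof of Proposition \ref{Dff2} (via \cite[Proposition 4.4.46]{ayoub-th2}), this is a Quillen pair at the projective level, and it passes to the indicated localized structures provided $\Perf^*$ inverts the generating weak equivalences. For the \'etale part, \'etale covers of $X$ pull back along $\Perf(X)\to X$ to \'etale covers of $\Perf(X)$, so by the \'etale hypercover description of Proposition \ref{bddhypercov} the \'etale part is preserved. For the interval, the perfection commutes with fiber products and sends $\B^1$ to $\widehat{\B}^1$, so $\Perf^*$ sends $\Lambda(X\times\B^1)\to\Lambda(X)$ to the $\widehat{\B}^1$-weak equivalence $\Lambda(\Perf(X)\times\widehat{\B}^1)\to\Lambda(\Perf(X))$ in the target. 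The same computation, using $\Perf(\widehat{\B}^1)=\widehat{\B}^1$ (as $\widehat{\B}^1$ is already perfectoid), shows that $\Perf^*$ inverts the generating $\widehat{\B}^1$-weak equivalences on the source; hence $\LL\Perf^*$ factors through $\wRigDA^{\eff}_{\et,\widehat{\B}^1}(K^\flat)$ by the universal property of Bousfield localization, and the adjunction $(\Perf^*,\Perf_*)$ upgrades to a Quillen adjunction between $\Ch_{\et,\widehat{\B}^1}\Psh(\wRigSm)$ and $\Ch_{\et,\widehat{\B}^1}\Psh(\PerfSm)$.

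The key input for the second assertion is that, before any derivation, $\Perf^*$ coincides with $j_*$ as a functor $\Ch\Psh(\wRigSm)\to\Ch\Psh(\PerfSm)$. Indeed, on representables, for $X\in\wRigSm$ and $Y\in\PerfSm$ we have
\[
\Perf^*\Lambda(X)(Y)=\Lambda\Hom_{\PerfSm}(Y,\Perf(X)),\qquad j_*\Lambda(X)(Y)=\Lambda\Hom_{\wRigSm}(Y,X),
\]
and these two sets are naturally identified by the universal property of the perfection in characteristic $p$: every map from a perfect adic space $Y$ to an object $X$ of $\wRigSm$ factors uniquely through the canonical map $\Perf(X)\to X$. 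Since both $\Perf^*$ (a left Kan extension) and $j_*$ (a precomposition) commute with colimits in the presheaf argument, this identification on representables extends to all presheaves, and hence to all complexes of presheaves.

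It then remains to derive the coincidence. Viewed as a functor $\Ch_{\et,\widehat{\B}^1}\Psh(\wRigSm)\to\Ch_{\et,\widehat{\B}^1}\Psh(\PerfSm)$, the common functor $\Perf^*=j_*$ is simultaneously left Quillen (by the factored adjunction of the first step) and right Quillen (by Proposition \ref{Dff2}). It therefore preserves both trivial cofibrations and trivial fibrations; since any weak equivalence factors as a trivial cofibration followed by a trivial fibration (combine CM5 with the two-out-of-three property), it preserves all weak equivalences. Consequently neither cofibrant nor fibrant replacement is needed to compute the derived functors, and one obtains $\LL\Perf^*M\cong\Perf^*M=j_*M\cong\RR j_*M$ for every $M\in\wRigDA^{\eff}_{\et,\widehat{\B}^1}(K^\flat)$. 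The main technical obstacle lies in establishing the universal property of $\Perf(X)\to X$ for all $X\in\wRigSm$: affinoid-locally this is the familiar statement that maps from a perfect $K^\flat$-algebra to a Tate $K^\flat$-algebra $R$ factor uniquely through the perfection $R^{\perf}$, and the general case follows from compatibility of $\Perf$ with fiber products and the \'etale presentations of objects of $\wRigSm^{\gc}$.
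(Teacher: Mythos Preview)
Your proof is correct and follows essentially the same route as the paper: the paper also deduces $\Perf^*\cong j_*$ from the adjunction $j\dashv\Perf$ at the level of sites (phrased there as ``$\Perf^*$ is a right adjoint of $j^*$, hence $j_*\cong\Perf^*$''), whereas you unpack this by checking it directly on representables via the universal property of perfection and then make explicit the passage to derived functors via the ``both left and right Quillen'' argument. The paper's version is more compressed but the underlying mechanism is identical.
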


\begin{proof}
 The perfection functor is continuous with respect to the \'etale topology 
and maps $\B^1$ and $\widehat{\B}^{1}$ to $\widehat{\B}^1$ hence the first claim.

We now consider the functors $j\colon\PerfSm\ra\wRigSm$ and $\Perf\colon\wRigSm\ra\PerfSm$. They induce two Quillen pairs $(j^*,j_*)$ and $(\Perf^*,\Perf_*)$ on the associated $(\et,\widehat{\B}^1)$-localized model categories of complexes. Since $\Perf$ is a right adjoint of $j$ we deduce that $\Perf^*$ is a right adjoint of $j^*$ and hence we obtain an isomorphism $j_*\cong\Perf^*$ which shows the second claim. 
\end{proof}

\begin{prop}\label{llperf}
 Let  $\Lambda$ be a $\Q$-algebra. The functor 
 \[
 \LL\Perf^*\LL\iota^*\colon\RigDA_{\et}^{\eff}(K^\flat)\ra\PerfDA_{\et}^{\eff}(K^\flat)
 \] 
 factors over $\RigDA^{\eff}_{\Frobet}(K^\flat)$ and 
is isomorphic to $\RR j_*\LL\iota^*C^{\Frob}$.
\end{prop}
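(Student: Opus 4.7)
The plan is to proceed in two steps. First, I will verify that $\LL\Perf^*\LL\iota^*$ sends relative Frobenius morphisms to isomorphisms, which, by the universal property of the Bousfield localization defining $\RigDA^{\eff}_{\Frobet}(K^\flat)$, yields the desired factorization through this category. Second, I will combine this factorization with Propositions \ref{areb1local} and \ref{perfisji} to identify the resulting functor with $\RR j_*\LL\iota^*C^{\Frob}$.

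For the factorization step, I observe that $\LL\Perf^*\LL\iota^*$ is a triangulated functor commuting with small sums, being a composition of left adjoints. By Proposition \ref{genRigDA} the source category is compactly generated by motives $\Lambda(X)$ with $X$ in $\RigSm^{\gc}$, and for such representable (hence projectively cofibrant) objects one computes $\LL\iota^*\Lambda(X)=\Lambda(X)$ and then $\LL\Perf^*\Lambda(X)=\Lambda(\Perf(X))$. It therefore suffices to check that for each $X\in\RigSm^{\gc}$ the map $\Perf(X^{(-1)})\to\Perf(X)$ induced by the relative Frobenius $\Phi\colon X^{(-1)}\to X$ is an isomorphism of perfectoid spaces. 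This is a direct manifestation of the fact that on any perfection the absolute Frobenius is bijective: the map $\Phi$ is isomorphic to the absolute Frobenius of $X$ (up to twisting the structure map), and after perfection this becomes invertible. A fortiori, the canonical map $\mcF\to C^{\Frob}\mcF$ is sent by $\LL\Perf^*\LL\iota^*$ to an isomorphism for every $\mcF$ in $\RigDA^{\eff}_{\et}(K^\flat)$.

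For the second step, I will invoke Proposition \ref{areb1local}, which guarantees that $\LL\iota^*C^{\Frob}\mcF$ is $\widehat{\B}^1$-local in $\wRigDA^{\eff}_{\et,\B^1}(K^\flat)$, hence lies naturally in $\wRigDA^{\eff}_{\et,\widehat{\B}^1}(K^\flat)$. On this subcategory, Proposition \ref{perfisji} identifies $\LL\Perf^*$ with $\RR j_*$. Concatenating the two isomorphisms gives the desired natural equivalence
\[
\LL\Perf^*\LL\iota^*\mcF\;\cong\;\LL\Perf^*\LL\iota^*C^{\Frob}\mcF\;\cong\;\RR j_*\LL\iota^*C^{\Frob}\mcF.
\]
The main obstacle lies in the first step, namely the geometric verification that perfection trivializes the relative Frobenius; the remainder of the argument is formal, relying only on the universal property of localization and the previously established propositions.
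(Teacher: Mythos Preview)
Your proof is correct and follows essentially the same route as the paper: the factorization through $\RigDA^{\eff}_{\Frobet}(K^\flat)$ is obtained by observing that $\Perf(X^{(-1)})\cong\Perf(X)$ canonically, and the identification with $\RR j_*\LL\iota^*C^{\Frob}$ is then a formal consequence of Propositions \ref{areb1local} and \ref{perfisji}, exactly as you argue. The paper packages the second step into a single commutative diagram rather than your chain of two isomorphisms, but the content is the same.
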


\begin{proof}
 The first claim follows as the perfection of $X^{(-1)}$ is canonically isomorphic to the perfection of $X$ for any object $X$ in $\RigSm$.

The second part of the statement follows from the first claim and the commutativity of the following diagram, which   is ensured by Propositions \ref{areb1local} and \ref{perfisji}.
 $$\xymatrix{
  \RigDA_{\Frobet}^{\eff}(K^\flat)\ar [dd]\ar[r]^-{\LL\iota^*}&\wRigDA_{\et,\widehat{\B}^1}^{\eff}(K^\flat)\ar[dd]\ar[dr]^-{\RR j_*}
 \\
 &&\PerfDA^{\eff}_{\et}(K^\flat)\\
 \RigDA_{\et}^{\eff}(K^\flat)\ar[r]^-{\LL\iota^*}	& \wRigDA_{\et,{\B}^1}^{\eff}(K^\flat)	\ar[ur]^-{\LL\Perf^*} 
 }$$
\end{proof}

\begin{thm}\label{eqcharp}
  Let  $\Lambda$ be a $\Q$-algebra. The functor $${\LL\Perf^*}\colon\RigDA^{\eff}_{\Frobet}(K^\flat)\ra\PerfDA^{\eff}_{\et}(K^\flat)$$ defines a monoidal, triangulated equivalence of  categories.
\end{thm}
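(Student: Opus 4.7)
The plan is to check that $\LL\Perf^*$ is monoidal, essentially surjective, and fully faithful on a set of compact generators. Since $\LL\Perf^*$ is a left adjoint (to $\RR\Perf_*$) it commutes with small coproducts, and both source and target are compactly generated triangulated categories (Propositions \ref{genRigDA} and \ref{genPerfDA}, using that the Frobenius morphisms being inverted are maps between compact objects, so $\RigDA^{\eff}_{\Frobet}(K^\flat)$ remains compactly generated by the images of $\Lambda(X)$ for $X\in\RigSm^{\gc}$); a standard argument then upgrades the three properties to an equivalence. Monoidality is immediate: perfection commutes with finite products in $\wRigSm$, so on representables one has $\LL\Perf^*(\Lambda(X)\otimes\Lambda(Y))=\Lambda(\Perf(X\times Y))=\Lambda(\Perf X)\otimes\Lambda(\Perf Y)$, and this extends to all motives by the argument of Proposition \ref{istensorial}.

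Essential surjectivity is a direct consequence of Proposition \ref{genPerfDA}: the compact generators of $\PerfDA^{\eff}_{\et}(K^\flat)$ are the motives of objects $\widehat{X}\in\PerfSm^{\gc}$, and each such $\widehat{X}$ is of the form $X_0\times_{\T^N}\widehat{\T}^N$ for some $X_0\in\RigSm^{\gc}$ étale over $\T^N$; in characteristic $p$, this fibre product is the perfection of $X_0$, so $\LL\Perf^*\Lambda(X_0)\cong\Lambda(\widehat{X})$. For fully faithfulness on compact generators, fix $X,Y\in\RigSm^{\gc}$. Using Proposition \ref{llperf} to rewrite $\LL\Perf^*\Lambda(Y)\cong\RR j_*\LL\iota^* C^{\Frob}\Lambda(Y)$, the adjunction $(\LL j^*,\RR j_*)$ together with the identification $\LL j^*\Lambda(\widehat X)=\Lambda(\widehat X)\in\wRigDA^{\eff}_{\et,\widehat{\B}^1}$ identifies $\Hom_{\PerfDA^{\eff}_{\et}}(\Lambda(\widehat X),\LL\Perf^*\Lambda(Y))$ with $\Hom_{\wRigDA^{\eff}_{\et,\widehat{\B}^1}}(\Lambda(\widehat X),\LL\iota^* C^{\Frob}\Lambda(Y))$; by Proposition \ref{areb1local} the target is $\widehat{\B}^1$-local, so this Hom coincides with the one in $\wRigDA^{\eff}_{\et,\B^1}$. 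Proposition \ref{colimok} then evaluates it as
\[
\varinjlim_h\Hom_{\RigDA^{\eff}_{\et}}(\Lambda(X^{(-h)}),C^{\Frob}\Lambda(Y))=\varinjlim_h\Hom_{\RigDA^{\eff}_{\Frobet}}(\Lambda(X^{(-h)}),\Lambda(Y)),
\]
and the transition maps in this colimit are Frobenius morphisms, which are inverted in $\RigDA^{\eff}_{\Frobet}$ (this is exactly the content of Proposition \ref{2-6}). Hence the colimit collapses to $\Hom_{\RigDA^{\eff}_{\Frobet}}(\Lambda(X),\Lambda(Y))$, and a naturality check, based on the fact that for $\phi\colon X\to Y$ the image $\LL\Perf^*\phi$ is literally $\Perf\phi\colon\widehat X\to\widehat Y$, identifies the resulting isomorphism with the map induced by $\LL\Perf^*$ itself.

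The hard part is the bookkeeping across the four categories $\RigDA^{\eff}_{\Frobet}$, $\wRigDA^{\eff}_{\et,\B^1}$, $\wRigDA^{\eff}_{\et,\widehat{\B}^1}$ and $\PerfDA^{\eff}_{\et}$: one must be careful that each $\Hom$-set is taken in the correct localization, that the adjunctions $(\LL\iota^*,\iota_*)$, $(\LL j^*,\RR j_*)$ and $(\LL\Perf^*,\RR\Perf_*)$ are applied with the correct variance (recalling that $\LL\Perf^*\cong\RR j_*$ on $\widehat{\B}^1$-local objects by Proposition \ref{perfisji}, but $\LL j^*$ is fully faithful with unit an isomorphism rather than counit), and that the chain of identifications genuinely agrees with the map induced by $\LL\Perf^*$. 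Once this is sorted out, all the genuine geometric input has already been assembled in Sections \ref{wrigsm}--\ref{motapprox}, and what remains is purely formal.
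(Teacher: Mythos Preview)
Your proof is correct and follows essentially the same route as the paper's: both arguments reduce to showing full faithfulness on the compact generators $\Lambda(X_0)$ with $X_0\in\RigSm^{\gc}$ by chaining the isomorphisms $\RigDA^{\eff}_{\Frobet}\to\RigDA^{\eff}_{\et}(-,C^{\Frob}-)\to\varinjlim_h\to\wRigDA^{\eff}_{\et,\B^1}\to\wRigDA^{\eff}_{\et,\widehat{\B}^1}\to\PerfDA^{\eff}_{\et}$ via Propositions \ref{2-6}, \ref{colimok}, \ref{areb1local}, \ref{llperf}, and then invoking \cite[Lemma 1.3.32]{ayoub-rig}. The only cosmetic difference is that you index the tower by Frobenius twists $X^{(-h)}$ rather than by the $X_h=X_0\times_{\T^N}\T^N\langle\underline{\upsilon}^{1/p^h}\rangle$ of Definition \ref{gc}; these coincide when $M=0$ (which is the presentation used here, since $X_0\in\RigSm^{\gc}$ is being re-read as the base of an object in $\PerfSm^{\gc}$), but it would be cleaner to say this explicitly so that Proposition \ref{colimok} applies on the nose.
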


\begin{proof}
Let $X_0$ and $Y$ be objects of $\RigSm^{\gc}$. Suppose $X_0$ is endowed with an \'etale map over $\T^N$ which is a composition of finite \'etale maps and inclusions, and let $\widehat{X}$ be $\varprojlim_hX_h$. We can identify $\widehat{X}$ with $\Perf X_0$.  Since $C^{\Frob}\Lambda(Y)$ is $\Frob$-local, by Proposition \ref{2-6} the maps  
\[\RigDA^{\eff}_{\et}(K^\flat)(\Lambda(X_h),C^{\Frob}\Lambda(Y))\ra\RigDA^{\eff}_{\et}(K^\flat)(\Lambda(X_{h+1}),C^{\Frob}\Lambda(Y))\]
 are isomorphisms for all $h$. 
Using  Propositions \ref{colimok} and \ref{areb1local}, we obtain the following sequence of isomorphisms for any $n\in \Z$:
\[\begin{aligned}
&\RigDA^{\eff}_{\Frobet}(K^\flat)(\Lambda(X_0),\Lambda(Y)[n])\cong \RigDA^{\eff}_{\et}(K^\flat)(\Lambda(X_0),C^{\Frob}\Lambda(Y)[n]) \\ &\cong\varinjlim_h\RigDA^{\eff}_{\et}(K^\flat)(\Lambda(X_h),C^{\Frob}\Lambda(Y)[n])
\\&\cong \wRigDA^{\eff}_{\et,\B^1}(K^\flat)(\Lambda(\widehat{X}),\LL\iota^*C^{\Frob}\Lambda(Y)[n])\\ &\cong\wRigDA^{\eff}_{\et,\widehat{\B}^1}(K^\flat)(\Lambda(\widehat{X}),\LL\iota^*C^{\Frob}\Lambda(Y)[n])\\
&\cong\PerfDA^{\eff}_{\et}(K^\flat)(\Lambda(\widehat{X}),\RR j_*\LL\iota^*C^{\Frob}\Lambda(Y)[n])\\
&\cong\PerfDA^{\eff}_{\et}(K^\flat)(\LL\Perf^*(X_0),\LL\Perf^*(Y)[n]).
\end{aligned}
\]
where the last isomorphism follows from the identification $\widehat{X}\cong\Perf X_0$ and Proposition  \ref{llperf}. In particular, we deduce that the triangulated functor $\LL\Perf^*$ maps a set of compact generators to a set of compact generators (see Propositions \ref{genRigDA} and \ref{genPerfDA}) and on these objects it is  fully faithful. 
By means of \cite[Lemma 1.3.32]{ayoub-rig}, we then conclude it is a triangulated equivalence of categories, as claimed.
\end{proof}

\begin{rmk}
From the proof of the previous claim, we also deduce that the  inverse $\RR\Perf_{*}$ of $\LL\Perf^*$ sends the motive associated to an object $X=\varprojlim_hX_h$ to the motive of $X_0$. This functor is then analogous to the de-perfectoidification functor $\LL j^*\circ\LL\iota_!$ of  Theorem \ref{premain}.
\end{rmk}

\section{The main theorem}\label{mainthm}

Thanks to the results of the previous sections, we can reformulate Theorem \ref{premain} in terms of motives of rigid varieties. We will always assume that $\car K=0$ since the results of this section are tautological when $	\car K=p$. 

\begin{cor}\label{adjKKflat}
 There exists a triangulated adjunction of categories
\[
\adj{\mfF}{\RigDM^{\eff}_{\et}(K^\flat)}{\RigDM^{\eff}_{\et}(K)}{\mfG}
\]
such that $\mfF$ is a monoidal functor.
\end{cor}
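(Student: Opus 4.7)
The plan is to define $\mfF$ as an iterated composition of the functors assembled in the summary diagram of the introduction, and to take $\mfG$ to be the right adjoint obtained by composing the corresponding right adjoints in reverse. Explicitly, I would set
\[
\mfF \;=\; \LL a_{tr}\circ\LL\iota_{!}\circ\LL j^{*}\circ(-)^{\sharp}\circ\LL\Perf^{*}\circ(\LL a_{tr})^{-1},
\]
where $(\LL a_{tr})^{-1}=\RR o_{tr}$ is the inverse of the equivalence of Theorem \ref{DA=DMp}, $\LL\Perf^{*}$ is the equivalence of Theorem \ref{eqcharp}, $(-)^{\sharp}$ is the tilting equivalence of Proposition \ref{tiltingeq}, $\LL j^{*}$ is the fully faithful left adjoint of Proposition \ref{Dff2}, $\LL\iota_{!}$ is the de-perfectoidification functor of Theorem \ref{premain} (which indeed lands in $\wRigDA^{\eff}_{\et,\widehat{\B}^{1}}(K)$-fibre of Proposition \ref{islocal2}), and the final $\LL a_{tr}$ is the equivalence of Theorem \ref{DA=DM}. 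Each factor is a triangulated left adjoint (or an equivalence of triangulated categories), so the composite $\mfF$ is a triangulated left adjoint whose right adjoint $\mfG$ is obtained by composing the corresponding right adjoints in reverse order.

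Next I would verify that $\mfF$ is monoidal by checking that each factor is monoidal. The two equivalences $\LL a_{tr}$ and its inverse are monoidal by the construction of \cite{vezz-DADM}; $\LL j^{*}$ is monoidal because it is the derived left adjoint of an inclusion of sites with interval and sends representables to representables; $\LL\Perf^{*}$ is monoidal for the same reason, being induced by the perfection functor between $(\wRigSm,\et,\widehat{\B}^{1})$ and $(\PerfSm,\et,\widehat{\B}^{1})$; the tilting equivalence $(-)^{\sharp}$ is monoidal because tilting preserves fibre products of perfectoid affinoid algebras (this is essentially the content of \cite[Proposition 6.18]{scholze}); and $\LL\iota_{!}$ is monoidal by Proposition \ref{istensorial}. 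A composition of monoidal functors is monoidal, so $\mfF$ is monoidal.

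Each ingredient is either already an adjunction or an equivalence of triangulated categories in the literature cited above, so the construction is essentially a bookkeeping exercise. The only mildly delicate point — and hence what I view as the only real obstacle — is to make sure the adjunctions compose correctly after one converts the equivalences $\LL a_{tr}$, $\LL\Perf^{*}$ and $(-)^{\sharp}$ into honest adjunctions (picking a pseudo-inverse on each side) so that the right adjoint $\mfG$ makes sense as a genuine functor rather than merely a functor defined up to isomorphism. This is handled by choosing definite pseudo-inverses once and for all, which is harmless since all the categories involved are triangulated and each equivalence is both a left and a right adjoint to any chosen pseudo-inverse.
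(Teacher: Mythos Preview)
Your proposal is correct and follows essentially the same route as the paper: the paper defines $\mfF'\colonequals \LL\iota_!\circ\LL j^*\circ(-)^\sharp\circ\LL\Perf^*$ between $\RigDA^{\eff}_{\Frobet}(K^\flat)$ and $\RigDA^{\eff}_{\et}(K)$, observes it is monoidal via Proposition~\ref{istensorial}, and then invokes Theorems~\ref{DA=DM} and~\ref{DA=DMp} to pass to the $\RigDM$ categories---exactly your composition with the outer $\LL a_{tr}$'s made explicit. Your additional care in checking that each individual factor is monoidal, and in noting that Proposition~\ref{islocal2} is needed so that $\LL\iota_!$ receives the output of $\LL j^*$, fills in details the paper leaves implicit.
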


\begin{proof}
From Theorem \ref{premain} and Proposition \ref{istensorial}, we can define an adjunction \[\adj{\mfF'}{\RigDA^{\eff}_{\Frobet}(K^\flat)}{\RigDA^{\eff}_{\et}(K)}{\mfG'}\]
 by putting $\mfF'\colonequals \LL\iota_!\circ\LL j^*\circ(-)^\sharp\circ\LL\Perf^*$. We remark that by Proposition \ref{istensorial}, $\mfF'$ is also monoidal. The claim then follows from the equivalence of motives with and without transfers (Theorems \ref{DA=DM} and \ref{DA=DMp}).
\end{proof}

Our goal is to prove that the adjunction of Corollary \ref{adjKKflat}  is an equivalence of categories. To this aim, we recall the construction of the stable versions of the rigid motivic categories given in \cite[Definition 2.5.27]{ayoub-rig}.

\begin{dfn}\label{RigDM}
Let $T$ be the cokernel in $\PST(\RigSm/K)$ of the unit map $\Lambda_{\tr}(K)\ra\Lambda_{\tr}(\T^1)$. We denote by $\RigDM_{\et}(K,\Lambda)$ or simply by $\RigDM_{\et}(K)$ the homotopy category of the stable $(\et,\B^1)$-local model structure on symmetric spectra $\SSpect_{T}^\Sigma(\Ch_{\et,\B^1}\PST(\RigSm/K))$. 
\end{dfn}

As explained in \cite[Section 2.5]{ayoub-rig}, $T$ is 
 cofibrant and the cyclic permutation induces the identity on $T^{\otimes3}$ in $\RigDM^{\eff}_{\et}$. 
Moreover, by \cite[Theorem 9.3]{hovey-sp}, $T\otimes-$ is a Quillen equivalence in this category, which is actually the universal model category where this holds (in some weak sense made precise by \cite[Theorem 5.1, Proposition 5.3 and Corollary 9.4]{hovey-sp}). We recall that the canonical functor $\RigDM^{\eff}_{\et}(K)\ra\RigDM_{\et}(K)$ is fully faithful, as proved in \cite[Corollary 2.5.49]{ayoub-rig} as a corollary of the Cancellation Theorem \cite[Theorem 2.5.38]{ayoub-rig}.

\begin{dfn}
We denote by $\Lambda(1)$ the motive $T[-1]$ in $\RigDM^{\eff}_{\et}(K)$. For any positive integer $d$ we let $\Lambda(d)$ be $\Lambda(1)^{\otimes d}$. The functor $(\cdot)(d)\colonequals(\cdot)\otimes\Lambda(d)$ is an auto-equivalence of $\RigDM_{\et}(K)$ and its inverse will be denoted with $(\cdot)(-d)$.
\end{dfn}

\begin{dfn}
We denote by $\RigDM_{\et}^{\ct}(K,\Lambda)$ or simply by $\RigDM_{\et}^{\ct}(K)$ the full triangulated subcategory of $\RigDM_{\et}(K,\Lambda)$ whose objects are the compact ones. They are of the form $M(d)$ for some compact object $M$ in $\RigDM^{\eff}_{\et}(K)$ and some $d$ in $\Z$. This category is called the category of \emph{constructible motives}.
\end{dfn}

We now present an important result that is a crucial step toward the proof of our main theorem. The motivic property it induces will be given right afterwards. 

\begin{prop}\label{approxtiltsimp}
Let $\widehat{X}$ be a smooth affinoid perfectoid. The natural map of complexes \[\Sing^{\widehat{\B}^{1}}(\Lambda(\widehat{\T}^{d}))(\widehat{X})\ra\Sing^{\widehat{\B}^{1}}(\Lambda(\T^d))(\widehat{X})\]
is a quasi-isomorphism.
 \end{prop}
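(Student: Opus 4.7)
The plan is to split by the characteristic of $K$. When $\car K=p$, the claim is elementary: since $\widehat{X}\times\widehat{\B}^n$ is an affinoid perfectoid over the perfect field $K=K^\flat$, its ring of functions $R_n=\mcO(\widehat{X}\times\widehat{\B}^n)$ is perfect. Every norm-one unit admits a unique compatible system of $p^h$-th roots, so the forgetful map $\Hom(\widehat{X}\times\widehat{\B}^n,\widehat{\T}^d)\to\Hom(\widehat{X}\times\widehat{\B}^n,\T^d)$ is a bijection and the comparison of $\Sing^{\widehat{\B}^1}$ complexes is already an isomorphism.

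When $\car K=0$, I would use Scholze's sharp map to approximate. Given any norm-one unit $u\in R_n^\times$, one may choose $w\in R_n^{\flat\circ\times}$ with $w^\sharp\equiv u\pmod\pi$; by construction $w^\sharp$ admits a canonical compatible system of $p^h$-th roots in $R_n$ and hence corresponds to a map into $\widehat{\T}^1$. The linear interpolation
\[
H(\chi)\colonequals w^\sharp+\chi(u-w^\sharp)=w^\sharp\bigl(1+\chi\, w^{-\sharp}(u-w^\sharp)\bigr)
\]
remains a norm-one unit in $R_n\langle\chi^{1/p^\infty}\rangle$, since the bracketed factor differs from $1$ by an element of norm strictly less than $1$. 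It thus defines a map $\widehat{X}\times\widehat{\B}^n\times\widehat{\B}^1\to\T^1$ interpolating between $u$ (at $\chi=1$) and $w^\sharp$ (at $\chi=0$). Applied coordinate-wise, this produces a pointwise homotopy from any map $\widehat{X}\times\widehat{\B}^n\to\T^d$ to one factoring through $\widehat{\T}^d$.

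To upgrade this pointwise construction to a chain homotopy equivalence on the normalized cubical complexes---thereby obtaining both surjectivity and injectivity on homology---I would choose the tilts $w_{k,i}$ associated with a normalized cycle $\alpha=\sum_k\lambda_k f_k$ in a face-coherent manner. Proceeding by induction on the codimension of the faces, and using that tilting is functorial and in particular compatible with restriction along the face embeddings $d_{r,\epsilon}\colon\widehat{\B}^{n-1}\hookrightarrow\widehat{\B}^n$, one arranges that whenever $d_{r,\epsilon}^*f_k=d_{r,\epsilon}^*f_{k'}$ one also has $d_{r,\epsilon}^*w_{k,i}=d_{r,\epsilon}^*w_{k',i}$. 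The resulting chain $\sum_k\lambda_k H_k$ then provides the homology witness; the analogous argument applied on $(n{+}1)$-cubes yields injectivity. The main obstacle will be precisely this coherent, face-compatible choice of tilts, which is bookkeeping rather than a conceptual difficulty and rests on the surjectivity of the tilt functor on affinoid perfectoids together with its functoriality under face restriction.
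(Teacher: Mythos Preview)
Your characteristic-$p$ argument is correct. In characteristic $0$, your approach diverges from the paper's and the step you flag as ``bookkeeping'' is a genuine gap.

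You work directly with the $\Lambda$-linearized complex and propose to build, for each family $\{f_k\}$ contributing to a normalized cycle, a face-coherent family of tilts $\{w_k\}$. But the assignment $u\mapsto w$ (choosing $w\in R_n^{\flat+\times}$ with $w^\sharp\equiv u\pmod\pi$) is not canonical, so face compatibility is not automatic from functoriality of tilting. Your inductive scheme requires, at each stage, a genuine lifting statement: given a unit $u\in R_n^{+\times}$ and compatible units $\{w_\sigma\}$ on a system of faces with $w_\sigma^\sharp\equiv u|_\sigma\pmod\pi$, find $W\in R_n^{\flat+\times}$ restricting to each $w_\sigma$ and with $W^\sharp\equiv u\pmod\pi$. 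This is a statement about units in tilted perfectoid polynomial rings (a multiplicative Chinese remainder problem, corrected by elements of $1+\pi^\flat R^{\flat+}$); it does not follow from functoriality alone, and you do not supply it. Your injectivity sketch has a further wrinkle: after homotoping a bounding chain $\beta$ to some $\gamma^\sharp$, you need $d\gamma=\alpha$ in the $\widehat{\T}^d$-complex, not merely $(d\gamma)^\sharp=\alpha^\sharp$, and $\sharp$ is only multiplicative and not injective on $R^{\flat}$ in general; so you must pin down $w_k|_{\theta_1=1}$ to be the \emph{specific} tilt coming from $\alpha$, which feeds back into the lifting problem above.

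The paper avoids all of this by first passing from the $\Lambda$-linearized complex to the underlying cubical \emph{abelian groups} $n\mapsto (R_n^{+})^\times$ and $n\mapsto (R_n^{\flat+})^\times$: both are fibrant cubical sets with connections, their homotopy groups are the homology of the normalized chain complex of the group, and a weak equivalence between them yields, after $\Lambda$-linearization and cubical Dold--Kan, the desired quasi-isomorphism. In the group complex a normalized $n$-cycle is a \emph{single} unit $f$ with $f|_{\text{every face}}=1$ --- no family, hence no coherence problem. One then checks that such an $f$ has $f-1$ topologically nilpotent (units of $(R^+/R^{\circ\circ})[\underline\tau^{1/p^\infty}]$ are constant), so $H=f+\tau_{n+1}(1-f)$ is an invertible homotopy to $1$; thus $H_n=0$ for $n>0$ on both sides. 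The whole comparison collapses to $H_0$, where one shows directly that $\sharp$ induces an isomorphism $(R^{\flat+})^\times/(1+R^{\flat\circ\circ})\xrightarrow{\sim}(R^{+})^\times/(1+R^{\circ\circ})$. Your linear interpolation is exactly the paper's contracting homotopy once this reduction is made; the missing move is to exploit the abelian group structure \emph{before} linearizing, which eliminates the need for face-coherent choices altogether.
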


\begin{proof}
   We let $\widehat{X}$ be $\Spa(R,R^+)$. A map $f$ in $\Hom(\widehat{X}\times\widehat{\B}^n,\T^d)$ [resp. in $\Hom(\widehat{X}\times\widehat{\B}^n,\widehat{\T}^d)$] corresponds to $d$ elements $f_1,\ldots,f_d$ in the group $(R^+\langle \tau_1^{1/p^\infty},\ldots,\tau_n^{1/p^\infty}\rangle)^{{\times}}$ [resp. in the group $(R^{\flat+}\langle \tau_1^{1/p^\infty},\ldots,\tau_n^{1/p^\infty}\rangle)^{{\times}}$] and the map between the two objects is induced by the multiplicative tilt map $R^{\flat+}\langle \tau_1^{1/p^\infty},\ldots,\tau_n^{1/p^\infty}\rangle\ra R^+\langle \tau_1^{1/p^\infty},\ldots,\tau_n^{1/p^\infty}\rangle$. 

We now present some facts about homotopy theory for cubical objects, which mirror classical results for simplicial objects (see for example \cite[Chapter IV]{may-sim}). 
We remark that the map of the statement is induced by a map of enriched cubical $\Lambda$-vector spaces (see \cite[Definition A.6]{ayoub-h1}), which is obtained by adding $\Lambda$-coefficients to a map of enriched cubical sets 
\[
 \Hom(\widehat{X}\times\widehat{\square},\widehat{\T}^d)\ra\Hom(\widehat{X}\times\widehat{\square},\T^d).
\]
 Any enriched cubical object has connections  in the sense of 
\cite[Section 1.2]{brownhiggins}, induced by the maps $m_i$ in \cite[Definition A.6]{ayoub-h1}. We recall that the category of cubical sets with connections can be endowed with a model structure by which all objects are cofibrant and weak equivalences are defined through the geometric realization (see \cite{jardine-cub}). Moreover, its homotopy category is canonically equivalent to the one of simplicial sets, as cubical sets with connections form a strict test category by \cite{maltsiniotis-cub}. 

The two cubical sets appearing above are abelian groups on each level and the maps defining their cubical structure are group homomorphisms. They therefore are cubical groups. By \cite{tonks}, they are fibrant objects and  their homotopy groups $\pi_i$ coincide with the homology $H_iN$ of the associated normalized complexes  of abelian groups (see Definition \ref{cocu}).  The $\Lambda$-enrichment functor is tensorial with respect to the monoidal structure of cubical sets introduced in \cite[Section 11.2]{BHS} and   the cubical Dold-Kan functor, associating to a cubical $\Lambda$-module with connection its normalized complex (see \cite[Section 14.8]{BHS}) is a left Quillen functor. We deduce that in order to prove the statement of the proposition it suffices to show that the  two normalized complexes of abelian groups are quasi-isomorphic. We also remark that it suffices to consider the case $d=1$.

We   prove the following  claim: the $n$-th homology of the complex $N((R\widehat{\otimes}\mcO(\widehat{\square}))^{+{\times}})$ is $0$ for $n>0$. Let $f$ be invertible in $R^+\langle{\tau}_1^{1/p^\infty},\ldots,{\tau}_n^{1/p^\infty}\rangle$ with $d_{r,\epsilon}f=1$ for all $(r,\epsilon)$. We claim that $f-1$ is topologically nilpotent. Up to adding a topological nilpotent element, we can assume that $f\in R^+[\underline{\tau}]$. 
Since $f$ is invertible, its image in $(R^+/R^{\circ\circ})[\underline{\tau}^{1/p^\infty}]$ is invertible as well. Invertible elements in this ring are just the invertible constants. We deduce that all  coefficients of  $f-f(0)=f-1$ are topologically nilpotent and hence $f-1$ is topologically nilpotent. 
In particular, the element  $H=f+\tau_{n+1}(1-f)$  in $R^+\langle\underline{\tau}^{1/p^\infty},\tau_{n+1}^{1/p^\infty}\rangle$ is invertible, satisfies $d_{r,\epsilon}H=1$ for all $\epsilon $ and all $1\leq r\leq n$ and determines a homotopy between $f$ and $1$. This proves the claim.

We can also prove that the $0$-th homology of  the complex $N((R\widehat{\otimes}\mcO(\widehat{\square}))^{+{\times}})$  coincides with $R^{+{\times}}/(1+R^{\circ\circ})$. This amounts to showing that the image of the ring map 
\[
\begin{aligned}
\{f\in R^+\langle\tau^{1/p^\infty}\rangle^{\times}\colon f(0)=1\}&\ra R^{+{\times}}
\\
f&\mapsto f(1)
\end{aligned}
\]
coincides with $1+R^{\circ\circ}$. 
Let $f$ be invertible in $R^+\langle\tau^{1/p^\infty}\rangle$ with $f(0)=1$. As proved above,  $f-1$ is topologically nilpotent so that also $f(1)-1$ is. Vice-versa  if $a\in R$ is topologically nilpotent then the element $1+a\tau\in R^+\langle\tau^{1/p^\infty}\rangle$ is invertible,  satisfies $f(0)=1$ and $f(1)=1+a$ proving the claim.

We are left to prove that the multiplicative map $\sharp$ induces an isomorphism $(R^{\flat+})^{\times}/(1+R^{\flat\circ\circ})\ra (R^{+})^{\times}/(1+R^{\circ\circ})$. We start by proving it is injective. Let $a\in R^{\flat+}$ such that $(a^\sharp-1)$ is topologically nilpotent. 
Since $(a^\sharp-1)=(a-1)^\sharp$ in $R^+/\pi$ we deduce that the element $ (a-1)^\sharp-(a^\sharp-1)$ is also topologically nilpotent. We conclude that $(a-1)^\sharp$ as well as $(a-1)$ are  topologically nilpotent, as wanted.

We now prove surjectivity. Let $a$ be invertible in $R^+$. In particular both $a$ and $a^{-1}$ are power-bounded. From the isomorphism $R^{\flat+}/\pi^\flat\cong R^+/\pi$ we deduce that there exists an element $b\in R^{\flat+}$ such that $b^\sharp=a+\pi\alpha=a(1+\pi\alpha a^{-1})$ for some (power bounded) element $\alpha\in R^+$. We deduce that $(1+\pi\alpha a^{-1})$ lies in $1+R^{\circ\circ}$ and that $b^\sharp$ is invertible. Since the multiplicative structure of $R^\flat$ is isomorphic to $\varprojlim_{x\mapsto x^p}R$ and $\sharp$ is given by the projection to the last component, we deduce that as $b^\sharp$ is invertible, then also $b$ is. In particular, the image of $b\in (R^{\flat+})^{\times}$ in $(R^+)^{\times}/(1+R^{\circ\circ})$ is equal to $a$ as wanted.
\end{proof}

We recall that by Corollary \ref{adjKKflat} there is an adjunction 
\[
\adj{\mfF}{\RigDM^{\eff}_{\et}(K^\flat)}{\RigDM^{\eff}_{\et}(K)}{\mfG}
\]
and our goal is to prove it is an equivalence.

\begin{prop}\label{G'}
 The motive $\mfG\Lambda(d)$ is isomorphic to $\Lambda(d)$ for any positive integer $d$.
\end{prop}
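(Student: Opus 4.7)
The plan is to compute $\mfG$ on the motive of the torus $\Lambda_{tr}(\T^d_K)$ directly and then extract $\Lambda(d)$ as a direct summand. Modulo the equivalences of Theorems \ref{DA=DM} and \ref{DA=DMp}, the functor $\mfG$ corresponds to $\RR\Perf_*\circ(-)^\flat\circ\RR j_*\circ\LL\iota^*$, so my strategy is to evaluate each of these functors in turn on $\Lambda(\T^d_K)$.

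By Proposition \ref{islocal2}, $\LL\iota^*\Lambda(\T^d_K)$ descends to $\wRigDA^{\eff}_{\et,\widehat{\B}^1}(K)$. For a smooth affinoid perfectoid $\widehat{X}$, Corollary \ref{wreplacement} allows me to compute $\RR j_*\LL\iota^*\Lambda(\T^d_K)(\widehat{X})$ as $\Sing^{\widehat{\B}^1}C^{\et}\Lambda(\T^d_K)(\widehat{X})$, and Proposition \ref{approxtiltsimp} identifies this, up to quasi-isomorphism, with $\Sing^{\widehat{\B}^1}C^{\et}\Lambda(\widehat{\T}^d_K)(\widehat{X})$. Consequently $\RR j_*\LL\iota^*\Lambda(\T^d_K)\cong\RR j_*\Lambda(\widehat{\T}^d_K)=\Lambda(\widehat{\T}^d_K)$ in $\PerfDA^{\eff}_{\et}(K)$, where the last equality uses the full faithfulness of $\LL j^*$ (Proposition \ref{Dff2}). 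The tilting equivalence $(-)^\flat$ (Proposition \ref{tiltingeq}) sends this to $\Lambda(\widehat{\T}^d_{K^\flat})=\LL\Perf^*\Lambda(\T^d_{K^\flat})$, and since $\LL\Perf^*$ is an equivalence with inverse $\RR\Perf_*$ (Theorem \ref{eqcharp}), the final step yields $\Lambda(\T^d_{K^\flat})$. Altogether $\mfG\Lambda_{tr}(\T^d_K)\cong\Lambda_{tr}(\T^d_{K^\flat})$, and the same argument with $d=0$ gives $\mfG\Lambda=\Lambda$.

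To extract $\Lambda(d)$, the unit section and the structural projection of $\T^1$ give a canonical splitting $\Lambda_{tr}(\T^1)\cong\Lambda\oplus T$, and iterating with the $d$ coordinate projections $p_i\colon\T^d\to\T^1$ and unit sections $\sigma_i\colon\T^{d-1}\hookrightarrow\T^d$ one obtains commuting idempotents $e_i$ on $\Lambda_{tr}(\T^d)$ whose joint complement $\prod_{i=1}^d(1-e_i)$ cuts out $T^{\otimes d}$ as a direct summand. Since the four functors composing $\mfG$ are all functorial in morphisms of rigid varieties, the isomorphism $\mfG\Lambda_{tr}(\T^d_K)\cong\Lambda_{tr}(\T^d_{K^\flat})$ intertwines the $e_i$ on both sides. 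As $\mfG$ is triangulated and therefore additive, it preserves this direct summand, so $\mfG T^{\otimes d}_K\cong T^{\otimes d}_{K^\flat}$ and hence $\mfG\Lambda(d)_K=\mfG T^{\otimes d}_K[-d]\cong\Lambda(d)_{K^\flat}$.

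The main technical obstacle is to verify that Proposition \ref{approxtiltsimp}, which asserts a pointwise quasi-isomorphism of $\Sing^{\widehat{\B}^1}$-complexes before \'etale sheafification, still yields the required isomorphism in $\PerfDA^{\eff}_{\et}(K)$ after applying $C^{\et}$. This should follow because $\Sing^{\widehat{\B}^1}\Lambda(\T^d_K)$ and $\Sing^{\widehat{\B}^1}\Lambda(\widehat{\T}^d_K)$ are already $\widehat{\B}^1$-local by Proposition \ref{singp}, so the convergent spectral sequence argument from Corollary \ref{replacement} propagates the pointwise quasi-isomorphism to an isomorphism of \'etale hypercohomologies. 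The subsequent naturality check needed in the direct summand argument is automatic from the functoriality of the composite defining $\mfG$.
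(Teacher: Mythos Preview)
Your approach is essentially the same as the paper's: both compute $\mfG$ on $\Lambda(\T^d)$ via Proposition \ref{approxtiltsimp} and then pass to the direct summand $\Lambda(d)$. The paper's execution is a bit cleaner in two places, and in particular dissolves the ``technical obstacle'' you flag.

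First, the paper never introduces $C^{\et}$ into the computation. Proposition \ref{approxtiltsimp} already gives a quasi-isomorphism of complexes of \emph{presheaves} on $\PerfSm$, namely $\Sing^{\widehat{\B}^1}\Lambda(\widehat{\T}^d)\simeq j_*\Sing^{\widehat{\B}^1}\Lambda(\T^d)$; any presheaf-level quasi-isomorphism is a weak equivalence in every Bousfield localization, so this is already an isomorphism in $\PerfDA^{\eff}_{\et}(K)$. The passage to $\RR j_*$ is handled by the observation (Remark \ref{Rjsums}) that $j_*$ commutes with $\et$-sheafification and with $\Sing^{\widehat{\B}^1}$, hence preserves all $(\et,\widehat{\B}^1)$-weak equivalences, so $\RR j_*=j_*$ and no fibrant replacement is needed. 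Your spectral-sequence route through $\Sing^{\widehat{\B}^1}C^{\et}$ would also work, but it is unnecessary.

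Second, for the summand extraction the paper proves directly that the \emph{unit map} $\Lambda(\T^d)\to\mfG\mfF\Lambda(\T^d)=\mfG\Lambda(\T^d)$ is an isomorphism. Since the unit is a natural transformation, its restriction to the direct summand $\Lambda(d)$ is automatically an isomorphism; no separate naturality check on the idempotents $e_i$ is required. Your argument that the specific isomorphism ``intertwines the $e_i$'' is plausible but would need a word of justification (tracing the coordinate projections and unit sections through the tilting equivalence), which the paper's formulation avoids.
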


\begin{proof}
The natural map $\Lambda(d)\ra\mfG\Lambda(d)$ is induced by the isomorphism  $\mfF\Lambda(d)\cong\Lambda(d)$. We need to prove it is an isomorphism. The motive $\Lambda(d)$ is a direct factor of the motive $\Lambda(\T^d)[-d]$ and the map above is  induced by $\Lambda(\T^d)\ra\mfG\Lambda(\T^d)$. It suffices then to prove that the map $\Lambda(\T^d)\ra\mfG\Lambda(\T^d)$ is an isomorphism.

 By the definition of the adjoint pair $(\mfF,\mfG)$ given in Corollary \ref{adjKKflat}, we can equivalently consider the adjunction
\[
\adj{\LL\iota_!\LL j^*}{\PerfDA^{\eff}_{\et}(K)}{\RigDA^{\eff}_{\et}(K)}{\RR j_*\LL\iota^*}
\]
and prove that $\Lambda(\widehat{\T}^d)\ra(\RR j_*\circ\LL\iota^*)\Lambda(\T^d)$ is an isomorphism in $\PerfDA^{\eff}_{\et}(K)$.

From Proposition \ref{approxtiltsimp} we deduce that the complexes  $\Sing^{\widehat{\B}^{1}}\Lambda(\widehat{\T}^{d})$ and $j_*\Sing^{\widehat{\B}^{1}}\Lambda(\T^d)$ 
 are quasi-isomorphic in $\Ch\Psh(\PerfSm)$. Since $j_*$ commutes with $\Sing^{\widehat{\B}^1}$ and with $\et$-sheafification,  the quasi-isomorphism above can be restated as 
 \[
 \Sing^{\widehat{\B}^{1}}\Lambda(\widehat{\T}^{d})\cong\RR j_*\Sing^{\widehat{\B}^{1}}\Lambda(\T^d).
 \]
Due to Proposition \ref{singp} and the isomorphism  $\LL\iota^*\Lambda(\T^d)\cong\Lambda(\T^d)$ this implies
$
\Lambda(\widehat{\T}^{d})\cong\RR j_*\LL\iota^*\Lambda(\T^d)
$ 
as wanted.
\end{proof}

\begin{rmk}\label{Tvgc}
Along the proof of the previous proposition, we showed in particular that $\RR j_*\LL\iota^*\Lambda(\T^d)\cong\Lambda(\widehat{\T}^d)$.
\end{rmk}

\begin{rmk}\label{Rjsums}
 Since $j_*$ commutes with $\et$-sheafification, it preserves $\et$-weak equivalences. It also commutes with $\Sing^{\widehat{\B}^1}$ and therefore preserves $\B^1$-weak equivalences. We conclude that $\RR j_*=j_*$ and in particular $\RR j_*$ commutes with small direct sums.
\end{rmk}

\begin{dfn}\label{vgc}
A rigid analytic varieties with good coordinates $X_0\ra\T^N$ such that the induced maps $\Lambda(X_{h+1})\ra\Lambda(X_h)$ are invertible in $\RigDM^{\eff}_{\et}(K,\Q)$ is called a \emph{variety with very good coordinates}. 
\end{dfn}

We are finally ready to present the proof of our main result.

\begin{thm}\label{main}Let $K$ be a perfectoid field and $\Lambda$ be a $\Q$-algebra. 
 The adjunction
 \[\adj{\mfF}{\RigDM_{\et}^{\eff}(K^\flat)}{\RigDM_{\et}^{\eff}(K)}{\mfG}\]
 is a monoidal, triangulated equivalence of categories.
\end{thm}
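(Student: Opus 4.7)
The plan is to split the proof into a formal reduction, an essential surjectivity statement on generators, and a fully faithfulness statement on generators, the last being where the real work lies. First, via Theorems \ref{DA=DM} and \ref{DA=DMp} together with Corollary \ref{adjKKflat}, I translate the question to the DA-level: it suffices to prove that
\[ \mfF' = \LL\iota_! \circ \LL j^* \circ (-)^\sharp \circ \LL\Perf^* \colon \RigDA^{\eff}_{\Frobet}(K^\flat) \longrightarrow \RigDA^{\eff}_{\et}(K) \]
is an equivalence. Since the tilting equivalence (Proposition \ref{tiltingeq}) and $\LL\Perf^*$ (Theorem \ref{eqcharp}) are already equivalences, the task reduces to showing that the Quillen adjunction $\LL\iota_! \circ \LL j^* \dashv \RR j_* \circ \LL\iota^*$ between $\PerfDA^{\eff}_{\et}(K)$ and $\RigDA^{\eff}_{\et}(K)$ is an equivalence. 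Both categories are compactly generated by motives of varieties with good coordinates (Propositions \ref{genRigDA} and \ref{genPerfDA}), so it is enough to verify essential surjectivity and fully faithfulness on these generators.

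For essential surjectivity, I take $X \in \RigSm^{\gc}/K$ with good coordinates $X \to \T^N \times \T^M$ and form the affinoid perfectoid
\[ \widehat{X} := X \times_{\T^N \times \T^M} (\widehat{\T}^N \times \widehat{\T}^M) \in \PerfSm^{\gc}/K. \]
The tilt $\widehat{X}^\flat \in \PerfSm^{\gc}/K^\flat$ is canonically of the form $\Perf Y_0$ for a natural $Y_0 \in \RigSm^{\gc}/K^\flat$ obtained by choosing $K^\flat$-models of the good-coordinate data (as in the proof of Theorem \ref{eqcharp}). Unwinding definitions gives $\mfF' \Lambda(Y_0) \cong \LL\iota_! \LL j^* \Lambda(\widehat{X})$, which by Theorem \ref{premain} is isomorphic to $\Lambda(\widehat{X}_{\bar h})$ for $\bar h$ sufficiently large; iterated application of Proposition \ref{tower} identifies this with $\Lambda(X)$ in $\RigDA^{\eff}_{\et}(K)$. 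This exhibits $\Lambda(X)$ in the essential image of $\mfF'$.

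For fully faithfulness, I show that the unit $\Lambda(Y) \to \mfG' \mfF' \Lambda(Y)$ is an isomorphism for every $Y \in \RigSm^{\gc}/K^\flat$ (so that the induced natural transformation on compact generators, and hence everywhere, is invertible). Proposition \ref{perfisji} computes $\RR j_*$ explicitly, Corollary \ref{singqi} and Proposition \ref{colimok} describe $\LL\iota_!$ and $\LL\iota^*$ on tower presentations, and Corollary \ref{wreplacement} gives the $\widehat{\B}^1$-local replacement. The unit then becomes a map of $\Sing^{\widehat{\B}^1}$-complexes comparing the tilted and untilted versions of the same good-coordinate variety. The central input is the generalization of Proposition \ref{approxtiltsimp} from tori to arbitrary objects of $\RigSm^{\gc}$, achieved by \v{C}ech hypercovering along good-coordinate \'etale covers (Proposition \ref{cech}) combined with the toric case.

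The main obstacle will be this extension of Proposition \ref{approxtiltsimp} beyond the toric case, together with matching the tower-level identifications across the tilting: Proposition \ref{tower} only yields isomorphisms on the characteristic-zero side for large index, while Proposition \ref{2-6} produces them in characteristic $p$ only after inverting Frobenius. The tilting equivalence on perfectoids is what lets both sides be compared to the same perfectoid motive, but transferring the comparison through $\LL\iota_!$ and $\RR j_*$ while controlling good-coordinate data requires care. Once this approximation-tilting comparison is established, both the unit and the counit of $\LL\iota_! \LL j^* \dashv \RR j_* \LL\iota^*$ will be isomorphisms on generators, and monoidality of $\mfF'$ (hence of $\mfF$) will follow from Proposition \ref{istensorial} together with compatibility with the Tate twist recorded in Proposition \ref{G'}.
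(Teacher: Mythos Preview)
Your reduction to the adjunction $\LL\iota_!\LL j^*\dashv\RR j_*\LL\iota^*$ and your essential surjectivity argument are in the same spirit as the paper (the paper phrases essential surjectivity via ``very good coordinates'', but the content is the same). The gap is in your fully faithfulness argument: you propose to extend Proposition \ref{approxtiltsimp} from $\T^d$ to arbitrary $Y\in\RigSm^{\gc}$ via \v{C}ech descent, but this does not work as stated. The complex $\Sing^{\widehat{\B}^1}\Lambda(Y)$ records maps \emph{into} $Y$, and an \'etale cover of the target does not give a descent description of these; Proposition \ref{cech} concerns descent in the \emph{source} variable. Moreover, the proof of Proposition \ref{approxtiltsimp} relies essentially on the abelian group structure of $\mcO^{+\times}$ and the explicit computation of $R^{+\times}/(1+R^{\circ\circ})$ under tilting---features specific to tori with no analogue for general $Y$.

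The paper avoids this obstacle entirely. It uses Proposition \ref{approxtiltsimp} \emph{only} in the toric case, deducing from it Proposition \ref{G'}: $\mfG\Lambda(d)\cong\Lambda(d)$. Fully faithfulness is then obtained by a completely different route: one extends $\mfF$ to the stable category $\RigDM_{\et}^{\ct}$, takes $M,N$ to be motives of analytifications of smooth \emph{projective} varieties (which are strongly dualizable), and uses duality to rewrite $\Hom(M,N)$ as $\Hom(M\otimes N^\vee,\Lambda)$. Lemma \ref{Fff} shows $\mfF$ is fully faithful on maps into $\Lambda$, using only Proposition \ref{G'} and the Cancellation theorem. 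Since motives of smooth projective varieties generate $\RigDM^{\eff}_{\et}(K^\flat)$ by \cite[Theorem 2.5.35]{ayoub-rig}, this suffices. You should replace your proposed generalization of Proposition \ref{approxtiltsimp} with this duality-and-generation argument.
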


Before the proof, we remark that by putting theorems \ref{tiltingeq} and \ref{eqcharp}  together, the theorem above has the following restatement:

\begin{thm}\label{mainperf}
	Let $K$ be a perfectoid field and $\Lambda$ be a $\Q$-algebra. The adjunction
	\[\adj{\LL\iota_!\LL j^*}{\PerfDA_{\et}^{\eff}(K)}{\RigDM_{\et}^{\eff}(K)}{\RR j_*\LL\iota^*}\]
	is a monoidal, triangulated equivalence of categories.
\end{thm}

\begin{proof}
By Theorem \ref{premain} the functor $\LL\iota_!\LL j^*\colon\PerfDA^{\eff}_{\et}(K)\ra\RigDA^{\eff}_{\et}(K)$ sends the motive $\Lambda(\widehat{X})$ associated to a perfectoid  $\widehat{X}=\varprojlim_hX_h$ to the motive $\Lambda(X_0)$ associated to $X_0$ up to rescaling indices. It is triangulated,  commutes with sums, and its essential image contains motives $\Lambda(X_0)$ of varieties $X_0$ having very good coordinates $X_0\ra\T^N$ (see Definition \ref{vgc}).  
By Proposition \ref{tower}, for every rigid variety with good coordinates $X_0\ra\T^N$ there exists an index $h$ such that $X_h=X_0\times_{\T^N}\T^N\langle\underline{\upsilon}^{\pm1/p^h}\rangle$ has very good coordinates. Since $\car K=0$  the map $\T^N\langle\underline{\upsilon}^{\pm1/p^h}\rangle\ra\T^N$ is finite \'etale, and therefore also the map $X_h\ra X_0$ is. We conclude that any rigid variety with good coordinates \ has a finite \'etale covering  with very good coordinates, and hence the motives associated to varieties with very good coordinates generate the \'etale topos. In particular, the motives associated to them generate $\RigDA^{\eff}_{\et}(K)$ and hence 
the functor $\LL\iota_!\circ\LL j^*$ maps a set of compact generators to a set of compact generators.

Since $\mfF$ is monoidal and $\mfF(\Lambda(1))=\Lambda(1)$ it extends formally to a monoidal functor from the category $\RigDA^{\ct}_{\et}(K^\flat)$ to $\RigDA_{\et}(K) $  by putting $\mfF(M(-d))=\mfF(M)(-d)$. 
Let now $M$, $N$ in $\RigDM_{\et}(K^\flat)$ be twists of the motives associated to the analytification of smooth projective varieties $X$ resp. $X'$. They are strongly dualizable objects of $\RigDM_{\et}(K^\flat)$ since $\Lambda_{\tr}(X)$ and $\Lambda_{\tr}(X')$ are strongly dualizable in $\DM_{\et}(K^\flat)$. Fix an integer $d$ such that $N^\vee(d)$ lies in $\RigDM^{\eff}_{\et}(K^\flat)$. The objects $M$, $N$, $M^\vee$ and $N^\vee$ lie in $\RigDM^{\ct}_{\et}(K^\flat)$  and moreover $\mfF(N^\vee)=\mfF(N)^\vee$. From Lemma \ref{Fff} we also deduce that the functor $\mfF$ induces a bijection
\[
 \RigDM^{\ct}_{\et}(K^\flat)(M\otimes N^\vee,\Lambda)\cong \RigDM_{\et}(K)(\mfF(M)\otimes \mfF(N)^\vee,\Lambda).
\]
By means of the Cancellation theorem \cite[Corollary 2.5.49]{ayoub-rig} the first set is isomorphic to  $\RigDM_{\et}^{\eff}(K^\flat)(M,N)$ and the second is isomorphic to $\RigDM_{\et}^{\eff}(K)(\mfF(M), \mfF(N))$. We then deduce that 
 all motives $M$ associated to the analytification of smooth projective varieties lie in the left orthogonal of the cone of the map $N\ra\mfG\mfF N$ which is closed under direct sums and cones. Since $\Lambda$ is a $\Q$-algebra, such motives  generate $\RigDM_{\et}^{\eff}(K^\flat)$ by means of \cite[Theorem 2.5.35]{ayoub-rig}.  We conclude that   $N\cong\mfG\mfF N$. Therefore the category $\catT$ of objects $N$ such that  $N\cong\mfG\mfF N$ contains  all motives  associated to the analytification of smooth projective varieties. It is clear that $\catT$ is closed under cones. The functors $\mfF$ and $\LL\iota^*$ commute with direct sums  as they are left adjoint functors.  As pointed out in Remark \ref{Rjsums} also the functor $\RR j_*$ does. Since $\mfG$ is a composite of $\RR j_*\LL\iota^*$ with equivalences of categories, it commutes with small sums as well. We conclude that $\catT$ is closed under direct sums. Using again \cite[Theorem 2.5.35]{ayoub-rig} we deduce $\catT=\RigDM_{\et}^{\eff}(K^\flat)$ proving that $\mfF$ is fully faithful. This is enough to prove that it is an equivalence of categories, by applying \cite[Lemma 1.3.32]{ayoub-rig}.
\end{proof}

\begin{lemma}\label{Fff}
 Let $M$ be an object of $ \RigDA^{\ct}_{\et}(K^\flat)$. The functor $\mfF$ induces an isomorphism
\[
  \RigDM^{\ct}_{\et}(K^\flat)(M,\Lambda)\cong  \RigDM_{\et}(K)(\mfF(M),\Lambda).
\]
\end{lemma}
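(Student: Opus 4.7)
The plan is to reduce to the effective category via the compact generator description $M \cong M'(-d)$ and then apply Proposition \ref{G'} which identifies $\mfG\Lambda(d)$ with $\Lambda(d)$.

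First, by the definition of $\RigDM^{\ct}_{\et}(K^\flat)$ recalled just before Proposition \ref{approxtiltsimp}, any compact motive $M$ can be written as $M'(-d)$ for some compact $M'$ in $\RigDM^{\eff}_{\et}(K^\flat)$ and some integer $d \geq 0$. By the Cancellation theorem \cite[Corollary 2.5.49]{ayoub-rig} and the discussion preceding the definition of $\RigDM^{\ct}_{\et}$, the twist $(-)\!\otimes\!\Lambda(d)$ is an equivalence of $\RigDM_{\et}(K^\flat)$, and the natural functor $\RigDM^{\eff}_{\et}(K^\flat)\hookrightarrow\RigDM_{\et}(K^\flat)$ is fully faithful. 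Hence
\[
\RigDM_{\et}(K^\flat)(M,\Lambda) \cong \RigDM_{\et}(K^\flat)(M',\Lambda(d)) \cong \RigDM^{\eff}_{\et}(K^\flat)(M',\Lambda(d)),
\]
since $\Lambda(d)$ already lives in the effective category.

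Second, since $\mfF$ is monoidal and satisfies $\mfF(\Lambda(1)) = \Lambda(1)$ (this is precisely how its extension to $\RigDM^{\ct}_{\et}(K^\flat)$ was defined in the proof of Theorem \ref{main}), we have $\mfF(M)\cong\mfF(M')(-d)$, and the analogous identification on the $K$-side gives
\[
\RigDM_{\et}(K)(\mfF(M),\Lambda) \cong \RigDM^{\eff}_{\et}(K)(\mfF(M'),\Lambda(d)).
\]

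Third, apply the adjunction $(\mfF,\mfG)$ between the effective categories from Corollary \ref{adjKKflat}:
\[
\RigDM^{\eff}_{\et}(K)(\mfF(M'),\Lambda(d)) \cong \RigDM^{\eff}_{\et}(K^\flat)(M',\mfG\Lambda(d)).
\]
By Proposition \ref{G'}, the unit map $\Lambda(d)\to\mfG\Lambda(d)$ is an isomorphism, so the right-hand side equals $\RigDM^{\eff}_{\et}(K^\flat)(M',\Lambda(d))$, which matches the first chain of isomorphisms. Chasing the identifications confirms that the composite isomorphism agrees with the map induced by $\mfF$.

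There is no serious obstacle here beyond bookkeeping: the real input has already been isolated as Proposition \ref{G'}, and the rest is formal manipulation of the twist. The only delicate point is verifying that the natural map induced by $\mfF$ coincides with the one obtained via the adjunction and Proposition \ref{G'}, which follows from the fact that the map $\Lambda(d)\to\mfG\Lambda(d)$ in Proposition \ref{G'} is the unit of the adjunction.
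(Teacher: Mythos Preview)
Your proof is correct and follows essentially the same route as the paper: both write $M'=M(d)$ for suitable $d$, reduce to the effective categories via the Cancellation theorem, apply the adjunction $(\mfF,\mfG)$, and invoke Proposition~\ref{G'} to identify $\mfG\Lambda(d)\cong\Lambda(d)$. The paper organizes the argument through explicit commutative squares to track that the isomorphism obtained really is the one induced by $\mfF$, which is exactly the ``delicate point'' you flag at the end.
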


\begin{proof}
Suppose that $d$ is an integer such that $M(d)$ lies in $\RigDA^{\eff}_{\et}(K^\flat)$.   One has   $\mfF\Lambda(d)\cong\Lambda(d)$ and by  Proposition \ref{G'} 
the unit map $\eta\colon\Lambda(d)\ra\mfG\mfF\Lambda(d)$ is an isomorphism. 
In particular from the adjunction $(\mfF,\mfG)$ we obtain a commutative square
$$\xymatrix{
\RigDM^{\eff}_{\et}(K^\flat)(M(d),\Lambda(d)) \ar[r]^-{\mfF}\ar^-{=}[d] &	\RigDM_{\et}^{\eff}(K)(\mfF M(d),\mfF\Lambda(d))\ar[d]^-{\sim}\\
\RigDM^{\eff}_{\et}(K^\flat)(M(d),\Lambda(d)) \ar[r]^-{\eta}_-{\sim}	&	\RigDM_{\et}^{\eff}(K)(M(d),(\mfG\mfF)\Lambda(d))
}$$
in which the top arrow is then an isomorphism. By the Cancellation theorem \cite[Corollary 2.5.49]{ayoub-rig} we also obtain the following commutative square
$$\xymatrix{
\RigDM^{\ct}_{\et}(K^\flat)(M(d),\Lambda(d)) \ar[r]^-{\mfF}\ar^-{\sim}[d] &	\RigDM_{\et}(K)(\mfF M(d),\Lambda(d))\ar[d]^-{\sim}\\
\RigDM^{\eff}_{\et}(K^\flat)(M(d),\Lambda(d)) \ar[r]^-{\mfF}_-{\sim}	&	\RigDM_{\et}^{\eff}(K)(\mfF M(d),\Lambda(d))
}$$
and hence also the top arrow is an isomorphism. We conclude the claim from the following commutative square, whose vertical arrows are isomorphisms since the functor 
 $(\cdot)(d)$ is invertible in $\RigDM^{}_{\et}(K)$:
 $$\xymatrix{
 \RigDM^{\ct}_{\et}(K^\flat)(M,\Lambda) \ar[r]^-{\mfF}\ar[d]^-{\sim}_-{(\cdot)(d)} &	\RigDM_{\et}(K)(\mfF M,\Lambda)\ar[d]^-{\sim}_-{(\cdot)(d)}\\
 \RigDM^{\ct}_{\et}(K^\flat)(M(d),\Lambda(d)) \ar[r]^-{\mfF}_-{\sim}	&	\RigDM_{\et}(K)(\mfF M(d),\Lambda(d)).
}$$
\end{proof}

\begin{rmk}\label{Qiscruc3}
 In the proof of Theorem \ref{main} we again used the hypothesis that $\Lambda$ is a $\Q$-algebra in order to  apply  \cite[Theorem 2.5.35]{ayoub-rig} which states that the motives associated to the analytification of smooth projective varieties generate $\RigDM_{\et}^{\eff}(K^\flat)$.
\end{rmk}

\begin{rmk}\label{gcok}
In the proof, we also showed that the motives $\Lambda(X)[i]$ where $X$ has very good coordinates and $i\in\Z$ generate $\RigDM^{\eff}_{\et}(K)$ as a triangulated category with small sums.
\end{rmk}

\begin{cor}\label{cofreplvgc}
Let $X_0\ra\T^N$ be a variety with very good coordinates. Then $\RR j_*\LL\iota^*\Lambda(X_0)\cong\Lambda(\varprojlim_hX_h)$. 
\end{cor}

\begin{proof}
By  the description given in Theorem \ref{premain}, we conclude that $\RR\iota_!\LL j^*(\Lambda(\varprojlim_hX_h))\cong\Lambda(X_{0})$. The claim then follows from Theorem \ref{main}, which shows that $\RR\iota_!\circ\LL j^* $ is a quasi-inverse of $\RR j_*\circ\LL\iota^*$.
\end{proof}

We remark that the  proof of Theorem \ref{main} also induces the  following result.

\begin{cor}\label{mainct}
 The functor
 \[
 \begin{aligned}
{\mfF}\colon{\RigDM_{\et}^{\ct}(K^\flat)}&\ra{\RigDM_{\et}^{\ct}(K)}\\
M(d)&\mapsto(\mfF M)(d)
 \end{aligned}
 \]
 is a monoidal equivalence of categories.
\end{cor}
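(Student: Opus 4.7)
The plan is to deduce this corollary almost formally from Theorem \ref{main}, using the Cancellation theorem \cite[Corollary 2.5.49]{ayoub-rig} and the fact that $\mfF$ is monoidal with $\mfF(\Lambda(1))\cong\Lambda(1)$. First I would observe that every object of $\RigDM^{\ct}_{\et}(K)$ is by definition of the form $N(d)$ for some compact effective motive $N$ and some $d\in\Z$, and similarly on the $K^\flat$-side. Hence it suffices to define and study an extension $\mfF^{\ct}$ of $\mfF$ to constructibles by $\mfF^{\ct}(M(d))\colonequals\mfF(M)(d)$ for $M$ compact in $\RigDM^{\eff}_{\et}(K^\flat)$. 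This is well defined up to canonical isomorphism because the Tate twist is an auto-equivalence of $\RigDM_{\et}$ on both sides, and $\mfF$ is monoidal with $\mfF\Lambda(1)\cong\Lambda(1)$ (as already used in Lemma \ref{Fff}).

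Next I would check full faithfulness. Given compact effective motives $M,N$ and integers $d,e$, after shifting the twist one reduces to comparing the abelian groups $\RigDM^{\ct}_{\et}(K^\flat)(M,N(d))$ and $\RigDM^{\ct}_{\et}(K)(\mfF M,\mfF N(d))$. By the Cancellation theorem, for $d\geq 0$ these coincide with $\RigDM^{\eff}_{\et}(K^\flat)(M,N(d))$ and $\RigDM^{\eff}_{\et}(K)(\mfF M,(\mfF N)(d))$ respectively, and the map between them is precisely the map induced by the equivalence $\mfF$ of Theorem \ref{main}. The case $d<0$ is handled by twisting both sides up until the twists become non-negative, which is harmless by cancellation. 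Hence $\mfF^{\ct}$ is fully faithful.

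For essential surjectivity, I would pick an object $N(d)\in\RigDM^{\ct}_{\et}(K)$. By Theorem \ref{main} the functor $\mfF$ is an equivalence on effective motives, and in particular it sends compact objects to compact objects and is essentially surjective onto compact effective motives. Thus $N\cong\mfF(M)$ for some compact effective $M$ over $K^\flat$, and then $N(d)\cong\mfF(M)(d)\cong\mfF^{\ct}(M(d))$, proving essential surjectivity. Monoidality of $\mfF^{\ct}$ follows from monoidality of $\mfF$ and the compatibility $\mfF(\Lambda(1))\cong\Lambda(1)$ used to define the extension.

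The whole argument is formal once Theorem \ref{main} is available, so there is no real obstacle; the only subtle point is ensuring that the extension by Tate twists is consistent, which is precisely what the Cancellation theorem guarantees.
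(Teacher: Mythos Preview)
Your proposal is correct and matches the paper's intent: the corollary is stated without a separate proof, with only the remark that ``the proof above also induces'' it, and the formal deduction from Theorem~\ref{main} via the Cancellation theorem and the monoidal extension $\mfF(M(d))=\mfF(M)(d)$ (already carried out inside the proof of Theorem~\ref{main}) is exactly what you spell out. The only cosmetic difference is that the paper defines the extension to constructibles during the proof of Theorem~\ref{main} itself, so it simply reuses that functor rather than introducing a new symbol $\mfF^{\ct}$.
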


We can refine the previous corollary by stating the stable version of our main result (Theorem \ref{mainstable}) which is based on the following intermediate results. For the definitions of spectra of model categories, we refer to \cite[Section 4.3]{ayoub-th2} and \cite{hovey-sp}.

\begin{dfn}
Let $T$ be the cokernel of the unit map $\Lambda\ra\Lambda(\T^1)$ in $\Psh(\RigSm/K)$ and let $\widehat{T}$ be the cokernel of the unit map $\Lambda\ra\Lambda(\widehat{\T}^1)$ in $\Psh(\PerfSm/K)$. They are direct factors of cofibrant object hence cofibrant.
\begin{enumerate}
\item We denote by $\RigDA_{\et}(K,\Lambda)$ the homotopy category of the stable $(\et,\B^1)$-local model structure on spectra $\SSpect_{T}(\Ch_{\et,\B^1}\Psh(\RigSm/K))$. We omit $\Lambda$ whenever the context allows it.
\item  We denote by $\wRigDA_{\et,\widehat{\B^1}}(K,\Lambda)$ the homotopy category of the stable $(\et,\widehat{\B}^1)$-local model structure on spectra $\SSpect_{\iota^*T}(\Ch_{\et,\widehat{\B}^1}\Psh(\wRigSm/K))$. We omit $\Lambda$ whenever the context allows it.
\item We denote by $\PerfDA_{\et}(K,\Lambda)$ the homotopy category of the stable $(\et,\widehat{\B}^1)$-local model structure on spectra $\SSpect_{\widehat{T}}(\Ch_{\et,\B^1}\Psh(\PerfSm/K))$. We omit $\Lambda$ whenever the context allows it.
\item For $\cat$ equal to $\RigSm$, $\wRigSm$ or $\PerfSm$, we denote by $\Sus_k$ the $k$-th \emph{suspension functor} from the model category of complexes of presheaves on $\cat$ to the associated spectra, which is the left  adjoint of the functor
\[\Ev_k\colon(M_n)_{n\in\N}\mapsto M_k.\]
\end{enumerate}
\end{dfn}

\begin{rmk}
The functor $-\otimes T$ [resp. $-\otimes\iota^*T$ resp. $-\otimes\widehat{T}$] has a prolongation to a left Quillen endofunctor on the associated spectra, which is furthermore a Quillen equivalence. The category of spectra is the universal model category with this property, in a weak sense made precise by \cite[Theorem 5.1 and Proposition 5.3]{hovey-sp}. %
\end{rmk}

\begin{rmk}\label{symmnon}
In contrast with Definition \ref{RigDM}, we use above the categories of \emph{non-symmetric} spectra. On the other hand, we remark that by  \cite[Proposition 4.3.47]{ayoub-th2} the model categories of symmetric and non-symmetric spectra are Quillen equivalent. We then conclude by  Proposition \ref{tiltingeq}, Theorem \ref{DA=DMp}, Proposition \ref{eqcharp} and  \cite[Theorem 5.5]{hovey-sp} that the canonical adjunctions 
$${\RigDA_{\et}(K,\Lambda)}\leftrightarrows{\RigDM_{\et}(K,\Lambda)}{}$$
$${\RigDM_{\et}(K^\flat,\Lambda)}\leftrightarrows{\PerfDA_{\et}(K,\Lambda)}{} $$
are equivalences of categories, and the prolongation of $-\otimes T$ [resp. $-\otimes\widehat{T}$] corresponds to the functor $M\mapsto M(1)$.
\end{rmk}

\begin{rmk}\label{Susts}
We point out that the functor $\Sus_0$ has an explicit description. For example, in the case of $\Ch_{\et,\B^1}\Psh(\RigSm/K)$ it sends a complex $\mcF$ to the spectrum $(\mcF\otimes T^{\otimes n})_{n\in\N}$ with the obvious transition maps. The functor $\Sus_k$ is the composition $t^k\Sus_0$ where $t$ is the \emph{shift functor} such that $\Ev_0 tM=0$ and  $\Ev_{k+1}tM=\Ev_kM$ for any spectrum $M$. 
The pair $(\Sus_k,\Ev_k)$ is a Quillen adjunction (see \cite[Lemma 4.3.24]{ayoub-th2}) as well as the pair $(t,s)$ where $s$ is such that $\Ev_{k}sM=\Ev_{k+1}M$ for any spectrum $M$ (see \cite[Definition 3.7]{hovey-sp}). 
\end{rmk}

\begin{dfn}
We denote by  
\[\adj{\Spect\iota^*}{\SSpect_{T}(\Ch_{\et,{\B}^1}\Psh(\RigSm))}{\SSpect_{\iota^*T}(\Ch_{\et,\widehat{\B}^1}\Psh(\wRigSm))}{\Spect\iota_*}\]
the Quillen adjunction induced by the pair $(\iota^*,\iota_*)$ by means of \cite[Proposition 5.3]{hovey-sp}.
\end{dfn}

The natural map $\widehat{\T}^1\ra\iota^*\T^1$ induces a map $\widehat{T}\ra j_*\iota^*T $ which in turn defines a natural transformation $\tau\colon j^*(-\otimes\widehat{T})\ra(j^*-)\otimes \iota^*T$. Nevertheless, it is not clear that for a smooth affinoid perfectoid  $\widehat{X}$ the map $\tau\colon j^*(\Lambda(\widehat{X})\otimes\widehat{T})\ra(j^*\Lambda(\widehat{X}))\otimes \iota^*T$  is a weak equivalence, so that
we can not apply the criterion of \cite[Proposition 5.3]{hovey-sp} to define a Quillen adjunction between $\SSpect_{\widehat{T}}(\Ch_{\et,\widehat{\B}^1}\Psh(\PerfSm/K))$ and  $\SSpect_{\iota^*T}(\Ch_{\et,\widehat{\B}^1}\Psh(\wRigSm/K))$. Therefore the following proposition, albeit easy to prove, has  a non-trivial content.

\begin{prop}\label{Spectj}
The functor
\[
\begin{aligned}
\Spect j_*\colon \SSpect_{\iota^*T}(\Ch_{\et,\widehat{\B}^1}\Psh(\wRigSm/K))&\ra \SSpect_{\widehat{T}}(\Ch_{\et,\widehat{\B}^1}\Psh(\PerfSm/K))\\
(M_n)_{n\in\N}&\mapsto(j_*M_n)_{n\in\N}
\end{aligned}
\] 
is well defined, preserves stable weak equivalences and induces a triangulated functor
\[
\RR\Spect j_*\colon\wRigDA_{\et,\widehat{\B}^1}(K,\Lambda){\ra} \PerfDA_{\et}(K,\Lambda).
\]
\end{prop}

\begin{proof}
If $(M_n)$ is a spectrum we can define the transition maps $j_*M_{n}\otimes \widehat{T}\ra j_*M_{n+1}$ with the following composition 
\[
j_*M_{n}\otimes \widehat{T}\stackrel{\tau}{\ra} j_*M_{n}\otimes j_*\iota^*{T}\cong j_*(M_n\otimes \iota^*T)\ra j_* M_{n+1}
\]
deduced by the transition maps  $M_n\otimes \iota^*T\ra  M_{n+1}$. Moreover,  as shown in the first part of the proof of \cite[Proposition 5.3]{hovey-sp}  $\Spect j_*$ is a right Quillen functor with respect to the projective model structures on spectra, whose weak equivalences are level-wise weak equivalences. It also preserves such weak equivalences,  as proved in Remark \ref{Rjsums}.

The stable model structure on spectra is obtained as a left Bousfield localization with respect to the maps $\Sus_{n+1}(\mcF\otimes \iota^*T)\ra\Sus_n\mcF$ [resp. $\Sus_{n+1}(\mcF\otimes\widehat{T})\ra\Sus_n\mcF$] as $\mcF$ runs among cofibrant objects (see for example \cite[Definition 4.3.29]{ayoub-th2}). We now prove that $\Spect j_*$ also preserves stable weak equivalences.

Consider  the natural map \[\Ev_k(\Spect j_*\Sus_{n+1} (\mcF\otimes\iota^*T))\ra \Ev_k(\Spect j_*\Sus_n\mcF).\] 
For $k>n$ it is an equality, hence the map 
\[\Spect j_*\Sus_{n+1}(\mcF\otimes\iota^*T)\ra \Spect j_*\Sus_n\mcF.\]
 is a stable weak equivalence by \cite[Lemma 4.3.59]{ayoub-th2}. We then deduce that $\Spect j_*$  preserves stable weak equivalences as claimed.

The fact that $\RR\Spect j_*$ is triangulated follows from the fact that it coincides with the restriction to stably local spectra of the functor induced by $\Spect j_*$ 
on the homotopy categories of the  projective model structures on spectra, which is triangulated being a right derived  Quillen functor.
\end{proof}

\begin{rmk}\label{RSpectjsums}
The functor $\RR\Spect j_*$ commutes with small direct sums by its explicit description and Remark \ref{Rjsums}. 
\end{rmk}

\begin{prop}\label{Sus}
There is an invertible natural transformation of  functors from the category $\RigDA_{\et}^{\eff}(K,\Lambda)$ to $\PerfDA_{\et}(K,\Lambda)$:
\[\eta\colon\LL\Sus_0\circ \RR j_*\circ\LL\iota^*\xrightarrow{\sim} \RR\Spect j_*\circ\LL\Spect\iota^*\circ\LL\Sus_0.\]
\end{prop}

\begin{proof} For any cofibrant object $\mcF$ in $\Ch\Psh(\RigSm)$ one has
\[
\Ev_k(\LL\Sus_0\circ \RR j_*\circ\LL\iota^*)(\mcF)\cong Q(j_*\iota^*\mcF)\otimes\widehat{T}^{\otimes k}
\]
\[
\Ev_k( \RR\Spect j_*\circ\LL\Spect\iota^*\circ\LL\Sus_0)(\mcF)\cong j_*\iota^*\mcF\otimes j_*(\iota^*T)^{\otimes k}
\]
where $Q$ is a cofibrant-replacement functor on $\Ch\Psh(\wRigSm)$. 
The natural transformation $\eta$ of the statement is then induced by the canonical maps $Q(j_*\iota^*\mcF)\ra j_*\iota^*\mcF$ and $\widehat{T}\ra j_*\iota^*T$. 

The two functors of the statement are triangulated and commute with small direct sums by Remarks \ref{Rjsums}, \ref{RSpectjsums} and Proposition \ref{Spectj}. We deduce that the subcategory of  $\RigDA_{\et}^{\eff}(K,\Lambda)$ on which $\eta$ is invertible is triangulated and closed under direct sums. 

By Proposition \ref{genRigDA} and Remark \ref{gcok} we conclude that it suffices to prove that $\eta$ is invertible on a fixed motive $\Lambda(X)[i]$ with $X$ with very good coordinates and $i\in\Z$. For such an object, using Corollary \ref{cofreplvgc}, we have an explicit description of a cofibrant replacement of $j_*\iota^*\Lambda(X)[i]$ namely $\Lambda(\widehat{X})[i]$ where $\widehat{X}=\varprojlim_hX_h$ is built with respect to some very good coordinates $X\ra\T^N$ of $X$. We are left to prove that the maps $\Lambda(\widehat{X})\otimes\widehat{T}^{\otimes k}[i]\ra j_*\iota^*\Lambda(X)\otimes j_*(\iota^*T)^{\otimes k}[i]$
induce a weak equivalence of spectra. We claim that each one of them is a weak equivalence.

The object $T$ [resp. $\widehat{T}$] is a direct summand of $\Lambda(\T^1)$ [resp. $\Lambda(\widehat{\T}^1)$] and the decomposition is compatible with the map $\widehat{T}\ra j_*\iota^*T$. Therefore, in order to prove the claim we can show that the maps
\[\Lambda(\widehat{X}\times\widehat{\T}^k)\cong\Lambda(\widehat{X})\otimes\Lambda(\widehat{\T}^{ k})\ra j_*\iota^*\Lambda(X)\otimes j_*\Lambda(\T^k)\cong j_*\iota^*\Lambda(X\times\T^k)\]
are weak equivalences, and this follows from Corollary \ref{cofreplvgc} and Remark \ref{Tvgc}.
\end{proof}

We define the following composite functor (see Remark \ref{symmnon}):
\[\mfG^{\st}\colon \RigDM_{\et}(K)\cong\RigDA_{\et}(K){\xrightarrow{\RR\Spect j_*\circ\LL\Spect\iota^*}}\PerfDA_{\et}(K)\cong\RigDM_{\et}(K^\flat)\]
which can be inserted in the diagram
$$\xymatrix{
\RigDM^{\eff}_{\et}(K,\Lambda)\ar[d]\ar[r]^{\mfG}_{\sim}	&	\RigDM^{\eff}_{\et}(K^\flat,\Lambda)\ar[d] \\
\RigDM_{\et}(K,\Lambda)\ar[r]^{\mfG^{\st}}	&	\RigDM_{\et}(K^\flat,\Lambda)
}$$
whose vertical functors are fully faithful by the Cancellation Theorem  \cite[Corollary 2.5.49]{ayoub-rig}. It is commutative, up to a natural transformation, by means of Proposition \ref{Sus}.

\begin{prop}\label{ctgen}
The category $\RigDM_{\et}(K,\Lambda)$ [resp. $\RigDM_{\et}(K,\Lambda)$] is generated, as a triangulated category with small sums, by the objects of $\RigDM^{\ct}_{\et}(K,\Lambda)$ [resp. $\RigDM^{\ct}_{\et}(K^\flat,\Lambda)$]
\end{prop}

\begin{proof}
We prove the statement only for $K$. 
From the Cancellation theorem \cite[Corollary 2.5.49]{ayoub-rig} we deduce that any object $M$ of $\RigDM_{\et}(K,\Lambda)$ is isomorphic to 
$\hocolim_{n}M_n(-n)$ with $M_n$ in $\RigDM_{\et}^{\eff}(K,\Lambda)$ (see the proof of  \cite[Proposition 7.4.1]{kahn-ff}). In particular, it sits in a distinguished triangle
$$\xymatrix{
\bigoplus M_n(-n)\ar[r] &\bigoplus M_n(-n)\ar[r]& M\ar[r]&
}$$
Since $\RigDM^{\eff}_{\eff}(K,\Lambda)$ is generated by compact objects by Proposition \ref{genRigDA}, we conclude that each $M_n(-n)$ lies in the triangulated category with small sums generated by the objects of $\RigDM^{\ct}_{\et}(K,\Lambda)$ so that also $M$ does. 
\end{proof}

We can finally prove the stable version of the main result of this article.

\begin{thm}\label{mainstable}
The functor
\[\mfG^{\st}\colon \RigDM_{\et}(K,\Lambda)\ra \RigDM_{\et}(K^\flat,\Lambda)\]
is a monoidal triangulated equivalence of categories and, up to a natural transformation, it restricts to the equivalence
\[\mfG\colon \RigDM_{\et}^{\eff}(K,\Lambda)\ra \RigDM_{\et}^{\eff}(K^\flat,\Lambda) \] 
of Theorem \ref{main} on the  subcategories of effective motives.
\end{thm}

\begin{proof}
By its definition, the functor   $\RR\Spect j_*$ commutes with the derived shift functor 
$\LL t$ (see Remark \ref{Susts}). We claim that also $\LL\Spect\iota^*$ does. We can equivalently show that $\RR\Spect\iota_*$ commutes with $\RR s$ and this again follows from their definitions (see Remark \ref{Susts} and \cite[Theorem 5.3]{hovey-sp}). 

By \cite[Theorem 3.8]{hovey-sp} the functor $\LL t$ on $\RigDA_{\et}(K,\Lambda)$ [resp. on $\PerfDA_{\et}(K,\Lambda) $] is a quasi-inverse of the prolongation of $\LL(-\otimes T)$ [resp. of $\LL(-\otimes\widehat{T})$]. Therefore,  we conclude that for any $M$ in $\RigDA_{\et}(K,\Lambda)$:
\[
(\RR\Spect j_*\circ\LL\Spect\iota^*)(\LL(-\otimes{T}))^{-1}M\cong  (\LL(-\otimes\widehat{T}))^{-1}(\RR\Spect j_*\circ\LL\Spect\iota^*)M
\]
which implies for any $M$ in $\RigDM_{\et}(K,\Lambda)$ the following canonical isomorphism:
\[\mfG^{\st}(M(-1))\cong(\mfG^{\st} M)(-1).\]

Since we already showed in Proposition \ref{Sus} that $\mfG^{\st}$ restricts to $\mfG$ on effective motives,  we conclude that it restricts  on $\RigDM^{\ct}_{\et}(K,\Lambda)$ to a quasi-inverse of the functor $\mfF$ of Corollary \ref{mainct}. In particular, this restriction is  fully faithful and its essential image contains $\RigDM^{\ct}_{\et}(K^\flat,\Lambda)$. 
By Remark \ref{RSpectjsums} we also deduce that $\mfG^{\st}$ commutes with small direct sums. The claim of the theorem then follows from Proposition \ref{ctgen} and \cite[Lemma 1.3.32]{ayoub-rig}. 
\end{proof}

\begin{rmk}
It is worth noticing that along the proof of our main theorem, we have not used any result on almost algebra (which nonetheless has a critical role in the theory of perfectoid spaces).
\end{rmk}

\begin{rmk}\label{finalrmk}
The reader may wonder if the equivalence of categories    $\RigDM_{\et}(K,\Lambda)\cong \RigDM_{\et}(K^\flat,\Lambda)$ still holds true for more general rings of coefficients $\Lambda$. The hypothesis $\Q\subset\Lambda$ has been used several times along the proof of the main statement, and most crucially in the following two instances. %
 First, it was used to invoke the results of \cite{vezz-DADM} on the equivalence of motives with and without transfers (see Remarks \ref{Qiscruc1} and \ref{Qiscruc2}). %
 Secondly, the hypothesis $\Q\subset\Lambda$ was used in order to apply the result of Ayoub \cite[Theorem 2.5.35]{ayoub-rig} about a generating set for the triangulated category $\RigDM(K^\flat,\Lambda)$ (see Remark \ref{Qiscruc3}). %

 On the other hand, we conjecture that the tilting equivalence  $\RigDM_{\et}(K,\Lambda)\cong \RigDM_{\et}(K^\flat,\Lambda)$  can be generalized to  ring of coefficients $\Lambda$ over $\Z[\frac{1}{p}]$. As a matter of fact, for any prime $\ell\neq p$ we expect the category $\RigDM_{\et}(K,\Z/\ell)$ to be  equivalent to the derived category of Galois representations over $\Z/\ell$, similarly to the case of $\DM_{\et}^{\eff}(K,\Z/\ell)$ (see \cite[Theorem 9.35]{mvw}). The tilting equivalence in the case $\Lambda=\Z/\ell$  would then follow  from the classic theorem of Fontaine and Wintenberger. 
\end{rmk}

\begin{appendix}

\section{An implicit function theorem and approximation results}\label{approx}

The aim of this appendix is to prove Proposition \ref{solleva2text} which will be obtained as a corollary of several intermediate  approximation results   
for maps defined from  objects of $\wRigSm^{\gc}$ to rigid analytic varieties. 

Along this section, we assume that $K$ is a complete non-archimedean field. 
 We begin our analysis with the analogue of the inverse mapping theorem, which is a variant of \cite[Theorem 2.1.1]{igusa}. 

\begin{prop}
\label{implicit}
Let $R$ be a   $K$-algebra, let $ {\sigma}=(\sigma_1,\ldots,\sigma_n)$ and $ {\tau}=(\tau_1,\ldots,\tau_m)$ be two systems of coordinates and let $ {P}=(P_1,\ldots,P_m)$ be a collection of polynomials in $R[ {\sigma}, {\tau}]$ such that $ {P}( {\sigma}=0, {\tau}=0)=0$ and $\det(\frac{\del P_i}{\del \tau_j})( {\sigma}=0, {\tau}=0)\in R^\times$. There exists a unique collection $ {F}=(F_1,\ldots,F_m)$ of $m$ formal power series in $R[[ {\sigma}]]$ such that $ {F}( {\sigma}=0)=0$ and $ {P}( {\sigma}, {F}( {\sigma}))=0$ in $R[[ {\sigma}]]$. 
 
Moreover, if $R$ is a  Banach $K$-algebra, then  $F_1,\ldots,F_n$ have a positive radius of convergence.
\end{prop}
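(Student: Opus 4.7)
The plan is to handle the formal existence and uniqueness first, and then to upgrade to the convergent case via a Banach fixed-point argument. After multiplying $P$ on the left by $A_0^{-1}$, where $A_0 := \bigl(\tfrac{\partial P_i}{\partial \tau_j}(0,0)\bigr) \in GL_m(R)$, I may assume $A_0 = I$. The iteration of choice is $F^{(0)} := 0$ and $F^{(k+1)} := F^{(k)} - P(\sigma, F^{(k)})$, viewed inside $R[[\sigma]]^m$.

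I would show by induction on $k$ that $P(\sigma, F^{(k)})$ lies in $(\sigma)^{k+1} R[[\sigma]]^m$ (hence also $F^{(k+1)} - F^{(k)} \in (\sigma)^{k+1}$). The base case $P(0,0) = 0$ is the hypothesis, and for the inductive step one Taylor-expands around $F^{(k)}$: $P(\sigma, F^{(k+1)}) = P(\sigma, F^{(k)}) - A(\sigma, F^{(k)}) \cdot P(\sigma, F^{(k)}) + \text{higher order in } P(\sigma,F^{(k)})$, where $A := \partial P/\partial \tau$. Since $A(\sigma, F^{(k)}) - I$ has zero constant term (because $F^{(k)}(0) = 0$ and $A(0,0) = I$), the linear term belongs to $(\sigma)^{k+2}$ and the higher order contributions belong to $(\sigma)^{2k+2}$. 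So $\{F^{(k)}\}$ is Cauchy in the $(\sigma)$-adic topology of $R[[\sigma]]^m$ and converges to the desired $F$. For uniqueness, I would use the algebraic difference formula $P_i(\sigma, \tau) - P_i(\sigma, \tau') = \sum_j Q_{ij}(\sigma, \tau, \tau')(\tau_j - \tau'_j)$ for suitable polynomials $Q_{ij}$ with $Q_{ij}(0,0,0) = \delta_{ij}$. Applied to two solutions $F, G$ vanishing at $0$, this gives $B(\sigma)(G - F) = 0$ for a matrix $B \in M_m(R[[\sigma]])$ with $B(0) = I$; hence $B$ is invertible by a Neumann series and $F = G$.

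Now suppose $R$ is Banach, still with $A_0 = I$; write $P(\sigma, \tau) = \tau + C(\sigma) + \tilde D(\sigma)\tau + Q(\sigma, \tau)$ with $C(0) = 0$, $\tilde D(0) = 0$, and $Q$ of order $\geq 2$ in $\tau$. The equation $P(\sigma, \tau) = 0$ is equivalent to the fixed-point problem $\tau = T(\tau) := (I + \tilde D(\sigma))^{-1}\bigl(-C(\sigma) - Q(\sigma, \tau)\bigr)$. I would choose $\rho > 0$ first and $r > 0$ afterwards, both small enough so that on the closed polydisc $\{\|\sigma\| \leq r,\ \|\tau\| \leq \rho\}$ one has $\|\tilde D(\sigma)\| \leq 1/2$, $\|C(\sigma)\| \leq \rho/4$, and both $\|Q(\sigma, \tau)\|$ and the Lipschitz constant in $\tau$ of $Q(\sigma, \cdot)$ are small compared to $\rho$. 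Such a choice is possible because $C$ vanishes at $\sigma = 0$ and $Q$ vanishes to second order in $\tau$. Then $T$ is a strict contraction of the closed ball of radius $\rho$ in the Banach algebra $R\langle \sigma/r \rangle^m$ into itself, and the Banach fixed-point theorem produces a unique convergent solution. By the formal uniqueness already established, this fixed point agrees with $F$, so $F \in R\langle \sigma/r\rangle^m$ has radius of convergence at least $r$.

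The formal iteration is essentially automatic; the one point requiring care is the choice of constants in the convergence step. Since $P$ is a polynomial all norms involved are a priori finite, but one must arrange the self-mapping and contraction bounds simultaneously, which forces the order of choice ($\rho$ first, $r$ after) described above.
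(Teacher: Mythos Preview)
Your argument is correct and takes a genuinely different route from the paper. For the formal part, the paper invokes the infinitesimal lifting criterion for \'etale maps: localizing at the Jacobian determinant makes $R[\sigma]\to R[\sigma,\tau]/(P)$ \'etale, and one lifts the augmentation $R[\sigma,\tau]/(P)\to R$ step by step to $R[\sigma]/(\sigma)^{N+1}$, assembling the $F_i$ in the limit. You instead run a Newton-type iteration $F^{(k+1)}=F^{(k)}-P(\sigma,F^{(k)})$ after normalizing the Jacobian to the identity; this is more elementary and self-contained, avoiding any reference to \'etaleness. For convergence, the paper proceeds by a majorant-series method: after the same normalization it writes $P_i=\tau_i-\sum c_{iJH}\sigma^J\tau^H$, solves recursively for the coefficients $d_{iI}$ of $F_i$, observes that each $d_{iI}$ is a polynomial with $\N$-coefficients in the $c_{iJH}$, and extracts the crude bound $\|d_{iI}\|\le|\pi|^{-|I|}$ once $\|c_{iJH}\|\le|\pi|^{-1}$. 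Your Banach fixed-point argument on $R\langle\sigma/r\rangle^m$ is cleaner and closer to the classical proof over $\mathbb{R}$ or $\mathbb{C}$; the paper's approach, by contrast, yields an explicit description of the coefficients of $F$ as universal polynomials in the data, which is occasionally useful elsewhere (e.g.\ when one wants to track how $F$ varies along a family, as in the proof of Proposition~\ref{tower}). Either way the conclusion is the same; your order of choosing constants ($\rho$ before $r$) is the right one, and the non-archimedean ultrametric only makes the contraction estimates easier.
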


\begin{proof}
 Let $f$ be the polynomial $\det(\frac{\del P_i}{\del \tau_j})$ in $R[\sigma,\tau]$ and let $S$ be the ring $ R[ {\sigma}, {\tau}]_f/( {P})$. The induced map $R[ {\sigma}]\ra S$ is \'etale, and from the hypothesis $f(0,0)\in R^\times$ we conclude that the map $R[ {\sigma}, {\tau}]/( {P})\ra R$, $( {\sigma}, {\tau})\mapsto0$ factors through $S$. 

Suppose given a factorization as $R[\sigma]$-algebras $S\ra R[\sigma]/(\sigma)^n\ra R$ of the map $S\ra R$. By the  \'etale lifting property (see \cite[Definition IV.17.1.1 and Corollary IV.17.6.2]{EGAIV4}) applied to the square
$$\xymatrix{
R[ {\sigma}] \ar[d]\ar[r] & R[ {\sigma}]/( {\sigma})^{n+1}\ar[d]\\
S\ar[r]\ar@{.>}[ur]^{\exists!}& R[ {\sigma}]/( {\sigma})^n
}$$
we obtain a uniquely defined $R[ {\sigma}]$-linear map $S\ra R[ {\sigma}]/( {\sigma})^{n+1}$ factoring $S\ra R$ and hence by induction a uniquely defined $R[ {\sigma}]$-linear map $ R[ {\sigma}, {\tau}]/( {P})\ra R[[\sigma]]$ factoring $R[ {\sigma}, {\tau}]/( {P})\ra R$ as wanted. The power series $F_i$ is the image of $\tau_i$ via this map.

Assume now that $R$ is a Banach $K$-algebra. We want to prove that the array $F=(F_1,\ldots,F_m)$ of formal power series in $R[[ {\sigma}]]$ constructed above is convergent around $0$. As $R$ is complete, this amounts to proving estimates on the valuation of the coefficients of $ {F}$. To this aim, we now try to give an explicit description of them, depending on the coefficients of $ {P}$.  Whenever $I$ is a $n$-multi-index $I=(i_1,\ldots,i_n)$ we denote by $ {\sigma}^I$ the product $\sigma_1^{i_1}\cdot\ldots\cdot \sigma_n^{i_n}$ and we adopt the analogous notation for $\tau$.

We remark that the claim is not affected by any invertible $R$-linear transformation of the polynomials $P_i$. Therefore, by multiplying the column vector $P$ by the matrix $(\frac{\partial P_i}{\partial \tau_j})(0,0)^{-1}$ we reduce to the case in which $(\frac{\partial P_i}{\partial \tau_j})(0,0)=\delta_{ij}$. 
We can then write the polynomials $P_i$ in the following form:
\[
 P_i(\sigma,\tau)=\tau_i-\sum_{|J|+|H|>0}c_{iJH}\sigma^J\tau^H
\]
where $J$ is an $n$-multi-index, $H$ is an $m$-multi-index and the coefficients $c_{iJH}$ equal $0$ whenever $|J|=0$ and $|H|=1$.

We will determine the functions $F_i(\sigma)$ explicitly. We start by writing them as 
\[
 F_i(\sigma)=\sum_{|I|>0}d_{iI}\sigma^I
\]
with unknown coefficients $d_{iI}$ for any $n$-multi-index $I$. We denote their $q$-homogeneous parts by 
\[
 F_{iq}(\sigma)\colonequals \sum_{|I|=q}d_{iI}\sigma^I.
\]
We need to solve the equation $P(\sigma,F(\sigma))=0$ which can be rewritten as
\[
 F_i(\sigma) = \sum_{J,H} c_{iJH}\sigma^J\left(\prod_{r=1}^m F_r(\sigma)^{h_r}\right)
\]
where we denote by $h_r$ the components of the $m$-multi-index $H$.

By comparing the $q$-homogeneous parts we get
\[
 F_{iq}(\sigma)=\sum_{(J,H,\Phi)\in\Sigma_{iq}} c_{iJH}\sigma^J\prod_{r=1}^m \prod_{s=1}^{h_{r}} F_{r, \Phi(r,s)}(\sigma)
\]
where the set $\Sigma_{iq}$ consists of triples $(J,H,\Phi)$ in which $J$ is a $n$-multi-index, $H$ is a $m$-multi-index and $\Phi$ is a function that associates to any element $(r,s)$ of the set 
\[\{(r,s):r=1,\ldots,m; s=1,\ldots, h_r\}\] 
a positive (non-zero!) integer $\Phi(r,s)$ such that $\sum \Phi(r,s)=q-|J|$.

If $\Phi(r,s)\geq q$ for some $r$  we see by  definition that $|J|=0$, $|H|=1$ and we know that in this case $c_{i0H}=0$. In particular, we conclude that the right hand side of the formula above involves only $F_{rq'}$'s with $q'<q$. Hence, we can determine the coefficients $d_{iI}$ by induction on $|I|$. 
 Moreover, by construction, each coefficient $d_{iI}$ can be expressed as 
 \begin{equation}
 \label{QiI}
 d_{iI}=Q_{iI}(c_{iJH})
 \end{equation}
 where each $Q_{iI}$ is a polynomial in $c_{iJH}$ for $|J|+|H|\leq |I|$ with coefficients in $\N$.

 We can fix a non-zero topological nilpotent element $\pi$ such  that $||c_{iJK}||\leq|\pi|^{-1}$ for all $i,J,H$. From the argument above, we deduce inductively that each coefficient $d_{iI}$ is a finite sum of products of the form $\prod c_{kJH}$ with $\sum|J|\leq|I|$. In particular, each product has at most $|I|$ factors and hence $||d_{iI}||\leq |\pi|^{-|I|}$. We conclude $||d_{iI}\pi^{2|I|}||\leq|\pi|^{|I|}$ which tends to $0$ as $|I|\ra\infty$.
\end{proof}

The previous statement has an immediate generalization. 

\begin{cor}\label{implicitC}
 Let $R$ be a non-archimedean  Banach  $K$-algebra, let $ {\sigma}=(\sigma_1,\ldots,\sigma_n)$ and $ {\tau}=(\tau_1,\ldots,\tau_m)$ be two systems of coordinates, let $\bar{\sigma}=(\bar{\sigma}_1,\ldots,\bar{\sigma}_n)$ and $ \bar{\tau}=(\bar{\tau}_1,\ldots,\bar{\tau}_m)$ two sequences of elements of $R$ and let $ {P}=(P_1,\ldots,P_m)$ be a collection of polynomials in $R[ {\sigma}, {\tau}]$ such that $ {P}( {\sigma}=\bar{\sigma}, {\tau}=\bar{\tau})=0$ and $\det(\frac{\del P_i}{\del \tau_j})( {\sigma}=\bar{\sigma}, {\tau}=\bar{\tau})\in R^\times$. There exists a unique collection $ {F}=(F_1,\ldots,F_m)$ of $m$ formal power series in $R[[ {\sigma-\bar{\sigma}}]]$ such that $ {F}( {\sigma}=\bar{\sigma})=\bar{\tau}$ and $ {P}( {\sigma}, {F}( {\sigma}))=0$ in $R[[ {\sigma-\bar{\sigma}}]]$ and they have a positive radius of convergence around $\bar{\sigma}$. 
\end{cor}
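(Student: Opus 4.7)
The plan is to reduce Corollary \ref{implicitC} to Proposition \ref{implicit} by a translation of variables. I will introduce new formal variables $\sigma'=\sigma-\bar{\sigma}$ and $\tau'=\tau-\bar{\tau}$ and form the translated polynomial collection
\[
\tilde{P}_i(\sigma',\tau')\colonequals P_i(\sigma'+\bar{\sigma},\tau'+\bar{\tau})\in R[\sigma',\tau'].
\]
By construction $\tilde{P}(0,0)=P(\bar{\sigma},\bar{\tau})=0$, and since partial derivatives commute with translation in the variables, $\det(\partial \tilde{P}_i/\partial \tau'_j)(0,0)=\det(\partial P_i/\partial \tau_j)(\bar{\sigma},\bar{\tau})\in R^\times$. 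Thus $\tilde{P}$ satisfies the hypotheses of Proposition \ref{implicit}.

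Applying that proposition yields a unique collection $\tilde{F}=(\tilde{F}_1,\ldots,\tilde{F}_m)$ in $R[[\sigma']]^m$ with $\tilde{F}(0)=0$ and $\tilde{P}(\sigma',\tilde{F}(\sigma'))=0$, moreover convergent in a neighborhood of the origin when $R$ is a Banach $K$-algebra. I then set
\[
F_i(\sigma)\colonequals \tilde{F}_i(\sigma-\bar{\sigma})+\bar{\tau}_i\in R[[\sigma-\bar{\sigma}]].
\]
A direct computation gives $F(\bar{\sigma})=\bar{\tau}$ and
\[
P(\sigma,F(\sigma))=P\bigl(\sigma,\tilde{F}(\sigma-\bar{\sigma})+\bar{\tau}\bigr)=\tilde{P}\bigl(\sigma-\bar{\sigma},\tilde{F}(\sigma-\bar{\sigma})\bigr)=0
\]
in $R[[\sigma-\bar{\sigma}]]$, and the radius of convergence of $F$ around $\bar{\sigma}$ equals that of $\tilde{F}$ around $0$.

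For uniqueness, if $G=(G_1,\ldots,G_m)$ is any other solution in $R[[\sigma-\bar{\sigma}]]^m$ with $G(\bar{\sigma})=\bar{\tau}$ and $P(\sigma,G(\sigma))=0$, then $\tilde{G}_i(\sigma')\colonequals G_i(\sigma'+\bar{\sigma})-\bar{\tau}_i$ is a formal power series in $R[[\sigma']]$ satisfying $\tilde{G}(0)=0$ and $\tilde{P}(\sigma',\tilde{G}(\sigma'))=0$, so $\tilde{G}=\tilde{F}$ by the uniqueness clause of Proposition \ref{implicit}, and hence $G=F$. There is no real obstacle here: the corollary is a purely formal consequence of Proposition \ref{implicit} via the change of variables, and the only thing to verify carefully is that translating the origin preserves both the vanishing hypothesis on $P$ and the invertibility of the Jacobian with respect to $\tau$.
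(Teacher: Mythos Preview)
Your proof is correct and follows essentially the same approach as the paper: both reduce to Proposition \ref{implicit} by translating the variables $\sigma\mapsto\sigma-\bar{\sigma}$, $\tau\mapsto\tau-\bar{\tau}$ and then shift the resulting power series back by $\bar{\tau}$. Your write-up is in fact slightly more explicit, spelling out the uniqueness argument that the paper leaves implicit.
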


\begin{proof}
 If we apply Proposition \ref{implicit} to the polynomials $P'_i\colonequals P(\bar{\sigma}+\eta,\bar{\tau}+\theta)$ we obtain an array of formal power series $F'=(F_1,\ldots,F_m')$ in $R[[\eta]]$ with positive radius of convergence  such that $P'(\eta,F'(\eta))=0$. If we now put $\sigma\colonequals \bar{\sigma}+\eta$ and $F\colonequals \bar{\tau}+F'$ we get $P(\sigma,F(\sigma-\bar{\sigma}))=0$ in $R[[\sigma-\bar{\sigma}]]$ as wanted.
\end{proof}

We now assume that $K$ is perfectoid and we come back to the category $\wRigSm^{\gc}$ that we introduced above (see Definition \ref{gc}). We recall that an object $X=\varprojlim_hX_h$ of this category is the pullback over $\widehat{\T}^N\ra\T^N$ of a map $X_0\ra\T^N\times\T^M$ that is a composition of rational embeddings and finite \'etale maps from an affinoid tft adic space $X_0$ to a torus $\T^N\times\T^M=\Spa K\langle\underline{\upsilon}^{\pm1},\underline{\nu}^{\pm1}\rangle$ and $X_h$ denotes the pullback of $X_0$  by $\T^N\langle\underline{\upsilon}^{1/p^h}\rangle\ra\T^N$.

\begin{prop}
\label{roots3}
Let $X=\varprojlim_hX_h$ be an object of $\wRigSm^{\gc}$. If an element $\xi$ of $\mcO^+(X)$ is algebraic and separable over each generic point of $\Spec \mcO(X_0)$ then it lies in $\mcO^+(X_{\bar{h}})$ for some ${\bar{h}}$.
\end{prop}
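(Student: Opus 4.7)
The strategy is to combine the density of $\varinjlim_h \mcO^+(X_h)$ in $\mcO^+(X)$ with Hensel's lemma in the complete Tate algebras $\mcO(X_h)$. First, I would produce a polynomial $P(\tau)\in\mcO(X_0)[\tau]$ with $P(\xi)=0$ in $\mcO(X)$ and such that $P'(\xi)$ is a non-zero-divisor in $\mcO(X)$. Since $\mcO(X_0)$ is Noetherian, $\Spec \mcO(X_0)$ has only finitely many generic points $\eta_1,\ldots,\eta_k$; at each $\eta_i$ the element $\xi$ has a separable minimal polynomial $P_i\in\kappa(\eta_i)[\tau]$. A common multiple of the $P_i$ in $\prod_i \kappa(\eta_i)[\tau]$, with denominators cleared, yields such a $P$: separability ensures $P'(\xi)$ is non-vanishing at each $\eta_i$, hence a non-zero-divisor in $\mcO(X_0)$ and therefore in $\mcO(X)$, since the structural maps $\mcO(X_0)\to\mcO(X_h)\to\mcO(X)$ are flat.

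Next, I would cover $X$ by finitely many rational subdomains $W^{(i)}$ on which $|P'(\xi)|$ is bounded below, so that $P'(\xi)\in\mcO(W^{(i)})^{\times}$. By Proposition \ref{liftmap}, each $W^{(i)}$ descends to a rational subdomain $W^{(i)}_{h_0}\subset X_{h_0}$ for a common index $h_0$, and $W^{(i)}=\varprojlim_{h\geq h_0}W^{(i)}_h$ belongs to $\wRigSm^{\gc}$.

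The Hensel step is then carried out on each $W^{(i)}$: by density of $\varinjlim_h \mcO^+(W^{(i)}_h)$ in $\mcO^+(W^{(i)})$ one can, for $h$ sufficiently large, find $\xi'_h\in\mcO^+(W^{(i)}_h)$ with $|P(\xi'_h)|<|P'(\xi'_h)|^2$ — possible because $P(\xi)=0$ and $|P'(\xi)|$ is bounded below on $W^{(i)}$. Standard Hensel's lemma in the Banach algebra $\mcO(W^{(i)}_h)$ then produces a unique root of $P$ close to $\xi'_h$, which by the same uniqueness applied in $\mcO(W^{(i)})$ coincides with $\xi|_{W^{(i)}}$. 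Hence $\xi|_{W^{(i)}}\in\mcO(W^{(i)}_h)$, and being power-bounded it in fact lies in $\mcO^+(W^{(i)}_h)=\mcO^{\circ}(W^{(i)}_h)$. Taking $\bar h$ large enough to work simultaneously across all finitely many $W^{(i)}$, the sheaf property of $\mcO^+$ on $X_{\bar h}$ glues these local sections into a global $\xi\in\mcO^+(X_{\bar h})$.

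The main obstacle I anticipate is the first step — producing $P$ with good control of $P'(\xi)$. When $\mcO(X_0)$ is not a domain, assembling the minimal polynomials across the various components requires care in the denominator-clearing; the separability assumption at \emph{every} generic point is exactly what is needed to make $P'(\xi)$ globally a non-zero-divisor on $X$, and thus a unit after restriction to a suitable rational cover.
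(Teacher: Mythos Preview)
Your Hensel-lemma strategy is a natural idea, but there is a genuine gap in the second step.  You claim that $X$ can be covered by rational subdomains on which $|P'(\xi)|$ is bounded below.  This would require $P'(\xi)$ to be non-vanishing at \emph{every} point of $X$, whereas the separability hypothesis only tells you that $P'(\xi)$ does not vanish at the generic points: it is (at best) a non-zero-divisor, and can certainly vanish on a proper closed subset.  (There is also a small slip in your first paragraph: $P'(\xi)$ lies in $\mcO(X_0)[\xi]\subset\mcO(X)$, not in $\mcO(X_0)$, so the flatness remark does not apply as written; what you presumably want is that the \emph{discriminant} of $P$ is a non-zero-divisor in $\mcO(X_0)$.)  Without such a cover, the gluing step at the end has nothing to glue.

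The paper avoids covers altogether.  Its key observation (Step~3 of the proof) is that, thanks to the explicit topological direct-sum decomposition $\mcO(X)\cong\widehat{\bigoplus}\,\mcO(X_0)$ of Remark~\ref{bigosum}, it suffices to prove $\xi\in\mcO^+(X_{\bar h})$ after restriction to a \emph{single} non-empty rational subdomain $U_0\subset X_0$: the intersection of $\mcO(X)$ with $\mcO(U_0)$ inside $\mcO(U)\cong\widehat{\bigoplus}\,\mcO(U_0)$ is exactly $\mcO(X_0)$.  With this in hand, one is free to shrink to any convenient open.  The paper then performs a long sequence of reductions (to a domain, to multiplicative sup-norms on $R_h$ and on $R_0[\xi]$, etc.) before concluding with a Krasner-type argument.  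If you graft the ``one rational open suffices'' trick onto your approach---restricting to a non-empty rational open on which $P'(\xi)$ \emph{is} a unit, which exists since $P'(\xi)$ is a nonzero element of a reduced ring---your Hensel step goes through directly and yields a shorter proof than the paper's Steps~4--10.  So the core idea is sound; what is missing is precisely this passage-to-one-open, which replaces the impossible global cover.
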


\begin{proof}
Let $X_0$ be $\Spa (R_0,R_0^\circ)$ let $X_h$ be $\Spa(R_h,R_h^\circ)$ and $X$ be $\Spa(R,R^+)$. For any $h\in\N$  one has  $R_h=R_0\widehat{\otimes}_{K\langle \underline{\upsilon}^{\pm1}\rangle} K\langle \underline{\upsilon}^{\pm1/p^h}\rangle$ and $R^+$ coincides with the  $\pi$-adic completion of $\varinjlim_hR^\circ_h$ by Proposition \ref{fibprod}. 
The proof is divided in several steps.

{\it Step 1}: We can suppose that $R$ is perfectoid. Indeed, we can consider the refined tower   $X_h'=X_0\times_{\T^N\times\T^M}(\T^N\langle\underline{\upsilon}^{1/p^h}\rangle\times\T^M\langle\underline{\nu}^{1/p^h}\rangle)$ whose limit $\widehat{X}$ is perfectoid. If the claim is true for this tower, we conclude that $\xi$ lies in the intersection of $\mcO(X_h')$ and $\mcO(X)$ inside $\mcO(\widehat{X})$ for some $h$. By Remark  $\ref{bigosum}$ this is the intersection \[\left(\widehat{\bigoplus}_{I\in(\Z[1/p]\cap[0,1))^N}R_0\underline{\upsilon}^I\right)\cap\left(\widehat{\bigoplus}_{\substack{I\in\{a/p^h\colon 0\leq a<p^h\}^N\\J\in\{a/p^h\colon 0\leq a<p^h\}^M}}R_0\underline{\upsilon}^I\underline{\nu}^J\right)\]
which coincides with \[\widehat{\bigoplus}_{\substack{I\in\{a/p^h\colon 0\leq a<p^h\}^N}}R_0\underline{\upsilon}^I=R_h.\]

{\it Step 2}: We can always assume that each $R_h$ is an integral domain. Indeed, the number of connected components of $\Spa R_h$ may rise, but it is bounded by the number of connected components of the affinoid perfectoid $X$ which is finite by Remark \ref{fincpt}. 

We deduce that the number of connected components of $\Spa R_h$ stabilizes for $h$ large enough. Up to shifting indices, we can then suppose that $\Spa R_0$ is the finite disjoint union of irreducible rigid varieties $\Spa R_{i0}$ for $i=1,\ldots,k$ such that $R_{ih}=R_{i0}\widehat{\otimes}_{K\langle \underline{\upsilon}^{\pm1}\rangle} K\langle \underline{\upsilon}^{\pm1/p^h}\rangle$ is a domain for all $h$. We denote by $R_i$ the ring $R_{i0}\widehat{\otimes}_{K\langle \underline{\upsilon}^{\pm1}\rangle} K\langle \underline{\upsilon}^{\pm1/p^\infty}\rangle$. Let now $\xi=(\xi_i)$ be an element in $ R^+=\prod R_i^+$ that is separable over $\prod\Frac R_i$ i.e. each $\xi_i$ is separable over $\Frac R_i$. If the proposition holds for $R_i$ we then conclude that $\xi_i$ lies in $R^\circ_{ih}$ for some large enough $h$ so that $\xi\in R_h^\circ$ as claimed.

{\it Step 3}: We prove that we can consider a non-empty rational subspace  $U_0=\Spa R_0\langle f_i/g\rangle$ of $X_0$ instead. Indeed, using Remark $\ref{bigosum}$ if the result holds for $U_0$  assuming $\bar{h}=0$ we deduce that $\xi$ lies in the intersection of $R\cong\widehat{\bigoplus} R_0$ and of $R_0\langle f_i/g\rangle$ inside $R\langle f_i/g\rangle\cong\widehat{\bigoplus}R_0\langle f_i/g\rangle$ which coincides with $R_0$.

{\it Step 4}: We prove that we can assume $\xi$ to be integral over $R_0$. Indeed, let $P_\xi$ be its minimal polynomial over $\Frac(R_0)$. We can suppose there is a common denominator $d$ such that $P_\xi$ has coefficients  in  $R_0[1/d][x]$. By \cite[Proposition 6.2.1/4(ii)]{BGR} we can also assume that $|d|=1$. In particular, by \cite[Proposition 7.2.6/3]{BGR}, 
 the rational subspace associated to $R_0\langle1/d\rangle$ is not empty. By Step 3, we can then restrict to it and assume $\xi$ integral over $R_0$ and $R_0[\xi]\cong R_0[x]/P_\xi(x)$. 

{\it Step 5}: We can  suppose that $P_\xi(x)$ is the minimal polynomial of $\xi$ with respect to all non-empty rational subspaces of $X_h$ for all $h$. If it is not the case, from the previous steps we can rescale indices and restrict to a rational subspace with respect to which the degree of $P_\xi(x)$ is lower. Since the degree is bounded from below, we conclude the claim. 

{\it Step 6}: We now conclude the claim. By Step 3 and Step 4 %
we may suppose that the map $R_0\ra R_0[\xi]$ is finite \'etale. Since this map splits after tensoring with $R$, we can deduce by \cite[Lemma 7.5]{scholze} that it splits already at some level ${h}$. By Step 5, this implies that $P_\xi(x)$ is of degree $1$ and hence $\xi\in R_{0}$. %
\end{proof}

We introduce now the geometric application of Propositions \ref{implicit} and \ref{roots3}. It states that a map  from $\varprojlim_hX_h\in\wRigSm$ to a rigid variety factors, up to $\B^1$-homotopy, over one of the intermediate varieties $X_h$. Analogous statements are widely used in  \cite{ayoub-rig}  (see for example \cite[Theorem 2.2.58]{ayoub-rig}): there, these are obtained as corollaries of  Popescu's theorem (\cite{popescu-85} and \cite{popescu-86}), 
which is not available in our non-noetherian setting.

\begin{prop}
\label{solleva}
Let $X=\varprojlim_hX_h$ be in $\wRigSm^{\gc}$.  
Let  $Y$ be an affinoid rigid variety endowed with an \'etale map $Y\ra\B^n$ and let $f\colon X\ra Y$ be a map of adic spaces. 
\begin{enumerate}
 \item There exist $m$ polynomials $Q_1,\ldots,Q_m$ in $K[ \sigma_1,\ldots,\sigma_n,\tau_1,\ldots,\tau_m]$ such that $Y\cong\Spa A$ with $A\cong K\langle \sigma,\tau\rangle/(Q)$ and $\det(\frac{\del Q_i}{\del \tau_j})\in A^\times$.
\item 
There exists a map $H\colon X\times\B^1\ra Y$ such that $H\circ i_0=f$ and $H\circ i_1$ factors over the canonical map $X\ra X_h$ for some integer $h$.
\end{enumerate}
Moreover, if $f$ is induced by the map $K\langle \sigma,\tau\rangle\ra\mcO(X)$, $\sigma\mapsto s, \tau\mapsto t$ the map $H$ can be defined via
\[
(\sigma,\tau)\mapsto(s+(\tilde{s}-s)\chi,F(s+(\tilde{s}-s)\chi))
\]
where $F$ is the unique array of formal power series in $\mcO(X)[[\sigma-s]]$ associated to the polynomials $P(\sigma,\tau)$ by Corollary \ref{implicitC}, and $\tilde{s}$ is any element in $\varinjlim_h\mcO^+(X_h)$ such that the radius of convergence of $F$ is larger than $||\tilde{s}-s||$ and $F(\tilde{s})$ lies in $\mcO^+(X)$.
\end{prop}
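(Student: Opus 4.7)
The proof splits into a structural part (1) and a construction part (2); the heart of the matter is (2).

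For (1), I would invoke the standard presentation theorem for \'etale affinoids over a Tate ball, namely \cite[Lemma 1.1.50]{ayoub-rig}, which is already used in the proof of Proposition \ref{tower}: every \'etale affinoid $Y\to\B^n$ admits a global equation $Y\cong\Spa K\langle\sigma,\tau\rangle/(Q)$ for polynomials $Q_1,\dots,Q_m$ with $\det(\partial Q_i/\partial\tau_j)$ invertible in the quotient. With this presentation in hand, the map $f$ is encoded by power-bounded tuples $s\in\mcO^+(X)^n$ and $t\in\mcO^+(X)^m$ with $Q(s,t)=0$ and with the Jacobian invertible at $(s,t)$.

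For (2), my strategy is to use Corollary \ref{implicitC} to solve $Q(\sigma,\tau)=0$ for $\tau$ as a convergent array $F\in\mcO(X)[[\sigma-s]]^m$ satisfying $F(s)=t$, with some positive radius of convergence $\rho>0$; then pick an approximation $\tilde{s}$ of $s$ in $\varinjlim_h\mcO^+(X_h)^n$ with $\|\tilde{s}-s\|<\rho$ small enough that $F(\tilde{s})$ is defined and lies in $\mcO^+(X)^m$; finally define $H$ by the formula in the statement. The existence of such a $\tilde{s}$ follows from the fact that $\mcO^+(X)$ is the $\pi$-adic completion of $\varinjlim_h\mcO^+(X_h)$, which was established in Proposition \ref{fibprod}.

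The step I expect to be the main obstacle is showing not merely that $F(\tilde{s})\in\mcO^+(X)^m$ but that $F(\tilde{s})\in\varinjlim_h\mcO^+(X_h)^m$, which is exactly what is needed for $H\circ i_1$ to factor through some $X_h$. This is precisely what Proposition \ref{roots3} is designed to deliver: each component $F(\tilde{s})_j$ satisfies the polynomial relation $Q(\tilde{s},\tau)=0$ in $\mcO(X_h)[\tau]$ for the $h$ such that $\tilde{s}\in\mcO^+(X_h)^n$, so it is algebraic over $\mcO(X_h)$; separability over every generic point of $\Spec\mcO(X_h)$ is ensured by the invertibility of $\det(\partial Q_i/\partial\tau_j)(\tilde{s},F(\tilde{s}))$, which is automatic by continuity of the Jacobian provided $\|\tilde{s}-s\|$ is small enough. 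Applying Proposition \ref{roots3} to the shifted presentation $X=\varprojlim_{k\geq 0}X_{h+k}$ then yields an $h'\geq h$ with $F(\tilde{s})\in\mcO^+(X_{h'})^m$, as required.

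To conclude I would verify that the proposed formula genuinely defines a morphism $H\colon X\times\B^1\to Y$. Well-definedness reduces to checking that the assignment extends to a continuous $K$-algebra map $K\langle\sigma,\tau\rangle/(Q)\to\mcO(X)\langle\chi\rangle$: the image $s+(\tilde{s}-s)\chi$ of $\sigma$ is power-bounded as a sum of power-bounded elements; the image $F(s+(\tilde{s}-s)\chi)$ of $\tau$ is a convergent series in $\chi$ with power-bounded coefficients, because expanding $F(\sigma)=\sum_I a_I(\sigma-s)^I$ and substituting gives $\chi^k$-coefficients of norm $\leq\max_{|I|=k}|a_I|\|\tilde{s}-s\|^{|I|}\to 0$, and these coefficients are all power-bounded once $\|\tilde{s}-s\|<\rho$ is chosen so that $F$ takes values in $\mcO^+(X)$ on the closed disk of radius $\|\tilde{s}-s\|$ around $s$; finally the relation $Q(\sigma,F(\sigma))=0$ valid in $\mcO(X)[[\sigma-s]]$ shows $(Q)$ lies in the kernel. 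Specializing $\chi=0$ recovers $(\sigma,\tau)\mapsto(s,t)$, i.e.\ $H\circ i_0=f$; and $\chi=1$ gives $(\sigma,\tau)\mapsto(\tilde{s},F(\tilde{s}))$, which by the previous paragraph factors through $X_{h'}$.
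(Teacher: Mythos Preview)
Your proposal is correct and follows essentially the same route as the paper: invoke \cite[Lemma 1.1.50]{ayoub-rig} for the presentation, apply Corollary \ref{implicitC} to get $F$, approximate $s$ by $\tilde{s}$ using the density from Proposition \ref{fibprod}, and then use Proposition \ref{roots3} to force $F(\tilde{s})$ down to some $\mcO^+(X_h)$. One small simplification: you justify the invertibility of $\det(\partial Q_i/\partial\tau_j)(\tilde{s},F(\tilde{s}))$ by continuity for $\tilde{s}$ close to $s$, but in fact this determinant is already a unit in $A$ by part (1), so its image under \emph{any} $K$-algebra map is invertible and no smallness condition is needed for this step.
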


\begin{proof}
The first claim follows  from  \cite[Lemma 1.1.52]{ayoub-rig}. We turn to the second claim. 
Let $X_0$ be $\Spa (R_0,R_0^\circ)$ and $X$ be $\Spa(R,R^+)$. For any $h\in\N$ we denote $R_0\widehat{\otimes}_{K\langle \underline{\upsilon}\rangle} K\langle \underline{\upsilon}^{\pm1/p^h}\rangle$ with $R_h$ so that $R^+$ coincides with the  $\pi$-adic completion of $\varinjlim_hR^\circ_h$ by Proposition \ref{fibprod}.

The map $f$ is determined by the choice of $n$ elements ${s}=(s_{1},\ldots,s_{n})$ and $m$ elements $t=(t_{1},\ldots,t_{m})$ of $R^+$ such that $P(s,t)=0$. We prove that the formula for $H$ provided in the statement defines a map $H$ with the required properties.

By Corollary \ref{implicitC}  there exists a collection $F=(F_{1},\ldots,F_{m})$ of $m$ formal power series in $R[[\sigma-s]]$ with a positive radius of convergence such that $F(s)=t$ and $P(\sigma,F(\sigma))=0$. As $\varinjlim_hR^\circ_h$ is dense in $R^+$ we can find an integer $\bar{h}$ and elements $\tilde{s}_i\in R^\circ_{\bar{h}}$ such that $||\tilde{s}-s||$ is smaller than the convergence radius of $F$. By  renaming the indices, we can assume that $\bar{h}=0$. As $F$ is continuous and $R^+$ is open, we can also assume that the elements $F_{j}(\tilde{s})$ 
 lie in $R^+$.  
 We are left to prove that they actually lie in $\varinjlim_hR_h^\circ$. 
Since the determinant of $(\frac{\del P_{i}}{\del \tau_j})(\tilde{s},F(\tilde{s}))$ is invertible, the field $L\colonequals\Frac(R_0)(F_{1}(\tilde{s}),\ldots,F_m(\tilde{s}))$ is algebraic and separable over $\Frac(R_0)$. 
We can then apply Proposition \ref{roots3} to conclude that each element $F_{j}(\tilde{s})$ lies in $R^\circ_h$ for a sufficiently big integer $h$.
\end{proof}

The goal of the rest of this section is to prove Proposition \ref{solleva2text}. To this aim, we present a generalization of the results above for collections of maps. As before, we start with an algebraic statement and then translate it into a geometrical fact for our specific purposes.

\begin{prop}\label{2dH=02}
Let $R$ be a Banach $K$-algebra and let $\{R_h\}_{h\in\N}$ be a collection of nested complete subrings of $R$ such that $\varinjlim R_h$ is dense in $R$. Let $s_1,\ldots,s_N$ be elements of $R\langle\theta_1,\ldots,\theta_n\rangle$. 
 For any $\varepsilon>0$ there exists an integer $h$ and elements $\tilde{s}_1,\ldots,\tilde{s}_N$ of $R_h\langle\theta_1,\ldots,\theta_n\rangle$
 satisfying the following conditions.
\begin{enumerate}
\item $|s_\alpha-\tilde{s}_\alpha|<\varepsilon$ for each $\alpha$.
\item For any $\alpha,\beta\in\{1,\ldots,N\}$ and any $k\in\{1,\ldots,n\}$ such that $s_\alpha|_{\theta_k=0}=s_\beta|_{\theta_k=0}$ we also have $\tilde{s}_\alpha|_{\theta_k=0}=\tilde{s}_\beta|_{\theta_k=0}$.
\item For any $\alpha,\beta\in\{1,\ldots,N\}$ and any $k\in\{1,\ldots,n\}$ such that $s_\alpha|_{\theta_k=1}=s_\beta|_{\theta_k=1}$ we also have $\tilde{s}_\alpha|_{\theta_k=1}=\tilde{s}_\beta|_{\theta_k=1}$.
\item\label{cond112} For any $\alpha\in\{1,\ldots,N\}$ if $s_\alpha|_{\theta_1=1}\in R_{h'}\langle \underline{\theta}\rangle$ for some $h'$ then $\tilde{s}_\alpha|_{\theta_1=1}={s}_\alpha|_{\theta_1=1}$.
\end{enumerate}
\end{prop}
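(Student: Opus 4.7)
The plan is to prove the statement by induction on $n$, the number of coordinates $\theta_1,\ldots,\theta_n$. For the base case $n=0$, the density of $\varinjlim_h R_h$ in $R$ gives approximations of each $s_\alpha$; I partition $\{1,\ldots,N\}$ into equality classes and pick one common approximation per class, while setting $\tilde{s}_\alpha = s_\alpha$ whenever $s_\alpha$ already lies in some $R_{h'}$. Conditions (2)--(4) are vacuous.

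For the inductive step, assuming the proposition for $n-1$, I apply the inductive hypothesis \emph{twice}, in two different ways:

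\emph{(A) To the boundary slices along $\theta_n$.} The restrictions $s_\alpha^{(\epsilon)} \colonequals s_\alpha|_{\theta_n=\epsilon}$ (for $\epsilon\in\{0,1\}$) lie in $R\langle\theta_1,\ldots,\theta_{n-1}\rangle$. I apply the inductive proposition (with base ring $R$ and $n-1$ variables) to the combined collection $\{s_\alpha^{(0)}\}\cup\{s_\alpha^{(1)}\}$, choosing the same approximation for two inputs that coincide. This produces $u_\alpha^{(\epsilon)}\in R_{h_0}\langle\theta_1,\ldots,\theta_{n-1}\rangle$ satisfying the face compatibilities for $\theta_1,\ldots,\theta_{n-1}$ and condition~(4) at $\theta_1=1$; these are inherited from the corresponding conditions for the $s_\alpha$ (if $s_\alpha|_{\theta_k=\delta}=s_\beta|_{\theta_k=\delta}$ then the same holds after further restriction to $\theta_n=\epsilon$; similarly $s_\alpha|_{\theta_1=1,\theta_n=\epsilon}\in R_{h'}\langle\theta_2,\ldots,\theta_{n-1}\rangle$ whenever $s_\alpha|_{\theta_1=1}\in R_{h'}\langle\theta_2,\ldots,\theta_n\rangle$).

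\emph{(B) To the ambient, treating $\theta_n$ as part of the base.} View the Tate algebra $R\langle\theta_1,\ldots,\theta_n\rangle$ as $R\langle\theta_n\rangle\langle\theta_1,\ldots,\theta_{n-1}\rangle$. The ring $R\langle\theta_n\rangle$ is a Banach $K$-algebra with nested closed subrings $R_h\langle\theta_n\rangle$ whose union is dense (truncate power series in $\theta_n$ and approximate finitely many coefficients). I apply the inductive proposition to $\{s_\alpha\}$ in this $(n{-}1)$-variable setup, obtaining a preliminary approximation $s_\alpha^*\in R_{h^*}\langle\underline\theta\rangle$ that already respects the face relations at $(k,\epsilon)$ for $k<n$ and the condition~(4) at $\theta_1=1$.

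I then glue by
\[
\tilde{s}_\alpha \;\colonequals\; s_\alpha^* \;+\; (1-\theta_n)\bigl(u_\alpha^{(0)}-s_\alpha^*|_{\theta_n=0}\bigr) \;+\; \theta_n\bigl(u_\alpha^{(1)}-s_\alpha^*|_{\theta_n=1}\bigr),
\]
which lies in $R_h\langle\underline\theta\rangle$ for $h=\max(h_0,h^*)$. By construction $\tilde{s}_\alpha|_{\theta_n=\epsilon}=u_\alpha^{(\epsilon)}$, which handles (2)--(3) at $k=n$. Since $|\theta_n|=|1-\theta_n|=1$ in the Gauss norm, the estimate (1) for $\tilde{s}_\alpha-s_\alpha$ follows from the corresponding estimates in (A) and (B). For (2)--(3) at $k<n$ one writes out $\tilde{s}_\alpha|_{\theta_k=\epsilon}$ and checks that each of the three summands matches $\tilde{s}_\beta|_{\theta_k=\epsilon}$ termwise, using (A) for the $u^{(\epsilon')}_\alpha$ pieces and (B) for the $s_\alpha^*$ pieces. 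For (4), if $s_\alpha|_{\theta_1=1}\in R_{h'}\langle\theta_2,\ldots,\theta_n\rangle$ then (B) gives $s_\alpha^*|_{\theta_1=1}=s_\alpha|_{\theta_1=1}$ and (A) gives $u_\alpha^{(\epsilon)}|_{\theta_1=1}=s_\alpha|_{\theta_1=1,\theta_n=\epsilon}=s_\alpha^*|_{\theta_n=\epsilon,\theta_1=1}$, so both correction terms vanish at $\theta_1=1$.

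The main conceptual obstacle is organizing the induction: a naive splitting $s_\alpha = (1-\theta_n)s_\alpha^{(0)} + \theta_n s_\alpha^{(1)} + \theta_n(1-\theta_n)r_\alpha$ leaves the residue $r_\alpha$ in the full $n$-variable ring, so reducing $n$ on the boundary alone is insufficient. The key trick is the double inductive call: one to trim the $\theta_n$-direction (step A) and a \emph{separate} inductive call over the Tate algebra $R\langle\theta_n\rangle$ (step B), which handles the $\theta_k$-faces for $k<n$ and condition~(4) uniformly in $\theta_n$. The gluing formula then simply corrects the $\theta_n$-faces without disturbing any of the other prescribed face data.
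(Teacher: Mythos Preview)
Your argument is correct and takes a genuinely different route from the paper. The paper inducts on $N$ (the number of elements) rather than on $n$ (the number of variables): it first strengthens the statement to handle restrictions along arbitrary multi-faces $\sigma\colon T\to\{0,1\}$ (the auxiliary conditions (5)--(6)), constructs $\tilde{s}_1,\ldots,\tilde{s}_{N-1}$ by induction, and then produces $\tilde{s}_N$ by a Chinese Remainder--type lifting lemma (Lemma~\ref{chinesecor2b}, resting on Lemmas~\ref{grobner2} and~\ref{chinese22}) that solves the system $\tilde{s}_N|_\sigma=\tilde{s}_i|_\sigma$ with a controlled norm bound. Your induction on $n$ avoids both the strengthening and the algebraic machinery on the ideals $I_\sigma$: the explicit linear interpolation in $\theta_n$ handles the new pair of faces, while the second inductive call over the enlarged base $R\langle\theta_n\rangle$ carries the face data for $\theta_1,\ldots,\theta_{n-1}$ and condition~(4) uniformly through $\theta_n$. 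The trade-off is that the paper's method yields the stronger multi-face statement essentially for free, whereas yours is more elementary and self-contained but proves exactly what is asked. Two small points worth making explicit in your write-up: the ``equal inputs get equal outputs'' clause you use in step~(A) is not literally part of the inductive statement and should be arranged by first passing to equality classes before invoking the hypothesis; and your base case $n=0$ is slightly stronger than the proposition (fixing elements already in some $R_{h'}$), which is exactly what is needed to recover condition~(4) when $n=1$, since there the correction formula collapses to $\tilde{s}_\alpha|_{\theta_1=1}=u_\alpha^{(1)}$.
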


\begin{proof}
We will actually prove a stronger statement, namely that we can reinforce the previous conditions with the following:
\begin{enumerate}\setcounter{enumi}{4}
 \item\label{condx}  For any $\alpha,\beta\in\{1,\ldots,N\}$ any subset $T$ of $\{1,\ldots,n\}$ and any map $\sigma\colon T\ra\{0,1\}$ such that $s_\alpha|_{\sigma}=s_\beta|_{\sigma}$ then $\tilde{s}_\alpha|_{\sigma}=\tilde{s}_\beta|_{\sigma}$.
 \item For any $\alpha\in\{1,\ldots,N\}$ any subset $T$ of $\{1,\ldots,n\}$ containing $1$ and any map $\sigma\colon T\ra\{0,1\}$ such that $s_\alpha|_{\sigma}\in R_h\langle\underline{\theta}\rangle$ for some $h$ then $\tilde{s}_\alpha|_{\sigma}={s}_\alpha|_{\sigma}$.
\end{enumerate}
Above we denote by $s|_\sigma$ the image of $s$ via the substitution  $(\theta_{t}=\sigma(t))_{t\in T}$. 
We proceed by induction on $N$, the case $N=0$ being trivial. 

Consider the conditions we want to preserve that involve the index $N$. They are of the form
\[
 {s}_i|_\sigma={s}_N|_{\sigma}
\]
and are indexed by some pairs $(\sigma, i)$ where $i$ is an index and  $\sigma$ varies in a set of maps $\Sigma$. Our procedure consists in determining by induction the elements $\tilde{s}_1,\ldots,\tilde{s}_{N-1}$ first, and then deduce the existence of $\tilde{s}_N$ by means of Lemma \ref{chinesecor2b} by lifting the elements $\{\tilde{s}_i|_{\sigma}\}_{(\sigma,i)}$. 
Therefore, we first define $\varepsilon'\colonequals\frac{1}{C}\varepsilon$ where $C=C(\Sigma)$ is the constant introduced in Lemma \ref{chinesecor2b} and then apply the induction hypothesis to the first $N-1$ elements with respect to $\varepsilon'$.

By the induction hypothesis, the elements $\tilde{s}_i|_\sigma$ satisfy the compatibility condition of Lemma \ref{chinesecor2b} and lie in $R_h\langle\underline{\theta}\rangle$ for some integer $h$. Without loss of generality, we assume $h=0$. 
By Lemma \ref{chinesecor2b} we can find an element $\tilde{s}_N$ of $R_h\langle\underline{\theta}\rangle$ lifting them 
 such that $|\tilde{s}_N-s_N|<C\varepsilon'=\varepsilon$ as wanted.
\end{proof}

The following lemmas are used in the proof of the previous proposition.

\begin{lemma}\label{grobner2}
For any normed ring $R$ and any map $\sigma\colon T_\sigma\ra\{0,1\}$ defined on a subset $T_\sigma$ of $\{1,\ldots,n\}$ we denote by $I_\sigma$ the ideal of $R\langle\underline{\theta}\rangle$ generated by $\theta_i-\sigma(i)$ as $i$ varies in $T_\sigma$.
For any finite set $\Sigma$ of such maps and any such map ${\eta}$  one has  $\left(\bigcap_{\sigma\in\Sigma} I_\sigma\right) + I_{\eta}=\bigcap_{\sigma\in\Sigma}(I_{\sigma}+I_{\eta})$.
\end{lemma}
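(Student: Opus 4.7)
The inclusion $\left(\bigcap_{\sigma\in\Sigma} I_\sigma\right) + I_\eta \subseteq \bigcap_{\sigma\in\Sigma}(I_\sigma + I_\eta)$ is immediate. For the reverse, my plan is to work modulo $I_\eta$: identify the quotient $R\langle\underline{\theta}\rangle/I_\eta$ with $S\colonequals R\langle\theta_j : j \notin T_\eta\rangle$ via the natural section $\iota\colon S \hookrightarrow R\langle\underline{\theta}\rangle$ (inclusion of the subring of elements not involving $\theta_i$ for $i\in T_\eta$), and rephrase the claim as $q_\eta(\bigcap_\sigma I_\sigma) = \bigcap_\sigma q_\eta(I_\sigma)$, where $q_\eta$ denotes the quotient map.

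The first step is to compute $q_\eta(I_\sigma)$. Call $\sigma$ \emph{compatible} with $\eta$ if $\sigma(i) = \eta(i)$ for every $i \in T_\sigma \cap T_\eta$, and set $\sigma'\colonequals \sigma|_{T_\sigma \setminus T_\eta}$. In the compatible case the generators $\theta_i - \sigma(i)$ for $i \in T_\sigma \cap T_\eta$ already lie in $I_\eta$, so $q_\eta(I_\sigma) = I_{\sigma'} \subseteq S$. Otherwise some $i_0 \in T_\sigma \cap T_\eta$ has $\sigma(i_0) \neq \eta(i_0)$, and $q_\eta(\theta_{i_0} - \sigma(i_0)) = \eta(i_0) - \sigma(i_0) = \pm 1$ is a unit, so $q_\eta(I_\sigma) = S$. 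Partitioning $\Sigma = \Sigma_c \sqcup \Sigma_{nc}$ into compatible and incompatible indices, the right-hand side of the desired identity becomes $\bigcap_{\sigma\in\Sigma_c} I_{\sigma'}$, and the problem reduces to lifting any $g$ in this intersection to some $f \in \bigcap_{\sigma\in\Sigma} I_\sigma$ with $q_\eta(f) = g$.

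The lift will be constructed explicitly. For each $\sigma \in \Sigma_{nc}$ pick $i_\sigma \in T_\sigma \cap T_\eta$ with $\sigma(i_\sigma) \neq \eta(i_\sigma)$ and let $a_\sigma = \pm(\theta_{i_\sigma} - \sigma(i_\sigma)) \in I_\sigma$, the sign chosen so that $a_\sigma \equiv 1 \pmod{I_\eta}$ (possible because $\eta(i_\sigma) - \sigma(i_\sigma) = \pm 1$). Set
\[
 f\colonequals \iota(g) \cdot \prod_{\sigma \in \Sigma_{nc}} a_\sigma.
\]
Then $q_\eta(f) = g$ because $q_\eta \circ \iota = \id$ and each $q_\eta(a_\sigma) = 1$; membership $f \in I_\sigma$ for $\sigma \in \Sigma_{nc}$ is witnessed by the factor $a_\sigma$; and for $\sigma \in \Sigma_c$ the lift $\iota(g)$ already lies in $I_\sigma$, since any expression $g = \sum_{i \in T_\sigma \setminus T_\eta}(\theta_i - \sigma(i))\, p_i$ in $S$ is also a valid expression in $R\langle\underline{\theta}\rangle$ with each $\theta_i - \sigma(i)$ a generator of $I_\sigma$, so that ideal membership is preserved by multiplication.

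The main technical points are the explicit description of $q_\eta(I_\sigma)$—hinging on the observation that $\eta(i_0) - \sigma(i_0) = \pm 1$ is a unit in the incompatible case—and the compatibility of the section $\iota$ with the linear generators of $I_\sigma$ for compatible $\sigma$; once these are in place, all remaining verifications are formal, and no further case analysis or induction on $|\Sigma|$ is needed.
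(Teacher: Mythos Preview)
Your proof is correct and takes a genuinely different route from the paper's. The paper reduces by induction on $|T_\eta|$ to the case $I_\eta=(\theta_1)$, then separately treats the case where no $T_\sigma$ contains $1$ (arguing via the substitution $\theta_1=0$) before reducing the general case to that one by stripping off incompatible $\sigma$'s and replacing each remaining $\sigma$ by its restriction $\sigma'$. You instead work directly modulo $I_\eta$ for arbitrary $\eta$: the section $\iota$ replaces the paper's induction on $|T_\eta|$ in one stroke, and your explicit lift $f=\iota(g)\prod_{\sigma\in\Sigma_{nc}}a_\sigma$ packages the removal of incompatible $\sigma$'s and the restriction step into a single formula. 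The trade-off is that the paper's argument is more elementary bookkeeping, while yours is shorter and yields an explicit preimage, which could be useful if one later wanted norm estimates on the lift.
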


\begin{proof}
We only need to prove the inclusion $\bigcap(I_{\sigma}+I_{\eta})\subseteq\left(\bigcap I_\sigma \right)+ I_{\eta}$. We can make induction on the cardinality of $T_{\eta}$ and restrict to the case in which $T_{\eta}$ is a singleton. By changing variables, we can suppose $T_{{\eta}}=\{1\}$ and ${\eta}(1)=0$ so that $I_{\eta}=(\theta_1)$. 

We first suppose that $1\notin T_\sigma$ for all $\sigma\in\Sigma$. Let $s$ be an element of $\bigcap(I_\sigma+(\theta_1))$. This means we can find elements $s_\sigma\in I_\sigma$ and polynomials $p_\sigma\in R\langle \underline{\theta}\rangle$ such that $s=s_\sigma+p_\sigma\theta_1$. Since $I_\sigma$ is generated by polynomials of the form $\theta_i-\epsilon$ with $i\neq1$ we can suppose that $s_\sigma$ contains no $\theta_1$ by eventually changing $p_\sigma$. Let now $\sigma$, $\sigma'$ be in $\Sigma$. From the equality
\[
 s_\sigma=(s_\sigma+p_\sigma\theta_1)|_{\theta_1=0}=(s_{\sigma'}+p_{\sigma'}\theta_1)|_{\theta_1=0}=s_{\sigma'}
\]
we conclude that $s_\sigma\in\bigcap I_\sigma$. Therefore $s\in\bigcap I_\sigma+(\theta_1)$ as claimed.

We now move to the general case. Suppose $\bar{\sigma}(1)=1$ for some $\bar{\sigma}\in\Sigma$. Then $I_{\bar{\sigma}}+I_{\eta}=R\langle \underline{\theta}\rangle$ and if $f\in\bigcap_{\sigma\neq\bar{\sigma}} I_\sigma$ then $f=-f(\theta_1-1)+f\theta_1\in\bigcap_{\sigma}I_\sigma+(\theta_1)$. Therefore, the contribution of $I_{\bar{\sigma}}$ is trivial on both sides and we can erase it from $\Sigma$. We can therefore suppose that $\sigma(1)=0$ whenever $1\in T_\sigma$.

For any $\sigma\in\Sigma$ let $\sigma'$ be its restriction to $T_\sigma\setminus\{1\}$. We have $I_{\sigma'}\subseteq I_\sigma$ and $I_{\sigma'}+(\theta_1)=I_{\sigma}+(\theta_1)$ for all $\sigma\in\Sigma$. By what we already proved, the statement holds for the set $\Sigma'\colonequals\{\sigma':\sigma\in\Sigma\}$. Therefore:
\[
\bigcap_{\sigma\in\Sigma}(I_\sigma+(\theta_1))=\bigcap_{\sigma'\in\Sigma'}(I_{\sigma'}+(\theta_1))=\bigcap_{\sigma'\in\Sigma'} I_{\sigma'}+(\theta_1)\subseteq\bigcap_{\sigma\in\Sigma} I_\sigma +(\theta_1)                                                            
\]
proving the claim.
\end{proof}

We recall (see \cite[Definition 1.1.9/1]{BGR}) that a morphism of normed groups $\phi\colon G\ra H$ is \emph{strict} if the homomorphism $G/\ker\phi\ra\phi(G)$ is a homeomorphism, where the former group is endowed with the quotient topology and the latter with the topology inherited from $H$. In particular, we say that a sequence of normed $K$-vector spaces
\[
 R\stackrel{f}{\ra} S\stackrel{g}{\ra} T
\]
is \emph{strict and exact} at $S$ if it exact at $S$ and if $f$ is strict i.e. the quotient norm and the norm induced by $S$ on $R/\ker(f)\cong\ker(g)$ are equivalent.

\begin{lemma}\label{chinese22}
For any  map $\sigma\colon T_\sigma\ra\{0,1\}$ defined on a subset $T_\sigma$ of $\{1,\ldots,n\}$ we denote by $I_\sigma$ the ideal of $R\langle\underline{\theta}\rangle=R\langle{\theta}_1\ldots,\theta_n\rangle$ generated by $\theta_i-\sigma(i)$ as $i$ varies in $T_\sigma$. 
For any finite  set $\Sigma$ of such maps and any complete normed $K$-algebra $R$ the following sequence of Banach $K$-algebras is strict and exact 
 \[
  0\ra R \langle \underline{\theta}\rangle/\bigcap_{\sigma\in\Sigma} I_\sigma\ra\prod_{\sigma\in\Sigma} R \langle \underline{\theta}\rangle/I_{\sigma}\ra\prod_{\sigma,\sigma'\in\Sigma} R\langle \underline{\theta}\rangle/(I_\sigma+I_{\sigma'})
 \]
and the ideal $\bigcap_{\sigma\in\Sigma} I_\sigma$ is generated by a finite set of polynomials in the $\theta_i$ with coefficients in $\Z$.
\end{lemma}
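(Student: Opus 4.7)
The plan naturally splits into three steps: exactness of the sequence, strictness of the leftmost map, and the existence of $\Z$-coefficient generators for $\bigcap_\sigma I_\sigma$.

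I would first prove exactness at the middle term; injectivity of the leftmost map is tautological from the definition $J := \bigcap_\sigma I_\sigma$. The argument proceeds by induction on $|\Sigma|$, modelled on the classical Chinese Remainder Theorem, with the base case trivial. For the inductive step from $\Sigma$ to $\Sigma \cup \{\eta\}$, start with a compatible family $(a_\sigma)_{\sigma \in \Sigma \cup \{\eta\}}$ in the middle term whose image in the double-quotient product vanishes. The inductive hypothesis applied to $\Sigma$ yields $a' \in R\langle \underline{\theta}\rangle$ with $a' \equiv a_\sigma \pmod{I_\sigma}$ for each $\sigma \in \Sigma$. Amalgamating $a'$ with $a_\eta$ reduces via the classical two-ideal CRT to the condition $a' - a_\eta \in (\bigcap_{\sigma \in \Sigma} I_\sigma) + I_\eta$, which by Lemma \ref{grobner2} equals $\bigcap_{\sigma \in \Sigma}(I_\sigma + I_\eta)$, and termwise membership follows from $a' - a_\eta \equiv a_\sigma - a_\eta \equiv 0 \pmod{I_\sigma + I_\eta}$ by compatibility. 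Strictness is then a consequence of the non-archimedean open mapping theorem: each $I_\sigma$ is finitely generated and hence closed in the Tate algebra, so $J$ is closed as well; the injection $R\langle \underline{\theta}\rangle/J \hookrightarrow \prod_\sigma R\langle \underline{\theta}\rangle/I_\sigma$ is a continuous $K$-linear bijection onto the closed kernel of the next continuous map, between Banach $K$-vector spaces, and is therefore a homeomorphism onto its image.

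For the $\Z$-coefficient generation, I would run the same inductive argument inside the noetherian polynomial ring $\Z[\underline{\theta}]$ to produce the analogous exact sequence, so that $J_\Z := \bigcap_\sigma I_{\sigma,\Z}$ is finitely generated by some $f_1,\dots,f_m \in \Z[\underline{\theta}]$. To see that these same polynomials generate $J$ inside $R\langle \underline{\theta}\rangle$, note that each quotient $\Z[\underline{\theta}]/I_{\sigma,\Z}$ is a polynomial ring in the complementary variables and hence a free $\Z$-module; the same is true for each $\Z[\underline{\theta}]/(I_{\sigma,\Z} + I_{\sigma',\Z})$ when the two maps are compatible, and zero otherwise. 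Tensoring the $\Z$-exact sequence with $R$ therefore preserves exactness and yields the identity $J_\Z \cdot R[\underline{\theta}] = \bigcap_\sigma I_{\sigma,R}$ in the polynomial ring. To extend this equality to the Tate algebra, I would show that the polynomial sequence is strict exact with respect to the Gauss norms, so that taking completions (which preserves strict exactness) produces the Tate sequence with $J_\Z \cdot R\langle \underline{\theta}\rangle$ as the kernel of the injection into $\prod R\langle \underline{\theta}\rangle/I_\sigma$; comparing with the kernel $J$ of the same Tate-algebra injection forces $J = J_\Z \cdot R\langle \underline{\theta}\rangle$.

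The main obstacle I foresee is precisely this last step, the verification of strict exactness of the polynomial-ring sequence in Gauss norm, which amounts to producing, for any compatible family of values on the coordinate subspaces cut out by $\sigma \in \Sigma$, a polynomial representative whose Gauss norm is controlled by the maximum of the values' norms. The natural route is an explicit non-archimedean Lagrange-type interpolation by ``face polynomials'' on subfaces of $\{0,1\}^n$, which in the ultrametric setting preserves Gauss norms exactly; once this combinatorial bookkeeping is settled, the remainder of the argument is formal.
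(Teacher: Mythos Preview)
Your exactness argument is the paper's, up to packaging: both induct on $|\Sigma|$ and invoke Lemma~\ref{grobner2} at the inductive step; the paper just phrases the two-ideal amalgamation as a snake-lemma computation on $O(\mcI')$ rather than a direct CRT lift. Where you genuinely diverge is on strictness and the $\Z$-generators. The paper dispatches both at once by a base-change trick: since each $R\langle\underline{\theta}\rangle/I_\sigma$ and $R\langle\underline{\theta}\rangle/(I_\sigma+I_{\sigma'})$ is itself a Tate algebra in fewer variables, the whole sequence is obtained by completed tensor from the same sequence over the base, and \cite[Proposition~2.1.8/6]{BGR} reduces strictness and the identification of the kernel to the noetherian cases $R=K=\Q_p$ or $\F_p(\!(t)\!)$, where both are immediate. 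Your route---open mapping theorem for strictness, then exactness over $\Z[\underline{\theta}]$, flat base change to $R[\underline{\theta}]$, and completion---is correct and more self-contained; the interpolation estimate you flag as the residual obstacle is precisely what the paper's base-change shortcut avoids, but it is indeed resolvable by the face-polynomial construction you sketch, since in the ultrametric setting such interpolants have Gauss norm bounded by the maximum of the prescribed face-values. Either way the underlying content is the same: the sequence is one of free modules defined over $\Z$, and the analytic claims reduce to that.
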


\begin{proof}
We follow the notation and the proof of \cite{kleinert}.   
For a collection of ideals $\mcI=\{I_\sigma\}$ we let $A(\mcI)$ be the kernel of the map $\prod_\sigma R \langle \underline{\theta}\rangle/I_\sigma\ra\prod_{\sigma,\sigma'} R\langle \underline{\theta}\rangle/(I_\sigma+I_{\sigma'})$ and $O(\mcI)$ be the cokernel of $R\langle \underline{\theta}\rangle/\bigcap_\sigma I_\sigma\ra A(\mcI)$. We make induction on the cardinality $m$ of $\mcI$. The case $m=1$ is obvious. 

Let $\mcI'$ be $\mcI\cup\{I_{\eta}\}$. From the diagram
$$\xymatrix{
&0\ar[d]\ar[r]&R \langle \underline{\theta}\rangle\ar[d]\ar[r]^{id}&R \langle \underline{\theta}\rangle\ar[d]\ar[r]&0\\
0\ar[r]&W\ar[r]&A(\mcI')\ar[r]&A(\mcI)
}$$
we obtain by the snake lemma the exact sequence
\[
0\ra I_{\eta}\cap\bigcap I_{\sigma}\ra \bigcap I_{\sigma}\ra W\ra O(\mcI')\ra O(\mcI).
\]
By direct computation, it holds $W=\bigcap(I_\sigma+I_{\eta})/I_{\eta}$. By the induction hypothesis, we obtain $O(\mcI)=0$. Moreover, since $\bigcap I_\sigma + I_{\eta}=\bigcap(I_\sigma+I_{\eta})$ by Lemma \ref{grobner2}, we conclude that the map $\bigcap I_{\sigma}\ra W$ is surjective and hence $O(\mcI')=0$ proving the main claim. 

We remark that  the sequence of the statement  is obtained from the sequence with $R=K$, by tensoring with $R$ over $K$ and completing. The latter is a (strict) exact sequence of affinoid algebras in the sense of Tate \cite[Definition 6.1.1/1]{BGR} so we deduce that the sequence is strict for any $R$ by means of  \cite[Proposition 2.1.8/6]{BGR}. We also remark that the sequence can be deduced from the analogous sequence for $\Z[\underline{\theta}]$ in place of $R\langle \underline{\theta}\rangle$, by tensoring with $R$ and completing. In particular,  the ideal $\bigcap_{\sigma\in\Sigma} I_\sigma$ is already defined over $\Z[\underline{\theta}]$ as claimed. %
\end{proof}

Let $\sigma$ and $\sigma'$ be maps defined from two subsets $T_\sigma$ resp. $T_{\sigma'}$ of $\{1,\ldots,n\}$ to $\{0,1\}$. We say that they are \emph{compatible} if $\sigma(i)=\sigma'(i)$ for all $i\in T_\sigma\cap T_{\sigma'}$ and in this case we denote by $(\sigma,\sigma')$ the map from $T_\sigma\cup T_{\sigma'}$ extending them.

\begin{lemma}\label{chinesecor2b}
Let $X=\varprojlim_hX_h$ be an object in $\wRigSm$ and $\Sigma$ a set as in Lemma \ref{chinese22}. We denote $\mcO(X)$ by $R$ and $\mcO(X_h)$ by $R_h$. 
 For any $\sigma\in\Sigma$ let  $\bar{f}_\sigma$ be an element of $R\langle\underline{\theta}\rangle/I_\sigma$ such that $\bar{f}_\sigma|_{(\sigma,\sigma')}=\bar{f}_{\sigma'}|_{(\sigma,\sigma')}$ for any couple $\sigma,\sigma'\in\Sigma$ of compatible maps. 
\begin{enumerate}
	\item There exists an element $f\in R\langle\underline{\theta}\rangle$ such that $f|_\sigma=\bar{f}_\sigma$. 
	\item There exists a constant $C=C(\Sigma)$ such that if for some $g\in R\langle\underline{\theta}\rangle$ one has $|\bar{f}_\sigma-{g}|_\sigma|<\varepsilon$ 
	for all $\sigma$ then the element $f$ can be chosen so that $|f-g|<C\varepsilon$. Moreover, if $\bar{f}_\sigma\in R_0\langle\underline{\theta}\rangle/I_\sigma$ for all $\sigma$ then the element $f$ can be chosen inside $R_h\langle\underline{\theta}\rangle$ for some integer $h$.
\end{enumerate}
\end{lemma}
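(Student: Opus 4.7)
The plan is to deduce both claims from the strict exact sequence
\[
0 \to R\langle\underline{\theta}\rangle / \bigcap_{\sigma\in\Sigma} I_\sigma \xrightarrow{\alpha} \prod_{\sigma\in\Sigma} R\langle\underline{\theta}\rangle/I_\sigma \xrightarrow{\beta} \prod_{\sigma,\sigma'\in\Sigma} R\langle\underline{\theta}\rangle/(I_\sigma+I_{\sigma'})
\]
provided by Lemma \ref{chinese22}. First I would check that the hypothesised tuple $(\bar{f}_\sigma)_{\sigma\in\Sigma}$ lies in $\ker\beta$: for compatible $\sigma,\sigma'$ this is precisely the compatibility assumption, since $I_\sigma+I_{\sigma'}=I_{(\sigma,\sigma')}$; for incompatible $\sigma,\sigma'$ the ideal $I_\sigma+I_{\sigma'}$ contains both $\theta_i$ and $\theta_i-1$ for some index $i$, is therefore the unit ideal, and its quotient vanishes. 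By exactness I then obtain $f\in R\langle\underline{\theta}\rangle$ with $f|_\sigma=\bar{f}_\sigma$ for every $\sigma$, yielding $(1)$.

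For the quantitative refinement in $(2)$, the plan is to apply the same argument to the difference $\delta\colonequals(g|_\sigma-\bar{f}_\sigma)_{\sigma\in\Sigma}$, which again belongs to $\ker\beta=\im\alpha$ and whose norm in $\prod_{\sigma}R\langle\underline{\theta}\rangle/I_\sigma$ is strictly less than $\varepsilon$. Strictness of $\alpha$ will furnish a constant $C=C(\Sigma)$ such that $\delta$ admits a representative $h\in R\langle\underline{\theta}\rangle$ with $|h|<C\varepsilon$ and $h|_\sigma=g|_\sigma-\bar{f}_\sigma$; the element $f\colonequals g-h$ then simultaneously lifts the tuple $(\bar{f}_\sigma)$ and satisfies $|f-g|<C\varepsilon$. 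For the ``moreover'' clause I would observe that Lemma \ref{chinese22} applies verbatim with $R$ replaced by the Banach $K$-algebra $R_0$; since $R_0\hookrightarrow R$ and the ideals involved are defined by polynomials with integer coefficients, the compatibility condition automatically descends to the corresponding quotients over $R_0$, so running the same argument over $R_0$ produces a lift already inside $R_0\langle\underline{\theta}\rangle\subseteq R_h\langle\underline{\theta}\rangle$ for $h=0$.

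The main obstacle will be isolating the constant $C=C(\Sigma)$ so that it depends only on the combinatorial data $\Sigma$ and not on $R$. This is really settled by the proof of Lemma \ref{chinese22}, where strict exactness is reduced via \cite[Proposition 2.1.8/6]{BGR} to the cases $R=K=\Q_p$ or $R=K=\F_p(\!(t)\!)$; the bound obtained there propagates to every complete normed $K$-algebra by the same flat base change that exhibits $\bigcap I_\sigma$ as a base change of an ideal over $\Z$. Once such a uniform bound is in hand, everything else is a formal consequence of the exactness of the sequence together with a single step of lifting a class to a representative with essentially optimal norm.
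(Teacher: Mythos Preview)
Your treatment of (1) and of the quantitative bound in the first sentence of (2) is correct and matches the paper: both follow directly from the strict exactness of the sequence in Lemma \ref{chinese22}, exactly as you describe.

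The gap is in your handling of the ``moreover'' clause. That clause is not an independent statement about the mere existence of a lift in some $R_h\langle\underline{\theta}\rangle$; in the paper (and crucially in the application to Proposition \ref{2dH=02}) it is meant to be combined with the estimate $|f-g|<C\varepsilon$. Your suggestion is to rerun the whole argument over $R_0$ and obtain a lift $f\in R_0\langle\underline{\theta}\rangle$. But the element $g$ lies in $R\langle\underline{\theta}\rangle$, not in $R_0\langle\underline{\theta}\rangle$, so the difference tuple $\delta=(g|_\sigma-\bar f_\sigma)_\sigma$ does \emph{not} live in the $R_0$-version of the sequence; running the lifting over $R_0$ therefore gives no control whatsoever on $|f-g|$. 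You obtain some lift in $R_0\langle\underline{\theta}\rangle$, but there is no reason it is $C\varepsilon$-close to $g$.

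The paper fixes this with an additional correction step. One first produces two lifts: an element $f_1\in R_0\langle\underline{\theta}\rangle$ (from the $R_0$-sequence) and an element $f_2\in R\langle\underline{\theta}\rangle$ with $|f_2-g|<C\varepsilon$ (from your argument over $R$). Their difference $f_1-f_2$ lies in $\bigcap_\sigma I_\sigma$, which by Lemma \ref{chinese22} is generated by finitely many polynomials $p_1,\dots,p_M$ with coefficients in $\Z$ (hence in $K$). Writing $f_1-f_2=\sum_i\gamma_ip_i$ with $\gamma_i\in R\langle\underline{\theta}\rangle$ and approximating each $\gamma_i$ by $\tilde\gamma_i\in R_h\langle\underline{\theta}\rangle$ for $h$ large enough so that $|\gamma_i-\tilde\gamma_i|<C\varepsilon/(M|p_i|)$, one sets $f_3\colonequals f_1-\sum_i\tilde\gamma_ip_i$. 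Then $f_3\in R_h\langle\underline{\theta}\rangle$, it still restricts to $\bar f_\sigma$ for every $\sigma$ (since each $p_i\in\bigcap I_\sigma$), and $|f_3-g|\le\max\{|f_2-g|,|f_3-f_2|\}<C\varepsilon$. This is the missing step in your argument, and it is where the tower $\{R_h\}$ (rather than just $R_0$) and the density of $\varinjlim_h R_h$ in $R$ are actually used.
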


\begin{proof}
The first claim and the first part of the second are simply a restatement of Lemma \ref{chinese22}, where $C=C(\Sigma)$ is  the constant defining the compatibility $||\cdot||_1\leq C||\cdot||_2$ between the  norm $||\cdot||_1$ on $R\langle\underline{\theta}\rangle/\bigcap I_\sigma$ induced by the quotient and the norm $||\cdot||_2$ induced by the embedding in $\prod R\langle\underline{\theta}\rangle/I_\sigma$. We now turn to the last sentence of the second claim.

We apply Lemma \ref{chinese22} to each $R_h$ and to $R$. We then obtain  exact sequences of Banach spaces:
\[
 0\ra R_h\langle \underline{\theta}\rangle/\bigcap_{\sigma\in\Sigma} I_\alpha\ra\prod_{\sigma\in\Sigma} R_h\langle \underline{\theta}\rangle/I_{\sigma}\ra\prod_{\sigma,\sigma'\in\Sigma} R_h\langle \underline{\theta}\rangle/(I_\sigma+I_{\sigma'})
\]
\[
 0\ra R\langle \underline{\theta}\rangle/\bigcap_{\sigma\in\Sigma} I_\alpha\ra\prod_{\sigma\in\Sigma} R\langle \underline{\theta}\rangle/I_{\sigma}\ra\prod_{\sigma,\sigma'\in\Sigma} R\langle \underline{\theta}\rangle/(I_\sigma+I_{\sigma'})
\]
where all ideals that appear are finitely generated by polynomials with $\Z$-coefficients, depending only on $\Sigma$.

In particular, there exist two lifts of $\{\bar{f}_\sigma\}$: an element $f_1$ of $R_0\langle \underline{\theta}\rangle$ and an element $f_2$ of $R\langle\underline{\theta}\rangle$ such that $|f_2-g|<C\varepsilon$ and their difference lies in $\bigcap I_\sigma$. Hence, we can find coefficients $\gamma_i\in R\langle \underline{\theta}\rangle$ such that $f_1=f_2+\sum_i\gamma_i p_i$ where $\{p_1,\ldots,p_M\}$ are generators of $\bigcap I_\sigma$ which have coefficients in $K$. Let now $\tilde{\gamma}_i$ be elements of $R_h\langle \underline{\theta}\rangle$ with $|\tilde{\gamma}_i-\gamma_i|<C\varepsilon/M|p_i|$. The element $f_3\colonequals f_1-\sum_i\tilde{\gamma}_i p_i$ lies in $\varinjlim_h(R_h\langle \underline{\theta}\rangle)$ is another lift of $\{\bar{f}_\sigma\}$ and satisfies  $|f_3-g|\leq\max \{|f_2-g|,|f_2-f_3|\}< C\varepsilon$ proving the claim.
\end{proof}

We can now finally prove the approximation result that played a crucial role 
 in Section \ref{motapprox}.

\begin{proof}[Proof of Proposition \ref{solleva2text}]
For any $h\in\Z$ we will denote 
$\mcO(X_h)\langle\theta_1,\ldots,\theta_n\rangle$ by $R_h$. We also denote the $\pi$-adic completion of $\varinjlim_hR^\circ_h$ by $R^+$ and $R^+[\pi^{-1}]$ by $R$.

By  Proposition \ref{solleva} we conclude that there exist integers $m$ and $n$ and a $m$-tuple of polynomials $P=(P_1,\ldots, P_m)$ in $K[\sigma,\tau]$ where $\sigma=(\sigma_1,\ldots,\sigma_n)$ and $\tau=(\tau_1\ldots,\tau_m)$ are systems of variables  such that $K\langle \sigma,\tau\rangle/(P)\cong\mcO(Y)$ and each $f_k$ is induced by maps $(\sigma,\tau)\mapsto(s_k,t_k)$ from $K\langle \sigma,\tau\rangle/(P)$ to $R$ for some $m$-tuples $s_k$ and $n$-tuples $t_k$ in $R$. Moreover, there exists a sequence of power series $F_k=(F_{k1},\ldots,F_{km})$ associated to each $f_k$ such that
\[
(\sigma,\tau)\mapsto(s_k+(\tilde{s}_k-s_k)\chi,F_k(s_k+(\tilde{s}_k-s_k)\chi)\in R\langle\chi\rangle\cong\mcO(X\times\B^n\times\B^1)
\]
defines a map $H_k$ satisfying the first claim, for any choice of $\tilde{s}_k\in \varinjlim_hR^\circ_h$ such that $\tilde{s}_k$ is in the convergence radius of $F_k$ and $F_k(\tilde{s}_k)$ is in $R^+$.

Let now $\varepsilon$ be a positive real number, smaller than all radii of convergence of the series $F_{kj}$ and such that $F(a)\in R^+$ for all $|a-s|<\varepsilon$. Denote by $\tilde{s}_{ki}$ the elements associated to $s_{ki}$ by applying Proposition \ref{2dH=02} with respect to the chosen $\varepsilon$. In particular, they induce a well defined map $H_k$  and the elements $\tilde{s}_{ki}$ lie in $R_{\bar{h}}^\circ\langle\theta_1\ldots,\theta_n\rangle$ for some integer $\bar{h}$. We show that the maps $H_k$ induced by this choice also satisfy the second and third claims of the proposition.

Suppose that $f_k\circ d_{r,\epsilon}=f_{k'}\circ d_{r,\epsilon}$ for some $r\in\{1,\ldots,n\}$ and $\epsilon\in\{0,1\}$. This means that $\bar{s}\colonequals s_{k}|_{\theta_r=\epsilon}=s_{k'}|_{\theta_r=\epsilon}$ and $\bar{t}\colonequals t_{k}|_{\theta_r=\epsilon}=t_{k'}|_{\theta_r=\epsilon}$. 
This implies that both $F_k|_{\theta_r=\epsilon}$ and $F_{k'}|_{\theta_r=\epsilon}$ are two $m$-tuples of formal power series $\bar{F}$ with coefficients in $\mcO(X\times\B^{n-1})$ converging around $\bar{s}$ and such that $P(\sigma,\bar{F}(\sigma))=0$, $\bar{F}(\bar{s})=\bar{t}$. By the uniqueness of such power series stated in Corollary \ref{implicitC}, we conclude that they coincide.
 
Moreover, by our choice of the elements $\tilde{s}_k$ it follows that $\bar{\tilde{s}}\colonequals\tilde{s}_{k}|_{\theta_r=\epsilon}=\tilde{s}_{k'}|_{\theta_r=\epsilon}$. In particular  one has  
\[
F_k((\tilde{s}_{k}-s_k)\chi)|_{\theta_r=\epsilon}=\bar{F}((\bar{\tilde{s}}-\bar{s})\chi)=F_{k'}((\tilde{s}_{k'}-s_{k'})\chi)|_{\theta_r=\epsilon}
\]
and therefore $H_k\circ d_{r,\epsilon}=H_{k'}\circ d_{r,\epsilon}$ proving the second claim.

The third claim follows immediately since the elements $\tilde{s}_{ki}$ satisfy the condition (\ref{cond112}) of Proposition \ref{2dH=02}.
\end{proof}

\end{appendix}

\bibliographystyle{alpha}

 \end{document}